\documentclass[reqno,11pt]{amsart}
\usepackage{amsmath,enumerate,amsfonts,amssymb,amsthm,mathrsfs}
\usepackage{graphicx}
\usepackage{bm}
\allowdisplaybreaks[4]
\usepackage{times}
\usepackage{comment}
\usepackage{float}
\usepackage[numbers,sort&compress]{natbib}
\usepackage{hyperref}
\usepackage{framed}
\usepackage[normalem]{ulem}
\usepackage[margin=1.25in]{geometry}
\numberwithin{equation}{section}
\newtheorem{thm}{Theorem}[section]
\newtheorem{lem}[thm]{Lemma}
\newtheorem{prop}[thm]{Proposition}
\newtheorem{defn}[thm]{Definition}

\newtheorem{cor}[thm]{Corollary}
\newtheorem{rem}[thm]{Remark}
\theoremstyle{plain}

\renewcommand{\d}{\mathrm{d}}
\renewcommand{\H}{\mathbb{H}}
\newcommand{\B}{\mathcal{B}}
\newcommand{\R}{\mathbb{R}}

\newcommand{\Cn}{\C^n}
\newcommand{\Rn}{\mathbb{R}^n}

\renewcommand{\S}{\mathbb{S}}
\newcommand{\C}{\mathbb{C}}
\newcommand{\lam}{\lambda}
\newcommand{\pa}{\partial}

\newcommand{\var}{\varepsilon}
\newcommand{\be}{\begin{equation}}
\newcommand{\ee}{\end{equation}}
\newcommand{\Lam}{\Lambda}
\newcommand{\si}{\sigma}
\newcommand{\al}{\alpha}
\newcommand{\om}{\omega}
\newcommand{\Om}{\Omega}
\newcommand{\Hn}{\mathbb{H}^{n}}

\def\XXint#1#2#3{{\setbox0=\hbox{$#1{#2#3}{\int}$}\vcenter{\hbox{$#2#3$}}\kern-.5\wd0}}

\author{Jiechen Qiang}
\address{School of Mathematical Sciences, Laboratory of Mathematics and Complex Systems, MOE,
 Beijing Normal University, Beijing 100875, China}
\email{jcqiang@mail.bnu.edu.cn}

\author{Zhongwei Tang}
\address{School of Mathematical Sciences, Laboratory of Mathematics and Complex Systems, MOE,
 Beijing Normal University, Beijing 100875, China}
\email{tangzw@bnu.edu.cn}
\thanks{ Z. Tang is supported by NSFC grants 12471099 and 12271223.}

\author{Heming Wang}
\address{State Key Laboratory of Mathematical Sciences, Academy of Mathematics and Systems Science,
 Chinese Academy of Sciences, Beijing 100190, China}
\email{hmwang@amss.ac.cn}
\thanks{H. Wang is supported by China Postdoctoral Science Foundation 2026M793418.}

\date{}
\title[CR Nirenberg problem, Part I]{The CR Nirenberg Problem, Part I: \\  Blow-up Analysis and Compactness}
\hypersetup{
 colorlinks,citecolor=blue,filecolor=blue,linkcolor=blue,urlcolor=black}
\keywords{CR Nirenberg problem, Webster scalar curvature, blow-up analysis, compactness, Heisenberg group}
\subjclass[2020]{35B44, 35R03, 53C17}

\begin{document}
\begin{abstract}
We establish a priori estimates and compactness results for positive solutions of the CR Nirenberg problem  under natural weighted flatness conditions. The proofs combine refined blow-up analysis, Pohozaev identities, and the complete classification of positive entire solutions to the critical CR Yamabe equation recently obtained  by Li--Liu--Ma \cite{LLM2026} and Li \cite{L2026}. Existence and degree results are established in our subsequent paper \cite{QTWPartII}.
\end{abstract}
\maketitle
{\small\tableofcontents}

\section{Introduction}
Curvature is a fundamental object in geometric analysis, and its transformation under conformal changes gives rise to many central problems in conformal geometry. In particular, a celebrated problem posed by Nirenberg in the 1960s asks whether a given function $K$ on the standard sphere $(\S^N,g_0)$, $N\geq2$, can be realized as the scalar curvature of a metric conformal to $g_0$ (or as the Gauss curvature when $N=2$). This is widely known as the Nirenberg problem and is also called the prescribed scalar curvature problem on $\S^N$. If we denote $g=e^{2v}g_0$ in the  case $N=2$
and $g=v^{4/(N-2)}g_0$ in the $N\geq 3$ dimensional case, this problem amounts to finding a solution $v$ of the equation
\[
-\Delta_{g_{0}}v+1=Ke^{2v} \quad\text{ on }\, \S ^2,
\]
and a positive solution $v$ of the equation
\[
-\Delta_{g_{0}}v+c(N)R_{g_0}v=c(N)Kv^{\frac{N+2}{N-2}} \quad\text{ on }\, \S^N \quad  \text{ for } \, N\geq 3,
\]
where  $\Delta_{g_{0}}$ is the Laplace--Beltrami operator on $(\S^N, g_{0})$, $c(N)=(N-2)/(4(N-1))$  and  $R_{g_0}=N(N-1)$ is the scalar curvature associated to $g_0$. The Nirenberg  problem has been studied extensively and it would be impossible to mention here all works in this area. The developments most closely related to this paper are the fine blow-up analysis of solutions and approximate solutions, and gluing constructions; see, e.g., \cite{Li1993,Li1995,Li1996,CGY1993,CY1991,WY2010,LWX2018,BC1988,BC1991,Li93d,SZ1996,MM2020} and the references therein.

Analogous prescribed curvature problems arise on CR manifolds, with the conformal sub-Laplacian in place of the conformal Laplacian.  The geometry of CR manifolds, which generalizes the geometry of real hypersurfaces in complex manifolds, was developed in fundamental works of Chern-Moser \cite{CM1974}, Fefferman \cite{F1976}, Jacobowitz \cite{J1990}, Jerison-Lee \cite{JL1984,JL1987,JL1988,JL1989}, Tanaka \cite{T1975}, Webster \cite{W1978}, among many others. On strictly pseudoconvex CR manifolds, the resulting geometry has close parallels with Riemannian conformal geometry. In particular, the CR Yamabe problem, which concerns constant Webster scalar curvature, was studied by Jerison--Lee \cite{JL1987,JL1989,JL1988} and subsequently by Gamara and Gamara--Yacoub \cite{G2001,GY2001}; see these works and the references therein.  For more recent and further studies, see \cite{DRY2022,CAY2010,CAY2013,R2013,R2012,GAG2015,SG2011} and related references.

As a natural analogue of the Nirenberg problem for the CR geometry, one can consider the prescribed Webster (pseudo-Hermitian) scalar curvature problem on the standard   CR sphere  which can be formulated as follows. 

Let $n\geq 1$ and let  $(\S^{2n+1},\theta_{0})$ be the unit   CR sphere in $\C^{n+1}$, where $\theta_{0}$ is the standard contact form. Given a  function $\bar{R}$ on $\S ^{2n+1}$, it is natural to ask: Does there exist a contact form $\theta$ conformally related to $\theta_0$ in the sense that $\theta=v^{2 / n} \theta_0$ for some  function $v>0$ such that $\bar{R}$ is the Webster scalar curvature associated with the contact form $\theta$? As in the Riemannian case, for the Webster scalar curvature associated with $\theta$, we have the transformation law
\[
\operatorname{Scal}_{\theta}=v^{-\frac{n+2}{n}}\Big(-\frac{2(n+1)}{n} \Delta_{\theta_0} v+\operatorname{Scal}_{\theta_0} v\Big).
\]
Here  $\Delta_{\theta_0}$ is the sub-Laplacian with respect to the contact form $\theta_0$ and $\operatorname{Scal}_{\theta_0}=n(n+1)/2$ is the Webster scalar curvature  associated with the contact form $\theta_0$. Clearly, the problem of solving $\operatorname{Scal}_{\theta}=\bar{R}$ is equivalent to finding positive solutions $v$ to the following PDE:
\be\label{maineq0}
L_{\theta_{0}}v:=-\Delta_{\theta_0}v +\frac{n^2}{4} v=\bar{c}(n) \bar{R} v^{1+\frac{2}{n}},\quad v>0 \quad \text { on }\, \S^{2n+1}.
\ee
Here $\bar c(n)=\frac{n}{2(n+1)}$ and $L_{\theta_0}$ is the \emph{conformal sub-Laplacian}. More generally, set $L_\theta=-\Delta_\theta+\bar c(n)\operatorname{Scal}_\theta$. For $\theta=v^{2/n}\theta_0$, its conformal covariance is given by $L_\theta(\phi)=v^{-\frac{n+2}{n}}L_{\theta_0}(v\phi)$ for every $\phi\in C^\infty(\S^{2n+1})$.
Let $\phi'$ be an extension of $\phi$ to $\C^{n+1}$. The sub-Laplacian $\Delta_{\theta_0} \phi$, defined on $\S^{2n+1}$ in the coordinate system $\zeta = (\zeta_1, \ldots, \zeta_{n+1}) \in \S^{2n+1}$, is given by the following expression:
\be\label{Delta}
\Delta_{\theta_0}\phi=\sum_{j=1}^{n+1} \frac{\pa^2 \phi'}{\pa \zeta_j \pa \bar{\zeta}_j}-\sum_{j, k=1}^{n+1} \zeta_j \bar{\zeta}_k \frac{\pa^2 \phi'}{\pa \zeta_j \pa \bar{\zeta}_k}- \frac{n}{2}\sum_{k=1}^{n+1}\Big(\zeta_k \frac{\pa \phi'}{\pa \zeta_k}+\bar{\zeta}_k \frac{\pa \phi'}{\pa \bar{\zeta}_k}\Big),
\ee
where $\bar\zeta_j$ denotes the complex conjugate of $\zeta_j$; see, e.g., \cite{BFM2013,JL1987}.

The solvability of \eqref{maineq0} is subject to restrictions on the prescribed curvature. Multiplying \eqref{maineq0} by $v$ and applying integration by parts, we see that a simple necessary condition is  $\max_{\S^{2n+1}} \bar{R} > 0$. More subtle obstructions arise from the conformal symmetry of the CR sphere. In particular, the Kazdan--Warner identity \cite[Theorem~B]{Cheng} states that, for every infinitesimal CR automorphism $X$ of $\S^{2n+1}$,
\be\label{KW1}
\int_{\S^{2 n+1}}X(\bar{R})  v^{2+\frac{2}{n}} \, \d V_{\theta_0}=0
\ee
for any positive solution $v$ of  \eqref{maineq0},  where  $\d V_{\theta_0}$  denotes the volume form associated with $\theta_0$. Identity \eqref{KW1} already rules out certain positive prescribed curvatures. For example, \eqref{maineq0} admits no positive solution when $\bar R(\zeta)=c+\operatorname{Re}\zeta_1$, where $c>1$ so that $\bar R$ is everywhere positive. Indeed, the infinitesimal CR automorphism $X$ with complex coordinate components $X_j(\zeta)=\delta_{1j}-\zeta_1\zeta_j$
satisfies
\[
X(\bar R)=1-\operatorname{Re}(\zeta_1^2)\geq 0,
\]
with strict inequality on a nonempty open subset of $\S^{2n+1}$. Since $v>0$, the integral in \eqref{KW1} would then be strictly positive, contradicting the Kazdan--Warner identity \eqref{KW1}.

 Another  difficulty in studying \eqref{maineq0} is the lack of compactness due to the presence of the Sobolev critical exponent $2+\frac{2}{n}$. A typical phenomenon encountered here is \emph{bubbling blow up}. Bubbles are solutions of \eqref{maineq0} with $\bar{R}=1$ and describe the local profiles of blowing-up sequences. Under the Cayley transform, the finite-volume condition becomes the critical integrability condition on the Heisenberg group; Jerison--Lee \cite{JL1988} classified the corresponding finite-energy entire solutions. From the variational point of view, such bubbles generate noncompact Palais--Smale sequences for the Euler--Lagrange functional associated with \eqref{maineq0}. Two main approaches have been used to understand the blow up phenomenon:  subcritical approximations (see, e.g., \cite{PR2003,TWZ}), or the construction of pseudo-gradient flows (see, e.g., \cite{GH2017,GHM2020,SG2011}).

The prescribed Webster scalar curvature problem on the standard CR sphere is referred to as the \emph{CR Nirenberg problem}. Further geometric and analytic background will be recalled in Section~\ref{sec:2.1}.  Throughout the paper, we assume $n\geq1$ and use the normalized prescribed Webster scalar curvature $R=2 \bar{R} /(n(n+1))$, so that $R\equiv1$ corresponds to the standard contact form. With this normalization, equation~\eqref{maineq0} becomes
 \be\label{maineq}
 L_{\theta_0} v=\frac{n^2}{4} R v^{1+2 / n}, \quad v>0 \quad \text { on }\, \S^{2n+1}.
\ee
Problem \eqref{maineq0} has been studied extensively over the past decades. Existing approaches include the Webster scalar curvature flow \cite{Ho20151,Ho20152}, finite-dimensional reduction and variational methods \cite{CPY2013,MU2002,FU2004}, and the theory of critical points at infinity \cite{GH2017,GHM2020,SG2011,RG2012,RG2012-2,R2012,R2013}. For further existence results, see, e.g., \cite{Ho2016,Ho2020} and the references therein.

In this paper, we establish refined blow-up estimates and
compactness results for positive solutions of the CR Nirenberg
problem on $\S^{2n+1}$ under weighted flatness conditions.
The resulting a priori estimates provide the analytic foundation
for the existence and topological degree results in our
subsequent paper \cite{QTWPartII}.

To state these results, we first introduce the Heisenberg coordinates and the assumptions on the prescribed curvature $R$ used
below. The conformal sub-Laplacian has a concrete realization on the Heisenberg group $\Hn$, a flat CR manifold.  The Heisenberg group $\Hn$ is the Lie group $\Cn\times\R$ with group law
\[
(z,t)\circ(\hat z,\hat t)
=(z+\hat z,t+\hat t+2\operatorname{Im}(z\cdot\overline{\hat z})),\quad \forall\, (z,t), (\hat z,\hat t)\in \Cn\times\R.
\]
Here $z\cdot\overline{\hat z}=\sum_{j=1}^n z_j\overline{\hat z_j}$. We write $\xi=(z,t)=(x,y,t)$, where $z=x+\mathrm i y$ and $x,y\in\Rn$. The identity element is $0=(0,0)$ and $\xi^{-1}=(-z,-t)$. For a positive integer $k$, $\xi^k$ denotes the $k$-fold group product. We set
\[
Q=2n+2,\quad Q^*=\frac{2Q}{Q-2},\quad \rho(\xi)=|\xi|=(|z|^4+t^2)^{1/4},\quad
 d(\xi,\xi_0)=|\xi_0^{-1}\circ\xi|.
\]
The left translations and Heisenberg dilations are given by
\[
\tau_{\xi_0}(\xi)=\xi_0\circ\xi,
\quad \delta_\lam (z,t)=(\lam  z,\lam ^2t),\quad \lam>0.
\]
For $r>0$, we write $B_r(\xi_0)=\{\xi\in\Hn:d(\xi,\xi_0)<r\}$ and $B_r=B_r(0)$.
The standard left-invariant vector fields are
\begin{equation}\label{vectorfields}
X_j=\frac\pa {\pa  x_j}+2y_j\frac\pa {\pa  t},
\quad
Y_j=\frac\pa {\pa  y_j}-2x_j\frac\pa {\pa  t},
\quad T=\frac\pa {\pa  t},
\end{equation}
and we use the normalization
\[
\nabla_{\H}u=\frac12(X_1u,\ldots,X_nu,Y_1u,\ldots,Y_nu),
\quad
\Delta_{\H}=\frac14\sum_{j=1}^n(X_j^2+Y_j^2).
\]
We  also use the right-invariant horizontal vector fields
\begin{equation}\label{right-vectorfields}
 \overline X_j=\frac\pa {\pa  x_j}-2y_j\frac\pa {\pa  t},\quad
 \overline Y_j=\frac\pa {\pa  y_j}+2x_j\frac\pa {\pa  t},
 \quad 1\le j\le n.
\end{equation}
Writing $Z=(X_1,\ldots,X_n,Y_1,\ldots,Y_n)$ and $\overline Z=(\overline X_1,\ldots,\overline X_n,\overline Y_1,\ldots,\overline Y_n)$, we set
\[
 \widetilde\nabla f=\overline Zf,\quad
 \overline\nabla f=(\overline Zf,Tf).
\]
Thus $\widetilde\nabla f$ has $2n$ components, whereas
$\overline\nabla f$ has $2n+1$ components; in contrast with
$\nabla_{\H}$, neither notation includes the factor $1/2$.
Unless otherwise specified, the norms of these derivative arrays are
the corresponding Euclidean norms. In computations on $\Hn$, the symbol $\nabla$ denotes the Euclidean gradient in coordinates. For a vector field $D$, $(Df)(\eta\circ\xi)$ denotes the derivative evaluated at $\eta\circ\xi$, whereas $D_\xi[f(\eta\circ\xi)]$ differentiates the composition. We also write $Z^2u=(Z_aZ_bu)_{a,b=1}^{2n}$ and $ZTu=(Z_aTu)_{a=1}^{2n}$; thus $ZR=2\nabla_\H R$.

On a pseudo-Hermitian manifold, $\nabla_\theta$ denotes the horizontal
gradient and $T_\theta$ the Reeb field. For $R\in C^1(\S^{2n+1})$, we write
\[
\operatorname{Crit}(R)
=\{P\in \S^{2n+1}:\d R(P)=0\}
=\{P\in \S^{2n+1}:\nabla_{\theta_0}R(P)=0,\ T_{\theta_0}R(P)=0\}.
\]
The coordinate field $T=\pa_t$ is distinct from $T_\theta$.
For $\zeta,\eta\in\C^{n+1}$, let
$\langle\zeta,\eta\rangle=\sum_{j=1}^{n+1}\zeta_j\overline{\eta_j}$.
On the sphere we use the contact distance
\begin{equation}\label{eq:contact-distance}
d_{\theta_0}(P,P')=\sqrt{2|1-\langle P,P'\rangle|}
\end{equation}
and the balls $\B(P,r)=\{P'\in \S^{2n+1}:d_{\theta_0}(P,P')<r\}$. 

On $\Hn$, we use Haar measure $\d\xi=\d x\,\d y\,\d t$, write $\d z=\d x\,\d y$, and use Euclidean surface measure $\d S$ on boundaries.
On $\S^{2n+1}$, integrals and $L^q$ norms use the Euclidean surface measure $\d\si $ unless otherwise stated. The contact volume form is
\[
\d V_{\theta_0}=\theta_0\wedge(\d\theta_0)^n
=2^{2n+1}n!\,\d\si .
\]

The CR sphere minus one point and the Heisenberg group are CR equivalent. The Cayley transform from $\S^{2n+1}\backslash\{\mathcal{S}\}$ to $\Hn$ is the inverse of
\be\label{Cayley}
	F: \Hn \to \S^{2n+1}\backslash\{\mathcal{S}\}, \quad (z,t) \mapsto\Big(\frac{2 z}{1+|z|^2-\mathrm{i} t}, \frac{1-|z|^2+\mathrm{i} t}{1+|z|^2-\mathrm{i} t}\Big),
\ee
where $\mathcal{S}=(0,\ldots,0,-1)$ is the south pole of $\S^{2n+1}$. 

For $P\in\S^{2n+1}$,  choose a unitary matrix $U_P$ with $U_P(0,\ldots,0,1)=P$ and set
\begin{equation}\label{eq:normalized-cayley}
 F_P=U_P\circ F\circ\delta_{1/2},\quad F_P(0)=P.
\end{equation}
For $\lam \ge1$, use \eqref{eq:normalized-cayley} to define the CR conformal automorphism
\begin{equation}\label{def:conformal-map}
\varphi_{P,\lam }:=F_P\circ\delta_{\lam ^{-1}}\circ F_P^{-1}.
\end{equation}

The choices of $U_P$ range over the compact unitary group; a global smooth choice is not required. On the sphere, the hypotheses on $R$ are understood in these centered Cayley charts on fixed small coordinate balls. Their contact distances are uniformly comparable with $d$, and their conformal factors equal $1$ at the origin. The usual pseudo-Hermitian normal coordinates on a general CR manifold and these exact sphere charts are not identified without a coordinate change.

Under this correspondence, any positive solution of \eqref{maineq} gives rise to a positive entire solution of
\be\label{maineq1}
-\Delta_{\H}u
=\frac{n^2}{4}R(\xi)\,u^{\frac{Q+2}{Q-2}},
\quad
u>0
\quad\text{ in }\,\Hn,
\ee
where $Q=2n+2$, $\Delta_{\H}$ is the sub-Laplacian on $\Hn$, and, by a slight abuse of notation, $R$ denotes the pullback of the prescribed curvature under the inverse Cayley transform \eqref{Cayley}. Conversely, a positive entire solution of \eqref{maineq1} corresponds to a positive solution on the sphere provided that its inverse Cayley transform extends as a positive $C^2$ function across the omitted point. We refer to Section~\ref{sec:heisenberg-prelim} for the precise transformation formulas.

For later use, set
\begin{equation}\label{standard-bubble}
\Lam (z,t):=((1+|z|^2)^2+t^2)^{(2-Q)/4},
\quad U_{0,1}:=2^n\Lam .
\end{equation}
The conformal factors of $F$ and $F_P$ satisfy
\[
H_F(\xi)=2^n\Lam (\xi)=U_{0,1}(\xi),\quad
H_P(\xi)=\Lam (\delta_{1/2}\xi),
\]
so that $H_F(0)=2^n$ and $H_P(0)=1$. The function $U_{0,1}$ is the standard bubble for \eqref{maineq1} with $R\equiv1$; its translates and dilates are recalled in Section~\ref{sec:heisenberg-prelim}.

Before stating the main results, we specify the regularity and weighted flatness conditions on $R$.
For $R$, we distinguish ordinary regularity from regularity adapted to the contact structure, which is essential in CR dimension one.
For an open set $U\subset\Hn$ and $\al \in (0,1)$, $\Gamma_\al (U)$ consists of bounded continuous functions with finite norm
\[
\|f\|_{\Gamma_\al (U)}
=\|f\|_{L^\infty(U)}+
\sup_{\substack{\xi,\eta\in U\\\xi\ne\eta}}
\frac{|f(\xi)-f(\eta)|}{d(\xi,\eta)^\al }.
\]
The higher-order spaces are recalled in Section~\ref{function space}.

\begin{defn}\label{def:intrinsic-coefficients}
Let $\al \in (0,1)$ and let $U\subset\Hn$ be relatively compact. We write $R\in\mathcal A_\al (U)$ if $R\in C^1(U)$, every ordered derivative $Z_aZ_bR$ is represented by a bounded measurable function, and $TR\in\Gamma_\al (U)$. We also require finiteness of the norm
\begin{equation}\label{eq:intrinsic-coefficient-norm}
 \|R\|_{\mathcal A_\al (U)}
 =\|R\|_{L^\infty(U)}+\|ZR\|_{L^\infty(U)}
  +\sum_{a,b=1}^{2n}\|Z_aZ_bR\|_{L^\infty(U)}
  +\|TR\|_{\Gamma_\al (U)}.
\end{equation}
On the sphere, this condition and norm are understood uniformly on fixed smaller balls in the centered charts \eqref{eq:normalized-cayley}. The local version is denoted by $\mathcal A_{\al ,loc}$. All distributional derivatives are taken along the indicated fields.
\end{defn}

Motivated by Li's flatness condition \cite[Definition 0.4]{Li1995}, we use the Heisenberg weights one for horizontal derivatives and two for $T$.
\begin{defn}\label{def:weighted-flatness}
Let $\beta\ge2$, let $\Om_i\subset\Hn$ be open, and let
$L_i=L(\beta,i)>0$. Suppose that
$R_i\in\mathcal A_{\al,loc}(\Om_i)$ for a fixed $\al\in(0,1)$.
For $\beta=2$, we say that $\{R_i\}$ satisfies $(*)'_2$ if
there exists a constant $C_*\ge1$, independent of $i$, such that
\begin{equation}\label{eq:lf-two}
 |\nabla_\H^2R_i|+|TR_i|\le C_*L_i
 \quad\text{a.e. in }\Om_i.
\end{equation}
For $\beta>2$, we say that $\{R_i\}$ satisfies $(*)'_\beta$ if
there exist a constant $C_*\ge1$, independent of $i$, and
functions $r_i:\Om_i\to[0,\infty)$ that are locally Lipschitz
with respect to $d$, with Lipschitz constants bounded by $C_*$,
such that
\begin{equation}\label{eq:wf-first}
 C_*^{-1}L_i r_i^{\beta-1}
 \le |\nabla_\H R_i|+r_i|TR_i|
 \le C_*L_i r_i^{\beta-1}
 \quad\text{for every }\xi\in\Om_i,
\end{equation}
and
\begin{equation}\label{eq:wf-higher}
 \begin{aligned}
 |\nabla_\H^2R_i|+|TR_i|&\le C_*L_i r_i^{\beta-2}
 &&\text{a.e. in }\Om_i,\\
 |\nabla_\H(TR_i)|&\le C_*L_i r_i^{\beta-3}
 &&\text{a.e. in }\{\xi\in\Om_i:r_i(\xi)>0\}.
 \end{aligned}
\end{equation}
Here $\nabla_\H^2u=\frac14(Z_aZ_bu)_{a,b=1}^{2n}$ is the
array of all ordered second horizontal derivatives, consistently
with $\nabla_\H=Z/2$. The second horizontal derivatives are
understood weakly on $\Om_i$; for $\beta>2$, the mixed derivatives
$Z_a(TR_i)$ are understood weakly on
$\{\xi\in\Om_i:r_i(\xi)>0\}$. These weak derivatives are
represented by locally integrable functions on the indicated
open sets. Here and below, a.e. refers to Haar measure $\d\xi$.
\end{defn}

Since $R_i\in C^1(\Om_i)$ and $r_i$ is continuous,
\eqref{eq:wf-first} is a pointwise condition. For $\beta>2$,
the bound $|TR_i|\le C_*L_i r_i^{\beta-2}$ in
\eqref{eq:wf-higher} also holds everywhere by continuity.
Consequently,
\[
 \{\xi\in\Om_i:r_i(\xi)=0\}
 =\{\xi\in\Om_i:\nabla_\H R_i(\xi)=0,\ TR_i(\xi)=0\}.
\]
When $2<\beta<3$, the negative power $r_i^{\beta-3}$ is used
only where $r_i>0$. No bound for $T^2R_i$ is imposed.


On the sphere, $(*)'_\beta$ is understood in the centered Cayley charts of \eqref{eq:normalized-cayley}, restricted to a common small coordinate radius. For a sequence, the constants in Definition~\ref{def:weighted-flatness} are required to be uniform in
both the sequence index and the choice of chart; the auxiliary functions may depend on the chart. Changes of a bounded-scale chart are made before the dilations in Lemma~\ref{lem:wf-scaling}. Uniform coordinate invariance for sequences with unbounded auxiliary functions is not included in these conventions.

We first establish uniform a priori bounds for positive solutions under suitable assumptions on the homogeneous expansions at critical points.
\begin{thm}\label{thm:homogeneous-compactness}
Let $\al \in (0,1)$ and let $R \in \mathcal{A}_\al (\S^{2 n+1})$ be positive. In every fixed contact chart assume that the weak derivatives $Z_a(T R)$ are locally essentially bounded away from the full critical set $\{\nabla_{\theta_0} R=T_{\theta_0} R=0\}$.  For each critical point $\zeta_0$ of $R$, let $F_{\zeta_0}$ be the centered Cayley chart in \eqref{eq:normalized-cayley}, with $F_{\zeta_0}(0)=\zeta_0$, and set $R_{\zeta_0}(\xi):=R(F_{\zeta_0}(\xi))$. Assume that there exist a small neighborhood $\mathscr{O}_{\zeta_0}$ of $0$ in $\H^n$,  constants $L>0$, and $\beta=\beta(\zeta_0)\in(Q-2,Q)$ such that  $R_{\zeta_0}$ satisfies condition $(*)'_\beta$ in $\mathscr{O}_{\zeta_0}$ for $L$, and
\[
    R_{\zeta_0}(\xi)=R(\zeta_0)+Q_{\zeta_0}^{(\beta)}(\xi)+W_{\zeta_0}(\xi)\quad \text{ in }\,\mathscr{O}_{\zeta_0},
\]
    where $Q_{\zeta_0}^{(\beta)}\in C^1_{loc }(\H^{n}\backslash \{0\})$ satisfies
    $Q_{\zeta_0}^{(\beta)}(\delta_{\lam}\xi)=\lam ^{\beta}Q_{\zeta_0}^{(\beta)}(\xi)$, $\forall\, \lam  >0$, $\xi\in \Hn $,
  \be\label{0.6}
	c_{\mathrm{nd}}|\xi|^{\beta-1}\leq |\widetilde{\nabla} Q_{\zeta_0}^{(\beta)}(\xi)|+|\xi||TQ_{\zeta_0}^{(\beta)}(\xi)|
\ee
for some constant $c_{\mathrm{nd}}>0$, which may depend on $\zeta_0$,
    and
\be \label{0.8in jde}
	\left( 
	\begin{array}{l}
	\displaystyle\int_{\Hn } \overline{\nabla} Q_{\zeta_0}^{(\beta)}(\eta\circ\xi)U_{0,1}^{Q^* } \, \d  \xi\\[2.5mm]
    \displaystyle\int_{\Hn } Q_{\zeta_0}^{(\beta)}(\eta\circ\xi)U_{0,1}^{Q^* } \, \d  \xi 
	\end{array}  \right) \neq 0, \quad \forall\,  \eta\in \Hn ,
	\ee 
    and $W_{\zeta_0}(\xi)\in C^{1}(\mathscr{O}_{\zeta_0})$ satisfies $\lim _{\xi \to  0}\big(|W_{\zeta_0}(\xi)||\xi|^{-\beta}+|\widetilde{\nabla}W_{\zeta_0}(\xi)||\xi|^{1-\beta}+|TW_{\zeta_0}(\xi)||\xi|^{2-\beta}\big)=0$. Then for any $\var>0$, there exists a positive constant $C(n,\al,R,\var)$ such that for all $\mu\in [\var,1]$, any positive solution $v$ of \eqref{maineq} with $R$ replaced by $R_\mu=\mu R+(1-\mu)$ satisfies
    \be\label{0.9}
    1/C(n,\al,R,\var)\leq v(\zeta)\leq C(n,\al,R,\var),\quad \forall\,\zeta\in \S^{2n+1}.
    \ee
\end{thm}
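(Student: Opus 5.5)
We argue by contradiction, following Li's scheme \cite{Li1995}. The lower bound follows from the upper bound: $R_\mu\ge\min(\min R,1)>0$ and $L_{\theta_0}$ has a positive Green function, so the Harnack inequality for the conformal sub-Laplacian applies. Alternatively, $v\ge c\int G v^{1+2/n}$, and $\int v^{2+2/n}$ is bounded below by the Sobolev inequality. So it suffices to prove the upper bound.

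Suppose there are $\mu_i\in[\var,1]$ and solutions $v_i$ with $\max v_i\to\infty$. The curvatures $R_{\mu_i}$ are uniformly positive and bounded in $\mathcal A_\al$, and the blow-up analysis is uniform in $\mu_i$. First, using the CR Yamabe classification of Jerison--Lee and Li--Liu--Ma/Li, we show that blow-up points are isolated, with bounded energy, and that each is isolated simple. Isolated simple blow-up points are then finite in number and stay separated. Near each such point $P$ we have the sharp estimate $v_i\le C v_i(P_i)^{-1}d(\cdot,P_i)^{2-Q}$. The rescaled function $v_i(P_i)^{-1}\,d(\cdot,P_i)^{Q-2}$-type normalization $v_i(P_i)v_i$ converges to $aG_P+h$ with $a>0$. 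Since $\S^{2n+1}$ is CR flat, the positive mass (the constant term of $h$ at $P$) is $\ge0$, and it is $>0$ if there is more than one blow-up point.

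Next, the Pohozaev/Kazdan--Warner identity in the Cayley chart at $P_i$, applied to the vector fields $\overline Z$, $T$ (translations) and to the dilation generator, gives the balance
\[
\int_{B_\sigma}\overline\nabla R_{\mu_i}\,u_i^{Q^*}=O(\text{boundary}),\qquad
\int_{B_\sigma}\langle \xi\cdot\nabla R_{\mu_i}\rangle_{\delta}\,u_i^{Q^*}=-c\,(\text{mass})\,u_i(0)^{-2}+o(u_i(0)^{-2}),
\]
with the first boundary term $O(u_i(0)^{-2})$. Condition $(*)'_\beta$, together with $\beta\ge2$ and Lemma~\ref{lem:wf-scaling}, shows that $P$ must be a critical point $\zeta_0$ of $R$. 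Indeed, the weighted gradient term near $P_i$ is comparable to $L\,r_i^{\beta-1}$, so it forces $r_i(P_i)\lesssim\lambda_i^{-1}$, where $\lambda_i=u_i(0)^{2/(Q-2)}$.

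Rescale $\xi=\delta_{\lambda_i^{-1}}\eta$ and write $P_i=F_{\zeta_0}(\eta_i')$ with $\delta_{\lambda_i}\eta_i'=\eta_i$. The expansion $R=R(\zeta_0)+Q^{(\beta)}+W$ then turns the gradient identities into
\[
\lambda_i^{-(\beta-1)}\int\overline\nabla Q^{(\beta)}(\eta_i\circ\xi)U_{0,1}^{Q^*}\,d\xi+o(\lambda_i^{-(\beta-1)}(1+|\eta_i|)^{\beta-1})=O(\lambda_i^{-(Q-2)}),
\]
with the convention that the $T$ component carries one extra power. The dilation identity similarly produces $\lambda_i^{-\beta}\int Q^{(\beta)}(\eta_i\circ\xi)U_{0,1}^{Q^*}$. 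Because $\beta<Q$, the gradient terms dominate the $O(\lambda_i^{2-Q})$ right side. Hence, if $\eta_i$ stays bounded, then along a subsequence $\eta_i\to\eta$ and both integrals in \eqref{0.8in jde} would have to vanish at $\eta$, a contradiction. The condition $\beta>Q-2$ is exactly what lets the dilation term dominate the mass term, so that it too must vanish. If $|\eta_i|\to\infty$, we instead use \eqref{0.6}: after renormalizing by $|\eta_i|$, the lower bound on $|\widetilde\nabla Q|+|\xi||TQ|$ near $\eta_i$ makes the gradient integral of size at least $c|\eta_i|^{\beta-1}\lambda_i^{1-\beta}$, which contradicts the identity. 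Thus no blow-up occurs, and the upper bound holds.

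The main obstacle is the first step, establishing isolated simple blow-up under $(*)'_\beta$ on the Heisenberg group. The non-commutative translations make the $T$ derivative carry weight $2$, and the error terms in the Pohozaev identity must be controlled using only $\mathcal A_\al$ regularity, with no bound on $T^2R$. I would first try Li's argument based on the $\beta$-flatness lower bound in \eqref{eq:wf-first}. The lower bound excludes accumulation of bubbles: comparing the two sides of the Pohozaev identity at the scale $r_i$ shows that the second bubble would have to sit at distance $\gtrsim r_i$, and this distance is incompatible with the decay $u_i\lesssim u_i(0)^{-1}|\xi|^{2-Q}$.
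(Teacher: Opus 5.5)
Your overall scheme is the right one: blow-up analysis, reduction to a critical point, then a Pohozaev balance that contradicts \eqref{0.8in jde} with the help of \eqref{0.6}. However, the decisive exponent comparison is reversed, and as written the argument does not close.

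With $\lambda_i=u_i(0)^{2/(Q-2)}\to\infty$, the dilation term in your local identity has size $\lambda_i^{-\beta}$ and the horizontal gradient term has size $\lambda_i^{1-\beta}$. Your local boundary error is of order $u_i(0)^{-2}=\lambda_i^{2-Q}$.
\begin{itemize}
\item Because $\beta>Q-2$, one has $\lambda_i^{-\beta}=o(\lambda_i^{2-Q})$. So the mass term dominates the dilation term, not conversely.
\item For $\beta\in[Q-1,Q)$ one has $\lambda_i^{1-\beta}\le\lambda_i^{2-Q}$. The horizontal translation identities with an $O(u_i(0)^{-2})$ error then give no information on $\int\widetilde\nabla Q^{(\beta)}(\eta\circ\xi)U_{0,1}^{Q^*}$. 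Your treatment of $|\eta_i|\to\infty$ breaks down in the same range.
\item With a single blow-up point the mass is zero and the limit is exactly $a|\xi|^{2-Q}$. The local boundary term is then only $o(u_i(0)^{-2})$, because the $C^2_{loc}$ convergence carries no rate. This error swamps precisely the $\lambda_i^{-\beta}$ term you need to isolate.
\end{itemize}

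The missing idea is the two-step structure the paper uses.

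\textbf{Step 1: rule out multiple blow-up points} (Theorem~\ref{thm:5.4}). Here the regular part at a blow-up point is strictly positive, so Lemma~\ref{lem:positivepoho} makes the normalized boundary term strictly negative. Meanwhile Corollary~\ref{cor:4.15} gives $u_i(\xi_i)^2\int\Xi_i(R_i)u_i^{p_i+1}\to0$. This is where $\beta>Q-2$ is used: it makes the curvature term negligible against the mass.

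\textbf{Step 2: a single blow-up point} (Theorem~\ref{thm:5.5}). Here local identities are replaced by the exact global identities \eqref{eq:global-translation-identities} and \eqref{eq:global-dilation-identity} on all of $\Hn$. These are the Kazdan--Warner identities and have no boundary term. By the exterior bound $u_i\le C(\var)m_i^{-1}H$, the region outside $B_\var$ contributes only $O(m_i^{-Q^*})=O(\lambda_i^{-Q})$. Only against this error does $\beta<Q$ make the terms of order $\lambda_i^{1-\beta}$, $\lambda_i^{2-\beta}$ and $\lambda_i^{-\beta}$ dominant. This yields the center estimate \eqref{eq:center-shift}, using $\mathcal D_i=(\widetilde\nabla,r_iT)$ together with \eqref{0.6}. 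It then gives the vanishing of both integrals in \eqref{0.8in jde}, which is the contradiction.

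For isolated simplicity, the paper does not compare distances with the scale $r_i$. It rescales at the second critical radius of $f_{u_i,\theta}$, so that the rescaled flatness constant tends to zero. It then reaches the same positive-mass contradiction (Proposition~\ref{prop:4.16}); away from the critical set it uses Proposition~\ref{prop:4.17}.

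Uniformity in $\mu\in[\var,1]$ comes from Proposition~\ref{prop:wf-globalization}, since $L_\mu=\mu$ with the same auxiliary functions. Your lower-bound argument is fine.
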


We record several observations on the assumptions of Theorem~\ref{thm:homogeneous-compactness}.
\begin{rem}
\begin{enumerate}[(1)]
\item
For fixed $\alpha\in(0,1)$, the regularity assumptions on $R$ and
the nondegeneracy condition \eqref{0.6} imply
$\beta\geq 2+\alpha$, which is automatically satisfied
when $n\geq2$. In CR dimension one, every $\beta\in(2,4)$
is admissible for suitable $\alpha$. For instance, one may take locally
$R(\xi)=R_0+c\rho(\xi)^\beta$, where $R_0>0$,
$c\neq0$, and $0<\alpha<\min\{1,\beta-2\}$.

\item
The weighted flatness condition $(*)'_\beta$ is imposed
independently of the homogeneous expansion. In particular,
the first-order remainder estimates do not imply the weighted
second- and mixed-derivative bounds in
Definition~\ref{def:weighted-flatness}, even when
$W\in C^{1,1}$. Only $C^1$ regularity of the homogeneous
term away from the origin is needed in
Proposition~\ref{prop:wf-globalization} and
Theorem~\ref{thm:5.5}; the higher-order estimates are
provided separately by $(*)'_\beta$.

\item
For a fixed critical point, let $Q^{(\beta)}$
denote the corresponding homogeneous term in
Theorem~\ref{thm:homogeneous-compactness}, and define
\[
\mathcal M(\eta)
=\int_{\Hn}Q^{(\beta)}(\eta\circ\xi)
U_{0,1}^{Q^*}\,\d\xi,
\quad  \eta\in\Hn.
\]
Since $\beta<Q$, differentiation under the integral is
justified, and
\[
\overline\nabla_\eta\mathcal M(\eta)
=\int_{\Hn}\overline\nabla Q^{(\beta)}
(\eta\circ\xi)U_{0,1}^{Q^*}\,\d\xi.
\]
Consequently, condition \eqref{0.8in jde} is equivalent
to requiring that zero be a regular value of
$\mathcal M$.
Moreover, the restriction $\beta<Q$ is sharp for the
absolute convergence of this integral, since for a nonzero
homogeneous function the behavior at infinity is determined by
\[
\int_1^\infty r^{\beta-Q-1}\,\d r.
\]
The absolute-convergence restriction $\beta<Q$ does not imply failure of compactness when $\beta\geq Q$.

\item
The lower threshold $\beta>Q-2$ arises from the
blow-up estimates. Let $\xi_i$ be local maximum points of $u_i$ with
$m_i=u_i(\xi_i)\to\infty$, and set
$\ell_i=m_i^{-2/(Q-2)}$.
We write $L_i=L(\beta,i)$ for the constants
appearing in condition $(*)'_\beta$.
The normalized integral in
Corollary~\ref{cor:4.15} is bounded by
$o(1)+CL_i\ell_i^{\beta-(Q-2)}$,
where $C$ is independent of $i$ and $o(1)\to0$
as $i\to\infty$.
Consequently, bounded $L_i$ suffice when
$\beta>Q-2$, whereas at $\beta=Q-2$
the same estimate is obtained under the additional
assumption $L_i\to0$.
The conditions in Corollary~\ref{cor:4.15} are sufficient for the estimate; we do not claim that they are necessary for compactness.
\end{enumerate}
\end{rem}

We denote by $S_1^2(\S^{2n+1})$ the Folland--Stein space obtained as the completion of $C^\infty(\S^{2n+1})$ under the norm
\begin{equation}\label{eq:sphere-energy-norm}
\|v\|_{S_1^2(\S^{2n+1})}:=\Big(\int_{\S^{2n+1}} v L_{\theta_0}v\,\d V_{\theta_0}\Big)^{1/2};
\end{equation}
see \cite{FS1974}.
Under the weighted flatness condition $(*)'_{Q-2}$,
we obtain the following uniform energy estimate for positive solutions of \eqref{maineq}.
\begin{thm}\label{thm:intro-energy}
Let $\al \in (0,1)$ and let $R\in\mathcal A_\al (\S^{2n+1})$ be positive.  Assume that, for some $d>0$, $R$ satisfies $(*)'_{Q-2}$ with uniform constants $L>0$ and $C_*\ge1$ in
\[
\Om _d=\{\zeta\in\S^{2n+1}:|\nabla_{\theta_0}R(\zeta)|<d\}.
\]
Then any positive solution $v$ of \eqref{maineq} satisfies
\begin{equation}\label{eq:intro-energy}
\|v\|_{S_1^2(\S^{2n+1})}\le C,
\end{equation}
where $C$ depends only on $n$, $\al$, $\inf _{\S^{2n+1}} R$, $\|R\|_{\mathcal{A}_\al (\S^{2n+1})}$, $C_*$, $L$ and $d$.
\end{thm}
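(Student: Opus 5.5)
The plan is to argue by contradiction and reduce the energy bound to a bound on the number of bubbles. Suppose there are positive solutions $v_i$ of \eqref{maineq} with $\|v_i\|_{S_1^2}\to\infty$.

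Testing \eqref{maineq} against $v_i$ gives
\[
\|v_i\|_{S_1^2}^2=\tfrac{n^2}{4}\int R\,v_i^{2+2/n}\,\d V_{\theta_0}.
\]
Since $R$ is positive and bounded, the energy is comparable to $\int v_i^{Q^*}$. If $\sup v_i$ stayed bounded, the Harnack inequality and Folland--Stein/$\Gamma_\alpha$ regularity would bound $v_i$ in $C^2$, and hence the energy. So $\max v_i\to\infty$, and we carry out blow-up analysis in the centered Cayley charts \eqref{eq:normalized-cayley}.

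First we select isolated blow-up points. Using the standard selection process (the local maximum points $\xi_i$ with $m_i=u_i(\xi_i)$ and scale $\ell_i=m_i^{-2/(Q-2)}$ from Section~4), together with the Li--Liu--Ma/Li classification of entire solutions of \eqref{maineq1} with $R\equiv$ const, we obtain a finite set of points $S_i$. These points satisfy two properties. First, $u_i$ is close, on balls of radius $\bar R\ell_i$ around each point, to a standard bubble $U_{\xi,\lambda}$. Second, $u_i(\xi)\le C d(\xi,S_i)^{-(Q-2)/2}$, and the points are mutually separated at scales much larger than their bubble radii. The energy of $v_i$ is then bounded by $C(\#S_i+1)$ times the single-bubble energy, plus a controlled remainder from the region where $u_i\le C d(\xi,S_i)^{-(Q-2)/2}$. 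That remainder is handled via the Harnack-type estimates for $-\Delta_\H u=Ku^{(Q+2)/(Q-2)}$ on annuli. Hence it suffices to bound $\#S_i$ uniformly.

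Next we locate the blow-up points. The Pohozaev identity on a ball $B_\sigma(\xi_i)$, combined with the local Kazdan--Warner identity \eqref{KW1} in Heisenberg form, forces the gradient of $R$ to degenerate at every blow-up point. Concretely, the first-order term $\int \overline\nabla R\, u_i^{Q^*}$ must be small relative to $\ell_i$. From this we expect that points of $S_i$ eventually lie in $\Omega_d$, where $(*)'_{Q-2}$ holds. There we invoke Corollary~\ref{cor:4.15}, whose normalized integral is bounded by $o(1)+CL\ell_i^{\beta-(Q-2)}$. At $\beta=Q-2$ this bound is only $O(L)$, not $o(1)$, which is precisely why we obtain an energy bound rather than full compactness.

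The goal of this step is to show that each isolated blow-up point is \emph{isolated simple}, with the upper bound $u_i(\xi_i)u_i(\xi)\le C d(\xi,\xi_i)^{2-Q}$ holding on a ball of fixed size. Once this is established, two consequences follow. First, the sign of the Pohozaev boundary term (from the Green's function limit $h\,d^{2-Q}+A+o(1)$ with $A>0$) rules out clustering of two blow-up points at distance $o(1)$. Second, the points are therefore separated by a fixed distance depending on $d$, $C_*$ and $L$, which bounds $\#S_i$.

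The main obstacle is exactly this isolated-simple step at the borderline exponent $\beta=Q-2$. There the flatness error in the Pohozaev identity is of the same order as the main term, so the usual contradiction argument fails. I would first try to show that the $O(L)$ error only affects constants: it yields $\rho$-dependent but bounded quantities in the decay estimate, which still give the upper bound with a constant depending on $L$. I would also use the Lipschitz auxiliary functions $r_i$ to split the region into $r_i\lesssim\ell_i$ and $r_i\gtrsim\ell_i$ when estimating $\int(\xi\cdot\overline\nabla R)u_i^{Q^*}$. Points outside $\Omega_d$ are excluded directly, since there the gradient is bounded below by $d$ and the Kazdan--Warner balancing gives a contradiction.
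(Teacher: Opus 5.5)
The gap is in the step you single out as the main obstacle. Your outline has the paper's architecture: selection as in Proposition~\ref{prop:5.1}, isolated simplicity, separation via the sign in Lemma~\ref{lem:positivepoho}, gradient degeneracy as in Proposition~\ref{prop:4.17}, then Lemma~\ref{lem:4.11} near each peak. But the isolated-simplicity step is left open, and your diagnosis of it is mistaken: the standard contradiction argument does \emph{not} fail at $\beta=Q-2$.

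The missing idea is that both contradiction arguments are run after rescaling by a length $\si_i\to0$. In Proposition~\ref{prop:4.16} this is the second critical radius $\mu_i$ of $f_{u_i,\theta}$; in Proposition~\ref{prop:5.2} it is the minimal distance $\vartheta_i$ between two peaks. By Lemma~\ref{lem:wf-scaling}, $R_i\circ\delta_{\si_i}$ satisfies $(*)'_{Q-2}$ with the same $C_*$ and with constant $\si_i^{Q-2}L\to0$. Hence Corollary~\ref{cor:4.15}(i) applies to the rescaled equation, and the normalized interior term, e.g.\ $\phi_i(0)^2\int_{B_\si}\Xi(\widetilde R_i\widetilde H_i^{\tau_i})\phi_i^{p_i+1}$, tends to $0$. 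This contradicts the strictly negative limit of the boundary term from Lemma~\ref{lem:positivepoho}(ii). In the non-simple case you apply it to $G=a_0|\xi|^{2-Q}+a_0$; in the clustering case, to the two-pole limit with regular part $N_2|\bar P|^{2-Q}+h(0)>0$.

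The $O(L)$ obstruction you describe is real only for blow-up points at fixed mutual distance, where no vanishing scale is available. That is why the critical case gives the energy bound but not the one-point alternative of Theorem~\ref{thm:5.4}. Your clustering step has the same gap: by your own account the interior term there is only $O(L)$, so the sign of the boundary term alone does not exclude two nearby peaks.

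Your proposed remedy cannot close this. The Pohozaev argument needs the normalized interior term to be $o(1)$, not merely bounded. The boundary term converges to a fixed negative number, and an error of size $CL$ with $L$ not small can cancel it. So showing that the $O(L)$ error ``only affects constants'' yields no contradiction. Splitting according to $r_i\lesssim\ell_i$ or $r_i\gtrsim\ell_i$ is how Lemmas~\ref{lem:4.12} and~\ref{lem:4.13} are proved; it produces the bound $o(1)+CL_i\ell_i^{\beta-(Q-2)}$ rather than removing the $L_i$ term. Also, the decay estimate \eqref{eq:sharp-isolated-bound} uses no flatness once simplicity is known, so $L$-dependent constants there are beside the point.

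Finally, your reduction ``it suffices to bound $\#S_i$'' is not correct as stated. The bound $u_i\le Cd(\xi,S_i)^{-(Q-2)/2}$ together with the annular Harnack inequality only gives $\int u_i^{Q^*}\lesssim\log(1/s_i)$ near each peak, which is the regime of bubble towers at non-simple points. The local energy bound comes from \eqref{eq:sharp-isolated-bound} through Lemma~\ref{lem:4.11}, i.e.\ again from isolated simplicity.
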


 By \eqref{maineq}, \eqref{eq:sphere-energy-norm}, and the positive lower and upper bounds for $R$, estimate \eqref{eq:intro-energy} is equivalent to $$
 \|v\|_{L^{Q^*}(\S^{2n+1})}\leq C.
 $$
However, the estimate \eqref{eq:intro-energy} is not sufficient to imply an $L^\infty$-bound for $v$ on $\S^{2n+1}$. For instance, define 
$$
v_{P, \lam }=|\operatorname{det} \d \varphi_{P, \lam }^{-1}|^{(Q-2) /(2 Q)},
$$
where $\varphi_{P,\lam}$ is defined in \eqref{def:conformal-map}. Jacobians of sphere diffeomorphisms are taken with respect to $\d\si $, equivalently $\d V_{\theta_0}$.
By a change of variables, we have
\[
    \int_{\S^{2 n+1}} v_{P, \lam }^{Q^*}\, \d  V_{\theta_0}=\operatorname{Vol}(\S^{2 n+1}, \theta_0), 
\]
but $v_{P, \lam }(P)=\lam ^{(Q-2) / 2} \to  \infty$ as $\lam\to  \infty$. Since these functions solve \eqref{maineq} with $R\equiv1$, the same family shows that the uniform bounds in Theorem~\ref{thm:homogeneous-compactness} cannot include the endpoint $\mu=0$, where $R_\mu\equiv1$. Multiple blow-up points occur in the classical Nirenberg problem \cite{Li1996}. For \eqref{maineq}, the following theorem excludes multiple blow-up points under a stronger weighted flatness condition.
\begin{thm}\label{thm:intro-onebubble}
Let $\al \in(0,1)$ and let $\{R_i\}\subset\mathcal A_\al (\S^{2n+1})$ be uniformly positive and bounded in the norm \eqref{eq:intrinsic-coefficient-norm}.  Suppose also that $\{R_i\}$ satisfies condition $(*) _\beta'$ for some constants $\beta>Q-2$, $L,d>0$ and a structural constant $C_*$ independent of $i$, in $\Om_{d,i}:=\{\zeta \in \S^{2n+1}: |\nabla_{\theta_{0}} R_i(\zeta)|<d\}$. Let $\{v_i\} \subset C^2(\S^{2n+1})$ be a sequence of corresponding positive solutions of \eqref{maineq} and $v_i(\zeta_i)=\max _{\S^{2n+1}} v_i$ for some $\zeta_i$. 

Then, after passing to a subsequence, either $\{v_i\}$ is bounded in $L^{\infty}(\S^{2n+1})$, or it blows up at exactly one point in the strong sense: There exists a sequence $\{\varphi_i\}$ of CR diffeomorphisms from $\S^{2n+1}$ to $\S^{2n+1}$ satisfying $\varphi_i(\zeta_i)=\zeta_i$ and $|\operatorname{det} \d \varphi_i(\zeta_i)|^{(Q-2)/(2Q)}=R_i(\zeta_i)^{-n/2}v_i^{-1}(\zeta_i)$ such that
	\[
\|R_i(\zeta_i)^{n/2}T_{\varphi_i} v_i-1\|_{C^0(\S^{2n+1})} \to  0 \quad \text { as }\, i \to  \infty,
	\]
	where $T_{\varphi_i} v_i:=(v_i \circ \varphi_i)|\operatorname{det} \d \varphi_i|^{(Q-2)/(2Q)}$.
\end{thm}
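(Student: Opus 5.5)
We would follow the scheme of Li \cite{Li1995} for the classical Nirenberg problem, adapted to the Heisenberg setting. Assume $\{v_i\}$ is unbounded in $L^\infty$, so $m_i:=v_i(\zeta_i)\to\infty$. Pull back by the centered Cayley chart $F_{\zeta_i}$ of \eqref{eq:normalized-cayley} to get solutions $u_i$ of \eqref{maineq1} on $\Hn$ with a maximum at $0$. Rescale
\[
\tilde u_i(\xi)=m_i^{-1}u_i(\delta_{\ell_i}\xi), \qquad \ell_i=m_i^{-2/(Q-2)} .
\]
Uniform $\mathcal A_\al$ bounds on $R_i$ give Folland--Stein/Schauder estimates, hence local $\Gamma_{2,\al}$ convergence of $\tilde u_i$ to a bounded positive entire solution of $-\Delta_\H u=\frac{n^2}{4}R_\infty u^{(Q+2)/(Q-2)}$ with $u(0)=1=\max u$. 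By the classification of Li--Liu--Ma \cite{LLM2026} and Li \cite{L2026}, which needs no finite-energy assumption, this limit is a standard bubble centered at $0$. Hence $\zeta_i$ is an isolated blow-up point. By the $(*)'_\beta$ hypothesis with $\beta>Q-2$ and the Pohozaev estimate of Corollary~\ref{cor:4.15} (the normalized integral is $o(1)+CL\ell_i^{\beta-(Q-2)}\to0$), we would then show it is an \emph{isolated simple} blow-up point. Standard arguments then give the sharp upper bound $u_i(\xi)\le C m_i^{-1}d(\xi,0)^{2-Q}$ near $0$ and the Harnack-type control away from it.

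Next we exclude a second blow-up point, the step we expect to be the main obstacle. Suppose there is a second isolated simple blow-up point $\zeta_i'$, either at bounded distance or approaching $\zeta_i$ at the rescaled distance $\sigma_i=d_{\theta_0}(\zeta_i,\zeta_i')$. After rescaling by $\sigma_i$, the function $m_i u_i$ (suitably normalized) converges away from the blow-up set to a Green-type function
\[
h(\xi)=a\,d(\xi,0)^{2-Q}+b\,d(\xi,\xi')^{2-Q}+\text{(regular part)}, \qquad a,b>0 .
\]
The regular part is therefore strictly positive at $0$. Meanwhile, the Pohozaev identity on small Heisenberg balls, with boundary terms built from the dilation generator $\tfrac12 Z\cdot$ and $t\pa_t$, forces the constant term of the expansion at the simple blow-up point to be nonpositive once the curvature contribution $\int \langle \xi,\overline\nabla R_i\rangle u_i^{Q^*}$ is shown to be $o(m_i^{-2})$. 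This last bound is exactly where $\beta>Q-2$ and the weighted bounds \eqref{eq:wf-first}--\eqref{eq:wf-higher} enter. The delicate part is controlling the $TR_i$ terms with weight $r_i$ and handling the case where $\zeta_i$ lies outside $\Om_{d,i}$. In that case $|\nabla_{\theta_0}R_i(\zeta_i)|\ge d$, and the Kazdan--Warner-type balance \eqref{KW1} localized (first-order Pohozaev) gives a contradiction directly. So blow-up points lie in $\Om_{d,i}$, and only one exists. A global volume/energy argument, or the same Green-function contradiction at scale $O(1)$, rules out collections of separated points.

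Finally, with a single isolated simple blow-up point and the bound $v_i\le C m_i^{-1}$ away from it, we construct $\varphi_i$. Take $\varphi_i=\varphi_{\zeta_i,\lam_i}^{-1}$ from \eqref{def:conformal-map}, with $\lam_i$ chosen so that $\lam_i^{(Q-2)/2}=R_i(\zeta_i)^{n/2}m_i$ up to the chart normalization. This fixes $\zeta_i$ and gives the stated Jacobian value. Then $R_i(\zeta_i)^{n/2}T_{\varphi_i}v_i$ corresponds, in Heisenberg coordinates, to $\tilde u_i$ divided by the bubble profile. Uniform convergence to $1$ follows on compact sets from the bubble convergence. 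Near the antipodal region it follows from the sharp decay estimate $u_i\le Cm_i^{-1}d^{2-Q}$ together with a matching lower bound from the Harnack inequality and the Green-function expansion. This yields the $C^0(\S^{2n+1})$ convergence.
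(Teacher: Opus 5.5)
Your overall route matches the paper's: reduce to a single isolated simple blow-up point (Theorem~\ref{thm:5.4}), then normalize by a Heisenberg dilation conjugated through the Cayley chart. But the last step, as written, does not prove $C^0$ convergence. Near the south pole of the normalized picture you only produce two-sided bounds. Estimate \eqref{eq:sharp-isolated-bound} and Harnack give $c\le R_i(\zeta_i)^{n/2}T_{\varphi_i}v_i\le C$ there, which is comparability, not convergence to $1$. The intermediate annuli $S\ell_i\le d(\xi,0)\le\varepsilon$ of the original chart are covered neither by the bubble convergence, which lives on $B_{S_i\ell_i}$, nor by the Green-function limit, which holds only at fixed distance.

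The paper closes this with regularity rather than asymptotics. The sharp decay and Harnack away from the blow-up point make $T_{\varphi_i}\widehat v_i$ uniformly bounded on all of $\S^{2n+1}$. This function solves the conformally invariant equation \eqref{eq:normalized-sphere-equation} with bounded coefficient $\widehat R_i\circ\varphi_i$. Interior subelliptic estimates and the Folland--Stein embedding then give uniform H\"older bounds on the whole sphere, south pole included. Equicontinuity plus $C^2_{loc}$ convergence off the south pole yields uniform convergence to $1$. An intermediate-scale rescaling that identifies the coefficient of $|\xi|^{2-Q}$ by a flux argument could replace this, but you would have to carry it out.

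Your $\varphi_i$ is also inverted. The map $\varphi_{\zeta_i,\lambda}^{-1}=F_{\zeta_i}\circ\delta_\lambda\circ F_{\zeta_i}^{-1}$ expands at $\zeta_i$. With $\lambda_i^{(Q-2)/2}=R_i(\zeta_i)^{n/2}m_i$ it gives $|\det \d\varphi_i(\zeta_i)|^{(Q-2)/(2Q)}=\lambda_i^{(Q-2)/2}$, the reciprocal of what is required. You want $\varphi_i=\varphi_{\zeta_i,\lambda_i}$.

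The first half also needs repair. Bubble convergence at the maximum controls $u_i$ only on $B_{S_i\ell_i}$; it does not make $\zeta_i$ an isolated blow-up point. Isolation comes from two ingredients. The first is the selected set $\wp(v_i)$ of Proposition~\ref{prop:5.1}, whose property (iii) bounds $v_i$ by the distance to $\wp(v_i)$. The second is the uniform separation of Proposition~\ref{prop:5.2}. Only after both are the points of $\wp(v_i)$ isolated, and then simple by Propositions~\ref{prop:4.16} and~\ref{prop:4.17}.

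Your Step~2 treats a blow-up point approaching $\zeta_i$, but that situation contradicts the isolation you already asserted, so separation must come first. In the separation argument you rescale by the minimal distance $\sigma_i$. The two points then become isolated blow-up points of the rescaled functions, and the rescaled curvature satisfies $(*)'_{Q-2}$ with $\widetilde L_i=\sigma_i^{Q-2}L\to0$.

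The large-gradient case is likewise not ``direct''. The localized first-order identity needs the sharp decay, i.e.\ simplicity. The non-simple case requires rescaling at the second critical radius, as in Proposition~\ref{prop:4.17}.

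You also apply Corollary~\ref{cor:4.15} for arbitrary $\beta>Q-2$. However, Lemmas~\ref{lem:4.12}--\ref{lem:4.14} and Corollary~\ref{cor:4.15} are proved only for $\beta<Q$. For $\beta\ge Q$, Lemma~\ref{lem:4.11} saturates, and your rate $CL\ell_i^{\beta-(Q-2)}$ is not what it gives. Proposition~\ref{prop:5.2} additionally requires $(*)'_{Q-2}$. The paper's proof begins with exactly this reduction: the auxiliary functions are uniformly bounded by Lemma~\ref{lem:wf-radius}. Lemma~\ref{lem:wf-lowering} then yields $(*)'_\gamma$ for some $\gamma\in(Q-2,\min\{\beta,Q\})$, and $(*)'_{Q-2}$, with $i$-independent constants, after which Theorem~\ref{thm:5.4} applies.

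Finally, for $n=1$ the Liouville theorem you need is the Catino--Li--Monticelli--Roncoroni classification used in Proposition~\ref{prop:4.4}, not Li--Liu--Ma or Li.
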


\begin{rem}
The CR automorphisms $\varphi_i$ play the role of
M\"obius transformations in the classical Nirenberg
problem. They are obtained by conjugating the Heisenberg
dilations $\delta_\lam(z,t)=(\lam z,\lam^2t)$
with Cayley charts centered at $\zeta_i$.
The dilation parameters are chosen such that
$R_i(\zeta_i)^{n/2}(T_{\varphi_i}v_i)(\zeta_i)=1$.
The corresponding charts and automorphisms are
defined in \eqref{eq:normalized-cayley} and
\eqref{def:conformal-map}, respectively.
\end{rem}

The above a priori estimates and compactness results have counterparts in the classical and fractional Nirenberg problems. For the classical Nirenberg problem, related results were established by Chang--Gursky--Yang \cite{CGY1993} and Schoen--Zhang \cite{SZ1996} in dimension three, and by Li \cite{Li1995} in higher dimensions; for the fractional problem, see Jin--Li--Xiong \cite{JLX2014}. In the CR setting, related compactness results include those of Afeltra \cite{Afeltra2024} for the three-dimensional CR Yamabe problem, Li--Tang--Zhou \cite{LTZ2026} for the prescribed Webster scalar curvature problem on the CR three-sphere, and Qiang et al.~\cite{QTZZ} for subelliptic equations with a potential on the Heisenberg group. We refer to these works and the references therein for further developments.

Our proofs follow the blow-up approach developed by Schoen--Zhang \cite{SZ1996} and Li \cite{Li1995}. We first establish the energy bound in Theorem~\ref{thm:intro-energy}, then reduce possible concentration to a single point in Theorem~\ref{thm:intro-onebubble}, and finally exclude the remaining blow-up under the assumptions of Theorem~\ref{thm:homogeneous-compactness}. The hypotheses are strengthened to exclude multiple blow-up points and then a single blow-up point.

A central difficulty in the CR setting is the different scaling of horizontal and Reeb derivatives under Heisenberg dilations. The bounds \eqref{eq:wf-first}--\eqref{eq:wf-higher} estimate both types of derivatives in the Pohozaev identity \eqref{Pohozaev}. The class $\mathcal A_\al $, together with the mixed-derivative assumptions, provides the regularity needed for rescaling without requiring a bound for $T^2R$. The distinction is particularly relevant in CR dimension one. Proposition~\ref{prop:4.17} shows that both the horizontal and Reeb derivatives of $R_i$ vanish in the limit at isolated blow-up points.

We begin by rescaling solutions near their local maxima. The classification of entire positive solutions on the Heisenberg
group gives the bubble approximation in Proposition~\ref{prop:4.4}, without assuming a global energy bound.
We then prove the sharp pointwise estimate \eqref{eq:sharp-isolated-bound} near isolated simple blow-up points.
Combining \eqref{eq:sharp-isolated-bound} with \eqref{Pohozaev}, we exclude a second concentration scale in Proposition~\ref{prop:4.16} and establish uniform separation in Proposition~\ref{prop:5.2}. At the flatness order $Q-2$, uniform separation bounds the number
of peaks, while Lemma~\ref{lem:4.11} bounds the energy near each peak and in the remaining region. The resulting energy bound is stated in Theorem~\ref{thm:5.3}, whose critical case gives \eqref{eq:intro-energy}.

The stronger weighted flatness condition in Theorem \ref{thm:intro-onebubble} is used to exclude multiple blow-up points.
For $\beta>Q-2$, the normalized terms involving derivatives of $R_i$ vanish by \eqref{eq:normalized-pohozaev-vanishing}.
If two or more blow-up points were present, the contribution from another point would produce a strictly positive regular part in
the normalized limiting profile. The resulting sign of the Pohozaev boundary term contradicts the
limiting identity, by Lemma~\ref{lem:positivepoho}. The contradiction proves the single-point alternative in Theorem~\ref{thm:5.4}.
To obtain the global convergence in Theorem~\ref{thm:intro-onebubble}, we combine the local bubble profile with the sharp decay estimate and uniform H\"older bounds for the conformally normalized solutions.

Theorem~\ref{thm:5.5} excludes a single blow-up point under the homogeneous expansion assumptions of Theorem~\ref{thm:homogeneous-compactness}.
The nondegeneracy condition \eqref{0.6} gives the displacement estimate \eqref{eq:center-shift}. After rescaling, \eqref{eq:global-translation-identities} gives the vanishing of the derivative integrals in \eqref{0.8in jde}.
Identity \eqref{eq:global-dilation-identity} and homogeneity give the vanishing of the remaining integral, contradicting \eqref{0.8in jde}. For $\beta<Q$, the integrals
used in this limiting argument converge.

Combining Theorems \ref{thm:5.4} and \ref{thm:5.5} gives the compactness result for a fixed prescribed curvature function in Corollary \ref{thm:5.6}. Proposition \ref{prop:wf-globalization} makes these arguments uniform along $R_\mu=1+\mu(R-1)$ for $\mu\in[\var,1]$.
The Harnack argument yielding \eqref{eq:uniform-positive-lower-bound} completes the proof of \eqref{0.9}. The resulting energy estimates, one-point concentration alternative, and compactness criteria apply in every CR dimension under the stated assumptions. In particular, the energy bound is derived from the local analysis rather than assumed in advance, and the compactness estimates remain uniform along this homotopy with $\mu$ bounded away from zero. The estimates provide the analytic foundation for the existence and degree results in our subsequent paper \cite{QTWPartII}.

The paper is organized as follows.
Section~\ref{sec:2} recalls the background on CR geometry,
the Heisenberg group, intrinsic regularity, and weighted flatness. In
Section~\ref{sec:3} we establish accurate blow up profiles of solutions of \eqref{maineq} near
isolated blow up points. In fact most of the estimates hold also for subcritical approximations to
such equations as well including in bounded domains of $\Hn$. In Section~\ref{sec:4}, we provide $S_{1}^2(\S^{2n+1}) $-norm
a priori estimates, at most one isolated simple blow up point, and $L^{\infty}(\S^{2n+1})$-norm a priori estimates
for solutions of \eqref{maineq} under appropriate hypotheses on $R$.
Section~\ref{sec:4} establishes a priori estimates and examines
the number and separation of blow-up points.
The proofs of the main theorems are completed in
Section~\ref{sec:5}.

\medskip

\noindent\textbf{Notation.}
Throughout the paper, we use the following notation and conventions.
\begin{itemize}
\setlength{\itemsep}{4pt}

\item
$Q=2n+2$ and $Q^*=2Q/(Q-2)$ denote the homogeneous dimension
and the critical Sobolev exponent, respectively.
For $\xi=(z,t)\in\Hn$, we write
$|\xi|=\rho(\xi)=(|z|^4+t^2)^{1/4}$ and
$d(\xi,\eta)=|\eta^{-1}\circ\xi|$.

\item
$Z$ and $\overline Z$ denote the arrays of left- and
right-invariant horizontal vector fields defined in
\eqref{vectorfields} and \eqref{right-vectorfields}.
We set $\nabla_\H=\frac12 Z$,
$\widetilde\nabla=\overline Z$,
$\overline\nabla=(\overline Z,T)$, and $T=\pa_t$.

\item
$\nabla_\theta$ and $T_\theta$ denote the horizontal gradient
and the Reeb vector field, respectively.
The latter is distinct from the coordinate vector field $T$.
The balls $B_r(\xi)$ and $\B(P,r)$ are defined using
$d$ and the spherical contact distance
\eqref{eq:contact-distance}, respectively.

\item
$F$ denotes the inverse Cayley transform \eqref{Cayley},
and $F_P$ is the centered chart defined in
\eqref{eq:normalized-cayley}.
The functions $\Lam$ and $U_{0,1}=2^n\Lam$
are defined in \eqref{standard-bubble}.

\item
On $\Hn$, $\d\xi=\d x\,\d y\,\d t$ denotes Haar measure,
and $\d S$ denotes Euclidean surface measure.
On $\S^{2n+1}$, integrals are taken with respect to
$\d\si$ unless otherwise specified, where
$\d V_{\theta_0}=2^{2n+1}n!\,\d\si$.

\item
The constant $C>0$ may vary from line to line. Its dependencies are
specified in the statements. In proofs, dependence on the fixed data,
coordinate neighborhoods, auxiliary radii and cutoff functions may be
suppressed. Unless otherwise stated, constants in estimates for sequences
are independent of $i$.

\item
We write $a\asymp b$ if $cb\leq a\leq Cb$
for some fixed constants $c,C>0$.
Unless otherwise stated, $O(\cdot)$ and $o(\cdot)$
refer to the limit along the sequence under consideration.

\end{itemize}


\section{Preliminaries}\label{sec:2}

We now recall the geometric and analytic facts used in the proofs, with the notation and assumptions on $R$ fixed in the introduction.

\subsection{CR geometry and the conformal sub-Laplacian}\label{sec:2.1}

Let $M$ be an orientable CR manifold without boundary of CR dimension $n$. Thus $M$ is an orientable differentiable manifold of real dimension $(2 n+1)$ endowed with a pair $(H(M), J)$, where $H(M)$ is a subbundle of the tangent bundle $T(M)$ of real rank $2 n$ and $J$ is an integrable complex structure on $H(M)$. Since $M$ is orientable, there exists a 1-form $\theta$ called a \emph{pseudo-Hermitian} structure on $M$. Then, we can associate with each structure $\theta$ a bilinear form $G_\theta$, called the \emph{Levi form}, which is defined only on $H(M)$ by
\[
G_\theta(X, Y)=-(\d \theta)(J X, Y), \quad \forall\, X, Y \in H(M) .
\]
Since $G_\theta$ is symmetric and $J$-invariant, we then call $(M, \theta)$ a \emph{strictly pseudo-convex} CR manifold if the Levi form $G_\theta$ associated with the structure $\theta$ is positive definite. The structure $\theta$ is then a contact form which immediately induces on $M$ the volume form $\d V_\theta$, i.e., $\d V_\theta=\theta \wedge (\d \theta)^n$. The volume of $M$ with respect to the contact form $\theta$ is denoted by $\operatorname{Vol}(M, \theta)$, i.e., $\operatorname{Vol}(M, \theta)=\int_M \,\d V_\theta$.

Moreover, $\theta$ on a strictly pseudo-convex CR manifold $(M, \theta)$ also determines a \emph{normal} vector field $T_\theta$ on $M$, called  \emph{the Reeb vector field} of $\theta$. Via the Reeb vector field $T_\theta$, one can extend the Levi form $G_\theta$ on $H(M)$ to a Riemannian metric $g_\theta$ on $T(M)$, called the Webster metric of $(M, \theta)$. Let
\[
\pi_H: T(M) \to  H(M)
\]
be the projection associated to the direct sum $T(M)=H(M) \oplus \mathbb{R} T_\theta$. Now, with the structure $\theta$, we can construct a unique affine connection $\nabla^{\mathrm{TW}}$, called the \emph{Tanaka-Webster connection} on $T(M)$. Using $g_\theta$ and $\pi_H$, we can define the \emph{horizontal gradient} $\nabla_\theta$ by
\[
\nabla_\theta u=\pi_H \operatorname{grad}_{g_\theta}u .
\]

Using the metric $g_\theta$ and the projection $\pi_H$, one can define the sub-Laplacian $\Delta_\theta$ acting on a $C^2$-function $u$ via
\[
\Delta_\theta u=\operatorname{div}(\pi_H \operatorname{grad}_{g_\theta}u) .
\]
Here $\operatorname{grad}_{g_\theta}u$ is the ordinary gradient of $u$ with respect to $g_\theta$ which can be written as $g_\theta(\operatorname{grad}_{g_\theta}u, X)=$ $X(u)$ for any $X$. Then integration by parts gives
\[
\int_M(\Delta_\theta u) f \,\theta \wedge(\d \theta)^n=-\int_M\langle\nabla_\theta u, \nabla_\theta f\rangle_\theta \,\theta \wedge(\d \theta)^n
\]
for any $f\in C_c^\infty(M)$. In the preceding formula, $\langle \cdot,\cdot\rangle_\theta$ denotes the inner product via the Levi form $G_\theta$ (or the Webster metric $g_\theta$ since both $\nabla_\theta u$ and $\nabla_\theta f$ are horizontal).

The Webster metric $g_\theta$ and the Tanaka--Webster connection $\nabla^{\mathrm{TW}}$ determine the pseudo-Hermitian Ricci and scalar curvatures. We denote the latter by $\mathrm{Scal}_\theta$, the Webster scalar curvature; see \cite[Definition~1.30]{DT2006}.

Since a pseudohermitian structure is defined only up to a conformal factor on a CR manifold, the CR Yamabe problem is a natural analogue of the Yamabe problem in Riemannian geometry. If $\widehat{\theta}=u^{2 / n} \theta$ for some smooth function $u>0$, the transformation law of the Webster curvature is
\[
\operatorname{Scal}_{\widehat{\theta}}=u^{-\frac{n+2}{n}}\Big(-\frac{2(n+1)}{n} \Delta_\theta u+\operatorname{Scal}_\theta u\Big).
\]
Clearly, the problem of solving $\operatorname{Scal}_{\widehat{\theta}}=h$ is equivalent to finding positive solutions $u$ to the following PDE:
\be\label{Yamabe}
-\Delta_\theta u+\frac{n}{2(n+1)} \operatorname{Scal}_\theta u=\frac{n}{2(n+1)} h u^{1+2 / n} \quad \text { on }\, M .
\ee
When $h$ is constant, \eqref{Yamabe} is known as the CR Yamabe problem.

Basic examples of CR manifolds include real hypersurfaces in $\C^{n+1}$; for example, any odd-dimensional unit sphere $\S^{2 n+1} \subset \C^{n+1}$ is a strictly pseudo-convex CR manifold.  Indeed, let $\theta_0$ be the standard contact form on the sphere $\S^{2 n+1}=\{\zeta=(\zeta^1, \ldots, \zeta^{n+1}) \in \C^{n+1} :| \zeta|^2=\sum_{j=1}^{n+1}|\zeta^j|^2=1\} \subset \C^{n+1}$, i.e.,
$$
\theta_0=\mathrm{i}(\bar{\pa}-\pa)|\zeta|^2=\mathrm{i} \sum_{j=1}^{n+1} (\zeta^j \, \d \bar{\zeta}^j-\bar{\zeta}^j \, \d \zeta^j) .
$$
Then $(\S^{2 n+1}, \theta_0)$ is a compact strictly pseudoconvex CR manifold of real dimension $(2 n+1)$. We use $g_{\theta_0}$ for full gradients and tensor norms. Distances in concentration estimates are measured by the contact distance \eqref{eq:contact-distance}. The distance \eqref{eq:contact-distance} is locally equivalent to the Carnot--Carath\'eodory distance, but not to the ordinary Riemannian distance of $g_{\theta_0}$ at the level of scaling. 
The Heisenberg group $\Hn$ is the corresponding flat example.   $\Hn$ plays a role among pseudoconvex pseudohermitian manifolds analogous to the role of $\Rn$ among Riemannian manifolds. In fact,  every pseudoconvex pseudohermitian manifold can locally be approximated by $\Hn$, through coordinates analogous to the normal coordinates of Riemannian geometry known as pseudohermitian normal coordinates.

For the standard sphere, we use the coordinate expression for the sub-Laplacian given in \eqref{Delta}.

With the surface measure $\d\si $ fixed above, Geller \cite{G1981} showed that, for regular $f:\S^{2n+1}\to\C$,
\begin{equation}\label{Green}
L_{\theta_0}^{-1}(f)(\zeta)
=c_n\int_{\S^{2n+1}}\operatorname{dist}(\zeta,\eta)^{-2n}f(\eta)\,\mathrm d\si (\eta),
\end{equation}
where $\Gamma$ denotes the Gamma function, $c_n=\frac{2^{n-1}\Gamma(\frac n2)^2}{\pi^{n+1}}$, and
$\operatorname{dist}(\zeta,\eta)=d_{\theta_0}(\zeta,\eta)$ as in \eqref{eq:contact-distance}.

\subsection{The Cayley transform, bubbles, and the Pohozaev identity}\label{sec:heisenberg-prelim}

We use the Heisenberg group law, dilations and vector fields introduced above. With the normalizations in \eqref{Green} and \eqref{Cayley}, conformal covariance gives
\be\label{1.7}
(L_{\theta_{0}} \phi) \circ F=-(2|J_F|)^{-(Q+2)/(2Q)} \Delta_{\H}((2|J_F|)^{(Q-2)/(2Q)}(\phi \circ F))\quad \text{ for }\,\phi \in C^{\infty}(\S^{2n+1}),
\ee
where  $$|J_F|=\frac{2^{2 n+1}}{((1+|z|^2)^2+t^2)^{n+1}}$$ is the surface Jacobian of $F$, characterized by $F^*(\d\si )=|J_F|\,\d\xi$.  The symbol $\circ$ denotes the Heisenberg group product when both entries are points of $\Hn$, and ordinary composition when applied to maps; the meaning will always be clear from the arguments. The multiplicative factor 2 in the  formula  \eqref{1.7} appears because we use the standard volume elements for $\Hn$ and $\S^{2n+1}$ instead of the volume forms associated with the standard contact forms on these two spaces,  see \cite{JL1987}.

Since the Heisenberg group has zero Webster curvature, setting $u=(2|J_F|)^{(Q-2)/(2Q)}(v\circ F)$ and $R(\xi)=R\circ F(\xi)$ transforms \eqref{maineq} into \eqref{maineq1}.
Conversely, if $u$ is a positive $C^2$ solution of \eqref{maineq1}, then
\[
v=[(2|J_F|)^{-\frac{Q-2}{2 Q}} u] \circ F^{-1}
\]
solves \eqref{maineq} on $\S^{2n+1}\backslash\{\mathcal S\}$. It corresponds to a positive $C^2$ solution on the whole sphere if and only if $v$ extends to a positive $C^2$ function across $\mathcal S$. In that case, the equation also holds at $\mathcal S$ by continuity. Thus the equivalence between \eqref{maineq} and \eqref{maineq1} is understood within this class of solutions.

Equations \eqref{maineq} and \eqref{maineq1} involve the critical exponent for the CR Sobolev embedding (or the Folland-Stein inequality \cite{FS1974}) and they will be called the \emph{CR Nirenberg equations}, where $R$ represents the normalized Webster scalar curvature function.

When $R\equiv 1$, \eqref{maineq1} becomes
\be\label{CRYamabe}
-\Delta_{\H} u=\frac{n^2}{4} u^{\frac{Q+2}{Q-2}}, \quad u>0 \quad  \text{ in }\,\Hn.
\ee
Jerison--Lee \cite{JL1988} proved that every finite-energy solution of \eqref{CRYamabe} is a CR Yamabe bubble. With $\Lam $ defined in \eqref{standard-bubble}, the bubble family can be written as
\begin{equation}\label{bubbles}
U_{a,\lam }(\xi)
=2^n\lam ^{(2-Q)/2}\Lam \bigl(\delta_{\lam ^{-1}}(a^{-1}\circ\xi)\bigr),
\quad a\in\Hn,\quad \lam >0.
\end{equation}
Garofalo--Vassilev \cite[Theorem~1.5]{GV2001} classified finite-energy entire solutions under cylindrical symmetry on groups of Iwasawa type, including $\Hn$.
  Catino, Li, Monticelli and Roncoroni \cite[Theorems~1.1 and~1.2]{CLMR2023} classified all positive solutions in $\H^1$ and positive solutions in $\Hn$, $n\geq2$, under a suitable decay condition at infinity; the latter condition does not itself imply finite energy. Very recently, a complete classification of positive entire solutions to the critical CR Yamabe equation for $n\geq 2$ has been obtained independently by Li--Liu--Ma \cite{LLM2026} and Li \cite{L2026}.

Let $E=\dot S_1^2(\Hn)$ be the completion of $C_c^\infty(\Hn)$ under the norm
\[
\|u\|_E=\Big(\int_{\Hn}|\nabla_{\H}u|^2\,\d\xi\Big)^{1/2}.
\]
We write $S^2_{1,loc }(\Hn)$ for the functions $u$ such that $u$ and $\nabla_\H u$ belong to $L^2_{loc }(\Hn)$.
With $Q^*=2Q/(Q-2)$ as above, the Folland--Stein embedding $E\hookrightarrow L^{Q^*}(\Hn)$ holds \cite{FS1974}. The sharp constant was determined by Jerison--Lee \cite{JL1988}:
\be\label{Sobolevineq}
S_n \Big(\int_{\Hn} |u|^{Q^*}\,\d\xi\Big)^{1 / Q^*} \leq \Big(\int_{\Hn} |\nabla_{\H} u|^2\,\d\xi\Big)^{1 / 2},
\ee
where $S_{n}=\frac{ n \sqrt{\pi}}{(2^{2 n} n !)^{1 /(2(n+1))}}$ is the best constant. Then for every $(a,\lam)\in \Hn\times (0,\infty)$,  $U_{a, \lam }$ is the solution to
\eqref{maineq1} with $R\equiv 1$. Moreover, the functions in \eqref{bubbles} and their non-zero constant multiples attain the sharp Sobolev inequality \eqref{Sobolevineq} and such functions are usually called \emph{Jerison-Lee's bubbles}.

As in the Riemannian case, where the blow-up  analysis requires using the Pohozaev identity for $\Rn$, in the CR case we will need a Pohozaev formula for the Heisenberg group. Such roles of the Pohozaev-type identity in analyzing blow-ups were first observed by Schoen \cite{Schoen43,Schoen44}.
 Pohozaev-type formulas have already been studied on pseudohermitian manifolds (see \cite{Afeltra2024,Afeltra2024-2}), the Heisenberg group (see \cite{GL1992,PR2003})   and
  more general Carnot groups (see \cite{CV2000}).

We associate any  point $(z, t)=(x,y,t) \in \Hn$  with a $(2n+1)\times (2n+1)$ symmetric matrix $A=A(\xi)=(a_{ij}(\xi))$ defined by
$$
\left( \begin{array}{ccc}
I_n & 0_n & 2 y \\
0_n & I_n & -2 x \\
2 y^{\mathsf T} & -2 x^{\mathsf T} & 4|z|^2
\end{array}\right) ,
$$
where $x,y$ are column vectors and $I_n,0_n$ are the $n\times n$ identity and zero matrices.
The matrix $A$ is related to  $\Delta_{\H}$ by the formula $\Delta_{\H} = \frac{1}{4}\operatorname{div}(A \nabla)$, where $\nabla$ denotes the gradient in $\R^{2n+1}$.

Let $\Om \subset \Hn $ be an open set. 
Recall that
\begin{align*}
\Xi =\sum_{j=1}^n  (x_j X_j+y_j Y_j) +2 t T
\end{align*}
is the infinitesimal generator of the Heisenberg dilations. Using this vector field, we obtain the following Pohozaev identity.
\begin{lem} \label{prop:4.7}
Let $B_{\si}$ be a ball in $\Hn$
		centered at the origin with radius $\si>0$, and $p\geq 1$. Let $U$ be an open neighborhood of $\overline{B}_\si $. Suppose that $R\in C^1(U)$ and $u \in C^2(U)$ satisfies \[
        -\Delta_{\H}u = \frac{n^2}{4}R(\xi)|u|^{p-1}u\quad \text{ in }\, B_\si.
        \]
		Then  we have
		\begin{align}
		&\,\frac{n^2}{4}\Big( \frac{Q}{p+1} - \frac{Q-2}{2}\Big) \int_{B_\si }R|u|^{p+1}+ \frac{n^2}{4}\frac{1}{p +1}\int_{B_\si } \Xi (R) |u|^{p+1}\notag\\=&\, \int_{\pa {B_\si }}B(\si , \xi, u, \nabla u)+\frac{n^2}{4}\frac{1}{p +1}\int_{\pa  B_\si } R |u|^{p+1} \Xi  \cdot \nu,\label{Pohozaev}
		\end{align}
		where $\nu$  is the outward Euclidean unit normal to $\pa B_{\si}$,	$\Xi  \cdot \nu=\Xi  \cdot \frac{\nabla d}{|\nabla d|}=\frac{\Xi  d}{|\nabla d|}=\frac{d}{|\nabla d|}$ with $d=d(\xi,0)=\rho(\xi)$,   and \[
		B(\si , \xi, u, \nabla u) =
		\frac{Q-2}{8}( A\nabla u \cdot \nu)u -\frac{1}{2}|\nabla_{\H} u|^2 \Xi  \cdot \nu +\frac{1}{4}(A \nabla u \cdot \nu) \Xi  (u).
\]
	\end{lem}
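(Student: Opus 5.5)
The plan is the standard Pohozaev argument: multiply the equation by the dilation derivative $\Xi(u)$, integrate over $B_\si$, and use two structural facts. First, $\Xi$ generates the dilations $\delta_\lam$, so $\operatorname{div}\Xi=Q$ (Euclidean divergence in $\R^{2n+1}$: $2n$ from the $x,y$ components and $2$ from $2t\pa_t$). Second, $\Delta_\H$ is homogeneous of degree $-2$ under $\delta_\lam$, which gives the commutator $[\Delta_\H,\Xi]=2\Delta_\H$; equivalently, $[Z_a,\Xi]=Z_a$ for each horizontal field. We write $\Delta_\H=\frac14\operatorname{div}(A\nabla)$ and $|\nabla_\H u|^2=\frac14 A\nabla u\cdot\nabla u$. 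All boundary terms use the Euclidean normal $\nu$, so the divergence theorem applies directly; $u\in C^2(U)$ and $R\in C^1(U)$ give enough regularity.

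The first step handles the right-hand side. Since
\[
|u|^{p-1}u\,\Xi(u)=\tfrac{1}{p+1}\Xi(|u|^{p+1}),
\]
integrating by parts gives
\[
\int_{B_\si}R\,\Xi(|u|^{p+1})=\int_{\pa B_\si}R|u|^{p+1}\,\Xi\cdot\nu-Q\int_{B_\si}R|u|^{p+1}-\int_{B_\si}\Xi(R)|u|^{p+1}.
\]

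The second step handles the left-hand side $-\int\Delta_\H u\,\Xi(u)$. One integration by parts gives
\[
-\tfrac14\int_{\pa B_\si}(A\nabla u\cdot\nu)\,\Xi(u)+\tfrac14\int_{B_\si}A\nabla u\cdot\nabla(\Xi u).
\]
In the interior term we write $\nabla(\Xi u)$ horizontally as $Z_a(\Xi u)=\Xi(Z_au)+Z_au$. This yields
\[
\tfrac14\int A\nabla u\cdot\nabla(\Xi u)=\int|\nabla_\H u|^2+\tfrac12\int\Xi(|\nabla_\H u|^2),
\]
and after integrating by parts the last term becomes
\[
\tfrac12\int_{\pa B_\si}|\nabla_\H u|^2\,\Xi\cdot\nu-\tfrac Q2\int_{B_\si}|\nabla_\H u|^2.
\]
So the left-hand side equals
\[
-\tfrac{Q-2}{2}\int_{B_\si}|\nabla_\H u|^2+\text{boundary terms}.
\]

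The third step eliminates the Dirichlet integral by multiplying the equation by $u$:
\[
\int_{B_\si}|\nabla_\H u|^2=\tfrac{n^2}{4}\int_{B_\si}R|u|^{p+1}+\tfrac14\int_{\pa B_\si}(A\nabla u\cdot\nu)\,u.
\]
Substituting this and collecting terms gives the coefficient $\frac{Q}{p+1}-\frac{Q-2}{2}$ in front of $\int R|u|^{p+1}$. The boundary terms come out as $\frac{Q-2}{8}(A\nabla u\cdot\nu)u$, $-\frac12|\nabla_\H u|^2\,\Xi\cdot\nu$ and $\frac14(A\nabla u\cdot\nu)\Xi(u)$, after moving everything to the side matching \eqref{Pohozaev}. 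The formula $\Xi\cdot\nu=\rho/|\nabla\rho|$ follows from $\nu=\nabla\rho/|\nabla\rho|$ and Euler's relation $\Xi\rho=\rho$, since $\rho$ is $\delta_\lam$-homogeneous of degree one.

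The main obstacle is the sign and constant bookkeeping in the commutator step. In particular, one must check that the matrix form $\frac14 A\nabla u\cdot\nabla(\Xi u)$ really equals $|\nabla_\H u|^2+\frac12\Xi|\nabla_\H u|^2$. I would verify this directly from $[X_j,\Xi]=X_j$ and $[Y_j,\Xi]=Y_j$, which hold because $X_j$ and $Y_j$ are homogeneous of degree $-1$; with these commutators the step is routine.
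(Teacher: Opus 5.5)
Your proposal is correct. Multiplying by $\Xi(u)$ and by $u$, using $\operatorname{div}\Xi=Q$ and $[Z_a,\Xi]=Z_a$, gives exactly the coefficient $\frac{Q}{p+1}-\frac{Q-2}{2}$ and the three boundary terms of $B(\si,\xi,u,\nabla u)$ with the right signs; the divergence theorem applies because $|u|^{p+1}\in C^1$ for $p\ge1$ and $\nabla\rho\neq0$ on $\pa B_\si$. The paper omits the proof and cites Garofalo--Lanconelli and Pistoia--Ricciardi, whose derivation is this same multiplier and divergence-theorem computation, so your route is essentially the paper's.
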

\begin{proof}
The proof  can be found  in \cite[Theorem 2.1]{GL1992} (or \cite[Theorem 4.1]{PR2003}), so we omit it.
\end{proof}
On a translated ball $B_\si (a)$, identity \eqref{Pohozaev} is applied to $u_a(\eta)=u(a\circ\eta)$ on $B_\si $; coordinate gradients in the boundary term are taken with respect to $\eta$. The boundary term $B(\si , \xi, u, \nabla u)$ has the following properties:
\begin{lem}\label{lem:positivepoho}
	\begin{itemize}
		\item[$(i)$]For $u(\xi) =|\xi|^{2-Q}$ and any $\si>0$, we have $B(\si, \xi, u, \nabla u)=0$ for  all  $\xi \in \pa B_\si$.
		\item[$(ii)$]		Let $u(\xi) = |\xi|^{2-Q} + a_0 + h(\xi)$, where $a_0>0$ is a  constant and $h \in C^1(U)$  for some neighborhood $U$ of the origin with $h(0)=0$. Then
we have
		$$
		\lim _{\si  \to  0} \int_{\pa  B_\si } B(\si , \xi, u, \nabla u)=-\frac{\sqrt{\pi}\Gamma(\frac{n+1}{2})}{8\Gamma(\frac{n}{2}+1)} a_0 (Q-2)^2|\S ^{2 n-1}|<0,
		$$
		where $\Gamma$ is the Gamma function and $|\S ^{2 n-1}|$ is the surface measure of the unit sphere in $\R ^{2n}$.
	\end{itemize}
\end{lem}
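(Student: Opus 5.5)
We compute everything in terms of the homogeneous norm $\rho$, using that $B$ is a quadratic form in $(u,\nabla u)$. Two facts carry the computation. First, $\Xi$ generates the dilations $\delta_\lam$ and $\rho$ is $\delta_\lam$-homogeneous of degree one, so $\Xi\rho=\rho$. Second, set $\psi:=A\nabla\rho\cdot\nabla\rho=4|\nabla_\H\rho|^2$. A direct computation with $X_j,Y_j$ gives $|\nabla_\H\rho|^2=|z|^2/\rho^2$, hence $\psi=4|z|^2/\rho^2$.

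\emph{Radial functions.} For $u=f(\rho)$ we have $\nabla u=f'\nabla\rho$ and $\nu=\nabla\rho/|\nabla\rho|$. Therefore
\[
A\nabla u\cdot\nu=\frac{f'\psi}{|\nabla\rho|},\quad
|\nabla_\H u|^2=\frac{f'^2\psi}{4},\quad
\Xi u=\rho f',\quad
\Xi\cdot\nu=\frac{\rho}{|\nabla\rho|}.
\]
Substituting into $B$ gives
\[
B=\frac{\psi f'}{8|\nabla\rho|}\bigl[(Q-2)f+\rho f'\bigr].
\]
For $f=\rho^{2-Q}$ we have $\rho f'=(2-Q)f$, so the bracket vanishes. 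This proves (i).

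\emph{Expansion for (ii).} Write $u=f+v$ with $f=\rho^{2-Q}$ and $v=a_0+h$. Polarizing the quadratic form $B$, we get $B(u)=B(f)+B_{\rm cross}(f,v)+B(v)$, and $B(f)=0$ by (i).

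Among the cross terms involving $a_0$, only $\frac{Q-2}{8}(A\nabla f\cdot\nu)a_0$ survives, because $\nabla a_0=0$ and $\Xi a_0=0$. Integrating it and applying the divergence theorem with $\Delta_\H=\frac14\operatorname{div}(A\nabla\,\cdot)$ and $\Delta_\H f=0$ away from $0$, the flux $\int_{\pa B_\si}A\nabla f\cdot\nu\,\d S$ is independent of $\si$. It equals
\[
-(Q-2)\int_{\pa B_1}\frac{\psi}{|\nabla\rho|}\,\d S
=-(Q-2)\,Q\int_{B_1}\frac{4|z|^2}{\rho^2}\,\d\xi,
\]
where the last equality uses the coarea formula together with the $\delta_\lam$-homogeneity of degree $0$ of $\psi$. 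We will evaluate $\int_{B_1}|z|^2\rho^{-2}\,\d\xi$ in polar coordinates $z=r\omega$, $\omega\in\S^{2n-1}$, with the substitution $t=r^2s$. This reduces it to $|\S^{2n-1}|$ times a Beta integral of the form $\int(1+s^2)^{-\gamma}\,\d s$, which yields $\sqrt\pi\,\Gamma(\frac{n+1}{2})/\Gamma(\frac n2+1)$ up to elementary factors. Collecting constants should give exactly $-\frac{\sqrt\pi\Gamma(\frac{n+1}2)}{8\Gamma(\frac n2+1)}a_0(Q-2)^2|\S^{2n-1}|$, and this is negative since $a_0>0$.

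\emph{Error terms.} It remains to show that the cross terms involving $h$ and the term $B(v)$ integrate to $o(1)$ as $\si\to0$. We will change variables $\xi=\delta_\si\eta$ and use the homogeneities $A\nabla f\cdot\nu\,\d S\sim\si^{1-Q}\cdot\si^{Q-1}$ (in the measure $\d S/|\nabla\rho|$), $f\sim\si^{2-Q}$ and $\Xi f\sim\si^{2-Q}$. On the other side, $h=O(\si)$ on $\pa B_\si$ because $h(0)=0$ and $h\in C^1$, while $\nabla h$ is bounded. Consequently $\Xi h=O(\si)$ (its coefficients are $O(\si)$), $A\nabla h$ is bounded (the entries of $A$ are bounded near $0$), and $|\nabla_\H h|$ is bounded. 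Each such term then carries at least one extra positive power of $\si$ compared with the main term. The delicate case is the mixed term $-|\nabla_\H f||\nabla_\H h|\,\Xi\cdot\nu$, where the bound is only $\si^{1-Q}\cdot O(1)\cdot\si^{Q}=O(\si)$; we have to check this carefully against the anisotropic Euclidean surface measure and $|\nabla\rho|$ on $\pa B_\si$. This bookkeeping, rather than the main computation, is the expected obstacle.
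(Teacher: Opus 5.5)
Your overall route is sound: part (i) via the radial reduction $B=\frac{\psi f'}{8|\nabla\rho|}[(Q-2)f+\rho f']$, then isolating the flux term $\frac{Q-2}{8}a_0\int_{\pa B_\si}A\nabla f\cdot\nu\,\d S$ and evaluating it by homogeneity and the coarea formula. The paper gives no argument of its own and only cites \cite[Proposition~4.3]{PR2003}. However, the one computation that fixes the constant in (ii) is off by a factor of $4$. With the paper's normalization $\nabla_\H=\frac12 Z$, one has $\sum_{j}\bigl((X_j\rho)^2+(Y_j\rho)^2\bigr)=|z|^2/\rho^2$. Hence $|\nabla_\H\rho|^2=|z|^2/(4\rho^2)$ and $\psi=A\nabla\rho\cdot\nabla\rho=|z|^2/\rho^2$, not $4|z|^2/\rho^2$. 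This agrees with \eqref{3.21 in}, where $A\nabla\rho\cdot\nu=2|z|^2(4|z|^6+t^2)^{-1/2}$ on $\pa B_1$. With your $\psi$, collecting constants gives four times the stated limit, so your claim that it ``should give exactly'' the stated value is false as written. Only the sign survives, and the sign is all the later applications use. With the corrected $\psi$ the computation closes. The substitution $t=r^2s$ gives $\int_{B_1}|z|^2\rho^{-2}\,\d\xi=\frac{|\S^{2n-1}|}{Q}\int_{\R}(1+s^2)^{-(n+2)/2}\,\d s=\frac{|\S^{2n-1}|}{Q}\cdot\frac{\sqrt\pi\,\Gamma(\frac{n+1}2)}{\Gamma(\frac n2+1)}$. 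Hence the flux equals $-(Q-2)|\S^{2n-1}|\sqrt\pi\,\Gamma(\frac{n+1}2)/\Gamma(\frac n2+1)$, and multiplying by $\frac{Q-2}{8}a_0$ yields exactly the claimed value. You should carry this out explicitly rather than leave it as an expectation.

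The error terms you flag as the main obstacle are routine once organized correctly. Every term of $B$ contains exactly one factor $1/|\nabla\rho|$, through $\nu=\nabla\rho/|\nabla\rho|$ or $\Xi\cdot\nu=\rho/|\nabla\rho|$. Also $A\nabla u\cdot\nabla\rho=Zu\cdot Z\rho$ and $|\nabla_\H u|^2=\frac14|Zu|^2$. Thus $B\,\d S=\widetilde B\,\d S/|\nabla\rho|$ with $\widetilde B=\frac{Q-2}{8}(Zu\cdot Z\rho)u-\frac18|Zu|^2\rho+\frac14(Zu\cdot Z\rho)\,\Xi u$. By the coarea formula, the measure $\d S/|\nabla\rho|$ on $\pa B_\si$ has total mass $c\,\si^{Q-1}$. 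On $\pa B_\si$ you have $|Z\rho|\le1$, $|Zf|\le C\si^{1-Q}$, $f=\si^{2-Q}$, $\Xi f=(2-Q)\si^{2-Q}$, $|Zh|\le C$ and $|h|+|\Xi h|\le C\si$. So every cross term between $f$ and $h$ is $O(\si^{2-Q})$ pointwise and $O(\si)$ after integration; this includes the mixed term $-\frac14(Zf\cdot Zh)\rho$. The terms of $B(a_0+h)$ are $O(\si^{Q-1})$ after integration. No finer analysis of the anisotropic Euclidean surface measure is needed.
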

\begin{proof}
The proof can be found in \cite[Proposition 4.3]{PR2003}, so we omit it here.
\end{proof}

\subsection{Regularity and weighted flatness}\label{function space}

We first recall the higher-order Folland--Stein spaces and then record the regularity and flatness properties of the functions in Definition~\ref{def:intrinsic-coefficients}.

For each positive integer $k$, let
\[
\mathscr B_k=\{Z_{a_1}\cdots Z_{a_j}:1\le a_\ell\le2n,\ 1\le\ell\le j,\ 1\le j\le k\},
\]
where $Z_a$ are the left-invariant horizontal fields listed in \eqref{vectorfields}.

Let $\Om \subset\Hn$ be open. We use the Folland--Stein spaces $\Gamma_\beta(\Om )$ \cite{FS1974}, whose definition for $0<\beta<1$ was given above. If $\beta=1$, define $$\Gamma_{1}(\Om)=\Big\{f\in L^{\infty}(\Om)\cap C^0(\Om):\sup_{\substack{\xi\in \Om, \zeta\in\H^n\backslash\{0\}\\ \xi\circ\zeta, \xi\circ \zeta^{-1}\in\Om }}\frac{|f(\xi\circ\zeta)-2f(\xi)+f(\xi\circ\zeta^{-1})|}{|\zeta|}<\infty\Big\}.$$
If $\beta=k+\al$ with $k\in \mathbb{N}^+$  and $\al\in (0,1)$, define
$$\Gamma_{k+\al}(\Om)=\Big\{f\in L^{\infty}(\Om)\cap C^0(\Om):\mathscr{L}f\in \Gamma_{\al}(\Om)\,\text{ for $\mathscr{L}\in \mathscr{B}_k$} \Big\}.$$
All derivatives are understood in the distributional sense. For $\beta=1$, the norm is the sum of the $L^{\infty}$ norm and the displayed supremum. For higher orders, we set
$$
\|f\|_{\Gamma_{k+\al }(\Om )}=\|f\|_{L^{\infty}(\Om )}+\sum_{\mathscr{L} \in \mathscr{B}_k}\|\mathscr{L} f\|_{\Gamma_\al (\Om )} .
$$

  On $\S^{2 n+1}$, these spaces and their norms are defined using a finite contact atlas and a subordinate smooth partition of unity; see \cite{FS1974}.
  Note that we can  identify $\Hn$ with its Lie algebra which is Euclidean
space $\R^{2n+1}$ with the Euclidean norm $|\cdot|_{\mathrm{Eu}}$ and the linear coordinates $x_j$ via the exponential map. Hence we can discuss the usual function spaces $C^k$ for $0\leq k\leq \infty$. We refer to \cite{FS1974,F1975} for more details and regularity results.

We return to the class $\mathcal A_\al $ in Definition~\ref{def:intrinsic-coefficients}. Both ordinary $C^{1,1}$ functions and functions in $\Gamma_{2+\al }$ belong locally to $\mathcal A_\al $. For $R\in\Gamma_{2+\al}$, use $T=-[X_j,Y_j]/4$ and $\pa _{x_j}=X_j-2y_jT$, $\pa _{y_j}=Y_j+2x_jT$: the weak coordinate first derivatives are continuous, hence the function is $C^1$. For $R\in C^{1,1}$, the bounds in \eqref{eq:intrinsic-coefficient-norm} follow by expanding the vector fields. In particular, ordinary \(C^{1,1}\) functions are included in \(\mathcal A_{\al ,loc}\), while a bound for \(T^2R\) is not required.

\begin{lem}\label{lem:intrinsic-coefficient-calculus}
On fixed compactly contained coordinate domains the class $\mathcal A_\al $ is preserved by smooth contact changes of coordinates, by multiplication by a positive $C^{1,1}$ function, and by compositions $R\mapsto a+bR$, with the corresponding uniform bounds. For $0<\si \le1$, bounded translations $\eta$, and $K(\xi)=R(\eta\circ\delta_\si \xi)$, one has
\begin{equation}\label{eq:intrinsic-rescaled-increments}
 \begin{aligned}
 |ZK(\xi)-ZK(0)|&\le C\si ^2|\xi|,\\
 |TK(\xi)-TK(0)|&\le C\si ^{2+\al }|\xi|^\al .
 \end{aligned}
\end{equation}
The first estimate also holds for the right-invariant horizontal fields centered at $0$. The constants in \eqref{eq:intrinsic-rescaled-increments} depend only on $n$, $\al$, a bound in
\eqref{eq:intrinsic-coefficient-norm}, and the fixed coordinate domains. The bounds under changes of coordinates, multiplication, and affine composition also depend on the corresponding coordinate bounds, the $C^{1,1}$ norm of the multiplier, and $|a|+|b|$, respectively.
\end{lem}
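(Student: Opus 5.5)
The lemma has two parts: the invariance of $\mathcal A_\al$ and the rescaled increment bounds \eqref{eq:intrinsic-rescaled-increments}. I would treat them separately.

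\emph{Invariance.} For $a+bR$ the claim is immediate. For a product $\phi R$ with $\phi\in C^{1,1}$ positive, the Leibniz rule gives $Z_aZ_b(\phi R)=\phi Z_aZ_bR+Z_a\phi Z_bR+Z_b\phi Z_aR+RZ_aZ_b\phi$. Each term is bounded, because $Z_a\phi$ is Lipschitz and $Z_aZ_b\phi\in L^\infty$. The term $T(\phi R)=\phi TR+RT\phi$ lies in $\Gamma_\al$: $T\phi$ is Lipschitz in the Euclidean sense, hence $\Gamma_\al$ on compact sets, since locally $|\xi-\eta|_{\mathrm{Eu}}\le Cd(\xi,\eta)$.

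For a smooth contact diffeomorphism $\Phi$, the pushforward $\Phi_*Z_a$ is horizontal, so $Z_a(R\circ\Phi)=\sum_c m_{ac}(Z_cR)\circ\Phi$ with smooth $m_{ac}$. Differentiating again, $Z_bZ_a(R\circ\Phi)$ involves $(Z_dZ_cR)\circ\Phi$, which is bounded, together with $Z_bm_{ac}\,(Z_cR)\circ\Phi$, which is bounded. No $T$ derivative appears here, because horizontality is exact. Finally $T(R\circ\Phi)=\lambda\,(TR)\circ\Phi+\sum_c k_c(Z_cR)\circ\Phi$ with smooth coefficients. The first term is in $\Gamma_\al$ because $\Phi$ is locally Lipschitz for $d$. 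The second term is in $\Gamma_\al$ because $Z_cR$ is $d$-Lipschitz: its horizontal derivatives are bounded, and horizontal paths realize $d$ up to constants. Composition with $\Phi$ preserves this.

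\emph{Increments.} Left translations commute with $Z$ and $T$, and dilations give $Z K=\si (ZR)(\eta\circ\delta_\si\,\cdot)$ and $TK=\si^2(TR)(\eta\circ\delta_\si\,\cdot)$. The $T$-estimate then follows directly: $|TK(\xi)-TK(0)|\le\si^2[TR]_\al d(\eta\circ\delta_\si\xi,\eta)^\al=C\si^{2+\al}|\xi|^\al$.

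For $Z$, $Z_bZ_aK=\si^2(Z_bZ_aR)(\cdots)$ is bounded by $C\si^2$. Join $0$ to $\xi$ by a horizontal curve of length $\le C|\xi|$; the ball-box theorem, or a concatenation of segments along $X_j,Y_j$, gives such a curve. Integrating $Z_aK$ along it yields $|Z_aK(\xi)-Z_aK(0)|\le C\si^2|\xi|$.

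The main obstacle is justifying this integration when $Z_bZ_aR$ is only an $L^\infty$ weak derivative. I would mollify by group convolution on the right, $R_\epsilon=R*\psi_\epsilon$. This commutes with the left-invariant $Z$, so $\|Z_bZ_aR_\epsilon\|_\infty$ is uniformly bounded. Apply the smooth argument to $R_\epsilon$ and pass to the limit $\epsilon\to0$, using uniform convergence $ZR_\epsilon\to ZR$ since $R\in C^1$.

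\emph{Right-invariant fields.} At a point, $\overline X_j=X_j-4y_jT$ and $\overline Y_j=Y_j+4x_jT$. So $\overline ZK(\xi)-\overline ZK(0)$ equals $ZK(\xi)-ZK(0)$ plus the terms $\pm4(x_j\text{ or }y_j)TK(\xi)$. Since $|TK|\le C\si^2$ and $|z|\le|\xi|$, these extra terms are $O(\si^2|\xi|)$, which gives the claim. All constants depend only on the listed data.
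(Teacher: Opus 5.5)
Your proof is correct and is essentially the paper's. The increment bounds come from the dilation rules $ZK=\si(ZR)\circ\Psi_\si$ and $TK=\si^2(TR)\circ\Psi_\si$, with $\Psi_\si(\xi)=\eta\circ\delta_\si\xi$ and $d(\Psi_\si\xi,\Psi_\si 0)=\si|\xi|$. The right-invariant case comes from $\overline X_j-X_j=-4y_jT$ and $\overline Y_j-Y_j=4x_jT$ together with $\|TK\|_{L^\infty}\le C\si^2$. You also write out the Leibniz and contact chain-rule details and the horizontal-path argument, which the paper leaves implicit.

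There is one small slip in the mollification step. With the usual convention, $R*\psi_\epsilon(\xi)=\int R(\xi\circ\zeta^{-1})\psi_\epsilon(\zeta)\,\d\zeta$ averages right translates and therefore commutes with the right-invariant fields, not with $Z$. To get $Z_bZ_aR_\epsilon=\psi_\epsilon*(Z_bZ_aR)$ you should take $R_\epsilon=\psi_\epsilon*R$, that is, $R_\epsilon(\xi)=\int\psi_\epsilon(\zeta)R(\zeta^{-1}\circ\xi)\,\d\zeta$.
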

\begin{proof}

We prove \eqref{eq:intrinsic-rescaled-increments}; the invariance properties follow from the contact chain rule and the Leibniz rule. 

For $\Psi_\si (\xi)=\eta \circ \delta_\si  \xi$, the dilation rules give
$$
Z K=\si (Z R) \circ \Psi_\si , \quad T K=\si ^2(T R) \circ \Psi_\si , \quad d(\Psi_\si  \xi, \Psi_\si  0)=\si |\xi| .
$$
Therefore, we have
$$
|Z K(\xi)-Z K(0)| \leq C \si ^2|\xi|, \quad|T K(\xi)-T K(0)| \leq C \si ^{2+\al }|\xi|^\al  .
$$

If $\xi=(z, t)$, the centered right-invariant fields satisfy
$$
\overline{X}_j-X_j=-4 y_j T, \quad \overline{Y}_j-Y_j=4 x_j T .
$$
They therefore agree with the left-invariant fields at 0 . Since $T K=\si ^2(T R) \circ \Psi_\si $, for the corresponding arrays $\overline Z,Z$ we obtain
$$
|\overline{Z} K(\xi)-\overline{Z} K(0)| \leq|Z K(\xi)-Z K(0)|+C|z|\|T K\|_{\infty} \leq C \si ^2|\xi| .
$$
The last inequality gives the right-invariant version of the first estimate in \eqref{eq:intrinsic-rescaled-increments}.
\end{proof}

We record the following local regularity consequence.

\begin{lem}\label{lem:intrinsic-regularity} Let $u_i>0$ be weak solutions of
$$
-\Delta_\H u_i=K_i u_i^{p_i} \quad \text { in } \, B \subset \Hn ,
$$
where $K_i$ are uniformly bounded in $\mathcal{A}_\al (B)$ and $p_i$ range over a compact subset of $(1, \infty)$. If $u_i$ and $u_i^{-1}$ are locally uniformly bounded, then, for every $B' \Subset B$ and $0<\al '<\al $,
\be\label{sch new}
\|u_i\|_{C^{2, \al ' / 2}(B')} \leq C .
\ee
Here $C$ depends on $n$, $\al$, $\al'$, the compact interval containing the exponents $p_i$, the domains $B'\Subset B''\Subset B$, and uniform bounds for $\|K_i\|_{\mathcal A_\al(B'')}$, $\|u_i\|_{L^\infty(B'')}$ and $\|u_i^{-1}\|_{L^\infty(B'')}$, where $B''$ is any fixed intermediate domain.
In particular, $\{u_i\}$ is precompact in $C_{loc}^2(B)$.
\end{lem}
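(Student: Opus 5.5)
The proof is a two-stage bootstrap. First we obtain uniform Folland--Stein H\"older bounds from the $\mathcal A_\al$ structure. Then we upgrade to $C^{2,\al'/2}$ through the equation satisfied by $Tu_i$.

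\textbf{Stage 1: Folland--Stein H\"older bounds.} Set $f_i=K_iu_i^{p_i}$. The functions $u_i$ and $u_i^{-1}$ are locally bounded and $p_i$ lies in a compact subset of $(1,\infty)$, so $s\mapsto s^{p_i}$ is smooth with uniform bounds on the range of $u_i$. Hence $f_i$ is bounded in $L^\infty(B'')$.

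The $L^p$ and H\"older theory for $\Delta_\H$ (Folland--Stein \cite{FS1974,F1975}) then gives $u_i\in\Gamma_{2-\epsilon}$ locally, with uniform bounds. In particular $Zu_i\in\Gamma_{\gamma}$ for every $\gamma<1$ and $u_i\in\Gamma_1$.

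Since $K_i\in C^1$ with $ZK_i$ bounded and $TK_i\in\Gamma_\al$, we have $K_i\in\Gamma_1\supset\Gamma_\al$ uniformly. Consequently $f_i\in\Gamma_\al$ uniformly on a slightly smaller domain. Folland's interior Schauder estimate, $\|u\|_{\Gamma_{2+\al}(B')}\le C(\|\Delta_\H u\|_{\Gamma_\al(B'')}+\|u\|_{L^\infty(B'')})$, then yields a uniform bound for $u_i$ in $\Gamma_{2+\al}$. In particular $Z_aZ_bu_i\in\Gamma_\al$, and $Tu_i=-[X_j,Y_j]u_i/4\in\Gamma_\al$.

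\textbf{Stage 2: regularity of $Tu_i$.} Since $T$ commutes with $\Delta_\H$, the function $w_i=Tu_i$ solves, in the weak sense,
\[
-\Delta_\H w_i=(TK_i)u_i^{p_i}+p_iK_iu_i^{p_i-1}w_i .
\]
Here $TK_i\in\Gamma_\al$ by assumption, $u_i^{p_i}$ and $K_iu_i^{p_i-1}$ lie in $\Gamma_\al$, and $w_i\in\Gamma_\al$ by Stage~1. So the right-hand side is uniformly bounded in $\Gamma_\al$, and a second application of the Schauder estimate gives $w_i\in\Gamma_{2+\al}$. Justifying the differentiation is done with difference quotients along $T$, which is left-invariant.

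As a result, $ZTu_i$, $TZu_i=ZTu_i$, $Z^2u_i$ and $T^2u_i$ are all in $\Gamma_\al$. Since $T^2 u_i=T w_i$ is controlled by $\Gamma_{2+\al}$ of $w_i$, even $Z^2Tu_i$ is bounded.

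\textbf{Stage 3: conversion to Euclidean derivatives.} We express the Euclidean coordinate derivatives $\pa_{x_j}=X_j-2y_jT$, $\pa_{y_j}=Y_j+2x_jT$, $\pa_t=T$ as combinations, with smooth bounded coefficients, of the fields $Z$ and $T$. Every Euclidean second derivative is then a smooth-coefficient combination of $Z_aZ_bu_i$, $Z_aTu_i$, $T^2u_i$, $Zu_i$ and $Tu_i$, each bounded in $\Gamma_\al$.

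On compact sets, $d(\xi,\eta)\ge c|\xi-\eta|_{\mathrm{Eu}}^{1/2}$, hence $\Gamma_\al$ embeds into Euclidean $C^{0,\al/2}$. This gives $\|u_i\|_{C^{2,\al/2}(B')}\le C$, which is stronger than \eqref{sch new}. Precompactness in $C^2_{loc}$ follows by Arzel\`a--Ascoli, since $C^{2,\al/2}$ embeds compactly into $C^{2,\al'/2}$, and a fortiori into $C^2$, on compact sets.

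\textbf{Main obstacle.} The delicate point is Stage 2. No bound on $T^2K_i$ is assumed, so one cannot differentiate twice in $T$, and the scheme must use exactly one $T$-derivative of $K_i$. Care is also needed with the weak formulation, and with the fact that $\Gamma_\al$ regularity of $TK_i$ is in the Heisenberg metric. If $T^2u_i\in\Gamma_\al$ turns out not to be directly available, the fallback is to prove only $w_i\in\Gamma_{1+\al'}$ via the $\Gamma_{2-\epsilon}$ estimate with a bounded right-hand side. That still controls $Tw_i$ with a loss, which explains the weaker exponent $\al'/2$ in \eqref{sch new}.
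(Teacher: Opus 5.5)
Your argument is essentially the paper's: one Schauder step (Folland's Theorem~6.1) bounds $u_i$ in $\Gamma_{2+\theta}$; the distributional identity $[T,\Delta_\H]=0$ gives the equation for $Tu_i$ with right-hand side $(TK_i)u_i^{p_i}+p_iK_iu_i^{p_i-1}Tu_i$ bounded in $\Gamma_\theta$; a second Schauder step gives $Tu_i\in\Gamma_{2+\theta}$; and the coordinate derivatives are then expressed through $Z$ and $T$. The paper uses an intermediate $\theta\in(\al',\al)$ where you use $\al$ itself, which is harmless.

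There are two slips to fix. In Stage~3 the inequality you need is $d(\xi,\eta)\le C|\xi-\eta|_{\mathrm{Eu}}^{1/2}$ on compact sets; yours points the wrong way and is false for horizontal displacements, where $d$ and $|\cdot|_{\mathrm{Eu}}$ are comparable. Also, your proposed fallback would not work, because $Tu_i\in\Gamma_{1+\al'}$ gives no control of $T^2u_i$; the $\al'$ in the statement is only the margin $\al'<\theta<\al$, not the trace of such a loss.
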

\begin{proof}Fix $\al '<\theta<\al $. Interior subelliptic and Schauder estimates \cite[Theorem~6.1]{F1975} give, on smaller balls,
$$
\|u_i\|_{\Gamma_{2+\theta}}+\|T u_i\|_{\Gamma_\theta} \leq C .
$$
Since $[T, \Delta_\H]=0$, in distributions, we have
$$
-\Delta_\H(T u_i)=(T K_i) u_i^{p_i}+p_i K_i u_i^{p_i-1} T u_i .
$$
The right-hand side is uniformly bounded in $\Gamma_\theta$. Applying \cite[Theorem~6.1]{F1975} to the equation for $Tu_i$, we obtain
$$
\|T u_i\|_{\Gamma_{2+\theta}} \leq C, \quad\|Z^2 u_i\|_{\Gamma_\theta}+\|Z T u_i\|_{\Gamma_\theta}+\|T^2 u_i\|_{\Gamma_\theta} \leq C .
$$
Expressing the coordinate derivatives in terms of $Z, T$, and using $d(\xi, \eta) \leq C|\xi-\eta|_{\mathrm{Eu}}^{1 / 2}$, where $|\cdot|_{\mathrm{Eu}}$ denotes the Euclidean norm, we obtain \eqref{sch new}. The compactness follows from the Arzel\`a--Ascoli theorem.
\end{proof}

We next record the properties of weighted flatness, as defined in Definition~\ref{def:weighted-flatness}, that will be used below.
The following lemma shows that the auxiliary function $r$ can be chosen explicitly in terms of the horizontal and central derivatives of $R$.
\begin{lem}\label{lem:wf-radius}
Let $\beta>2$ and suppose that $R$ satisfies $(*)'_\beta$ with $L$, $C_*$ and $r$.
Set
\[
 a=L^{-1}|\nabla_\H R|,\quad b=L^{-1}|TR|,\quad
 r_R=\max\{a^{1/(\beta-1)},b^{1/(\beta-2)}\}.
\]
Then $C^{-1}r\le r_R\le Cr$, where $C$ depends only on $n,\beta,C_*$.
The function $r_R$ is locally Lipschitz for $d$ with a bound depending
only on $n$, $\beta$ and $C_*$, and it may replace $r$ in
Definition~\ref{def:weighted-flatness} after changing $C_*$.
\end{lem}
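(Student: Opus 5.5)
The plan is to compare $r$ and $r_R$ directly from \eqref{eq:wf-first} and \eqref{eq:wf-higher}, and then to obtain the Lipschitz property of $r_R$ from the derivative bounds.

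\emph{Upper bound.} The right inequality in \eqref{eq:wf-first} gives $a\le C_*r^{\beta-1}$. The first line of \eqref{eq:wf-higher}, which holds everywhere by continuity, gives $b\le C_*r^{\beta-2}$. Hence $r_R\le C_*^{1/(\beta-2)}r$, since $C_*\ge1$.

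\emph{Lower bound.} The left inequality in \eqref{eq:wf-first} reads $C_*^{-1}r^{\beta-1}\le a+rb$. We split into cases.
\begin{itemize}
\item If $a\ge rb$, then $a\ge (2C_*)^{-1}r^{\beta-1}$, so $r_R\ge a^{1/(\beta-1)}\ge cr$.
\item Otherwise $rb\ge (2C_*)^{-1}r^{\beta-1}$. If $r>0$ this gives $b\ge (2C_*)^{-1}r^{\beta-2}$, hence $r_R\ge b^{1/(\beta-2)}\ge cr$. If $r=0$ the bound is trivial.
\end{itemize}
Both cases use $\beta>2$.

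\emph{Lipschitz property.} This is the main step. Since $\max$ and composition with Lipschitz functions preserve the Lipschitz property, it suffices to treat $A=a^{1/(\beta-1)}$ and $B=b^{1/(\beta-2)}$ separately, each near a point where it is positive. Zeros are handled by comparison: near a zero of $r_R$ the bound $r_R\le Cr$, together with the Lipschitz property of $r$, gives $|r_R(\eta)-r_R(\xi)|\le C\,d(\xi,\eta)$.

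For $A$: where $a>0$, $a$ is Lipschitz along horizontal curves, with horizontal derivative bounded by $L^{-1}|\nabla_\H^2R|\le C r^{\beta-2}$ a.e. Therefore
\[
|\nabla_\H A|\le C\,a^{\frac{1}{\beta-1}-1}r^{\beta-2}\le C\,(r^{\beta-1})^{\frac{2-\beta}{\beta-1}}\,r^{\beta-2}\cdot(\text{ratio})\lesssim C.
\]
Here we use $a\le Cr^{\beta-1}$ and the negative exponent $\frac{2-\beta}{\beta-1}$. More carefully, on the set where $A\ge cr$ we have $a^{(2-\beta)/(\beta-1)}\le Cr^{2-\beta}$, giving a bound $C$. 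Off that set, $A$ is dominated and only the comparison with $r$ is needed.

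For $B$: where $b>0$ and $r>0$, the second line of \eqref{eq:wf-higher} gives $|\nabla_\H b|\le Cr^{\beta-3}$. Hence
\[
|\nabla_\H B|\le C\,b^{\frac{1}{\beta-2}-1}r^{\beta-3}\le C \quad\text{on the set } \{B\ge cr\}.
\]
To convert the horizontal-gradient bound into a $d$-Lipschitz bound, join nearby points by horizontal curves of length comparable to $d$ (Chow's theorem).

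\emph{Patching.} The technical obstacle is combining the region where $A$ (or $B$) is comparable to $r$ with the region where it is not. We handle this by working with $r_R=\max\{A,B\}$ directly. At each point, $r_R\asymp r$, and at least one of $A$, $B$ is $\ge cr$. The term achieving the max therefore has a gradient bounded by $C$. A standard argument for the max of functions along curves then shows that $r_R$ is Lipschitz with constant depending only on $n,\beta,C_*$.

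\emph{Replacing $r$ by $r_R$.} Since $r_R\asymp r$, inequalities \eqref{eq:wf-first} and \eqref{eq:wf-higher} continue to hold with $r$ replaced by $r_R$, after enlarging $C_*$. Because $r$ and $r_R$ have the same zero set, the set $\{r>0\}$ used in the mixed-derivative bound is unchanged.
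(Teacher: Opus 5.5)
Your proposal is correct and is the elementary branch argument that the paper omits (it later invokes the ``branch estimates'' of this lemma). The steps are: compare $a,b$ with $r$ via \eqref{eq:wf-first}--\eqref{eq:wf-higher}; bound the horizontal gradient of whichever of $a^{1/(\beta-1)}$, $b^{1/(\beta-2)}$ realizes the maximum, where that branch is $\ge cr$ and hence, by continuity, on a neighborhood; use $r_R\le Cr$ at zeros; and patch along horizontal curves. You should, however, delete the opening claim that it suffices to treat $A$ and $B$ separately, and the displayed ``(ratio)'' estimate. $A$ need not be Lipschitz on its own (e.g.\ near points where $\nabla_\H R=0$ but $TR\neq0$), and $a\le Cr^{\beta-1}$ gives the wrong inequality for the negative exponent $(2-\beta)/(\beta-1)$. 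Your later patching on the active branch is what actually carries the proof.
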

\begin{proof}
The proof is elementary. We omit the details here.
\end{proof}

With uniformly bounded auxiliary functions, the following lemma shows that condition $(*)_\beta'$ implies $(*)_\gamma'$ for every $2 \leq \gamma<\beta$.


\begin{lem}\label{lem:wf-lowering}
Let $\beta>2$ and suppose that $R$ satisfies $(*)'_\beta$ with $L$, $C_*$, $r$ and $0\le r\le M<\infty$.
For $2<\gamma<\beta$, it satisfies $(*)'_\gamma$ with the same $L$, a structural constant depending only on $n,C_*,\beta,\gamma,M$,
and the auxiliary function
\[
 s=\max\{(L^{-1}|\nabla_\H R|)^{1/(\gamma-1)},(L^{-1}|TR|)^{1/(\gamma-2)}\}.
\]
It also satisfies $(*)'_2$, since $|\nabla_\H^2R|+|TR|\le C_*L M^{\beta-2}$.
\end{lem}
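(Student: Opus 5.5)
We prove Lemma~\ref{lem:wf-lowering} by comparing the new auxiliary function $s$ with the original $r$, using Lemma~\ref{lem:wf-radius}, and then converting each power of $r$ into the corresponding power of $s$ via the bound $r\le M$. By Lemma~\ref{lem:wf-radius} we may replace $r$ by $r_R=\max\{a^{1/(\beta-1)},b^{1/(\beta-2)}\}$, where $a=L^{-1}|\nabla_\H R|$ and $b=L^{-1}|TR|$. This costs only a change of $C_*$. Moreover $r_R\le C r\le CM$, so we still have a uniform bound $r_R\le M'$.

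\emph{Comparing $s$ and $r_R$.} By \eqref{eq:wf-first}, $a\le C_* r^{\beta-1}$ and $b\le C_* r^{\beta-2}$. Hence
\[
a^{1/(\gamma-1)}\le C r^{(\beta-1)/(\gamma-1)},\qquad b^{1/(\gamma-2)}\le C r^{(\beta-2)/(\gamma-2)}.
\]
Both exponents exceed $1$ and $r\le M$, so $s\le C(M)\,r^{\kappa}$ with $\kappa=\min\{(\beta-1)/(\gamma-1),(\beta-2)/(\gamma-2)\}$. In particular $s\le C(M)r$. This gives the upper inequality in \eqref{eq:wf-first} for $s$ directly: from the definition of $s$,
\[
|\nabla_\H R|=La\le Ls^{\gamma-1},\qquad s|TR|=Lsb\le Ls\,s^{\gamma-2}=Ls^{\gamma-1}.
\]
For the lower inequality, suppose first that $s=a^{1/(\gamma-1)}$. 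Then $|\nabla_\H R|=Ls^{\gamma-1}$. Otherwise $s=b^{1/(\gamma-2)}$, so $s|TR|=Ls\,b=Ls^{\gamma-1}$. In both cases $|\nabla_\H R|+s|TR|\ge Ls^{\gamma-1}$.

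\emph{Higher-order bounds.} Here we need $r^{\beta-2}\le Cs^{\gamma-2}$ and $r^{\beta-3}\le Cs^{\gamma-3}$ wherever $r>0$. Both should follow from the reverse comparison $r\le C s^{\theta}$ for a suitable $\theta\ge 1$, but this needs an argument. By the lower bound in \eqref{eq:wf-first}, $C_*^{-1}r^{\beta-1}\le a+rb$, so either $a\gtrsim r^{\beta-1}$ or $b\gtrsim r^{\beta-2}$. In the first case $s\ge a^{1/(\gamma-1)}\gtrsim r^{(\beta-1)/(\gamma-1)}$. In the second case $s\gtrsim r^{(\beta-2)/(\gamma-2)}$. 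Thus $r\le C s^{1/\kappa'}$ with $\kappa'=\max\{(\beta-1)/(\gamma-1),(\beta-2)/(\gamma-2)\}$, so $r^{\beta-2}\le Cs^{(\beta-2)/\kappa'}$.

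One checks that $(\beta-2)/\kappa'\ge \gamma-2$. Indeed $(\beta-2)(\gamma-1)/(\beta-1)\ge\gamma-2$ is equivalent to $\beta\ge\gamma$, and the other ratio gives equality. Since $s$ is bounded, $s^{(\beta-2)/\kappa'}\le Cs^{\gamma-2}$. Together with \eqref{eq:wf-higher}, this yields $|\nabla_\H^2R|+|TR|\le CLs^{\gamma-2}$.

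\emph{The main obstacle.} The mixed bound is the delicate step, because the exponent $\beta-3$ may be negative. When $\beta\ge3$, the same argument as above, with $r^{\beta-3}$ bounded above by a power of $s$, works if $\gamma\le3$ by boundedness. If $\gamma>3$, the exponent comparison must be redone; we expect it to hold analogously. When $\beta<3$, we have $\gamma<3$, and we need an upper bound on $r^{\beta-3}$, that is, a lower bound on $r$ in terms of $s$. The upper comparison $s\le C r^{\kappa}$ gives $r\ge cs^{1/\kappa}$. Hence
\[
r^{\beta-3}\le Cs^{(\beta-3)/\kappa}\le Cs^{\gamma-3},
\]
provided $(3-\beta)/\kappa\le 3-\gamma$, using $s\le C$. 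We will verify this exponent inequality in each branch of $\kappa$; this is where we expect the real work to lie. Note also that $\{r>0\}=\{s>0\}$, so the mixed bound is required on the same set.

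\emph{Lipschitz property and the $(*)'_2$ statement.} The local Lipschitz property of $s$ follows from Lemma~\ref{lem:wf-radius} applied with $\gamma$ in place of $\beta$, once $(*)'_\gamma$ is established. Finally, the $(*)'_2$ statement is immediate from \eqref{eq:wf-higher} and $r\le M$:
\[
|\nabla_\H^2R|+|TR|\le C_*Lr^{\beta-2}\le C_*LM^{\beta-2}.
\]
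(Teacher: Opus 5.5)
Your route is the paper's. Its proof consists exactly of the comparisons $s\le Cr$, $r^{\beta-2}\le Cs^{\gamma-2}$ and $r^{\beta-3}\le Cs^{\gamma-3}$ on $\{s>0\}$, derived from \eqref{eq:wf-first}, \eqref{eq:wf-higher} and $r\le M$. Your treatment of \eqref{eq:wf-first} for $s$ and of the second-order bound is complete.

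\textbf{The third comparison.} You leave it open in two subcases, but it needs no real work. Since $(\beta-2)/(\gamma-2)-(\beta-1)/(\gamma-1)=(\beta-\gamma)/((\gamma-1)(\gamma-2))>0$, you have $\kappa=(\beta-1)/(\gamma-1)>1$ and $\kappa'=(\beta-2)/(\gamma-2)$.
\begin{itemize}
\item If $\beta<3$, then $(3-\beta)/\kappa<3-\beta<3-\gamma$. This is exactly the exponent inequality you need, and no branch analysis is required.
\item If $\gamma>3$ (so $\beta>3$), your bound $r\le Cs^{(\gamma-2)/(\beta-2)}$ gives $r^{\beta-3}\le Cs^{(\beta-3)(\gamma-2)/(\beta-2)}$. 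Since $(\beta-3)(\gamma-2)-(\beta-2)(\gamma-3)=\beta-\gamma>0$, this exponent exceeds $\gamma-3$, and $s\le C$ finishes the argument.
\end{itemize}
A small correction: where $r=0$, the bound $|TR|\le C_*Lr^{\beta-2}$ comes from \eqref{eq:wf-higher}, which holds everywhere by continuity, not from \eqref{eq:wf-first}. It is what gives $s=0$ there, and hence $\{s>0\}=\{r>0\}$.

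\textbf{The Lipschitz step.} As worded, this step is circular. Lemma~\ref{lem:wf-radius} takes $(*)'_\gamma$ as a hypothesis, and $(*)'_\gamma$ already requires a locally Lipschitz auxiliary function, which is what you are trying to produce. What is needed, and what the paper means by referring to the proof of that lemma, is to rerun its branch estimates directly for $s$:
\begin{itemize}
\item Where $s=(L^{-1}|\nabla_\H R|)^{1/(\gamma-1)}$, one has $|\nabla_\H s|\le Cs^{2-\gamma}L^{-1}|\nabla_\H^2R|\le Cs^{2-\gamma}r^{\beta-2}\le C$.
\item Where $s=(L^{-1}|TR|)^{1/(\gamma-2)}$, one has $|\nabla_\H s|\le Cs^{3-\gamma}L^{-1}|\nabla_\H(TR)|\le Cs^{3-\gamma}r^{\beta-3}\le C$.
\end{itemize}
These use precisely the comparisons $r^{\beta-2}\le Cs^{\gamma-2}$ and $r^{\beta-3}\le Cs^{\gamma-3}$. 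So finishing the mixed-bound exponent check is also what makes the Lipschitz property go through.
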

\begin{proof}
By \eqref{eq:wf-first}, \eqref{eq:wf-higher} and the boundedness of $r$, we have
\[
s\leq Cr,\quad
r^{\beta-2}\leq Cs^{\gamma-2},\quad
r^{\beta-3}\leq Cs^{\gamma-3}\quad\text{on }\{s>0\}.
\]
The bounds in \eqref{eq:wf-first} and \eqref{eq:wf-higher} with exponent $\gamma$ follow immediately, and the local Lipschitz continuity of $s$ follows as in the proof of Lemma \ref{lem:wf-radius}.
The case $\gamma=2$ follows directly from \eqref{eq:wf-higher} and $r\leq M$.
\end{proof}

The following lemma gives the scaling rules for the constant $L$ and auxiliary function $r$.
\begin{lem}\label{lem:wf-scaling}
Let $\beta\ge2$ and let $R_i$ satisfy $(*)'_\beta$ in an open set $\Om _i\subset\Hn$ with 
$L_i$ and $C_*$. For $\si _i>0$, $b_i>0$,
$a_i\in\R$, and $\eta_i\in\Hn$, set
\[
 \Psi_i(\xi)=\eta_i\circ\delta_{\si _i}\xi,
 \quad \widetilde R_i=a_i+b_iR_i\circ\Psi_i.
\]
On $\Psi_i^{-1}(\Om _i)$, the transformed constant is $\widetilde L_i=b_i\si _i^\beta L_i$ for every $\beta\ge2$. And when $\beta>2$, the transformed auxiliary function is $\widetilde r_i=\si _i^{-1}r_i\circ\Psi_i$. The structural constant $C_*$ is unchanged. 
\end{lem}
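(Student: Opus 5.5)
The plan is to verify the two defining conditions of $(*)'_\beta$ directly for $\widetilde R_i$ on $\Psi_i^{-1}(\Om_i)$. The starting point is how derivatives transform under the dilation. Left-invariant fields commute with left translation $\tau_{\eta_i}$, and $\delta_{\si_i}$ scales horizontal fields by $\si_i$ and $T$ by $\si_i^2$. Hence
\[
Z\widetilde R_i=b_i\si_i\,(ZR_i)\circ\Psi_i,\qquad
T\widetilde R_i=b_i\si_i^2\,(TR_i)\circ\Psi_i,
\]
\[
Z_aZ_b\widetilde R_i=b_i\si_i^2\,(Z_aZ_bR_i)\circ\Psi_i,\qquad
Z_a(T\widetilde R_i)=b_i\si_i^3\,(Z_aTR_i)\circ\Psi_i .
\]
These identities hold first for smooth functions. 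For weak derivatives they follow by the change of variables $\xi\mapsto\Psi_i\xi$, whose Jacobian is the constant $\si_i^{Q}$ because Haar measure is left-invariant. The same change of variables shows that $\Psi_i$ maps null sets to null sets, so ``a.e.'' statements transfer. The constant $a_i$ disappears under differentiation. The regularity class $\mathcal A_{\al,loc}$ is clearly preserved on the preimage.

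\emph{Case $\beta=2$.} Substituting the identities into \eqref{eq:lf-two} gives
\[
|\nabla_\H^2\widetilde R_i|+|T\widetilde R_i|
=b_i\si_i^2\bigl(|\nabla_\H^2R_i|+|TR_i|\bigr)\circ\Psi_i
\le C_*\,b_i\si_i^2L_i ,
\]
which is \eqref{eq:lf-two} with $\widetilde L_i=b_i\si_i^2L_i$.

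\emph{Case $\beta>2$.} Put $\widetilde r_i=\si_i^{-1}r_i\circ\Psi_i$. We have $d(\Psi_i\xi,\Psi_i\xi')=\si_i d(\xi,\xi')$ by left-invariance of $d$ and homogeneity under dilation. Therefore $\widetilde r_i$ is locally Lipschitz for $d$ with the same constant $C_*$. For \eqref{eq:wf-first}, compute
\[
|\nabla_\H\widetilde R_i|+\widetilde r_i|T\widetilde R_i|
=b_i\si_i\bigl(|\nabla_\H R_i|+r_i|TR_i|\bigr)\circ\Psi_i .
\]
By \eqref{eq:wf-first} for $R_i$, this is comparable, with constant $C_*$, to
\[
b_i\si_i L_i\,(r_i\circ\Psi_i)^{\beta-1}=b_i\si_i^{\beta}L_i\,\widetilde r_i^{\,\beta-1}.
\]
For the first line of \eqref{eq:wf-higher}, we get
\[
|\nabla_\H^2\widetilde R_i|+|T\widetilde R_i|\le C_*b_i\si_i^2L_i\,(r_i\circ\Psi_i)^{\beta-2}=C_*\widetilde L_i\,\widetilde r_i^{\,\beta-2}.
\]
For the mixed term, on the set $\{\widetilde r_i>0\}=\Psi_i^{-1}\{r_i>0\}$ we get
\[
|\nabla_\H(T\widetilde R_i)|\le C_*b_i\si_i^3L_i\,(r_i\circ\Psi_i)^{\beta-3}=C_*\widetilde L_i\,\widetilde r_i^{\,\beta-3}.
\]
In all three cases the power of $\si_i$ works out to exactly $\beta$. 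This happens because each derivative weight (one for horizontal fields, two for $T$) is matched by the corresponding power of $r$.

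The only point needing care, and hence the main obstacle, is the justification of the weak-derivative scaling and of the null-set correspondence. This is handled by testing against $\phi\circ\Psi_i^{-1}$ and using the constant Jacobian as above. The rest is bookkeeping.
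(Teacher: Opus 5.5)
Your proof is correct, and it is the elementary direct verification the paper has in mind; the paper omits its own proof as elementary. You verify each weighted bound by combining four ingredients: the scaling rules $Z\mapsto\si_i Z$, $T\mapsto\si_i^2T$ and $Z_aT\mapsto\si_i^3Z_aT$ under $\Psi_i$, the identity $d(\Psi_i\xi,\Psi_i\xi')=\si_i\,d(\xi,\xi')$, and the constant Jacobian $\si_i^{Q}$ for transferring weak derivatives and null sets. This yields $\widetilde L_i=b_i\si_i^\beta L_i$ and $\widetilde r_i=\si_i^{-1}r_i\circ\Psi_i$ with $C_*$ unchanged.
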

\begin{proof}
Since the proof is elementary, we omit the details here.
\end{proof}






The following local estimates follow from weighted flatness and are used to estimate the integrals involving derivatives of $R_i$ in Section~\ref{sec:3}.
\begin{lem}\label{lem:wf-increments}
Let $\beta>2$ and suppose that $R$ satisfies $(*)'_\beta$ in an open set $\Om \subset\Hn$ with $L$, $C_*$, $r$. There is a dimensional constant $\kappa_n>1$ such that, if $h=d(\xi,\eta)>0$ and $B_{\kappa_nh}(\eta)\Subset\Om $, then
 \begin{align*}
 |\nabla_\H R(\xi)-\nabla_\H R(\eta)|
   &\le CLh(r(\eta)+h)^{\beta-2},\\
 |TR(\xi)-TR(\eta)|
   &\le CLh(r(\eta)+h)^{\beta-3},\\
 |R(\xi)-R(\eta)|
   &\le CLh(r(\eta)+h)^{\beta-1},
 \end{align*}
where the  constant $C>0$ depends only on $n,\beta,C_*$. With $s=r(\eta)$,
\begin{equation}\label{eq:wf-split}
 Lh^2(s+h)^{\beta-2}
 \le C\bigl(h|\nabla_\H R(\eta)|+h^2|TR(\eta)|+Lh^\beta\bigr).
\end{equation}
For $\beta=2$, \eqref{eq:lf-two} instead gives
\begin{equation}\label{eq:lf-two-increment}
 |\nabla_\H R(\xi)-\nabla_\H R(\eta)|\le CLh,
 \quad |TR|\le C_*L.
\end{equation}
No mixed-derivative condition is required for \eqref{eq:lf-two-increment}.
\end{lem}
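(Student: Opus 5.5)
The plan is to integrate along horizontal curves and use the Lipschitz control of $r$ to compare $r$ along the path with $r(\eta)$. Let $\gamma:[0,1]\to\Hn$ be a horizontal curve from $\eta$ to $\xi$ of length comparable to $h$. For instance, take a concatenation of a bounded number of horizontal segments given by the standard ball-box construction. Its image lies in $B_{\kappa_n h}(\eta)$; this is exactly how $\kappa_n$ is chosen. Since $r$ is $C_*$-Lipschitz for $d$, every point $\zeta$ on $\gamma$ satisfies
\[
r(\zeta)\le r(\eta)+C_*\kappa_nh\le C(r(\eta)+h),
\]
so all weights $r^{\beta-2}$ along the path are bounded by $C(s+h)^{\beta-2}$.

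\emph{Increment of $\nabla_\H R$.} Along a horizontal segment, $\frac{\d}{\d\tau}\nabla_\H R(\gamma(\tau))$ is a combination of the second horizontal derivatives $Z_aZ_bR$. By \eqref{eq:wf-higher}, these are bounded a.e. by $C_*L r^{\beta-2}\le CL(s+h)^{\beta-2}$. Since they are only weak derivatives, I would argue by mollification, or by integrating over a family of parallel segments and applying Fubini, so that the a.e. bound suffices. This gives the first estimate.

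\emph{Increment of $R$.} Integrate $\nabla_\H R$ along $\gamma$. By \eqref{eq:wf-first}, $|\nabla_\H R|\le C_*Lr^{\beta-1}\le CL(s+h)^{\beta-1}$ on the path, and this yields the third estimate.

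\emph{Increment of $TR$.} This is the main obstacle, because the bound $|\nabla_\H(TR)|\le C_*Lr^{\beta-3}$ is available only where $r>0$, and for $\beta<3$ the weight blows up as $r\to0$. I would split into two cases.
\begin{itemize}
\item[(a)] If $s\ge Ah$ for a large constant $A$, then $r\ge s-C_*\kappa_n h\ge s/2$ along the path. Integrating $\nabla_\H(TR)$ then gives a bound $CLh\,s^{\beta-3}\asymp CLh(s+h)^{\beta-3}$, with the weak derivative handled as above.
\item[(b)] If $s<Ah$, use the pointwise bound $|TR|\le C_*Lr^{\beta-2}\le CL(s+h)^{\beta-2}$ at both endpoints. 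The difference is then at most $CL(s+h)^{\beta-2}\le CLh(s+h)^{\beta-3}$, because $s+h\le (A+1)h$.
\end{itemize}
In case (a) with $\beta\ge3$, the weight is monotone and no splitting is needed.

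\emph{The splitting estimate \eqref{eq:wf-split}.} Again split according to $s$ versus $h$.
\begin{itemize}
\item[(i)] If $s\le h$, the left side is at most $CLh^\beta$.
\item[(ii)] If $s>h$, the left side is comparable to $Lh^2s^{\beta-2}$. By \eqref{eq:wf-first} at $\eta$, $Ls^{\beta-1}\le C_*(|\nabla_\H R(\eta)|+s|TR(\eta)|)$. Hence
\[
Lh^2s^{\beta-2}\le C\frac{h^2}{s}\bigl(|\nabla_\H R(\eta)|+s|TR(\eta)|\bigr)\le C\bigl(h|\nabla_\H R(\eta)|+h^2|TR(\eta)|\bigr),
\]
using $h/s<1$.
\end{itemize}

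\emph{The case $\beta=2$.} The first bound in \eqref{eq:lf-two-increment} follows by the same path integration of the second horizontal derivatives, now using \eqref{eq:lf-two}. The bound $|TR|\le C_*L$ is just \eqref{eq:lf-two} itself, holding everywhere by continuity. No mixed derivatives enter.
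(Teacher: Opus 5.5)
Your argument is correct and is essentially the elementary proof the paper omits. You integrate the relevant derivatives along a horizontal curve of length $O(h)$ inside $B_{\kappa_n h}(\eta)$, compare $r\le C(s+h)$ (and $r\ge s/2$ when $s\ge Ah$) along that curve, and treat $TR$ in two cases; for \eqref{eq:wf-split} you use exactly the split $s\le h$ versus $s>h$ via the lower bound in \eqref{eq:wf-first}, which the paper itself uses in the proof of Lemma~\ref{lem:4.12}. The only point to tidy is that $r$ is merely locally Lipschitz, so $|r(\zeta)-r(\eta)|$ should be bounded by chaining along $\gamma$, giving $C_*$ times the $d$-length of the subarc, which is at most $Ch$. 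It does not follow directly from $d(\zeta,\eta)\le\kappa_n h$.
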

\begin{proof}
Since the proof is elementary, we omit it here.
\end{proof}


We next extend the local flatness condition to the whole sphere and verify that it is preserved along $R_\mu=1+\mu(R-1)$. 

\begin{prop}\label{prop:wf-globalization}
Let $R\in\mathcal A_\al (\S^{2n+1})$ have locally bounded weak derivatives $Z_a(TR)$ off its full critical set $\mathcal C=\{\nabla_{\theta_0} R=T_{\theta_0} R=0\}$. Suppose that, near each $P\in\mathcal C$, $R$ satisfies $(*)'_{\beta_P}$ for some $\beta_P>2$ and admits the following expansion in the chart $F_P$, where $R(\xi)$ abbreviates $R(F_P(\xi))$:
\[
 R(\xi)=R(P)+Q_P^{(\beta_P)}(\xi)+W_P(\xi),
\]
where $Q_P^{(\beta_P)}$ is $\beta_P$-homogeneous and, for some $c_0>0$, satisfies
\be\label{nd}
 |ZQ_P^{(\beta_P)}|+\rho|TQ_P^{(\beta_P)}|\ge c_0\rho^{\beta_P-1},\quad \hbox{ a.e. }
\ee
and
\[
 \rho^{-\beta_P}|W_P|+\rho^{1-\beta_P}|ZW_P|+\rho^{2-\beta_P}|TW_P|\to 0.
\]
Then $\mathcal C=\{P_1,\ldots,P_N\}$ is finite. For each fixed $2\le\gamma\le\min_j\beta_{P_j}$, $R$ satisfies $(*)'_\gamma$ on the whole sphere with $L=1$ in the uniform chart sense above. Moreover, $R_\mu=1+\mu(R-1)$ satisfies this condition with $L_\mu=\mu$ and the same auxiliary functions and structural constants in each chart for $0<\mu\le1$. These structural constants may depend on $n$, $\al$, $\gamma$, $R$ and its local flatness data, but are independent of $\mu$ and the centered chart.
\end{prop}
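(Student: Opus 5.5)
Finiteness of $\mathcal C$ comes first. Fix $P\in\mathcal C$ and work in the chart $F_P$. From the expansion and the decay of $W_P$, together with \eqref{nd} and the $\beta_P$-homogeneity of $Q_P^{(\beta_P)}$, we get for $0<\rho(\xi)<\rho_P$ small
\[
|ZR(\xi)|+\rho|TR(\xi)|\ge c_0\rho^{\beta_P-1}-o(\rho^{\beta_P-1})\ge \tfrac{c_0}{2}\rho^{\beta_P-1}>0 .
\]
Hence $P$ is an isolated point of $\mathcal C$. Since $\mathcal C$ is closed in the compact sphere, it is finite, say $\mathcal C=\{P_1,\dots,P_N\}$. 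The same inequality, combined with the matching upper bounds $|ZQ|\le C\rho^{\beta_P-1}$ and $|TQ|\le C\rho^{\beta_P-2}$ from homogeneity and $C^1$ regularity away from $0$, gives
\[
|\nabla_\H R|+\rho|TR|\asymp \rho^{\beta_P-1}\quad\text{on } B_{\rho_P}\setminus\{0\}.
\]
This is \eqref{eq:wf-first} with $r=\rho$ in a punctured neighborhood. By Lemma~\ref{lem:wf-radius}, the explicit radius $r_R$ is comparable to $\rho$ there. The weighted higher-order bounds \eqref{eq:wf-higher} hold near $P$ because $(*)'_{\beta_P}$ is assumed; near $P$ the hypothesized auxiliary radius is comparable to $\rho$ by the same comparison.

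Next I pass to $\gamma$. Fix $2<\gamma\le\min_j\beta_{P_j}$ and take as auxiliary function the one from Lemma~\ref{lem:wf-lowering},
\[
s=\max\{|\nabla_\H R|^{1/(\gamma-1)},|TR|^{1/(\gamma-2)}\},
\]
computed in each centered chart with $L=1$. The sphere splits into two regions.

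\emph{Near $\mathcal C$.} In each ball $\B(P_j,r_0)$, Lemma~\ref{lem:wf-lowering} applied to $(*)'_{\beta_{P_j}}$ with bounded $r$ gives $(*)'_\gamma$.

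\emph{Away from $\mathcal C$.} On the complement $K$ of these balls, $|\d R|$ is bounded below by some $c_K>0$, so $s\ge c>0$ on $K$. Then \eqref{eq:wf-first} holds with constants depending on $\sup|\d R|$ and $c_K$. The first line of \eqref{eq:wf-higher} follows from the $\mathcal A_\al$ bound on $Z_aZ_bR$ and the boundedness of $TR$. The mixed bound $|\nabla_\H TR|\le Cs^{\gamma-3}$ follows from the assumed local boundedness of $Z_a(TR)$ off $\mathcal C$, uniformly on the compact set $K$, since $s$ is bounded above and below there.

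Lipschitz continuity of $s$ with respect to $d$ follows as in Lemma~\ref{lem:wf-radius}. The case $\gamma=2$ is simpler: near $\mathcal C$ use $(*)'_2$ from Lemma~\ref{lem:wf-lowering}, and away from $\mathcal C$ use the $\mathcal A_\al$ norm directly.

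Chart uniformity needs care, because the centered chart at an arbitrary point differs from $F_{P_j}$ by a smooth contact change of coordinates on bounded-scale domains. I would invoke Lemma~\ref{lem:intrinsic-coefficient-calculus} (the contact chain rule) to transfer the derivative bounds with uniform constants. The lower bound in \eqref{eq:wf-first} also transfers: the horizontal gradient transforms by an invertible linear map on the horizontal bundle, and $T$ transforms to a multiple of $T$ plus horizontal terms. The compactness of the unitary group makes all constants uniform over $U_P$. I expect this step to be the main obstacle. A horizontal error of size $|TR|$ enters $\nabla_\H R$, and in weighted form this reads $|\nabla_\H R|\lesssim \rho^{\gamma-1}+|z|\,\rho^{\gamma-2}$, which is harmless only because $|z|\le\rho$. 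I would verify this in each term of the chain rule.

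Finally, the statement about $R_\mu$ is pure scaling. Since $\nabla_\H R_\mu=\mu\nabla_\H R$, $TR_\mu=\mu TR$ and $Z(TR_\mu)=\mu Z(TR)$, every inequality in \eqref{eq:wf-first}--\eqref{eq:wf-higher} for $R$ with $L=1$ becomes the same inequality for $R_\mu$ with $L_\mu=\mu$. The auxiliary function $s$ and the constant $C_*$ are unchanged. This is the case $\si_i=1$, $b_i=\mu$ of Lemma~\ref{lem:wf-scaling}. In particular the constants are independent of $\mu\in(0,1]$.
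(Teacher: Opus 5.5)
Your proposal is correct and follows essentially the same route as the paper. The shared steps are: isolating the critical points from the expansion and \eqref{nd}; using Lemmas~\ref{lem:wf-radius} and~\ref{lem:wf-lowering} near $\mathcal C$ (Lemma~\ref{lem:wf-radius} is what covers the endpoint $\gamma=\beta_{P_j}$, which Lemma~\ref{lem:wf-lowering} excludes); using the $\mathcal A_\al$ and $Z_a(TR)$ bounds on the compact complement; applying the contact chain rule for uniformity over centered charts; and pure scaling for $R_\mu$. One small correction to the obstacle you anticipate: a contact change of coordinates maps horizontal fields to horizontal fields, so $\nabla_\H K$ acquires no $T$-component, and it is $TK$ that picks up horizontal terms of size $|\nabla_\H R|\lesssim r^{\gamma-1}$, which are harmless near $\mathcal C$ where $r$ is small; an error of the form $|z|\,|TR|$ arises only when comparing left- and right-invariant fields, as in the remark following the proposition.
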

\begin{proof}
In a sufficiently small punctured chart at $P\in\mathcal C$, we have
\[
 |ZR|+\rho|TR|
 \ge |ZQ_P^{(\beta_P)}|+\rho|TQ_P^{(\beta_P)}|
       -|ZW_P|-\rho|TW_P|
 \ge c_P\rho^{\beta_P-1}>0.
\]
Thus $\mathcal C$ is discrete and compact, hence finite. For $\gamma=2$, \eqref{eq:lf-two} follows directly from the uniform $\mathcal A_\al $ bound. 

Let $\gamma>2$. For each $q\in\S^{2n+1}$, in the centered chart $F_q$, let
\[
 K=R\circ F_q,\quad
 s=\max\{|\nabla_\H K|^{1/(\gamma-1)},|TK|^{1/(\gamma-2)}\}.
\]
Near $\mathcal C$, the original auxiliary functions vanish at the critical points and are bounded on smaller neighborhoods. The chain rule for contact changes of coordinates transfers the local flatness condition to each centered Cayley chart, with uniform constants because the chart family is bounded. Lemmas \ref{lem:wf-radius} and \ref{lem:wf-lowering} then give $(*)_\gamma'$ near $\mathcal{C}$. On the compact complement of these neighborhoods, by the $\mathcal A_\al $ bounds, the local bounds for $Z_a(TR)$, and the absence of critical points, we have, uniformly in $q$,
\[
 c\le s\le C,\quad
 |\nabla_\H^2K|+|TK|+|\nabla_\H(TK)|\le C,\quad
 |\nabla_\H s|\le C\quad\text{a.e.}
\]
Consequently, throughout each chart, we have
\begin{gather*}
  s^{\gamma-1}\le |\nabla_\H K|+s|TK|\le2s^{\gamma-1},\\
 |\nabla_\H^2K|+|TK|\le Cs^{\gamma-2}\quad\text{a.e.},\\
 |\nabla_\H(TK)|\le Cs^{\gamma-3}\quad\text{ a.e. on }\{s>0\}.   
\end{gather*}
The branch estimates of Lemma \ref{lem:wf-radius} also give a uniform local Lipschitz bound for $s$. Compactness of the unitary group makes the chart constants uniform.

Finally, for $K_\mu=1+\mu(K-1)$, we have
\[
 DK_\mu=\mu DK,\quad
 D\in\{\nabla_\H,T,\nabla_\H^2,\nabla_\H T\},\quad L_\mu=\mu,
\]
so the same auxiliary functions in Definition~\ref{def:weighted-flatness} work for every $0<\mu\le1$.
\end{proof}

Moreover, by \eqref{vectorfields} and \eqref{right-vectorfields}, the nondegeneracy condition \eqref{nd} is equivalent to the corresponding estimate written with the right-invariant horizontal fields \eqref{right-vectorfields}.

\section{Local analysis near isolated blow-up  points}\label{sec:3}
The analysis in this section and the next adapts the blow-up  analysis developed in \cite{SZ1996,Li1995}  to give accurate blow-up  profiles for solutions of sub-elliptic equations in $\Hn $.

Let $\Om\subset \Hn $  be a domain, let $\tau_i \geq 0$ satisfy $\lim _{i \to\infty} \tau_i=0$, let $p_i=\frac{Q+2}{Q-2}-\tau_i$, and let $R_i \in\mathcal A_\al (\Om)$, $0<\al<1$ satisfy, for some constants $A_1,A_2>0$ independent of $i$,
\be\label{4.1}
1 / A_1 \leq R_i(\xi) \leq A_1 \quad \text { for all }\, \xi \in \Om, \quad\|R_i\|_{\mathcal A_\al (\Om)} \leq A_2 .
\ee
Here $A_1$ gives the two-sided bound for $R_i$, and $A_2$ bounds its intrinsic norm. All estimates for blow-up sequences are understood for sufficiently large $i$, so that $p_i>1$; the starting index may depend on the sequence.
Let $u_i \in L^{\infty}(\Om)$ with $u_i \geq 0$ in $\Om$ satisfy
\be\label{4.2}
-\Delta_{\H} u_i=\frac{n^2}{4} R_i u_i^{p_i} \quad \text { in } \,\Om .
\ee

We say that $\{u_i\}$ blows up if $\|u_i\|_{L^{\infty}(\Om )} \to\infty$ as $i \to\infty$.

\begin{defn}\label{def:isolated-blowup}
 Suppose that $\{R_i\}$ satisfies \eqref{4.1} and $\{u_i\}$ satisfies \eqref{4.2}. We say a point $\bar{\xi} \in \Om $ is an isolated blow-up  point of $\{u_i\}$ if there exist $\bar{r}\in (0,d(\bar{\xi}, \pa\Om ))$, $A_3>0$, and a sequence $\xi_i$ tending to $\bar{\xi}$, such that $\xi_i$ is a local maximum point of $u_i$, $u_i(\xi_i) \to  \infty$ and $$
 u_i(\xi) \leq A_3d(\xi,\xi_i)^{-2 /(p_i-1)} \quad \text { for all }\, \xi \in B_{\bar{r}}(\xi_i)\backslash\{\xi_i\}.
 $$
\end{defn}
For such an isolated blow-up sequence, $A_3$ denotes the bound in Definition~\ref{def:isolated-blowup} and $\bar r$ its fixed isolation radius. For $\theta\in\pa B_1$ and a fixed $0<S<\bar r$, define $f_{u_i,\theta}:[0,S]\to\R$ by
	\be\label{4.3}
		f_{u_i, \theta}(s) = s^{2/(p_i-1)}u_i(\xi_i\circ \delta_s \theta),
	\ee
		where  $\delta_s \theta$ is the dilation in $\Hn $.
\begin{defn}\label{def:simple-blowup}
We say $\xi_i \to  \bar{\xi} \in \Om$ is an isolated simple blow-up  point if $\xi_i \to  \bar{\xi}$ is an isolated blow-up  point such that for some radius $0<\rho<S$ $($independent of $i$ and $\theta\in \pa B_1)$, $f_{u_i,\theta}$ has precisely one critical point in $(0, \rho)$ for every  $\theta\in \pa B_1$ and  large $i$.
\end{defn}

In Definition~\ref{def:simple-blowup}, $\rho$ is a fixed radius, distinct from the norm function $\rho(\xi)$. In this section, we are mainly concerned with the blow-up profile of $\{u_i\}$. Under certain conditions, we can show that isolated blow-up points must be simple.

\begin{lem}\label{lem:4.3}
Let $r>0$, $\xi_0\in\Hn$, and let $w>0$ be a $C^2$ solution of
$-\Delta_\H w=a w$ in $B_{2r}(\xi_0)$, where $a\in L^\infty(B_{2r}(\xi_0))$. Then
\begin{equation}\label{eq:local-harnack}
\sup_{B_r(\xi_0)}w\le C\inf_{B_r(\xi_0)}w,
\end{equation}
where $C$ depends only on $n$ and $r^2\|a\|_{L^\infty(B_{2r}(\xi_0))}$.

In particular, suppose that $u_i \in C^2(\Om)$ satisfies \eqref{4.2} with $\{R_i\}$ satisfying \eqref{4.1}, and $\xi_i \to 0$ is an isolated blow-up  point of $\{u_i\}$, i.e., for some positive constants $A_3$ and $\bar{r}$ as in Definition~\ref{def:isolated-blowup}, independent of $i$,
	\be\label{4.5}
	d(\xi,\xi_i)^{2 /(p_i-1)} u_i(\xi) \leq A_3 \quad \text { for all }\, \xi \in B_{\bar{r}}(\xi_i)\setminus\{\xi_i\},\quad B_{\bar r}(\xi_i)\subset\Om.
\ee
	 Then for any $r\in (0, \bar{r}/3)$, we have the following Harnack inequality:
	\begin{equation}\label{eq:annular-harnack}
\sup_{B_{2r}(\xi_i)\backslash B_{r/2}(\xi_i)} u_i \leq C \inf_{B_{2r}(\xi_i)\backslash B_{r/2}(\xi_i)}u_i,
	\end{equation}
	where $C$ is a positive constant depending only on $n,  A_3, \bar{r}$ and $\sup _i\|R_i\|_{L^{\infty}(B_{\bar r}(\xi_i))}$.
\end{lem}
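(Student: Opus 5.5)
The local estimate \eqref{eq:local-harnack} is the subelliptic Harnack inequality for nonnegative solutions of $-\Delta_\H w=aw$ with bounded zeroth-order coefficient. I would first rescale to unit size and then quote the standard result. Left translation by $\xi_0$ and the dilation $\delta_r$ commute appropriately with $\Delta_\H$. So $\tilde w(\xi)=w(\xi_0\circ\delta_r\xi)$ solves $-\Delta_\H\tilde w=\tilde a\tilde w$ in $B_2$, where $\tilde a(\xi)=r^2a(\xi_0\circ\delta_r\xi)$ and $\|\tilde a\|_{L^\infty(B_2)}=r^2\|a\|_{L^\infty(B_{2r}(\xi_0))}$. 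The supremum and infimum over $B_r(\xi_0)$ become those over $B_1$, because $d$ is left-invariant and homogeneous under $\delta_r$. On $B_2$ I would then apply the Harnack inequality for sums of squares of H\"ormander vector fields with a bounded potential. This can be taken from Bony's maximum principle combined with Moser iteration in the Folland--Stein setting, or from the general results for divergence-form subelliptic operators. The Harnack constant on the unit ball depends only on $n$ and the $L^\infty$ bound of $\tilde a$, which gives \eqref{eq:local-harnack}.

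For \eqref{eq:annular-harnack} I would follow the classical Schoen--Li argument. Write $u_i$ as a solution of the linear equation $-\Delta_\H u_i=a_iu_i$ with $a_i=\frac{n^2}{4}R_iu_i^{p_i-1}$. The strong maximum principle applied to the nonnegative supersolution $u_i$ gives $u_i>0$ on the region considered; otherwise $u_i\equiv0$ locally, which contradicts its positivity near the maximum point. On the annulus $\mathcal{N}_r=B_{5r/2}(\xi_i)\setminus B_{r/4}(\xi_i)$, which lies inside $B_{\bar r}(\xi_i)$ because $r<\bar r/3$, the isolated blow-up bound \eqref{4.5} gives
\[
a_i(\xi)\le \tfrac{n^2}{4}\sup_i\|R_i\|_{L^\infty}A_3^{p_i-1}d(\xi,\xi_i)^{-2}\le C r^{-2}\quad\text{on }\mathcal N_r,
\]
with $C$ depending only on $n$, $A_3$, and the sup of $R_i$. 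Here $A_3^{p_i-1}$ is bounded uniformly since $p_i$ stays in a compact range.

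Next I would cover the closed annulus $\overline{B_{2r}(\xi_i)}\setminus B_{r/2}(\xi_i)$ by balls $B_{cr}(\eta_k)$, for a small fixed $c$, with $B_{2cr}(\eta_k)\subset\mathcal N_r$. On each ball $(cr)^2\|a_i\|_{L^\infty}\le C$, so the first part gives a uniform Harnack constant there. To chain these estimates, I need the number of balls to be bounded by a dimensional constant and the annulus to be connected, with consecutive balls overlapping. By left translation and dilation the annulus is the image of the fixed set $B_2\setminus B_{1/2}$, which is compact and connected; the Heisenberg gauge sphere is connected since $Q\ge4$. A fixed finite chain of balls in the normalized picture therefore transfers to every $i$ and $r$. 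Chaining the estimates yields \eqref{eq:annular-harnack} with a constant depending only on $n$, $A_3$, $\bar r$, and $\sup_i\|R_i\|_{L^\infty}$.

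The main obstacle is the first step: justifying the Harnack inequality on $\Hn$ with a bounded potential and with an explicit dependence of the constant. I would quote it rather than reprove it. If a reference is needed, I would apply Moser iteration using the Folland--Stein--Sobolev inequality \eqref{Sobolevineq} together with the John--Nirenberg inequality on the homogeneous space $(\Hn,d,\d\xi)$. The rest of the argument is only bookkeeping of scaling and covering.
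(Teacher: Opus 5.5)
Your proposal is correct and follows the paper's proof. The local estimate is a quoted Harnack inequality for sums of squares of vector fields with a bounded potential, transported by left translation and $\delta_r$. The annular estimate comes from bounding $\frac{n^2}{4}R_iu_i^{p_i-1}\le Cr^{-2}$ on $B_{5r/2}(\xi_i)\setminus B_{r/4}(\xi_i)$ via \eqref{4.5} and then chaining the local estimate over a fixed finite family of overlapping balls.

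The only cosmetic difference is where the scaling is done. The paper first rescales to $v_i(\xi)=r^{2/(p_i-1)}u_i(\xi_i\circ\delta_r\xi)$ on $B_3$ and chains on the fixed annulus $B_{5/2}\setminus\overline{B}_{1/4}$, whereas you stay in the original variables and scale the ball radii.
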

\begin{proof}
The local estimate \eqref{eq:local-harnack} follows from \cite[Theorem~1.1]{CGL1993} by left translation and dilation.
For the annular estimate, let $r\in (0,\bar r/3)$ and set
	\[
	v_i(\xi)=r^{2 /(p_i-1)} u_i(\xi_i\circ \delta_{r}\xi) \quad \text { for }\, \xi \in B_3.
	\]
		It is easy to see that
	\[
	-\Delta_{\H}v_i(\xi)=\frac{n^2}{4}R_{i}(\xi_i\circ \delta_{r}\xi)v_i(\xi)^{p_i} \quad \text { in }\, B_3.
	\]
		By \eqref{4.5},
	\[
	v_i(\xi) \leq A_3 |\xi|^{-2 /(p_i-1)} \quad \text { for all }\, \xi \in B_3 .
	\]
		The potential $(n^2/4)R_i(\xi_i\circ\delta_r\xi)v_i^{p_i-1}$ is uniformly bounded on $B_{5/2}\backslash\overline B_{1/4}$ by \eqref{4.5}. Applying \eqref{eq:local-harnack} on a fixed finite family of overlapping balls in this annulus gives \eqref{eq:annular-harnack}.
\end{proof}

\begin{prop}\label{prop:4.4}
 Suppose that $u_i \in C^2(\Om)$ with $u_i \geq 0$ in $\Om$ satisfies \eqref{4.2} with $R_i \in\mathcal A_\al (\Om)$ satisfying \eqref{4.1}. Suppose also that $\xi_i \to0$ is an isolated blow-up  point of $\{u_i\}$ with \eqref{4.5}.
Then for any $S_i \to\infty$ and $\var_i \to0^{+}$, we have, after passing to a subsequence $($still denoted by $\{u_i\},\{\xi_i\}$, etc.$)$,
\begin{gather*}
\|m_i^{-1} u_i(\xi_i\circ\delta_{m_i^{-(p_i-1) / 2}} \xi  )-\Lam(\delta_{k_i} \xi)\|_{C^2(B_{2 S_i}(0))} \leq \var_i, \\
S_i m_i^{-(p_i-1) / 2} \to 0 \quad \text { as }\, i \to\infty,
\end{gather*}
where $m_i=u_i(\xi_i)$, $\Lam(\xi)=((1+|z|^2)^2+t^2)^{(2-Q)/4}$ and  $k_i= \sqrt{R_i(\xi_i)}/2$.

Here, passing to a subsequence means pairing the prescribed
$(S_j,\var_j)$ with suitably chosen $u_{i_j}$ and then
relabeling $j$ as $i$, rather than taking a common subsequence
of $(u_i,S_i,\var_i)$.
\end{prop}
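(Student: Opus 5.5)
We rescale around the local maxima and pass to the limit. Set $\ell_i=m_i^{-(p_i-1)/2}$ and
\[
\phi_i(\xi)=m_i^{-1}u_i(\xi_i\circ\delta_{\ell_i}\xi).
\]
By the dilation and left-invariance of $\Delta_\H$, this function satisfies
\[
-\Delta_\H\phi_i=\frac{n^2}{4}R_i(\xi_i\circ\delta_{\ell_i}\xi)\phi_i^{p_i},\qquad \phi_i(0)=1,
\]
and $0$ is a local maximum of $\phi_i$. The isolated bound \eqref{4.5} transfers to $\phi_i(\xi)\le A_3|\xi|^{-2/(p_i-1)}$ on $B_{\bar r/\ell_i}\setminus\{0\}$.

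The first step is a uniform bound for $\phi_i$ on compact sets. On $|\xi|\ge1$ it comes from the decay estimate above. On $B_1$ we use Lemma~\ref{lem:4.3}: the annular Harnack inequality, applied at scale $r=\ell_i$, bounds $\phi_i$ on the annulus $B_2\setminus B_{1/2}$ by a constant. To control the interior we argue by contradiction. Suppose $\sup_{B_1}\phi_i\to\infty$, attained at points $\eta_i$. Since $\phi_i$ is bounded near $\partial B_1$ and $|\eta_i|<1$, the decay bound gives $\phi_i(\eta_i)\le A_3|\eta_i|^{-2/(p_i-1)}$. A second rescaling at $\eta_i$ then produces a nontrivial bounded entire solution with a maximum, and comparing scales shows the mass must escape, which is incompatible with the Harnack chain. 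A cleaner route is to apply the local Harnack inequality \eqref{eq:local-harnack} to $w=\phi_i$ with potential $a=\frac{n^2}{4}R_i\phi_i^{p_i-1}$, after first obtaining boundedness of $a$ by a blow-up argument. Either way we obtain $\phi_i\le C(K)$ on each compact $K$. For the lower bound, $\phi_i(0)=1$ and Harnack give $\phi_i\ge c(K)>0$ on $K$.

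Next, the coefficients $K_i(\xi)=R_i(\xi_i\circ\delta_{\ell_i}\xi)$ are uniformly bounded in $\mathcal A_\al$ on compacts, since dilation with $\ell_i\le1$ only improves the derivative norms (Lemma~\ref{lem:intrinsic-coefficient-calculus}). They also converge locally uniformly to the constant $R_\infty=\lim R_i(\xi_i)$ along a subsequence. Lemma~\ref{lem:intrinsic-regularity} then gives uniform $C^{2,\al'/2}$ bounds, so $\phi_i\to\phi$ in $C^2_{loc}(\Hn)$ after extracting a subsequence. The limit satisfies
\[
-\Delta_\H\phi=\frac{n^2}{4}R_\infty\phi^{\frac{Q+2}{Q-2}},\qquad \phi>0,\quad \phi(0)=1=\max\phi,
\]
where $p_i\to\frac{Q+2}{Q-2}$ and $0$ is a maximum because each $\phi_i\le\phi_i(0)$ near $0$ and the bound extends by the decay estimate. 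The limit also satisfies $\phi(\xi)\le A_3|\xi|^{-(Q-2)/2}$.

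We then classify the limit, which we expect to be the main obstacle. This decay is weaker than finite energy, so Jerison--Lee does not apply directly. Instead we invoke the complete classification of positive entire solutions by Li--Liu--Ma \cite{LLM2026} and Li \cite{L2026} for $n\ge2$, and by \cite{CLMR2023} for $n=1$. After absorbing $R_\infty$ by scaling, $\phi$ is a Jerison--Lee bubble. The normalizations $\phi(0)=1$ and $\nabla\phi(0)=0$ at the maximum, with the maximum of a bubble located at its center, force $\phi(\xi)=\Lam(\delta_{k}\xi)$ with $k=\sqrt{R_\infty}/2$. Explicitly, $\Lam(\delta_k\cdot)$ solves the equation with constant $R_\infty$ exactly when $4k^2=R_\infty$, which one checks from $U_{0,1}=2^n\Lam$ and homogeneity. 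Replacing $k$ by $k_i=\sqrt{R_i(\xi_i)}/2$ costs only $o(1)$ in $C^2$ on compacts.

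Finally, we run the diagonal argument. For each $j$ we choose $i_j$ so large that the $C^2(B_{2S_j})$ distance to $\Lam(\delta_{k_{i_j}}\cdot)$ is less than $\var_j$ and $S_j\ell_{i_j}<1/j$. This is possible because $\ell_i\to0$ and the convergence is uniform on each fixed ball. Relabeling $j$ as $i$ gives both assertions.
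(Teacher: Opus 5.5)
Your outline (rescale, bound $\phi_i$ on compacts, apply Lemma~\ref{lem:intrinsic-regularity}, classify the limit, select diagonally) is the paper's. However, the one step with real analytic content, the uniform bound for $\phi_i$ on $\overline B_1$, is not established.

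\textbf{The local bound near the origin.} The decay bound $\phi_i\le A_3|\xi|^{-2/(p_i-1)}$ degenerates at the origin. Applying the annular Harnack inequality at the single scale giving $B_2\setminus B_{1/2}$ adds nothing there that the decay bound does not already give. Your ``cleaner route'' is circular: boundedness of the potential $\frac{n^2}{4}R_i\phi_i^{p_i-1}$ on $B_1$ is exactly the statement to be proved. The contradiction sketch (``the mass must escape, which is incompatible with the Harnack chain'') does not name an actual contradiction.

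The idea you are missing is to combine superharmonicity with the scale invariance of Lemma~\ref{lem:4.3}. Since $-\Delta_\H\phi_i\ge0$, the minimum principle on $B_r$ gives $\min_{\pa B_r}\phi_i\le\phi_i(0)=1$ for \emph{every} $r\in(0,1)$. The annular Harnack inequality, whose constant is independent of $r$, then gives $\max_{\pa B_r}\phi_i\le C\min_{\pa B_r}\phi_i\le C$ for all such $r$. Hence $\max_{\overline B_1}\phi_i\le C$.

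Alternatively, your second rescaling can be completed. Set $M_i=\phi_i(\eta_i)\to\infty$ and $\psi_i(\xi)=M_i^{-1}\phi_i(\eta_i\circ\delta_{M_i^{-(p_i-1)/2}}\xi)$, so that $\psi_i\le1$ (for large $i$, since $\phi_i\le A_3$ off $B_1$). The decay bound gives $M_i^{(p_i-1)/2}|\eta_i|\le A_3^{(p_i-1)/2}$. So the old center sits at bounded distance from $0$ in the new variables, and there $\psi_i=M_i^{-1}\to0$, while $\psi_i(0)=1$. This contradicts \eqref{eq:local-harnack}, which applies because the potential is bounded. One of these two arguments has to be written out.

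\textbf{A smaller point.} You cannot conclude $\phi(0)=\max\phi$ from ``$0$ is a local maximum of each $\phi_i$'' plus the decay bound. The neighbourhoods on which $\phi_i\le\phi_i(0)$ may shrink, and $A_3|\xi|^{-(Q-2)/2}$ exceeds $1$ at intermediate distances. What the $C^2$ convergence does give is $\nabla_\H\phi(0)=0$ and $T\phi(0)=0$, and that is enough. A bubble has exactly one critical point, its center, so the center is $0$. Then $\phi(0)=1$ and the equation force $\phi=\Lam(\delta_k\cdot)$ with $k=\sqrt{R_\infty}/2$.
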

\begin{proof}
 Let
	\[
	\phi_i(\xi)=m_i^{-1} u_i(\xi_i\circ\delta_{m_i^{-(p_i-1) / 2}} \xi  ) \quad \text { for }\, |\xi|<\bar{r} m_i^{(p_i-1) / 2}.
	\]
		It follows that
	\begin{gather}
	-\Delta_{\H} \phi_i= \frac{n^2}{4}R_i(\xi_i\circ\delta_{m_i^{-(p_i-1) / 2}} \xi) \phi_i^{p_i}, \notag\\
	0<\phi_i(\xi) \leq A_3|\xi|^{-2 /(p_i-1)} \quad \text{ for }\,0<|\xi|<\bar{r} m_i^{(p_i-1) / 2},\label{4.6}
	\end{gather}
	and since $\xi_i$ is a local maximum point of $u_i$, we also have
\[
	\phi_i(0)=1, \quad \nabla_{\H} \phi_i(0)=0,\quad T\phi_i(0)=0.
\]

	By the weak maximum principle, for any $r\in(0,1)$, we have $1=\phi_i(0) \geq\min _{\pa B_r} \phi_i$. It follows from Lemma \ref{lem:4.3} that
	\[
	\max _{\pa  B_r} \phi_i \leq C \min _{\pa B_r} \phi_i \leq C .
	\]
		Thus,
	\[
	\max _{\overline{B}_1} \phi_i \leq C
	\]
	for some $C>0$ depending on $n,A_1,A_2,A_3$. The bound on $\overline B_1$, together with \eqref{4.6}, implies that for any $\lam_0>1$,
	\[
	\max _{\overline{B}_{\lam_0}} \phi_i \leq C(\lam_0)
	\]
    for some $C(\lam_0)>0$ depending on $n,A_1,A_2,A_3$ and $\lam_0$.
    The rescaled functions $R_i(\xi_i\circ\delta_{m_i^{-(p_i-1)/2}}\xi)$ are uniformly bounded in
$\mathcal A_\al $ on compact sets by
Lemma~\ref{lem:intrinsic-coefficient-calculus}. The local Harnack estimate \eqref{eq:local-harnack} in Lemma~\ref{lem:4.3}, together with $\phi_i(0)=1$, gives a uniform positive lower bound on each compact set. Hence
Lemma~\ref{lem:intrinsic-regularity} gives local uniform ordinary
$C^{2,\al '/2}$ bounds for any $0<\al '<\al $.
Estimate \eqref{sch new} gives the $C^2$ compactness below even when the original
function $R_i$ has no bounded second central derivative at the blow-up point.
    Thus, after passing to a subsequence we have, for some nonnegative function  $\phi \in C^2(\Hn )$,
	\[
	\begin{cases}\phi_i \rightharpoonup \phi & \text { weakly in }\, S_{1,loc}^2(\Hn), \\ \phi_i \to\phi & \text { in }\, C_{loc}^2(\Hn ) .\end{cases}
	\]
	 After passing to a further subsequence, let $R_i(\xi_i)\to R_0>0$ and $k_i:=\sqrt{R_i(\xi_i)}/2\to k_0$. It follows that
	\[
 \phi(0)=1, \quad \nabla_{\H} \phi(0)=0, \quad T\phi(0)=0
	\]
	and $\phi$ satisfies
	\[
	-\Delta_{\H}\phi=\frac{n^2}{4}R_0\phi^{(Q+2)/(Q-2)}\quad \text{ in }\, \Hn .
\]

By the Liouville theorem of \cite[Theorem 1.1]{CLMR2023} for $n=1$, and of \cite[Corollary 1.3]{LLM2026} or  \cite[Corollary 1.2]{L2026} for $n\geq 2$, we have
	\[
	\phi(\xi)=((1+k_0^2|z|^2)^2+k_0^4t^2)^{(2-Q)/4}=\Lam(\delta_{k_0}\xi).
	\]
 A diagonal argument, together with $k_i\to k_0$, gives the stated estimate on $B_{2S_i}$ and completes the proof.
\end{proof}

\begin{rem}\label{rem1}
Since we may pass to a subsequence throughout the proof, we first fix
$S_i\to\infty$ and then choose $\var_i\to0^+$ sufficiently small,
depending on $S_i$, with $\var_i S_i^{Q-2}\to0$.
We then select a subsequence as in Proposition~\ref{prop:4.4}.
With $\var_i$ chosen sufficiently small, $u_i$ has a unique critical
point in $B_{S_i m_i^{-(p_i-1)/2}}(\xi_i)$, and
$f_{u_i,\theta}$ has a unique critical point in
$(0,S_i m_i^{-(p_i-1)/2})$ for every $\theta\in\pa B_1$,
for all sufficiently large $i$.
\end{rem}

For the local estimates below, write $m_i=u_i(\xi_i)$ and $s_i=S_i m_i^{-(p_i-1)/2}$. After a fixed translation and dilation, retaining $A_1,A_2$ for the resulting bounds on $R_i$, we may assume $B_3(0) \subset \Om $ and
$$
u_i(\xi) \leq A_3 d(\xi, \xi_i)^{-2 /(p_i-1) } \quad \text { in } \, B_3(0) \backslash\{\xi_i\} .
$$
\begin{prop}\label{prop:4.5}
Under the hypotheses of Proposition \ref{prop:4.4}, there exists some positive constant $C=C(n,A_1,A_2,A_3)$ such that
	\[
	u_i(\xi) \geq C^{-1} m_i\Lam(\delta_{k_im_i^{(p_i-1)/2}} (\xi_i^{-1}\circ\xi))
	\]
    for all $d(\xi,\xi_i) \leq 1$.
	In particular, for any $e \in \Hn $ with $|e|=1$, we have
	\[
	u_i(\xi_i\circ e) \geq C^{-1} m_i^{-1+((Q-2)/2)\tau_i},
	\]
	where $\tau_i=\frac{Q+2}{Q-2}-p_i$.
\end{prop}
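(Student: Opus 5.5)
We prove the lower bound of Proposition~\ref{prop:4.5} by combining the bubble approximation of Proposition~\ref{prop:4.4} on the inner scale with a comparison (maximum principle) argument on the outer annulus. Fix $r_i=m_i^{-(p_i-1)/2}$. By Proposition~\ref{prop:4.4} with some fixed large $S_i$ (or simply on $B_1$ of the rescaled variable), the rescaled function $\phi_i(\xi)=m_i^{-1}u_i(\xi_i\circ\delta_{r_i}\xi)$ converges in $C^2_{loc}$ to $\Lam(\delta_{k_0}\xi)$. Hence for $d(\xi,\xi_i)\le r_i$ we directly get $u_i(\xi)\ge \tfrac12 m_i\Lam(\delta_{k_i}\delta_{r_i^{-1}}(\xi_i^{-1}\circ\xi))$ for large $i$, and more precisely on the sphere $\partial B_{r_i}(\xi_i)$ we have $u_i\ge c\, m_i$ with $c>0$ depending only on the bounds $A_1,A_2,A_3$ (since $k_i\in[ (4A_1)^{-1/2},\sqrt{A_1}/2]$).

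For the region $r_i\le d(\xi,\xi_i)\le 1$, we use that $u_i$ is superharmonic: $-\Delta_\H u_i=\frac{n^2}{4}R_iu_i^{p_i}\ge0$. Compare with the fundamental-solution type function $G_i(\xi)=c\,m_i r_i^{Q-2}\bigl(d(\xi,\xi_i)^{2-Q}-1\bigr)$ which is $\Delta_\H$-harmonic away from $\xi_i$ (left translation of the Folland fundamental solution $\rho^{2-Q}$), vanishes on $\partial B_1(\xi_i)$ and is $\le c\,m_i$ on $\partial B_{r_i}(\xi_i)$. Since $u_i\ge0$ on $\partial B_1(\xi_i)$ and $u_i\ge G_i$ on $\partial B_{r_i}(\xi_i)$, the weak maximum principle on the annulus gives $u_i\ge G_i$ there. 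To avoid the degeneracy of $d^{2-Q}-1$ near $d=1$, I would instead run the comparison on $B_2(\xi_i)$ (allowed since $B_3(0)\subset\Om$ and $\xi_i\to0$), with $G_i=c\,m_ir_i^{Q-2}(d^{2-Q}-2^{2-Q})$, so that on $d\le1$ one has $G_i\ge c'\,m_ir_i^{Q-2}d^{2-Q}$.

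It remains to match with the bubble: for $d(\xi,\xi_i)\ge r_i$, $m_i\Lam(\delta_{k_i r_i^{-1}}(\xi_i^{-1}\circ\xi))\asymp m_i (r_i/d)^{Q-2}$, since $\Lam(\eta)\asymp |\eta|^{2-Q}$ for $|\eta|\ge1$ (note $((1+|z|^2)^2+t^2)\asymp 1+|\eta|^4$) and $k_i$ is bounded above and below. Combining the two regions yields the first claim. The second claim follows by evaluating at $\xi=\xi_i\circ e$: $m_i r_i^{Q-2}=m_i^{1-(p_i-1)(Q-2)/2}$, and with $p_i-1=\frac{4}{Q-2}-\tau_i$ this exponent is $1-2+\frac{Q-2}{2}\tau_i=-1+\frac{Q-2}{2}\tau_i$.

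The only step requiring care is the comparison: checking the boundary inequality on $\partial B_{r_i}(\xi_i)$ uniformly (which is where Proposition~\ref{prop:4.4} and the bounds on $k_i$ enter) and the fact that $d(\cdot,\xi_i)^{2-Q}$ is a genuine $\Delta_\H$-harmonic function off $\xi_i$, by left invariance of $\Delta_\H$; both are standard, so I expect no real obstacle.
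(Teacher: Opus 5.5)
Your proposal is correct and follows the paper's argument. The paper also combines the bubble approximation of Proposition~\ref{prop:4.4} on an inner ball, comparison with the translated fundamental solution $c\,m_i\,m_i^{-(p_i-1)(Q-2)/2}\bigl(d(\xi,\xi_i)^{2-Q}-R_0^{2-Q}\bigr)$ on the annulus via Bony's maximum principle, and the matching with $m_i\Lam$ through $\Lam(\eta)\le|\eta|^{2-Q}$. The differences are cosmetic: the paper uses inner radius $s_i=S_im_i^{-(p_i-1)/2}$ with $S_i\to\infty$ and outer radius $3/2$, whereas you take $S_i\equiv1$ and outer radius $2$; the barriers have the same amplitude since $S_i^{2-Q}s_i^{Q-2}=m_i^{-(p_i-1)(Q-2)/2}$.
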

\begin{proof}
Denote $s_i=S_i m_i^{-(p_i-1) / 2}$. It follows from Proposition \ref{prop:4.4} that $s_i \to0$ and
\[
u_i(\xi) \geq C^{-1} m_i S_i^{2-Q} \quad \text{ for all }\,  d(\xi,\xi_i)=s_i.
\]
	Set
	\[
	\varphi_i(\xi)=C^{-1} S_i^{2-Q} s_i^{Q-2} m_i(d(\xi,\xi_i)^{2-Q}-(3 / 2)^{2-Q}) \quad \text{ for all }\,s_i \leq  d(\xi,\xi_i) \leq 3 / 2 .
	\]
		Clearly, $\varphi_i$ satisfies
\begin{align*}
-\Delta_{\H} u_i\geq &\, 0=-\Delta_{\H} \varphi_i  \quad \text{ for all }\, s_i \leq d(\xi,\xi_i) \leq 3 / 2, \\
u_i(\xi) \geq &\, \varphi_i(\xi) \quad  \text { on }\, \pa B_{s_i}(\xi_i )\cup \pa B_{3/2}(\xi_i ).
\end{align*}
		Bony's maximum principle \cite[Corollary~3.1]{B1969} gives
	\[
u_i(\xi) \geq \varphi_i(\xi) \quad \text { for all }\, s_i \leq d(\xi,\xi_i) \leq 3 / 2 .
	\]
		Therefore, Proposition \ref{prop:4.5} follows immediately from Proposition \ref{prop:4.4}.
\end{proof}
\begin{lem}\label{lem:4.6}
 Under the hypotheses of Proposition \ref{prop:4.4}, assume in addition that $\xi_i \to 0$ is also an isolated simple blow-up  point with constant $\rho$. Then, for any fixed $0<\delta\ll (Q-2)/2$, we have
	\[
	u_i(\xi) \leq C_1 u_i(\xi_i)^{-\lam_i} d(\xi,\xi_i) ^{2-Q+\delta} \quad \text { for all } \, s_i \leq d(\xi,\xi_i)  \leq 1,
	\]
	where $\lam_i=(Q-2-\delta)(p_i-1) / 2-1$ and $C_1$ is some positive constant depending only on $n,A_1,A_3$, $\delta$ and $\rho$.
\end{lem}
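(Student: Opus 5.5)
We adapt the Schoen--Zhang/Li argument: first exploit the simple-blow-up hypothesis to get decay of the spherical averages, then compare with a supersolution on the annulus $s_i\le d(\xi,\xi_i)\le 1$ via Bony's maximum principle.

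\emph{Step 1: control of $u_i^{p_i-1}$ on the annulus.} By Remark~\ref{rem1}, $f_{u_i,\theta}$ has its unique critical point in $(0,s_i)$ for every $\theta\in\pa B_1$. Since $\xi_i\to0$ is isolated simple with radius $\rho$, $f_{u_i,\theta}$ is therefore strictly decreasing on $(s_i,\rho)$. Combining this monotonicity with Proposition~\ref{prop:4.4} (for $d(\xi,\xi_i)=s_i$ the rescaled function is close to $\Lam(\delta_{k_i}\cdot)$, so $s_i^{2/(p_i-1)}u_i\le C S_i^{(2-Q)/2}$ roughly) and the annular Harnack inequality \eqref{eq:annular-harnack}, we obtain
\[
d(\xi,\xi_i)^{2/(p_i-1)}u_i(\xi)\le C S_i^{-(Q-2)/2}\quad\text{for } s_i\le d(\xi,\xi_i)\le\rho .
\]
Hence the potential satisfies $\frac{n^2}{4}R_iu_i^{p_i-1}\le \var_i' \,d(\xi,\xi_i)^{-2}$ with $\var_i'\to0$. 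For $\rho\le d\le 1$ we use the bound $u_i\le A_3 d^{-2/(p_i-1)}$ together with Harnack to obtain a bounded potential, which is absorbed in the same way after shrinking constants.

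\emph{Step 2: comparison function.} Left-translate so that $\xi_i=0$. We use $|\xi|^{-\mu}$ with $\mu\in(0,Q-2)$. A direct computation on $\Hn$ (using $|\nabla_\H\rho|^2=|z|^2/\rho^2$ and $\Delta_\H\rho^{2-Q}=0$) gives
\[
-\Delta_\H \rho^{-\mu}=\mu(Q-2-\mu)\rho^{-\mu-2}\frac{|z|^2}{\rho^2}\cdot c_n .
\]
The degeneracy $|z|^2/\rho^2$ vanishes on the $t$-axis. This is the main obstacle: the operator $L_i=-\Delta_\H-\frac{n^2}{4}R_iu_i^{p_i-1}$ cannot be handled by pure powers of $\rho$ near $z=0$. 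To overcome it, we replace $\rho^{-\mu}$ by $G^{\mu/(Q-2)}$, where $G=\rho^{2-Q}$ is the fundamental solution. The chain rule gives
\[
-\Delta_\H G^{\gamma}=\gamma(1-\gamma)G^{\gamma-2}|\nabla_\H G|^2,
\]
which is still degenerate. We therefore add a correction: we use instead $\phi_\pm=\rho^{-\mu}+\rho^{-\mu}\epsilon\, t^2\rho^{-4}$-type modifications, or more robustly apply the weak Harnack approach. Concretely, we aim to show that, for $\delta$ fixed and $i$ large,
\[
\varphi_i=M_i\rho^{-\delta}+A\,m_i^{-\lam_i}\rho^{2-Q+\delta}
\]
is a supersolution of $L_i$ on $s_i\le\rho\le 1$. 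If the degeneracy prevents a pointwise supersolution, the fallback is to use Step 1 on dyadic annuli. By Harnack, the maximum $M(r)=\max_{\pa B_r}u_i$ satisfies a discrete decay $M(2r)\le 2^{-(Q-2-\delta)}M(r)+2^{\delta}(\ldots)M(1)$. This follows by comparing $u_i$ on each annulus, via Green's representation with the small potential $\var_i'\rho^{-2}$, so that the potential term is treated perturbatively through the kernel $\rho^{2-Q}$ convolved with $\var_i'\rho^{-2}u_i$.

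\emph{Step 3: boundary values and conclusion.} Choose $M_i=\max_{\pa B_1}u_i$ (times a constant), and $A$ so that $\varphi_i\ge u_i$ on $\pa B_{s_i}$. By Proposition~\ref{prop:4.4}, $u_i\approx m_iS_i^{2-Q}$ there, and $m_i s_i^{Q-2-\delta}=m_i^{-\lam_i}S_i^{Q-2-\delta}$, consistent with the definition of $\lam_i$. Bony's maximum principle then gives $u_i\le\varphi_i$ on the annulus. Finally, we remove the $M_i\rho^{-\delta}$ term by showing $M_i\le Cm_i^{-\lam_i}$. To do this, we use the lower bound in Proposition~\ref{prop:4.5}, the Harnack inequality on $\pa B_1$, and the monotonicity of $f_{u_i,\theta}$ to compare $\max_{\pa B_\rho}$ with values at scale $\rho$. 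Iterating the comparison then yields $\rho^{(Q-2)/2}\max_{\pa B_\rho}u_i\le$ the term from the inner boundary. This produces the stated bound with $C_1=C_1(n,A_1,A_3,\delta,\rho)$.
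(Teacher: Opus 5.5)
Your Step~1 matches the paper: monotonicity of $f_{u_i,\theta}$ on $(s_i,\rho)$ and Proposition~\ref{prop:4.4} at $d(\xi,\xi_i)=s_i$ give $u_i^{p_i-1}\le O(S_i^{-2+o(1)})\,d(\xi,\xi_i)^{-2}$ on $s_i\le d(\xi,\xi_i)\le\rho$. You also correctly spot the obstruction: $\Delta_\H|\xi|^{-\mu}=-\frac{\mu(Q-2-\mu)}{4}|z|^2|\xi|^{-\mu-4}$ vanishes on the $t$-axis, where the potential is positive.

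There is a genuine gap at exactly this point: you never produce a supersolution, and your suggested fix cannot work. Indeed $|\xi|^{-\mu}(1+\epsilon t^2|\xi|^{-4})=(1+\epsilon)|\xi|^{-\mu}-\epsilon|z|^4|\xi|^{-\mu-4}$. Since $\Delta_\H|z|^4=2(n+1)|z|^2$ and $\nabla_\H|z|^4=O(|z|^3)$, its sub-Laplacian still vanishes on $\{z=0\}$. Your alternative $G^{\mu/(Q-2)}$ is just $|\xi|^{-\mu}$ again.

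The missing idea is to subtract a multiple of $|z|^2$, whose sub-Laplacian $\Delta_\H|z|^2=n$ is nondegenerate. After translating $\xi_i$ to $0$, the paper takes $\Phi=|\xi|^{-\mu}-\var_0|z|^2|\xi|^{-\mu-2}$, for which
\[
\Delta_\H\Phi=-\frac{\mu(Q-2-\mu)}{4}|z|^2|\xi|^{-\mu-4}-n\var_0|\xi|^{-\mu-2}+\frac{\var_0(\mu+2)(Q-\mu)}{4}|z|^4|\xi|^{-\mu-6}.
\]
Choose $\var_0(\mu+2)(Q-\mu)\le\mu(Q-2-\mu)$ and use $|z|\le|\xi|$; then the last term is absorbed by the first. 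This gives $\mathscr L_i\Phi\le(-n\var_0+CS_i^{-2+o(1)})|\xi|^{-\mu-2}\le0$ for $\mu\in\{\delta,Q-2-\delta\}$ and large $i$. Since $(1-\var_0)|\xi|^{-\mu}\le\Phi\le|\xi|^{-\mu}$, your boundary comparisons go through with this $\Phi$ in place of the pure powers.

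Your fallback does not fill the gap. The dyadic recursion contains an unspecified term. Making it rigorous would require either controlling constants that compound over $\sim\log(1/s_i)$ annuli, or a global Riesz-potential bound such as $\int|\eta^{-1}\circ\xi|^{2-Q}|\eta|^{-2-\gamma}\,\mathrm{d}\eta\le C_\gamma|\xi|^{-\gamma}$ for $0<\gamma<Q-2$ together with a Dirichlet problem on the annulus. None of this is set up.

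Several smaller points also need repair.
\begin{itemize}
\item The comparison should be done only on $s_i\le d(\xi,\xi_i)\le\rho_\delta$ for a fixed $\rho_\delta\le\rho$. On $\rho\le d\le 1$ the potential is merely bounded, not small compared with $n\var_0 d^{-2}$, so ``shrinking constants'' does not help. The range $\rho_\delta\le d\le 1$ is recovered at the end by the annular Harnack inequality.
\item Because $\mathscr L_i$ has a positive zeroth-order term, $u_i\le\varphi_i$ is not a direct application of Bony's principle. The paper passes to $w_i=\varphi_i/u_i$, which satisfies $\Delta_\H w_i+2\langle\nabla_\H\ln u_i,\nabla_\H w_i\rangle\le0$.
\item The outer boundary value should be $\mathfrak M_i=\max_{\pa B_{\rho_\delta}(\xi_i)}u_i$, not a value on $\pa B_1$, where monotonicity is unavailable. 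It is controlled by a single absorption, with no iteration and no use of Proposition~\ref{prop:4.5}: for a fixed small $s$,
\[
\rho_\delta^{2/(p_i-1)}\mathfrak M_i\le Cf_{u_i,\theta}(\rho_\delta)\le Cf_{u_i,\theta}(s)\le Cs^{2/(p_i-1)}\bigl(\mathfrak M_i\rho_\delta^{\delta}s^{-\delta}+Bm_i^{-\lam_i}s^{2-Q+\delta}\bigr).
\]
Since $2/(p_i-1)-\delta>0$, the $\mathfrak M_i$ term is absorbed, giving $\mathfrak M_i\le Cm_i^{-\lam_i}$.
\end{itemize}
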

\begin{proof}
 From Proposition \ref{prop:4.4}, we see that
	\be\label{4.7}
	u_i(\xi) \leq C u_i(\xi_i) S_i^{2-Q} \quad \text { for all }\, d(\xi,\xi_i) =s_i,
	\ee
where	$s_i=S_i m_i^{-(p_i-1) / 2}$ as before.
Let $f_{u_i,\theta}$ be defined in \eqref{4.3}. Definition~\ref{def:simple-blowup} and Remark~\ref{rem1} imply that there exists $\rho>0$ such that for every $\theta\in \pa B_1$, $f_{u_i,\theta}$ is strictly decreasing for $s_i<r<\rho$. Then it follows from \eqref{4.7} that for $r:=|\xi_i^{-1}\circ \xi|$ and $\theta:=\delta_{r^{-1}}(\xi_i^{-1}\circ \xi)$
\begin{align*}
r^{2 /(p_i-1)} u_i(\xi) =&\,f_{u_i, \theta}(r) \leq f_{u_i, \theta}(s_i)=s_i^{2 /(p_i-1)} u_i(\xi_i \circ \delta_{s_i} \theta) \\  \leq&\, C s_i^{2 /(p_i-1)}u_i(\xi_i)S_i^{2-Q} = C S_i^{-(Q-2)/2+o(1)} ,
\end{align*}
	where $o(1)$ denotes some quantity tending to 0 as $i \to\infty$. Applying Lemma \ref{lem:4.3} again, we obtain
\be\label{4.9}
u_i(\xi)^{p_i-1} \leq O(S_i^{-2+o(1)}) d(\xi,\xi_i)^{-2} \quad \text { for all }\, s_i \leq d(\xi,\xi_i) \leq \rho .
\ee
	Consider the operator
	\[
\mathscr{L}_i(\phi)=\Delta_{\H}\phi +\frac{n^2}{4} R_i u_i^{p_i-1} \phi  \quad \text { in }\,  B_2.
	\]
		Clearly, $u_i>0$ satisfies $\mathscr{L}_i(u_i)=0$ in $B_2$.

        Now, we look for a supersolution $\varphi_i$ of the operator $\mathscr{L}_i$ such that $\varphi_i \geq u_i$ on the boundary of the annulus $\mathscr{A}_i$, where $\mathscr{A}_i:=\{\xi \in \Hn : s_i<d(\xi, \xi_i)<\rho_\delta\}\subset B_2 $. Consider the function
       \[
        \Phi(\xi):=d(\xi, \xi_i)^{-\mu}-\var _0|z-z_i|^2 d(\xi, \xi_i)^{-\mu-2}
        \]
        for $\xi=(z, t) \in \mathscr{A}_i$, where $\xi_i=(z_i, t_i)$ and $\var _0$ is a small constant to be chosen later. By direct calculations and using \eqref{4.9}, we have
        \begin{align*}
            \mathscr{L}_i(\Phi)
\leq & \,-\frac{\mu(Q-2-\mu)}{4}|z-z_i|^2 d(\xi, \xi_i)^{-\mu-4} \\
&\, - \var _0 n d(\xi, \xi_i)^{-\mu-2}+\frac{1}{4}\var _0(\mu+2)(Q-\mu)|z-z_i|^4 d(\xi, \xi_i)^{-\mu-6} \\
&\, +O(S_i^{-2+o(1)} d(\xi, \xi_i)^{-2})(d(\xi, \xi_i)^{-\mu}-\var _0|z-z_i|^2 d(\xi, \xi_i)^{-\mu-2}).
        \end{align*}

Fix $0<\delta\ll1$. For
$\mu\in\{\delta,Q-2-\delta\}$, choose $\var_0=c_0\delta>0$,
with $c_0$ depending only on $n$ and sufficiently small that
$\var_0<1$ and
$\var_0(\mu+2)(Q-\mu)\leq\mu(Q-2-\mu)$.
Since $|z-z_i|\leq d(\xi,\xi_i)$, the preceding estimate gives
\[
\mathscr L_i(\Phi)
\leq\bigl(-n\var_0+CS_i^{-2+o(1)}\bigr)
 d(\xi,\xi_i)^{-\mu-2}\leq0
\]
for all sufficiently large $i$, including the points where $z=z_i$.
Thus, for a fixed $0<\rho_\delta\leq\rho$, we have
\be\label{4.14 in Com}
    \mathscr{L}_i(d(\xi, \xi_i)^{-\delta}-\var_0|z-z_i|^2 d(\xi, \xi_i)^{-\delta-2}) \leq 0
\ee
for $s_i \leq d(\xi, \xi_i) \leq \rho_\delta$. Similarly,
\be\label{4.15 in Com}
\mathscr{L}_i(d(\xi, \xi_i)^{2-Q+\delta}-\var_0|z-z_i|^2 d(\xi, \xi_i)^{-Q+\delta}) \leq 0
\ee
for $s_i \leq d(\xi, \xi_i) \leq \rho_\delta$.

Now set $\mathfrak{M}_i=\max _{\pa  B_{\rho_\delta}(\xi_i)} u_i$, $\lam_i=((Q-2-\delta)(p_i-1)) / 2-1$, and
\begin{align*}
\varphi_i(\xi)= &\, \frac{\mathfrak{M}_i}{1-\var_0} \rho_\delta^\delta\Big[ d(\xi, \xi_i)^{-\delta}-\var_0 \frac{|z-z_i|^2}{d(\xi, \xi_i)^2} d(\xi, \xi_i)^{-\delta}\Big] \\
& \,+B u_i(\xi_i)^{-\lam_i}\Big[ d(\xi, \xi_i)^{2-Q+\delta}-\var_0 \frac{|z-z_i|^2}{d(\xi, \xi_i)^2} d(\xi, \xi_i)^{2-Q+\delta}\Big] \quad \text { in }\, \mathscr{A}_i,
\end{align*}
where $B>1$ will be chosen later. By \eqref{4.14 in Com} and \eqref{4.15 in Com}, we see that $\varphi_i$ is a supersolution of $\mathscr{L}_i$ in $\mathscr{A}_i$. Furthermore,
\be\label{4.16 in Com}
\varphi_i(\xi) \geq \mathfrak{M}_i \geq u_i(\xi) \quad \text { for }\, d(\xi, \xi_i)=\rho_\delta .
\ee
Also,
\begin{align}
\varphi_i(\xi)  \geq&\,  B(1-\var_0) u_i(\xi_i)^{-\lam _i} d(\xi, \xi_i)^{2-Q+\delta} \notag\\
 \geq&\,  B(1-\var_0) u_i(\xi_i) S_i^{2-Q} \quad \text { for }\, d(\xi, \xi_i)=s_i \label{4.17 in Com}.
\end{align}
Comparing \eqref{4.17 in Com} with \eqref{4.7}, we choose $B$ large enough that $B(1-\var_0) \geq C$, where $C>0$ is the constant occurring in equation \eqref{4.7}. With this choice of $B$, we have
\[
\varphi_i(\xi) \geq u_i(\xi) \quad \text { for } \,d(\xi, \xi_i)=s_i .
\]

Set $w_i=\varphi_i/u_i$. Since $\mathscr L_i u_i=0$ and $\mathscr L_i\varphi_i\le0$, we have
\[
\Delta_{\H }w_i +2\langle\nabla_{\H }\ln u_i,\nabla_{\H }w_i\rangle =u_i^{-1}\mathscr L_i\varphi_i\le0
\quad\text{ in }\, \mathscr A_i.
\]
Moreover, from \eqref{4.16 in Com} and \eqref{4.17 in Com}, $w_i\ge1$ on $\pa \mathscr A_i$. 
Applying the maximum principle to the equation for $w_i$ (see \cite[Section~3]{B1969}), we obtain
\be\label{uphi}
u_i(\xi) \leq \varphi_i(\xi) \quad \text { for all }\, s_i \leq d(\xi, \xi_i) \leq \rho_\delta .
\ee 

Using Lemma \ref{lem:4.3} and \eqref{uphi}, we get, for any $\theta \in \pa  B_1$ and $s_i \leq s \leq \rho_\delta$,
\begin{align*}
\rho_\delta^{2 /(p_i-1) } \mathfrak{M}_i & \leq C \rho_\delta^{2 /(p_i-1) } \min _{d(\xi, \xi_i)=\rho_\delta} u_i(\xi) \\
& \leq C f_{u_i, \theta}(\rho_\delta) \\
& \leq C s^{2 /(p_i-1) } u_i(\xi_i \circ \delta_s \theta) \\
& \leq C s^{2 /(p_i-1) }(\mathfrak{M}_i \rho_\delta^{\delta} s^{-\delta}+B u_i(\xi_i)^{-\lam _i} s^{2-Q+\delta}) .
\end{align*}

		Choose $s$ sufficiently small, independently of $\theta$, such that
\[
	C s^{2 /(p_i-1)} \rho_\delta^{\delta} s^{-\delta} \leq \frac{1-\var_0}{2} \rho_\delta^{2 /(p_i-1)} .
\]
	It follows that $\mathfrak{M}_i \leq C u_i(\xi_i)^{-\lam_i}$. Then Lemma \ref{lem:4.6} follows from the Harnack estimate \eqref{eq:annular-harnack}.
\end{proof}
To sharpen the decay estimate in Lemma~\ref{lem:4.6}, we first estimate $\tau_i$ using \eqref{Pohozaev}.
\begin{lem}\label{lem:4.8}
	 Under the assumptions of Lemma \ref{lem:4.6}, we have
	\[
     \tau_i=O(u_i(\xi_i)^{-2/(Q-2)+o(1)}),\quad  \text{ and thus  } \quad u_i(\xi_i)^{\tau_i}=1+o(1).
     \]
The implicit constant depends only on $n,\al,A_1,A_2,A_3$ and $\rho$.
\end{lem}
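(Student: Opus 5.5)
We apply the Pohozaev identity \eqref{Pohozaev} on the ball $B_\si(\xi_i)$ (translated to the origin as explained after Lemma~\ref{prop:4.7}), with $p=p_i$ and a small fixed $\si\in(0,\rho_\delta]$. The left side has the main term
\[
\frac{n^2}{4}\Big(\frac{Q}{p_i+1}-\frac{Q-2}{2}\Big)\int_{B_\si}R_i u_i^{p_i+1},
\]
and the coefficient is comparable to $\tau_i$: since $\frac{Q}{p+1}-\frac{Q-2}{2}$ vanishes at $p=\frac{Q+2}{Q-2}$ and has nonzero derivative there, it equals $c_Q\tau_i(1+o(1))$ with $c_Q>0$. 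By Proposition~\ref{prop:4.4}, a change of variables on $B_{s_i}(\xi_i)$ gives $\int_{B_\si}u_i^{p_i+1}\ge c>0$. We use $m_i^{\tau_i}\le C$, which follows from Proposition~\ref{prop:4.5} combined with Lemma~\ref{lem:4.6}: comparing the two bounds at $|e|=1$ gives $m_i^{-1+\frac{Q-2}{2}\tau_i}\le Cm_i^{-\lam_i}$, and $\lam_i\to1-\delta'$ with $\delta'$ small. Hence the left-hand main term is bounded below by $c\tau_i$.

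Next we bound the remaining terms. The boundary terms on $\pa B_\si$ involve $u_i$, $\nabla u_i$ and $u_i^{p_i+1}$ there. By Lemma~\ref{lem:4.6}, $u_i\le Cm_i^{-\lam_i}$ on $\frac12\si\le d\le 2\si$. For the gradient we use interior estimates: on an annulus the function $m_i^{\lam_i}u_i$ is bounded and satisfies an equation whose potential $R_iu_i^{p_i-1}$ is small, so the regularity of Lemma~\ref{lem:intrinsic-regularity} (or Folland's interior estimates) gives $|\nabla u_i|\le Cm_i^{-\lam_i}$ on $\pa B_\si$. Consequently the boundary terms are $O(m_i^{-2\lam_i})=O(m_i^{-2+O(\delta)+o(1)})$.

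The term $\frac{1}{p_i+1}\int_{B_\si}\Xi(R_i)u_i^{p_i+1}$ is the delicate one. We write $\Xi(R_i)(\xi_i\circ\eta)$ using the dilation generator centered at $\xi_i$. This is bounded by $C|\eta|\,(|ZR_i|+|\eta||TR_i|)$ because the coefficients of $\Xi$ are $O(|\eta|)$ horizontally and $O(|\eta|^2)$ in $T$. The naive bound gives only $\int|\eta|u_i^{p_i+1}\sim m_i^{-2/(Q-2)}$. This rate already matches the claimed one, since the claim has the form $O(m_i^{-2/(Q-2)+o(1)})$, so no cancellation from $\nabla_\H R_i(\xi_i)$ is needed.

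Concretely, we split the integral. On $B_{s_i}(\xi_i)$ we rescale by Proposition~\ref{prop:4.4}: $u_i^{p_i+1}\,\d\xi$ becomes $\phi_i^{p_i+1}$ times $m_i^{p_i+1-Q(p_i-1)/2}=m_i^{O(\tau_i)}$, and $|\eta|=m_i^{-(p_i-1)/2}|\zeta|$, giving $Cm_i^{-2/(Q-2)+O(\tau_i)}\int|\zeta|\phi_i^{p_i+1}$. This integral converges because $\Lam^{Q^*}|\zeta|$ is integrable, with tail $Q+1-2Q<0$. On $s_i\le d\le\si$ we use Lemma~\ref{lem:4.6}:
\[
\int d\cdot m_i^{-\lam_i(p_i+1)}d^{(2-Q+\delta)(p_i+1)}.
\]
For small $\delta$ this integral is dominated by its lower limit $d=s_i$. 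Inserting the value of $s_i$ yields $m_i^{-2/(Q-2)+O(\delta)+o(1)}$ times a power $S_i^{-\text{positive}}$.

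Collecting the estimates gives
\[
\tau_i\le C\big(m_i^{-2/(Q-2)+C\delta+o(1)}+m_i^{-2+C\delta}\big).
\]
Since $\delta$ is arbitrary, a diagonal choice of $\delta\to0$ yields $\tau_i=O(m_i^{-2/(Q-2)+o(1)})$. Finally, $m_i^{\tau_i}=\exp(\tau_i\log m_i)\to1$.

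The main obstacle is the bookkeeping of the $o(1)$ exponents coming from $m_i^{O(\tau_i)}$ before $\tau_i$ is known to be small in the required sense. I would handle this first by the a priori bound $m_i^{\tau_i}\le C$ derived above. Alternatively, bounded $\tau_i\log m_i$ can be obtained from Proposition~\ref{prop:4.4}, since $\phi_i\to\Lam$ on balls of radius $S_i$ forces $m_i^{\tau_i}$ to stay bounded.
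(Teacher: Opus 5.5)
Your route is the paper's: the Pohozaev identity on a ball centered at $\xi_i$, the crude bound $|\Xi_\xi[R_i(\xi_i\circ\xi)]|\le C|\xi|$ (no cancellation from $\nabla_\H R_i(\xi_i)$ is needed), and a positive lower bound for $\int u_i^{p_i+1}$ from the bubble region. The weighted integral is split at $d(\xi,\xi_i)=s_i$ via Proposition~\ref{prop:4.4} and Lemma~\ref{lem:4.6}, and the boundary terms are of order $m_i^{-2\lam_i}=m_i^{-2+4\delta/(Q-2)+o(1)}$, which is lower order because $Q\ge4$. One small point on the gradient bound on $\pa B_\si$: $m_i^{\lam_i}u_i$ has no uniform lower bound, so Lemma~\ref{lem:intrinsic-regularity} does not apply to it as stated. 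Normalizing by $u_i(\xi_i\circ e)$ and using the annular Harnack inequality, as the paper does, gives the two-sided bounds that lemma needs.

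One step is wrong, although the proof does not need it. The bound $m_i^{\tau_i}\le C$ does not follow from comparing Proposition~\ref{prop:4.5} with Lemma~\ref{lem:4.6}, because $\lam_i$ itself depends on $\tau_i$. From $p_i-1=\frac{4}{Q-2}-\tau_i$,
\[
\lam_i=1-\frac{2\delta}{Q-2}-\frac{(Q-2-\delta)\tau_i}{2},
\]
so $C^{-1}m_i^{-1+\frac{Q-2}{2}\tau_i}\le C_1m_i^{-\lam_i}$ reduces to $m_i^{\delta\tau_i/2}\le Cm_i^{2\delta/(Q-2)}$, which is vacuous. Even replacing $\lam_i$ by its limit, you only get $m_i^{\tau_i}\le Cm_i^{O(\delta)}$. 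Your alternative via Proposition~\ref{prop:4.4} is also unjustified. That proposition concerns only $B_{2s_i}(\xi_i)$ in rescaled variables, where $m_i^{\tau_i}$ never enters, and $S_i$ may grow arbitrarily slowly. Boundedness of $m_i^{\tau_i}$ is exactly what this lemma delivers; the paper invokes Lemma~\ref{lem:4.8} for it in Proposition~\ref{prop:4.9}.

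The ``main obstacle'' you describe is not actually there. In the lower bound, the change of variables produces $m_i^{p_i+1-Q(p_i-1)/2}=m_i^{(Q-2)\tau_i/2}\ge1$, a factor of favorable sign. In the upper bounds, the factors $m_i^{O(\tau_i)}$ are $m_i^{o(1)}$ simply because $\tau_i\to0$ is a standing hypothesis of Section~\ref{sec:3}, and the $o(1)$ in the exponent absorbs them. Deleting that step leaves a correct proof.

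Finally, your diagonal argument in $\delta$ is valid but unnecessary. $\lam_i$ is chosen so that $m_i^{-\lam_i}s_i^{2-Q+\delta}=m_iS_i^{2-Q+\delta}$, and the annular integral is dominated by $d=s_i$. Hence the $\delta$-dependence of the $m_i$-exponent cancels exactly:
\[
\int_{s_i\le d(\xi,\xi_i)\le1}d(\xi,\xi_i)\,u_i^{p_i+1}\le C_\delta\,S_i^{1-Q+O(\delta+\tau_i)}\,m_i^{-\frac{2}{Q-2}+\frac{(Q-1)\tau_i}{2}}.
\]
This is why the paper can fix $\delta<\min\{\frac14,\frac{Q-2}{8}\}$ once and for all.
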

\begin{proof}
 By the Pohozaev identity \eqref{Pohozaev} in Lemma \ref{prop:4.7}, we have
\begin{align*}
&\,\int_{\pa {B_1}}B(1 , \xi, u_i(\xi_i\circ \xi), \nabla_\xi[u_i(\xi_i\circ\xi)])\\
	 =&\,\Big( \frac{Q}{p_i+1} -\frac{Q-2}{2}\Big)\frac{n^2}{4} \int_{B_1 }R_i(\xi_i\circ\xi)u_i(\xi_i\circ\xi)^{p_i+1}+ \frac{n^2}{4(p_i +1)}\int_{B_1} \Xi (R_i(\xi_i\circ\xi)) u_i(\xi_i\circ\xi)^{p_i+1}\\
     &\,-\frac{n^2}{4(p_i+1)}\int_{\pa  B_1 } R_i(\xi_i\circ\xi) u_i(\xi_i\circ\xi)^{p_i+1} \Xi  \cdot \nu.
\end{align*}
		By direct computations, we conclude that
	\begin{align}
	\tau_i \int_{B_1} u_i(\xi_i\circ\xi)^{p_i+1} \leq&\, C(n,A_1,A_2)\Big\{\int_{ B_1} |\xi|  u_i(\xi_i\circ\xi)^{p_i+1}+\int_{ \pa  B_1} u_i(\xi_i\circ\xi)^{p_i+1} \Xi\cdot \nu\nonumber\\
    &\,+\Big|\int_{\pa {B_1}}B(1 , \xi, u_i(\xi_i\circ \xi), \nabla_\xi[u_i(\xi_i\circ\xi)])\Big|\Big\},\label{4.14}
	\end{align}
    where we also used the fact that
\[
|\Xi (R_i(\xi_i\circ\xi))| \leq C d( \xi_i\circ\xi, \xi_i) |\nabla_{\H } R_i|+C d( \xi_i\circ\xi, \xi_i) ^2|T R_i| \leq C d(\xi_i\circ\xi, \xi_i)=C|\xi|
\]
for $d(\xi_i\circ\xi, \xi_i) \leq 1$.

It follows from Proposition \ref{prop:4.5} and a change of variables that
	\begin{align}
\int_{B_1} u_i(\xi_i\circ\xi)^{p_i+1} \geq& \,C^{-1} \int_{B_{s_i}} \frac{m_i^{p_i+1}}{((1+k_i^2m_i^{p_i-1}|z|^2)^2+k_i^4m_i^{2(p_i-1)}|t|^2)^{(Q-2)(p_i+1)/4}}\, \d z\, \d t \nonumber\\
	\geq&\, C^{-1} m_i^{\tau_i(Q/2-1)} \int_{B_{S_i}} \frac{1}{((1+k_i^2|z|^2)^2+k_i^4|t|^2)^{(Q-2)(p_i+1)/4}}\, \d z\, \d t  \nonumber\\
	 \geq &\, C^{-1} m_i^{\tau_i(Q/2-1)}.\label{new 1}
	\end{align}
	Similarly, by Proposition \ref{prop:4.4}, Lemma \ref{lem:4.6} and a change of variables, we have
    \begin{align}
        \int_{B_1} |\xi|  u_i(\xi_i\circ\xi)^{p_i+1}=&\,\int_{B_1\backslash B_{s_i}} |\xi|  u_i(\xi_i\circ\xi)^{p_i+1}+\int_{B_{s_i}} |\xi|  u_i(\xi_i\circ\xi)^{p_i+1}\nonumber\\
        \leq &\,Cm_i^{-2/(Q-2)+o(1)}.\label{new 2}
    \end{align}

Fix
\[
0<\delta<\min\Big\{\frac14,\frac{Q-2}{8}\Big\}.
\]
By Lemmas \ref{lem:4.3} and \ref{lem:4.6} and the interior estimate \eqref{sch new}, applied after normalization, the last two boundary integrals on the right-hand side of \eqref{4.14} are bounded by
\[
C_\delta u_i(\xi_i)^{-2+\frac{4\delta}{Q-2}+o(1)}.
\]
It follows from \eqref{4.14}, \eqref{new 1} and \eqref{new 2} that
\[
\tau_i \le C_\delta u_i(\xi_i)^{-\frac{2}{Q-2}+o(1)} + C_\delta u_i(\xi_i)^{-2+\frac{4\delta}{Q-2}+o(1)}.
\]
By the choice of $\delta$ and the fact that $Q\ge4$, we have
$
-2+\frac{4\delta}{Q-2}<-\frac{2}{Q-2}.
$
Consequently,
\[
\tau_i=O(u_i(\xi_i)^{-\frac{2}{Q-2}+o(1)}).
\]
In particular, $\tau_i\ln  u_i(\xi_i)\to0$, and hence $u_i(\xi_i)^{\tau_i}=1+o(1)$.
\end{proof}

\begin{prop}\label{prop:4.9}
	 Under the assumptions of Lemma \ref{lem:4.6}, we have
\be\label{eq:sharp-isolated-bound}
	u_i(\xi) \leq C u_i(\xi_i)^{-1} d(\xi,\xi_i) ^{2-Q} \quad \text { for all }\, d(\xi,\xi_i)  \leq 1 .
\ee
Here $C$ depends only on $n,\al,A_1,A_2,A_3$ and $\rho$.
	Furthermore, after passing to a subsequence, we have
\[
    u_{i}(\xi_i)u_i(\xi)\to u(\xi)=a|\xi|^{2-Q}+h(\xi)\quad \text{ in }\, C^2_{loc}(B_1\backslash \{0\}),
 \]
	where $a>0$ is a constant and $-\Delta_{\H}h=0$ in $B_1$.
\end{prop}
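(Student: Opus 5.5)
Following Li \cite{Li1995} and Schoen--Zhang \cite{SZ1996}, the plan is to upgrade the decay $d^{2-Q+\delta}$ of Lemma~\ref{lem:4.6} to the sharp $d^{2-Q}$ by a Green representation. Then the limit is extracted by Harnack and Schauder compactness. The steps are as follows.

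\emph{Step 1: an estimate on a fixed sphere.} First prove, for a fixed $0<\rho_0\le\rho$, that
\[
\max_{\pa B_{\rho_0}(\xi_i)}u_i\le C\,m_i^{-1}.
\]
By Lemma~\ref{lem:4.3} it suffices to bound one point, and I would argue by contradiction. Suppose $\bar\xi_i\in\pa B_{\rho_0}(\xi_i)$ satisfies $u_i(\bar\xi_i)m_i\to\infty$, and set $w_i=u_i/u_i(\bar\xi_i)$. Then
\[
-\Delta_\H w_i=\tfrac{n^2}{4}R_iu_i^{p_i-1}w_i ,
\]
and by Lemma~\ref{lem:4.6} the potential tends to zero locally uniformly in $B_{2\rho_0}\setminus\{0\}$ (after translating $\xi_i\to0$). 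Harnack (Lemma~\ref{lem:4.3}) and Lemma~\ref{lem:intrinsic-regularity} then give $w_i\to w$ in $C^2_{loc}(B_{2\rho_0}\setminus\{0\})$, where $w>0$ is $\Delta_\H$-harmonic off $0$ and $w=O(|\xi|^{2-Q+\delta})$.

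A Bôcher-type theorem for $\Delta_\H$ then gives $w=b|\xi|^{2-Q}+h$ with $h$ harmonic. The coefficient $b$ is computed from the flux: integrate the equation over $B_r$, obtaining
\[
b\sim\lim_i u_i(\bar\xi_i)^{-1}\int_{B_r}R_iu_i^{p_i}.
\]
By Proposition~\ref{prop:4.4}, Lemma~\ref{lem:4.6} and Lemma~\ref{lem:4.8} ($m_i^{\tau_i}=1+o(1)$), the integral is $O(m_i^{-1})$. Hence $b=0$, so $w$ is regular at $0$.

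The contradiction comes from $f_{w_i,\theta}$. Since $s\mapsto s^{2/(p_i-1)}u_i(\xi_i\circ\delta_s\theta)$ has a unique critical point in $(0,\rho)$, it is decreasing on $(s_i,\rho)$. The limit $s^{(Q-2)/2}w(\delta_s\theta)$ is therefore nonincreasing on $(0,\rho_0)$, which is impossible for a function that is positive and bounded near $0$, because it tends to $0$ as $s\to0$. The contradiction gives Step~1.

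\emph{Step 2: pointwise estimate in the annulus.} Let $G$ be the Green function of $-\Delta_\H$ on $B_{\rho_0}(\xi_i)$, which is comparable to $d^{2-Q}$. Represent
\[
u_i(\xi)=\int_{B_{\rho_0}}G(\xi,\eta)\tfrac{n^2}{4}R_iu_i^{p_i}(\eta)\,\d\eta+O\Big(\max_{\pa B_{\rho_0}}u_i\Big).
\]
The boundary term is $O(m_i^{-1})$ by Step~1. Split the integral into three regions:
\begin{itemize}
\item $\eta\in B_{s_i}(\xi_i)$: use the bubble bound $u_i\le Cm_i\Lambda(\cdot)$ from Proposition~\ref{prop:4.4}, which gives $Cm_i^{-1}d(\xi,\xi_i)^{2-Q}$;
\item $\eta$ far from $\xi$: use Lemma~\ref{lem:4.6};
\item $\eta$ near $\xi$: use the local bound $u_i^{p_i}(\eta)\le C m_i^{-\lambda_ip_i}d^{(2-Q+\delta)p_i}$ together with $\int_{B_{d/2}(\xi)}G\lesssim d^2$.
\end{itemize}
The exponent bookkeeping, using Lemma~\ref{lem:4.8} and $\delta$ small, yields $u_i\le Cm_i^{-1}d^{2-Q}$ for $s_i\le d\le\rho_0$. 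For $d\le s_i$ the bound follows from Proposition~\ref{prop:4.4}, and for $\rho_0\le d\le1$ from Harnack. This is \eqref{eq:sharp-isolated-bound}.

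\emph{Step 3: the limit.} Set $v_i=m_iu_i$. Then $v_i$ is locally bounded off $0$ by Step~2 and satisfies $-\Delta_\H v_i=\tfrac{n^2}{4}R_iu_i^{p_i-1}v_i$, whose potential tends to $0$ there. Harnack and Lemma~\ref{lem:intrinsic-regularity} give $C^2_{loc}$ convergence to a positive harmonic $u$ on $B_1\setminus\{0\}$.

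By the Bôcher decomposition, $u=a|\xi|^{2-Q}+h$, and the flux computation gives
\[
a=c_Q\lim_i m_i\int_{B_r}R_iu_i^{p_i}=c_Q\tfrac{n^2}{4}R_0\int_{\Hn}\Lambda(\delta_{k_0}\xi)^{\frac{Q+2}{Q-2}}\,\d\xi>0 .
\]
Here the tails are controlled by Step~2 and $m_i^{\tau_i}\to1$.

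I expect Step~1, specifically the use of simplicity to force $b$ and the regular part into contradiction, to be the main obstacle. The delicate points there are the sub-Laplacian Bôcher theorem and the exponent accounting with $\tau_i$.
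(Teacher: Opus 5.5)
Your proposal is correct. Step 1 is the paper's argument for \eqref{4.23}, which the paper runs at $d(\xi,\xi_i)=1$; the only change is the order. You use the flux identity first to kill the singular coefficient and then the monotonicity of $f_{w,\theta}$ coming from simplicity. The paper first uses monotonicity to show that the singularity of the normalized limit is non-removable, and then gets the contradiction from the flux against $\int_{B_1}R_iu_i^{p_i}=O(m_i^{-1})$.

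The genuine difference is how you pass from one sphere to the whole range $s_i\le d(\xi,\xi_i)\le\rho$. The paper uses no potential representation. It argues by contradiction along points $\widetilde\xi_i$ and rescales by $\widetilde s_i=d(\widetilde\xi_i,\xi_i)$. It then notes that $0$ is again an isolated simple blow-up point of $\widetilde u_i(\xi)=\widetilde s_i^{2/(p_i-1)}u_i(\xi_i\circ\delta_{\widetilde s_i}\xi)$ with the same constants, and applies the unit-distance estimate. The exponent mismatch is absorbed through $\widetilde s_i^{-e_i}\le m_i^{(Q-2)\tau_i/2}\to1$.

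The two routes trade off as follows. The rescaling route is soft and avoids all integral bookkeeping, at the price of checking that the hypotheses are scale invariant. Your Green-function route is a direct quantitative estimate. It needs the $\delta$-improved decay of Lemma~\ref{lem:4.6} with its exact power $m_i^{-\lam_i}$, and it needs Lemma~\ref{lem:4.8}, because the powers of $m_i$ cancel only up to a factor $m_i^{O(\tau_i)}$.

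The same contrast appears for the coefficient $a$. In the paper, $a>0$ is inherited from the non-removable singularity of the normalized limit. Your flux computation against the bubble mass identifies $a$ explicitly, a value the paper only states.

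Three small repairs:
\begin{enumerate}
\item In Step 1, the claim $w=O(|\xi|^{2-Q+\delta})$ does not follow, since $u_i(\bar\xi_i)$ may be much smaller than $m_i^{-\lam_i}$. It is never used, because B\^ocher's theorem applies to any positive harmonic function on the punctured ball.
\item In Step 2, when $d(\xi,\xi_i)=r$ is close to $s_i$, points of $B_{s_i}(\xi_i)$ can be arbitrarily close to $\xi$, so the kernel cannot be pulled out as $Cr^{2-Q}$ there. Split instead by distance:
\begin{itemize}
\item $d(\eta,\xi_i)\le r/2$: use the mass bound $\int_{B_{r/2}(\xi_i)}u_i^{p_i}\le Cm_i^{-1}$.
\item $d(\eta,\xi)\le r/2$: use the pointwise bound, taken from Proposition~\ref{prop:4.4} when $s_i/2\le d(\eta,\xi_i)<s_i$.
\item The remaining region, handled as you describe.
\end{itemize}
\item Only the upper bound $G\le C\,d^{2-Q}$ is used. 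You can therefore bypass the Dirichlet Green function of the Kor\'anyi ball, whose boundary has characteristic points, by comparing $u_i$ with the Newtonian potential of $\frac{n^2}{4}R_iu_i^{p_i}$ over $B_{\rho_0}(\xi_i)$ plus $\max_{\pa B_{\rho_0}(\xi_i)}u_i$, via the maximum principle.
\end{enumerate}
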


\begin{proof}
For $ d(\xi,\xi_i) <s_i$, it follows from Proposition \ref{prop:4.4} that
\[
u_i(\xi)  \leq C m_i\Lam(\delta_{k_im_i^{(p_i-1)/2}}(\xi_i^{-1}\circ \xi))
 \leq  C m_i^{-1+ \frac{Q-2}{2}\tau_i} d(\xi,\xi_i) ^{2-Q}
\leq  C m_i^{-1} d(\xi,\xi_i) ^{2-Q},
\]
where Lemma \ref{lem:4.8} is used in the last inequality.

Suppose $ d(\xi,\xi_i)  \geq s_i$. Let $e \in \Hn $ with $|e|=1$, and set $v_i(\xi)=u_i(\xi_i\circ e)^{-1} u_i(\xi)$. Then $v_i$ satisfies
\[
-\Delta_{\H} v_i=\frac{n^2}{4}R_i u_i(\xi_i\circ e)^{p_i-1} v_i^{p_i} \quad  \text { in }\, B_2.
\]
The Harnack estimate \eqref{eq:annular-harnack}, the decay bound in Lemma~\ref{lem:4.6}, and interior regularity \cite[Theorem~6.1]{F1975} imply that, after passing to a subsequence, $\{v_i\}$ converges in $C_{loc }^2( B_2 \backslash\{0\}) $ to a positive function $v$, which satisfies
\[
\Delta_{\H } v=0 \quad \text { in }\, B_2 \backslash\{0\} .
\]


We claim that the singularity of $v$ at the origin is not removable. For every $\theta\in\pa  B_1$, set
\[
f_{v_i,\theta}(r) :=r^{\frac{2}{p_i-1}}v_i(\xi_i\circ\delta_r\theta) =\frac{f_{u_i,\theta}(r)}{u_i(\xi_i\circ e)}.
\]
Since $\xi_i\to0$, $p_i\to (Q+2)/(Q-2)$, and $v_i\to v$ in $C^2_{loc}(B_2\setminus\{0\})$, we have
\[
f_{v_i,\theta}(r)\to  f_{v,\theta}(r):=r^{\frac{Q-2}{2}}v(\delta_r\theta)\quad   \text{ as } \,i\to  \infty
\]
locally uniformly for $r\in(0,\rho)$, after shrinking $\rho$ if necessary.
By Definition~\ref{def:simple-blowup} and Remark~\ref{rem1}, $f_{u_i,\theta}$ is decreasing on $(s_i,\rho)$. The same holds for $f_{v_i,\theta}$. As $s_i\to0$, passing to the limit shows that $f_{v,\theta}$ is non-increasing on $(0,\rho)$.

If the singularity were removable, $v$ would be bounded near the origin, and hence
\[
\lim_{r\to 0}f_{v,\theta}(r)=0.
\]
For any fixed $r_0\in(0,\rho)$, by monotonicity, we have
\[
0<f_{v,\theta}(r_0)\le f_{v,\theta}(r) \quad \text{ for }\, 0<r<r_0.
\]
 We obtain a contradiction after letting $r\to 0$.

Therefore, $v$ has to have a singularity at $\xi=0$. By the B\^ocher-type theorem for the Heisenberg group (see \cite[Theorem 1.9]{BL2007}), we can write
\[
v(\xi)=a_v|\xi|^{2-Q}+h_v(\xi),
\]
where $a_v>0$ is a constant and
\[
\Delta_{\H}h_v=0 \quad \text{ in }\, B_2.
\]

We first establish the inequality in Proposition \ref{prop:4.9} for $ d(\xi,\xi_i)=1$. Namely, we prove that
\be\label{4.23}
u_i(\xi_i\circ e) \leq C u_i^{-1}(\xi_i) .
\ee
Suppose that \eqref{4.23} does not hold. Then along a subsequence we have
\be\label{3.19 in}
\lim _{i \to\infty} u_i(\xi_i\circ e) u_i(\xi_i)=\infty .
\ee
We multiply \eqref{4.2} by $u_i( \xi_i \circ e) ^{-1}$ and integrate over $B_1$ to obtain
\be \label{3.20 in}
\frac{n^2}{4} u_i( \xi_i \circ e) ^{-1} \int_{B_1} R_i u_i^{p_i}=-\int_{B_1} u_i( \xi_i \circ e) ^{-1} \Delta_{\H } u_i=-\frac{1}{4}\int_{\pa  B_1} A \nabla v_i \cdot \nu.
\ee
By direct computations, we have
\begin{align}
\lim _{i \to  \infty} \int_{\pa  B_1} A \nabla v_i \cdot \nu  =&\, \int_{\pa  B_1} A \nabla( a_v|\xi|^{2-Q})  \cdot \nu\nonumber \\
 =&\,(2-Q) a_v \int_{\pa  B_1} A \nabla(|\xi|) \cdot \frac{\nabla(|\xi|)}{|\nabla(|\xi|)|}\nonumber \\
 =&\,2(2-Q) a_v \int_{\pa  B_1} \frac{|z|^2}{(4|z|^6+t^2)^{1 / 2}}<0 .\label{3.21 in}
\end{align}
By Lemma \ref{lem:4.8} and computations similar to those in \eqref{new 2}, we see that
\[
\int_{B_1} R_i u_i^{p_i} \leq C u_i(\xi_i)^{-1}.
\]
By \eqref{3.19 in},
\[
\lim _{i \to\infty} u_i(\xi_i\circ e)^{-1} \int_{B_1} R_i u_i^{p_i}=0,
\]
which contradicts \eqref{3.20 in} and \eqref{3.21 in}. Thus we have proved \eqref{4.23}.

 By Lemma \ref{lem:4.3}, we have established the inequality in Proposition \ref{prop:4.9} for $\rho \leq d(\xi,\xi_i) \leq 1$. It remains to prove Proposition \ref{prop:4.9} for $s_i \leq d(\xi,\xi_i) \leq \rho$.
Indeed, by using a  scaling argument, we can reduce the case of $s_i \leq d(\xi,\xi_i)<\rho$ to $ d(\xi,\xi_i)=1$.

Suppose, to the contrary, that along a subsequence there are points $\widetilde{\xi}_{i}$ satisfying $s_{i} \leq d(\widetilde{\xi}_i,\xi_i) \leq \rho$ and \be
\label{prop4.9-1}
\lim_{i \to  \infty} u_{i}(\widetilde{\xi}_{i})u_{i}(\xi_{i})d(\widetilde{\xi}_i,\xi_i)^{Q-2}=\infty.
\ee
Set $\widetilde{s}_{i}=d(\widetilde{\xi}_i,\xi_i)$, $\widetilde{u}_{i}(\xi)=(\widetilde{s}_{i})^{2 /(p_{i}-1)} u_{i}(\xi_{i}\circ\delta_{\widetilde{s}_{i}}\xi)$. Then $\{\widetilde{u}_{i}\}$ satisfies
\[
-\Delta_{\H} \widetilde{u}_{i}=\frac{n^2}{4}R_{i}(\xi_{i}\circ\delta_{\widetilde{s}_{i}} \xi) \widetilde{u}_{i}(\xi)^{p_{i}}, \quad \xi \in B_{2},
\]
and all the hypotheses of Proposition \ref{prop:4.9} with 0 being an isolated simple
blow-up  point. It follows from \eqref{4.23} that
\[
\widetilde{u}_{i}(0) \widetilde{u}_{i}(\delta_{\widetilde{s}_{i}^{-1}}(\xi_{i}^{-1}\circ \widetilde{\xi}_{i} )) \leq C.
\]

Returning to the original variables, we obtain
\[
 u_i(\widetilde\xi_i) u_i(\xi_i)  d(\widetilde{\xi}_i,\xi_i)^{\,4/(p_i-1)}\le C.
\]
Set $ e_i=\frac{4}{p_i-1}-(Q-2)=\frac{(Q-2)\tau_i}{p_i-1}\ge0$.
Since $d(\widetilde{\xi}_i,\xi_i)\ge S_i m_i^{-(p_i-1)/2}$ and $S_i\ge1$ for large $i$, by Lemma \ref{lem:4.8} we have
\[
d(\widetilde{\xi}_i,\xi_i)^{-e_i} \le m_i^{(Q-2)\tau_i/2}\to  1.
\]
Therefore,
\[
\limsup _{i \to  \infty} u_{i}(\widetilde{\xi}_{i}) u_{i}(\xi_{i})d(\widetilde{\xi}_i,\xi_i)^{Q-2}<\infty,
\]
which contradicts \eqref{prop4.9-1}. We complete the proof of Proposition \ref{prop:4.9}.

The boundary integration argument in the proof of \cite[Proposition~2.3]{Li1995} gives the constant $a$ in Proposition~\ref{prop:4.9}:
\[
a=2^{Q-2}\lim_{i\to  \infty} (R_i(0))^{(2-Q)/2}.
\] 
We omit the details here.
\end{proof}

The bubble approximation in Proposition~\ref{prop:4.4} and the decay estimate \eqref{eq:sharp-isolated-bound} give the following integral bounds.
\begin{lem}\label{lem:4.11}
Under the assumptions of Lemma~\ref{lem:4.6}, with $m_i=u_i(\xi_i)$ and $s_i=S_i m_i^{-(p_i-1)/2}$ as above, the following estimates hold:
\[
\int_{ d(\xi,\xi_i)  \leq s_i} d(\xi,\xi_i) ^s u_i(\xi)^{p_i+1}= \left\{\begin{aligned}
&O(u_i(\xi_i)^{-2s/(Q-2)}) &&\text{ if }\, -Q<s<Q, \\ &O(u_i(\xi_i)^{-Q^* } \ln  u_i(\xi_i)) & & \text{ if }\, s=Q, \\ &o(u_i(\xi_i)^{-Q^* }) &&\text{ if }\, s>Q,\end{aligned}\right.
\]
and
\[
\int_{s_i< d(\xi,\xi_i)  \leq 1} d(\xi,\xi_i) ^s u_i(\xi)^{p_i+1}=  \left\{\begin{aligned}   &o(u_i(\xi_i)^{-2s/(Q-2)}) && \text{ if }\, -Q<s<Q, \\ &O(u_i(\xi_i)^{-Q^* } \ln  u_i(\xi_i)) & &\text{ if }\,  s=Q, \\ &O(u_i(\xi_i)^{-Q^* }) && \text{ if }\, s>Q ,\end{aligned}\right.
\]
where the constants implicit in $O(\cdot)$ depend only on $n,\al,A_1,A_2,A_3,\rho$ and $s$.
\end{lem}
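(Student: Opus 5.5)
The plan is to treat the inner ball and the outer annulus separately. On the inner ball we use the bubble approximation of Proposition~\ref{prop:4.4}. On the annulus we use the sharp decay \eqref{eq:sharp-isolated-bound} of Proposition~\ref{prop:4.9}. Throughout, $u_i(\xi_i)^{\tau_i}=1+o(1)$ from Lemma~\ref{lem:4.8} lets us replace the exponent $p_i+1$ by $Q^*$ up to factors $1+o(1)$.

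\emph{Inner region.} Write $\ell_i=m_i^{-(p_i-1)/2}$ and change variables $\xi=\xi_i\circ\delta_{\ell_i}\eta$, so that $d(\xi,\xi_i)=\ell_i|\eta|$ and $\d\xi=\ell_i^Q\d\eta$. The integral over $d(\xi,\xi_i)\le s_i$ becomes
\[
\ell_i^{s+Q}m_i^{p_i+1}\int_{|\eta|\le S_i}|\eta|^s\phi_i(\eta)^{p_i+1}\,\d\eta .
\]
Using Lemma~\ref{lem:4.8}, the prefactor equals $m_i^{-2s/(Q-2)}$ up to $1+o(1)$.

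By Proposition~\ref{prop:4.4}, $\phi_i\le C\Lambda(\delta_{k_i}\eta)\le C(1+|\eta|)^{2-Q}$ on $B_{2S_i}$. The integrand is therefore bounded by $C|\eta|^s(1+|\eta|)^{-2Q+o(1)}$. This is integrable near $0$ because $s>-Q$. At infinity it is integrable exactly when $s<Q$. The $o(1)$ in the exponent is harmless because $S_i^{o(1)}$ can be controlled: the $\tau_i$-correction is $(Q-2)\tau_i\ln S_i\to 0$, using $\ln S_i\le C\ln m_i$ together with Lemma~\ref{lem:4.8}.

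This gives three cases:
\begin{itemize}
\item $-Q<s<Q$: the bound $O(m_i^{-2s/(Q-2)})$.
\item $s=Q$: the bound $C\ln S_i\,m_i^{-Q^*}$, and since $\ln S_i=O(\ln m_i)$ (choose $S_i$ growing slowly) this is $O(m_i^{-Q^*}\ln m_i)$.
\item $s>Q$: the integral is $O(S_i^{s-Q})$, so the total is $O(S_i^{s-Q}m_i^{-2s/(Q-2)})$. Since $S_i\ell_i=s_i\to0$, this equals $O(s_i^{s-Q}m_i^{-Q^*})=o(m_i^{-Q^*})$.
\end{itemize}

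\emph{Outer region.} On $s_i<d(\xi,\xi_i)\le1$, \eqref{eq:sharp-isolated-bound} gives $u_i^{p_i+1}\le Cm_i^{-(p_i+1)}d^{(2-Q)(p_i+1)}$. The factor $d^{(Q-2)\tau_i}$ is bounded by $1+o(1)$ on this range, again because $d\ge s_i$ and by Lemma~\ref{lem:4.8}. Integrating in polar Heisenberg coordinates, $\d\xi\sim r^{Q-1}\d r$, gives
\[
Cm_i^{-Q^*}\int_{s_i}^1 r^{s-Q-1}\,\d r .
\]
There are again three cases:
\begin{itemize}
\item $s<Q$: this is $Cm_i^{-Q^*}s_i^{s-Q}=CS_i^{s-Q}m_i^{-2s/(Q-2)}=o(m_i^{-2s/(Q-2)})$, since $S_i\to\infty$.
\item $s=Q$: it is $Cm_i^{-Q^*}\ln(1/s_i)=O(m_i^{-Q^*}\ln m_i)$.
\item $s>Q$: it is $O(m_i^{-Q^*})$.
\end{itemize}

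The main obstacle is the bookkeeping of the $\tau_i$-perturbations of the exponents, i.e.\ showing that every factor $m_i^{c\tau_i}$ or $r^{c\tau_i}$ with $r\in[s_i,1]$ stays $1+o(1)$. This follows from Lemma~\ref{lem:4.8} together with $\ln(1/s_i)=O(\ln m_i)$. The rest is routine dyadic integration.
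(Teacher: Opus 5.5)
Your proposal is correct and follows the paper's route: the inner-ball estimates come from the bubble approximation of Proposition~\ref{prop:4.4} together with Lemma~\ref{lem:4.8}, and the annulus estimates come from \eqref{eq:sharp-isolated-bound} together with Lemma~\ref{lem:4.8}. Two small points: the pointwise bound $\phi_i\le C(1+|\eta|)^{2-Q}$ on $B_{2S_i}$ relies on the normalization $\var_iS_i^{Q-2}\to0$ of Remark~\ref{rem1}, because the $C^2$ closeness in Proposition~\ref{prop:4.4} is only additive; and $\ln S_i\le C\ln m_i$ holds automatically since $s_i=S_i\ell_i\to0$, so you do not need to choose $S_i$ to grow slowly.
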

\begin{proof}
The first set of estimates follows from Proposition \ref{prop:4.4} and Lemma \ref{lem:4.8}, and the second one from Proposition \ref{prop:4.9} and Lemma \ref{lem:4.8}.
\end{proof}

For later applications, we replace $R_i$ by $R_i(\xi) H_i(\xi)^{\tau_i}$ in \eqref{4.2} and consider
\be\label{4.26}
-\Delta_{\H} u_i(\xi)=\frac{n^2}{4} R_i(\xi) H_i(\xi)^{\tau_i} u_i(\xi)^{p_i} \quad \text { in }\, B_2,
\ee
where $H_i \in C^{1,1}(B_2)$ satisfies
\be\label{4.27}
A_4^{-1} \leq H_i(\xi) \leq A_4, \quad \| H_i\|_{C^{1,1}(B_2)} \leq A_5 , 
\ee
for all $\xi\in B_2$, with $A_4,A_5>0$ independent of $i$. Here $A_4$ bounds $H_i$ and $H_i^{-1}$, whereas $A_5$ bounds its $C^{1,1}$ norm.
By Lemma~\ref{lem:intrinsic-coefficient-calculus}, the preceding local estimates apply to $R_iH_i^{\tau_i}$ with two-sided and intrinsic norm bounds depending only on $n,\al,A_1,A_2,A_4,A_5$, for large $i$. Flatness is imposed on $R_i$ itself; derivatives of $H_i^{\tau_i}$ are estimated separately. 

\begin{lem}\label{lem:4.12}
 Suppose that $\{R_i\}$ satisfies \eqref{4.1} with constants $A_1,A_2$ and condition $(*)'_\beta$ for $2\le\beta<Q$, with $L_i=L(\beta,i)>0$ and a structural constant $C_*$ independent of $i$. Assume that $\{H_i\}$ satisfies \eqref{4.27} with constants $A_4,A_5$. Let $u_i\in C^2(B_2)$, $u_i\ge0$, solve \eqref{4.26}. If $\xi_i\to0$ is an isolated simple blow-up point with bound $A_3$ in \eqref{4.5} and radius $\rho$ in Definition~\ref{def:simple-blowup}, then
\[
\tau_i \leq  C u_i(\xi_i)^{-2}+C|\nabla_\H R_i(\xi_i)| u_i(\xi_i)^{-2/(Q-2)} +C|T R_i(\xi_i)| u_i(\xi_i)^{-4/(Q-2)}+CL(\beta, i) u_i(\xi_i)^{- 2\beta/(Q-2)},
\]
where $C>0$ depends only on $n,\al,C_*,A_1,A_2,A_3,A_4,A_5,\beta$ and $\rho$, and is independent of $i$ and $L(\beta,i)$.
\end{lem}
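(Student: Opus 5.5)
The estimate comes from the Pohozaev identity \eqref{Pohozaev}, applied as in Lemma~\ref{lem:4.8}, with two changes. The coefficient is now $K_i=R_iH_i^{\tau_i}$. The crude bound $|\Xi(K_i)|\le C|\xi|$ used there is replaced by an expansion of $\Xi(R_i)$ around $\xi_i$ that uses weighted flatness.

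\textbf{Setting up the identity.} We apply \eqref{Pohozaev} to $u_i(\xi_i\circ\cdot)$ on a small fixed ball $B_\si$, with $\si\le\rho$, and with $p=p_i$. The first-order coefficient is $\frac{Q}{p_i+1}-\frac{Q-2}{2}$, which is comparable to $\tau_i$, and it multiplies $\int_{B_\si}K_iu_i^{p_i+1}$. By \eqref{new 1} and Lemma~\ref{lem:4.8}, this integral is bounded below by a positive constant. Hence
\[
\tau_i\le C\Big|\int_{B_\si}\Xi(K_i)(\xi_i\circ\xi)\,u_i(\xi_i\circ\xi)^{p_i+1}\Big|+C\,(\text{boundary terms}).
\]

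\textbf{Boundary terms.} By the sharp decay \eqref{eq:sharp-isolated-bound}, together with the Harnack inequality and interior estimates on the annulus, $u_i$ and $\nabla u_i$ are $O(u_i(\xi_i)^{-1})$ on $\pa B_\si$. The terms $B(\si,\xi,u,\nabla u)$ are quadratic in $(u,\nabla u)$, so they are $O(u_i(\xi_i)^{-2})$. The term $\int_{\pa B_\si}K_iu_i^{p_i+1}$ is even smaller. This produces the $Cu_i(\xi_i)^{-2}$ term.

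\textbf{Interior term.} We split $\Xi(K_i)=H_i^{\tau_i}\Xi(R_i)+R_i\,\Xi(H_i^{\tau_i})$. Since $|\Xi(H_i^{\tau_i})|\le C\tau_i|\xi|$, the second piece contributes at most $C\tau_i\int|\xi|u_i^{p_i+1}=o(\tau_i)$ by Lemma~\ref{lem:4.11}, and it is absorbed into the left side. For the first piece, write $\Xi=\sum(x_jX_j+y_jY_j)+2tT$ in coordinates centered at $\xi_i$. At a point with $h=|\xi|$, Lemma~\ref{lem:wf-increments} gives
\[
|\nabla_\H R_i(\xi_i\circ\xi)-\nabla_\H R_i(\xi_i)|\le CL_ih(r_i(\xi_i)+h)^{\beta-2},
\]
and similarly for $TR_i$. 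For $\beta=2$ we use \eqref{eq:lf-two-increment} instead. The contributions of the frozen derivatives at $\xi_i$ are then estimated separately.

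\emph{Horizontal frozen term.} The term $\sum \nabla_\H R_i(\xi_i)\cdot(x,y)$ integrated against $u_i^{p_i+1}$ is odd under $z\mapsto -z$ for the bubble profile. Rather than exploit this cancellation, I will simply bound it by $|\nabla_\H R_i(\xi_i)|\int|\xi|u_i^{p_i+1}$. By Lemma~\ref{lem:4.11} with $s=1$, this is $|\nabla_\H R_i(\xi_i)|\,O(u_i(\xi_i)^{-2/(Q-2)})$.

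\emph{Reeb frozen term.} Similarly, $|TR_i(\xi_i)|\int|\xi|^2u_i^{p_i+1}=O(|TR_i(\xi_i)|\,u_i(\xi_i)^{-4/(Q-2)})$, using Lemma~\ref{lem:4.11} with $s=2<Q$.

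\emph{Increment terms.} These are bounded by $CL_i\int h^2(r_i(\xi_i)+h)^{\beta-2}u_i^{p_i+1}$. By \eqref{eq:wf-split} with $s=r_i(\xi_i)$, the integrand is controlled by
\[
C\bigl(h|\nabla_\H R_i(\xi_i)|+h^2|TR_i(\xi_i)|+L_ih^\beta\bigr)u_i^{p_i+1}.
\]
The first two pieces reproduce the frozen-term bounds. The last piece gives $CL_iu_i(\xi_i)^{-2\beta/(Q-2)}$ by Lemma~\ref{lem:4.11} with $s=\beta<Q$; this is exactly where the hypothesis $\beta<Q$ is used.

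\textbf{Main obstacle.} The hard step is making Lemma~\ref{lem:wf-increments} usable uniformly on $B_\si$. Its hypothesis $B_{\kappa_nh}(\xi_i)\Subset\Om_i$ forces $\si$ to be small, and the mixed bound $|\nabla_\H TR_i|\le C_*L_ir_i^{\beta-3}$ is available only where $r_i>0$. I will handle the $T$-increment via the Lipschitz property of $r_i$: this gives $r_i\ge r_i(\xi_i)-C_*h$ along horizontal paths. I will integrate along a horizontal path from $\xi_i$ to $\xi_i\circ\xi$ of length comparable to $h$, and treat the case $r_i(\xi_i)\lesssim h$ by the crude bound $|TR_i|\le CL_i(r_i+h)^{\beta-2}$.

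Collecting all the estimates yields the stated inequality, with constants depending only on the listed data.
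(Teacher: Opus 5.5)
Your proposal is correct and follows essentially the paper's proof: the Pohozaev identity on a small fixed ball centered at $\xi_i$, absorption of the $\Xi(H_i^{\tau_i})$ piece, the frozen horizontal gradient plus the increment bound from Lemma~\ref{lem:wf-increments}, then \eqref{eq:wf-split} and Lemma~\ref{lem:4.11} with $s=1,2,\beta$. The one difference is that the paper never freezes $TR_i$ at $\xi_i$. It bounds $|TR_i(\xi)|\le C_*L_i r_i(\xi)^{\beta-2}\le CL_i(r_i(\xi_i)+d(\xi,\xi_i))^{\beta-2}$ directly via the Lipschitz bound on $r_i$; this is your ``crude bound'', used throughout rather than only in a subcase, and it already gives $CL_i\int d(\xi,\xi_i)^2(r_i(\xi_i)+d(\xi,\xi_i))^{\beta-2}u_i^{p_i+1}$, so the ``main obstacle'' involving mixed derivatives and horizontal paths never arises.
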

\begin{proof}
Given $\xi_i=(\hat{x}, \hat{y}, \hat{t})$, we define the following operator
\be\label{Xi}
\Xi_i=\sum_{j=1}^n((x-\hat{x})_j X_j+(y-\hat{y})_j Y_j)+2(t-\hat{t}+2(\hat{x} \cdot y-\hat{y} \cdot x)) T.
\ee

If $\beta=2$, on a fixed sufficiently small ball, by
\eqref{eq:lf-two-increment} and \eqref{Xi}, we have
\[
 |\Xi_iR_i(\xi)|\le C d(\xi,\xi_i)|\nabla_\H R_i(\xi_i)|
                  +CL(2,i) d(\xi,\xi_i)^2.
\]
It follows from Lemmas \ref{prop:4.7} and \ref{lem:4.11} that
\[
 \tau_i\le Cu_i(\xi_i)^{-2}+C|\nabla_\H R_i(\xi_i)| u_i(\xi_i)^{-2/(Q-2)}+CL(2,i)u_i(\xi_i)^{-4/(Q-2)},
\]
where the $H_i^{\tau_i}$ contribution is $C\tau_iu_i(\xi_i)^{-2/(Q-2)}$ and is absorbed
for large $i$, giving the stated estimate when $\beta=2$. 

If $\beta>2$, applying Lemma \ref{prop:4.7} to \eqref{4.26} with $\si=a$ and center $\xi_i$ for a fixed sufficiently small $a>0$, we have
    \begin{align*}
        &\,\int_{\pa B_a} B(a,\eta,u_i(\xi_i\circ\eta),\nabla_\eta[u_i(\xi_i\circ\eta)])\,\d S_\eta\\
        =&\,\Big( \frac{Q}{p_i+1} -\frac{Q-2}{2}\Big)\frac{n^2}{4} \int_{B_a(\xi_i) }R_iH_i^{\tau_i}u_i^{p_i+1}+ \frac{n^2}{4(p_i +1)}\int_{B_a(\xi_i)} \Xi_i (R_iH_i^{\tau_i}) u_i^{p_i+1}\\
     &\,-\frac{n^2}{4(p_i+1)}\int_{\pa  B_a(\xi_i)} R_iH_i^{\tau_i} u_i^{p_i+1} \Xi_i  \cdot \nu_i,
    \end{align*}
where $\d S_\eta$ is Euclidean surface measure on $\pa B_a$ and $\nu_i$ is the outward Euclidean unit normal to $\pa B_a(\xi_i)$.
It follows from Propositions \ref{prop:4.4} and \ref{prop:4.9} that
	\begin{align}
\tau_i  \leq&\, C u_i(\xi_i)^{-2}+C\Big|\int_{B_a(\xi_i)} \Xi_i (R_iH_i^{\tau_i}) u_i^{p_i+1}\Big|\notag \\
\leq& \, C u_i(\xi_i)^{-2}+C \tau_i \int_{B_a(\xi_i)} d(\xi,\xi_i)u_i^{p_i+1}+C\Big|\int_{B_a(\xi_i)}\Xi_i(R_i) H_i^{\tau_i} u_i^{p_i+1}\Big| \label{tau}.
\end{align}

By direct computations, we have
    \begin{align}
        &\,\Big|\int_{B_a(\xi_i)}\Xi_i(R_i) H_i^{\tau_i} u_i^{p_i+1}\Big|\notag\\
        \leq&\,          C \int_{B_a(\xi_i)} \bigl(d(\xi, \xi_i)|\nabla_{\H } R_i(\xi)|+d(\xi, \xi_i)^2|T R_i(\xi)|\bigr) u_i^{p_i+1}:=C(I_1+I_2)\label{I}.
    \end{align}

	Making use of Lemma \ref{lem:4.11}, we have
	\begin{align*}
|I_1| \leq&\, C|\nabla_{\H} R_i(\xi_i)| \int_{B_a(\xi_i)} d(\xi,\xi_i)  u_i^{p_i+1}+C \int_{B_a(\xi_i)} d(\xi,\xi_i) |\nabla_{\H} R_i(\xi)-\nabla_{\H} R_i(\xi_i)| u_i^{p_i+1} \\
\leq&\, C|\nabla_{\H} R_i(\xi_i)| u_i(\xi_i)^{-2 /(Q-2)}+C \int_{B_a(\xi_i)} d(\xi,\xi_i) |\nabla_{\H} R_i(\xi)-\nabla_{\H} R_i(\xi_i)| u_i^{p_i+1} .
	\end{align*}
For $d(\xi,\xi_i)<a$, Lemma~\ref{lem:wf-increments} gives
\begin{align}
&\,  |\nabla_{\H} R_i(\xi)  -\nabla_{\H} R_i(\xi_i)| \leq CL(\beta,i)d(\xi,\xi_i)(r_i(\xi_i)+d(\xi,\xi_i))^{\beta-2}.\label{star beta new}
	\end{align}
 When $r_i(\xi_i)\leq d(\xi,\xi_i)$,  we have
 \begin{align*}
     L(\beta,i)d(\xi,\xi_i)^2(r_i(\xi_i)+d(\xi,\xi_i))^{\beta-2}\leq C L(\beta,i)d(\xi,\xi_i)^\beta.
 \end{align*}
 When $r_i(\xi_i)> d(\xi,\xi_i)$, the lower bound in \eqref{eq:wf-first}, equivalently the argument of \eqref{eq:wf-split}, gives
 \begin{align*}
     L(\beta,i)d(\xi,\xi_i)^2(r_i(\xi_i)+d(\xi,\xi_i))^{\beta-2}& \leq CL(\beta,i)d(\xi,\xi_i)^2 r_i(\xi_i)^{\beta-2}\\
     &\leq C\frac{d(\xi,\xi_i)^2}{r_i(\xi_i)}|\nabla_\H R_i(\xi_i)|+Cd(\xi,\xi_i)^2|T R_i(\xi_i)|\\
     & \leq C d(\xi,\xi_i)|\nabla_\H R_i(\xi_i)|+Cd(\xi,\xi_i)^2|T R_i(\xi_i)|,
 \end{align*}
 where we have used the fact that $L(\beta,i)r_i(\xi_i)^{\beta-1}\leq C_* (|\nabla_\H R_i(\xi_i)|+r_i(\xi_i)|T R_i(\xi_i)|)$ in the second inequality. Hence we conclude from the above two estimates that
\[
     L(\beta,i)d(\xi,\xi_i)^2(r_i(\xi_i)+d(\xi,\xi_i))^{\beta-2}\leq C(d(\xi,\xi_i)|\nabla_\H R_i(\xi_i)|+d(\xi,\xi_i)^2|T R_i(\xi_i)|+L(\beta,i)d(\xi,\xi_i)^\beta).
\]
It follows from Lemma \ref{lem:4.11} and \eqref{star beta new} that
	\begin{align*}
	& \,\int_{B_a(\xi_i)} d(\xi,\xi_i) |\nabla_{\H} R_i(\xi)-\nabla_{\H} R_i(\xi_i)| u_i^{p_i+1} \\
\leq&\, C|\nabla_{\H} R_i(\xi_i)| u_i(\xi_i)^{-2/(Q-2)}+C|T R_i(\xi_i)| u_i(\xi_i)^{-4/(Q-2)}+CL(\beta, i) u_i(\xi_i)^{- 2\beta/(Q-2)} ,
	\end{align*}
    Therefore,
    \be\label{I1 new}
        |I_1|\leq C|\nabla_{\H} R_i(\xi_i)| u_i(\xi_i)^{-2/(Q-2)}+C|T R_i(\xi_i)| u_i(\xi_i)^{-4/(Q-2)}+CL(\beta, i) u_i(\xi_i)^{- 2\beta/(Q-2)} .
    \ee

    For $I_2$, condition $(*)_\beta'$ gives $|T R_i|\leq CL(\beta,i)(r_i(\xi_i)+d(\xi,\xi_i))^{\beta-2}$. It follows from Lemma \ref{lem:4.11} and \eqref{star beta new} that
    \begin{align}
    |I_2|    &\leq \ CL(\beta,i)\int_{B_a(\xi_i)}d(\xi, \xi_i)^2 (r_i(\xi_i)+d(\xi,\xi_i))^{\beta-2}u_i^{p_i+1}\notag\\
        &\leq \ C|\nabla_\H R_i(\xi_i)|u_i(\xi_i)^{-2/(Q-2)}+C|T R_i(\xi_i)| u_i(\xi_i)^{-4/(Q-2)}+CL(\beta, i) u_i(\xi_i)^{- 2\beta/(Q-2)} .\label{I2 new}
    \end{align}

	Lemma \ref{lem:4.12} follows immediately from \eqref{tau}, \eqref{I}, \eqref{I1 new}, \eqref{I2 new} and Lemma \ref{lem:4.11}.
\end{proof}
\begin{lem}\label{lem:4.13}
Under the hypotheses of Lemma \ref{lem:4.12},
	\[
|\nabla_{\H} R_i(\xi_i)|+u_i(\xi_i)^{-2/(Q-2)}|TR_i(\xi_i)| \leq C u_i(\xi_i)^{-2}+CL(\beta, i) u_i(\xi_i)^{-2(\beta-1)/(Q-2)},
	\]
	where $C>0$ depends only on $n,\al,C_*,A_1,A_2,A_3,A_4,A_5,\beta$ and $\rho$, and is independent of $i$ and $L(\beta,i)$.
\end{lem}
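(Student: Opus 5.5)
We use the translation-type Pohozaev identities, i.e. the Kazdan--Warner-type identities obtained by multiplying \eqref{4.26} by $\overline Z_a u_i$ and $Tu_i$. These fields generate left translations and commute with $\Delta_\H$. Fix a small $a>0$ and work on $B_a(\xi_i)$ in the translated variable $\eta\mapsto\xi_i\circ\eta$.

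For a right-invariant (translation-generating) field $D\in\{\overline Z_1,\dots,\overline Z_{2n},T\}$, multiplying $-\Delta_\H u_i=\frac{n^2}{4}R_iH_i^{\tau_i}u_i^{p_i}$ by $Du_i$ and integrating by parts gives
\[
\frac{n^2}{4(p_i+1)}\int_{B_a(\xi_i)} D(R_iH_i^{\tau_i})\,u_i^{p_i+1}=\text{boundary terms on }\pa B_a(\xi_i).
\]
There is no interior gradient term, since $D$ commutes with $\Delta_\H$ and $\operatorname{div}$-structure gives $\int \Delta_\H u\,Du=$ boundary. By Proposition~\ref{prop:4.9} and the interior estimate \eqref{sch new}, the boundary terms are $O(u_i(\xi_i)^{-2})$. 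The $H_i^{\tau_i}$ contribution is $O(\tau_i)\int u_i^{p_i+1}$, and by Lemma~\ref{lem:4.12} this is $O(\tau_i)$, which is bounded by the right-hand side of Lemma~\ref{lem:4.12}.

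Next we expand $DR_i(\xi)=DR_i(\xi_i)+[DR_i(\xi)-DR_i(\xi_i)]$. By Lemma~\ref{lem:4.11} and the bubble approximation of Proposition~\ref{prop:4.4}, $\int_{B_a(\xi_i)}u_i^{p_i+1}\ge c>0$. Hence
\[
|DR_i(\xi_i)|\le Cu_i(\xi_i)^{-2}+C\tau_i+C\int_{B_a(\xi_i)}|DR_i(\xi)-DR_i(\xi_i)|u_i^{p_i+1}.
\]
Right- and left-invariant horizontal gradients coincide at the center after translation, by Lemma~\ref{lem:intrinsic-coefficient-calculus}. For the horizontal fields, the increments are controlled by Lemma~\ref{lem:wf-increments}, together with the splitting \eqref{eq:wf-split} used in the proof of Lemma~\ref{lem:4.12}:
\[
|\nabla_\H R_i(\xi)-\nabla_\H R_i(\xi_i)|\le C\bigl(|\nabla_\H R_i(\xi_i)|+d|TR_i(\xi_i)|+L_id^{\beta-1}\bigr),\quad d=d(\xi,\xi_i),
\]
with an extra factor $d/r_i$ where $d<r_i$. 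Integrating against $u_i^{p_i+1}$ with Lemma~\ref{lem:4.11} yields small multiples $o(1)|\nabla_\H R_i(\xi_i)|$. These are absorbed, since $\int_{d\ge s_i}u_i^{p_i+1}\to0$ and the inner region carries a factor $d\le s_i\to0$. The integration also yields $u_i(\xi_i)^{-2/(Q-2)}|TR_i(\xi_i)|$ and $L_iu_i(\xi_i)^{-2(\beta-1)/(Q-2)}$.

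For $D=T$ we use the second increment bound of Lemma~\ref{lem:wf-increments}, $|TR_i(\xi)-TR_i(\xi_i)|\le CL_id(r_i(\xi_i)+d)^{\beta-3}$. We multiply the resulting estimate by $u_i(\xi_i)^{-2/(Q-2)}$. Where $d\ge r_i$ this gives $L_id^{\beta-2}$, which integrates to $L_iu_i(\xi_i)^{-2(\beta-2)/(Q-2)}$. After the multiplication this becomes exactly the claimed power. Where $d<r_i$ we bound $L_idr_i^{\beta-3}\le (d/r_i^2)(|\nabla_\H R_i(\xi_i)|+r_i|TR_i(\xi_i)|)$ by \eqref{eq:wf-first}.

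Finally, $\tau_i$ on the right-hand side is replaced using Lemma~\ref{lem:4.12}. Its terms $|\nabla_\H R_i(\xi_i)|u_i(\xi_i)^{-2/(Q-2)}$ and $|TR_i(\xi_i)|u_i(\xi_i)^{-4/(Q-2)}$ are small multiples of the left side and are absorbed. Its term $L_iu_i(\xi_i)^{-2\beta/(Q-2)}$ is dominated by $L_iu_i(\xi_i)^{-2(\beta-1)/(Q-2)}$.

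\textbf{Main obstacle.} The hardest step is the $T$-increment region $d<r_i(\xi_i)$ with $\beta<3$. There the factor $d/r_i$ is not small when $r_i\lesssim s_i$, i.e.\ at scale $m_i^{-2/(Q-2)}$. The plan there is to split at $d=r_i$. If $r_i\le Cu_i(\xi_i)^{-2/(Q-2)}$, then \eqref{eq:wf-first} gives directly $|\nabla_\H R_i(\xi_i)|+u_i(\xi_i)^{-2/(Q-2)}|TR_i(\xi_i)|\le CL_ir_i^{\beta-1}\le CL_iu_i(\xi_i)^{-2(\beta-1)/(Q-2)}$, and the lemma holds trivially. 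Otherwise $d/r_i$ is small on the bulk of the mass, by Lemma~\ref{lem:4.11}, and absorption works.
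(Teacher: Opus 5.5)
Your proposal is essentially the paper's proof. It uses the same translation identities from the centered right-invariant fields $\overline X_j^i,\overline Y_j^i$ and from $T$, the increment bounds of Lemma~\ref{lem:wf-increments}, the case split $r_i(\xi_i)\le Mu_i(\xi_i)^{-2/(Q-2)}$ (trivial by $(*)'_\beta$) versus $r_i(\xi_i)>Mu_i(\xi_i)^{-2/(Q-2)}$ (absorption via the lower bound in \eqref{eq:wf-first}), and elimination of $\tau_i$ through Lemma~\ref{lem:4.12}. The only difference is that you apply \eqref{eq:wf-first} pointwise after splitting at $d=r_i(\xi_i)$ and weight $|TR_i(\xi_i)|$ by $u_i(\xi_i)^{-2/(Q-2)}$, whereas the paper integrates first and weights by $\bar r_i=r_i/(1+r_i)$.

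Check the following bookkeeping points:
\begin{enumerate}
\item The case split is also what makes your horizontal absorption work. The inner-region factor is $d/r_i(\xi_i)$, not a factor $d\le s_i$, and its integral is $O(u_i(\xi_i)^{-2/(Q-2)}/r_i(\xi_i))$.
\item The term $u_i(\xi_i)^{-2/(Q-2)}|TR_i(\xi_i)|$ produced by the horizontal increments carries a non-small constant. You must first close the $T$-estimate, whose coefficients are $O(M^{-1})$, and then insert it, rather than summing and absorbing directly.
\item The case $\beta=2$ is omitted; it is handled exactly as in the paper.
\item The extra term $|z-\hat z|\,|TR_i(\xi)|$ coming from $\overline Z^i_a-Z_a$ is omitted; it is handled exactly as in the paper.
\end{enumerate}
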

\begin{proof}
We first introduce the translated right-invariant vector fields centered at $\xi_i$. Writing
$\xi_i=(\hat{x}, \hat{y}, \hat{t})$,
we define, for each $i$ and $j=1,\ldots,n$,
\begin{align*}
\overline{X}_j^i & :=\overline{X}_j+4 \hat{y}_j \pa _t=X_j-4(y_j-\hat{y}_j) \pa _t, \\
\overline{Y}_j^i & :=\overline{Y}_j-4 \hat{x}_j \pa _t=Y_j+4(x_j-\hat{x}_j) \pa _t .
\end{align*}
They are divergence-free, commute with all the left-invariant horizontal vector fields, and satisfy
\[
\overline{X}_j^i|_{\xi_i}=X_j|_{\xi_i},\quad \overline{Y}_j^i|_{\xi_i}=Y_j|_{\xi_i} .
\]

First suppose $\beta=2$. For $1\le j\le 2n$, define the centered right-invariant horizontal field as follows: 
\[
\overline Z_j^{\,i}:=
\left\{\begin{aligned}
   &\overline X_j^{i} && \text{ if }\, 1\le j\le n,\\
&\overline Y_{j-n}^{i} && \text{ if }\,  n+1\le j\le 2n. 
\end{aligned}\right.
\]
By \eqref{eq:lf-two-increment}, 
$$
|\overline{Z}_j^i R_i(\xi)-\overline{Z}_j^i R_i(\xi_i)| \leq C L(2, i) d(\xi, \xi_i)
$$
Multiplying \eqref{4.26} by $\eta\overline{Z}_j^i u_i$, where $\eta$ is a fixed cut-off function equal to 1 near $\xi_i$, and using Proposition \ref{prop:4.9} and Lemma \ref{lem:4.11}, we obtain
\begin{align*}
|\nabla_\H R_i(\xi_i)| & \leq C u_i(\xi_i)^{-2}+C \tau_i+C L(2, i) \int_{B_a(\xi_i)} d(\xi, \xi_i) u_i^{p_i+1}  \\
& \leq C u_i(\xi_i)^{-2}+C \tau_i+C L(2, i) u_i(\xi_i)^{-2 /(Q-2)},
\end{align*}
where $a>0$ is fixed and sufficiently small. Substituting the estimate for $\tau_i$ from the proof of Lemma \ref{lem:4.12} yields
$$
|\nabla_\H R_i(\xi_i)| \leq  C u_i(\xi_i)^{-2}+C u_i(\xi_i)^{-2 /(Q-2)}|\nabla_\H R_i(\xi_i)| +C L(2, i) u_i(\xi_i)^{-2 /(Q-2)} .
$$
Using $|T R_i| \leq C_* L(2, i)$ from \eqref{eq:lf-two-increment}, we conclude that
$$
|\nabla_\H R_i(\xi_i)|+u_i(\xi_i)^{-2 /(Q-2)}|T R_i(\xi_i)|  \leq C u_i(\xi_i)^{-2}+C L(2, i) u_i(\xi_i)^{-2 /(Q-2)}.
$$

We now consider $\beta>2$.
   After a fixed local rescaling if necessary, choose a cut-off function $\eta \in C_c^{\infty}(B_{1 / 2})$ satisfying
\[
	\eta(\xi)=1, \quad|\xi| \leq 1 / 4, \quad \text { and } \quad \eta(\xi)=0, \quad|\xi| \geq 1 / 2 .
\]
	Define another cut-off function centered at $\xi_i$, namely,
    \[
\eta_i(\xi):=\eta(\xi_i^{-1} \circ \xi) .
\]
	Since $\xi_i \to  0$, for all sufficiently large $i$, $B_{1 / 2}(\xi_i) \subset B_1$. Moreover, $\overline{X}_j^i \eta_i$ and $\overline{Y}_j^i \eta_i$ are uniformly bounded and supported in
$B_{1 / 2}(\xi_i) \backslash \overline{B_{1 / 4}(\xi_i)}$.

	Multiplying \eqref{4.26} by $(\overline{X}^i_ju_i)\eta_i$, and integrating by parts over $B_1$, we obtain
\begin{align*}
&\,\frac{n^2}{4(p_i+1)}  \int_{B_1} u_i^{p_i+1} \eta_i \overline{X}^i_j(R_i H_i^{\tau_i}) \\
 = &\, -\int_{B_1} \eta_i \nabla_{\H } u_i \cdot \nabla_{\H }(\overline{X}^i_j u_i)-\int_{B_{1 / 2}(\xi_i) \backslash \overline{B_{1 / 4}(\xi_i)}}(\overline{X}^i_j u_i) \nabla_{\H } u_i \cdot \nabla_{\H } \eta_i \\
& \, -\frac{n^2}{4(p_i+1)} \int_{B_{1 / 2}(\xi_i) \backslash \overline{B_{1 / 4}(\xi_i)}} u_i^{p_i+1} R_i H_i^{\tau_i}(\overline{X}^i_j \eta_i) .
\end{align*}
Since $\overline{X}_j^i$ commutes with all the left-invariant horizontal vector fields,
\[
-\int_{B_1} \eta_i  \nabla_{\H } u_i \cdot \nabla_{\H }(\overline{X}_j^i u_i)  =-\frac{1}{2} \int_{B_1} \eta_i \overline{X}_j^i|\nabla_{\H } u_i|^2 =\frac{1}{2} \int_{B_{1 / 2}(\xi_i) \backslash \overline{B_{1 / 4}(\xi_i)}}|\nabla_{\H } u_i|^2 \overline{X}_j^i \eta_i .
\]
Therefore,
\begin{align*}
&\,\frac{n^2}{4(p_i+1)}  \int_{B_1} u_i^{p_i+1} \eta_i \overline{X}^i_j(R_i H_i^{\tau_i}) \\
=\, & \frac{1}{2} \int_{B_{1 / 2}(\xi_i) \backslash \overline{B_{1 / 4}(\xi_i)}}|\nabla_{\H } u_i|^2 \overline{X}_j^i \eta_i -\int_{B_{1 / 2}(\xi_i) \backslash \overline{B_{1 / 4}(\xi_i)}}(\overline{X}^i_j u_i) \nabla_{\H } u_i \cdot \nabla_{\H } \eta_i \\
& \,-\frac{n^2}{4(p_i+1)} \int_{B_{1 / 2}(\xi_i) \backslash \overline{B_{1 / 4}(\xi_i)}} u_i^{p_i+1} R_i H_i^{\tau_i}(\overline{X}^i_j \eta_i) .
\end{align*}

By Young's inequality,
\[
\Big|\int_{B_{1 / 2}(\xi_i) \backslash \overline{B_{1 / 4}(\xi_i)}}(\overline{X}_j^i u_i)  \nabla_{\H } u_i \cdot \nabla_{\H } \eta_i\Big| \leq C \int_{B_{1 / 2}(\xi_i) \backslash \overline{B_{1 / 4}(\xi_i)}}(|\nabla_{\H} u_i|^2+|\overline{X}_j^i u_i|^2) .
\]
It follows from Proposition \ref{prop:4.9} that
\be\label{4.29 in Com}
\Big|\int_{B_1} u_i^{p_i+1} \eta_i \overline{X}^i_j(R_i H_i^{\tau_i})\Big|  \leq C \int_{B_{1 / 2}(\xi_i) \backslash \overline{B_{1 / 4}(\xi_i)}}|\nabla_{\H } u_i|^2+C \int_{B_{1 / 2}(\xi_i) \backslash \overline{B_{1 / 4}(\xi_i)}}|\overline{X}_j^i u_i|^2+C u_i(\xi_i)^{-p_i-1}.
\ee

Using the fact $\overline{X}_j^i u_i=X_j u_i-4(y_j-\hat{y}_j) \pa _t u_i$,
we have
\be\label{4.295}
\int_{B_{1 / 2}(\xi_i) \backslash \overline{B_{1 / 4}(\xi_i)}}|\overline{X}_j^i u_i|^2 \leq C \int_{B_{1 / 2}(\xi_i) \backslash \overline{B_{1 / 4}(\xi_i)}}(|\nabla_{\H } u_i|^2+|\pa _t u_i|^2).
\ee
By Proposition \ref{prop:4.9} and local subelliptic estimates, we have
\be\label{4.30 in Com}
    \int_{B_{1 / 2}(\xi_i) \backslash \overline{B_{1 / 4}(\xi_i)}}|\nabla_{\H } u_i|^2 \leq C u_i( \xi_i \circ e ) ^2 \leq C u_i(\xi_i)^{-2},
\ee
where $e \in \Hn $ with $|e|=1$.
Similarly, 
\[
   \int_{B_{1 / 2}(\xi_i) \backslash \overline{B_{1 / 4}(\xi_i)}}(|\nabla_{\H }^2 u_i|^2+|\pa _t u_i|^2) \leq C u_i(\xi_i)^{-2}
\]
and
\be\label{4.31 in Com}
    \int_{B_{1 / 2}(\xi_i) \backslash \overline{B_{1 / 4}(\xi_i)}}|\overline{X}_j^i u_i|^2 \leq C u_i( \xi_i \circ e) ^2 \leq C u_i(\xi_i)^{-2} .
\ee

It follows from \eqref{4.29 in Com}, \eqref{4.295}, \eqref{4.30 in Com} and \eqref{4.31 in Com} that
\[
    \Big|\int_{B_1}u_i^{p_i+1} \eta_i \overline{X}^i_j(R_i H_i^{\tau_i})\Big| \leq C u_i(\xi_i)^{-2} .
\]
It follows from Lemma \ref{lem:4.11} that
\be\label{extra1}
    \Big|\int_{B_1}u_i^{p_i+1} \eta_iH_i^{\tau_i} \overline{X}^i_j(R_i )\Big| \leq C u_i(\xi_i)^{-2} +C\tau_i.
\ee

At the center $\xi_i$, we have $\overline{X}_j^i (R_i )(\xi_i)=X_j (R_i )(\xi_i)$.
Therefore,
\begin{align}
& \,|X_j (R_i )(\xi_i)| \int_{B_1} u_i(\xi)^{p_i+1} \eta_i(\xi)H_i^{\tau_i}(\xi) \notag \\
\leq &\,\Big|\int_{B_1}(\overline{X}_j^i(R_i ))(\xi) u_i(\xi)^{p_i+1} \eta_i(\xi)H_i^{\tau_i}(\xi) \Big|+C\int_{B_1}|(\overline{X}_j^i (R_i))(\xi)-X_j (R_i )(\xi_i)| u_i(\xi)^{p_i+1} \eta_i(\xi)\label{extra2} .
\end{align}

Using the fact $\overline{X}_j^i=X_j-4(y_j-\hat{y}_j) \pa _t$,
we have
\begin{align}
&\,\int_{B_1}|(\overline{X}_j^i (R_i ))(\xi)-X_j (R_i )(\xi_i)|u_i(\xi)^{p_i+1} \eta_i(\xi)\notag\\
\leq &\,\int_{B_1}(|X_j (R_i )(\xi)-X_j (R_i )(\xi_i)|+4|y_j-\hat{y}_j | | \pa _t (R_i )(\xi)|)u_i(\xi)^{p_i+1} \eta_i(\xi):=I_3+I_4 .\label{II}
\end{align}

By \eqref{star beta new} and Lemma \ref{lem:4.11}, we conclude that
\begin{align}
    |I_3|\leq&\, CL(\beta,i)\int_{B_1}d(\xi,\xi_i)(r_i(\xi_i)+d(\xi,\xi_i))^{\beta-2}u_i(\xi)^{p_i+1}\eta_i(\xi)\notag\\
    \leq &\, CL(\beta,i)(r_i(\xi_i)^{\beta-2}\int_{B_1}d(\xi,\xi_i)u_i(\xi)^{p_i+1}+\int_{B_1}d(\xi,\xi_i)^{\beta-1}u_i(\xi)^{p_i+1} )\notag\\
    \leq &\, CL(\beta,i)(r_i(\xi_i)^{\beta-2}u_i(\xi_i)^{-2/(Q-2)}+u_i(\xi_i)^{-2(\beta-1)/(Q-2)}).\label{I3 new}
\end{align}
Similarly, we have
\begin{align}
    |I_4|\leq CL(\beta,i)(r_i(\xi_i)^{\beta-2}u_i(\xi_i)^{-2/(Q-2)}+u_i(\xi_i)^{-2(\beta-1)/(Q-2)}).\label{I4 new}
\end{align}

Combining \eqref{extra1}, \eqref{extra2}, \eqref{II}, \eqref{I3 new} and \eqref{I4 new}, we have
\begin{align*}
   & \,|X_j (R_i )(\xi_i)| \int_{B_1} u_i(\xi)^{p_i+1} \eta_i(\xi)H^{\tau_i}_i(\xi)\\
    \leq&\, C u_i(\xi_i)^{-2} +C\tau_i+CL(\beta,i)(r_i(\xi_i)^{\beta-2}u_i(\xi_i)^{-2/(Q-2)}+u_i(\xi_i)^{-2(\beta-1)/(Q-2)}).
\end{align*}
It follows from Proposition \ref{prop:4.4} that
\[
 |X_j (R_i )(\xi_i)|\leq C u_i(\xi_i)^{-2} +C\tau_i+CL(\beta,i)(r_i(\xi_i)^{\beta-2}u_i(\xi_i)^{-2/(Q-2)}+u_i(\xi_i)^{-2(\beta-1)/(Q-2)}).
\]
Similarly,
\[
    |Y_j (R_i )(\xi_i)|\leq C u_i(\xi_i)^{-2} +C\tau_i+CL(\beta,i)(r_i(\xi_i)^{\beta-2}u_i(\xi_i)^{-2/(Q-2)}+u_i(\xi_i)^{-2(\beta-1)/(Q-2)}).
\]
Summing the estimates for $X_j(R_i)(\xi_i)$ and $Y_j(R_i)(\xi_i)$ over $j=1,\ldots,n$,  we obtain
\be\label{3.6 1}
    |\nabla_\H R_i (\xi_i)|\leq C u_i(\xi_i)^{-2} +C\tau_i+CL(\beta,i)(r_i(\xi_i)^{\beta-2}u_i(\xi_i)^{-2/(Q-2)}+u_i(\xi_i)^{-2(\beta-1)/(Q-2)}).
\ee

Take a large constant $M\geq 2$. When $r_i(\xi_i)\leq Mu_i(\xi_i)^{-2/(Q-2)}$, by $(*)'_\beta$, we have
\be\label{3.6 5}
    |\nabla_{\H} R_i(\xi_i)|+u_i(\xi_i)^{-2/(Q-2)}|TR_i(\xi_i)|\leq CL(\beta,i)u_i(\xi_i)^{-2(\beta-1)/(Q-2)}.
\ee
 When $r_i(\xi_i)> Mu_i(\xi_i)^{-2/(Q-2)}$, multiplying \eqref{4.26} by $(Tu_i)\eta_i$ and integrating by parts, we argue as above to obtain, for $2<\beta<3$,
\be\label{3.6 2}
|T R_i(\xi_i)| \leq C u_i(\xi_i)^{-2}+C \tau_i+C L(\beta, i) u_i(\xi_i)^{-2 /(Q-2)} r_i(\xi_i)^{\beta-3},
\ee
where we have also used the commutation of $T$ with the horizontal fields, together with Lemmas \ref{lem:wf-increments} and \ref{lem:4.11}. The negative power in \eqref{3.6 2} is well defined  because $r_i(\xi_i)>0$. For $\beta \geq 3$, Lemma \ref{lem:4.11} gives an additional term $C L(\beta,i) u_i(\xi_i)^{-2(\beta-2) /(Q-2)}$, which can be absorbed into $C L(\beta, i) u_i(\xi_i)^{-2 /(Q-2)} r_i(\xi_i)^{\beta-3}$ since $r_i(\xi_i)> Mu_i(\xi_i)^{-2/(Q-2)}$.

By \eqref{3.6 1} and \eqref{3.6 2}, we have
\begin{align*}
    |\nabla_\H R_i (\xi_i)|&\leq C u_i(\xi_i)^{-2} +C\tau_i+CL(\beta,i)u_i(\xi_i)^{-2/(Q-2)}r_i(\xi_i)^{\beta-2},\\
    |T R_i (\xi_i)|&\leq C u_i(\xi_i)^{-2} +C\tau_i+CL(\beta,i)u_i(\xi_i)^{-2/(Q-2)}r_i(\xi_i)^{\beta-3}.
\end{align*}
Take $\bar{r}_i(\xi_i)=r_i(\xi_i)/(1+r_i(\xi_i))$. Then $\bar{r}_i(\xi_i)\leq 1$ and $\bar{r}_i(\xi_i)\leq r_i(\xi_i)$. By direct computations, we have
\be\label{3.6 3}
    |\nabla_\H R_i (\xi_i)|+\bar{r}_i(\xi_i)|T R_i (\xi_i)|\leq C u_i(\xi_i)^{-2} +C\tau_i+CL(\beta,i)r_i(\xi_i)^{\beta-2}u_i(\xi_i)^{-2/(Q-2)}.
\ee

On the other hand, by $(*)'_\beta$, we have
\[
     |\nabla_\H R_i (\xi_i)|+\bar{r}_i(\xi_i)|T R_i (\xi_i)|\geq \frac{|\nabla_\H R_i (\xi_i)|+r_i(\xi_i)|T R_i (\xi_i)|}{1+r_i(\xi_i)} \geq C^{-1}L(\beta,i)\frac{r_i(\xi_i)^{\beta-1}}{1+r_i(\xi_i)}.
\]
Hence
\begin{align*}
    L(\beta,i)u_i(\xi_i)^{-2/(Q-2)}r_i(\xi_i)^{\beta-2}&\leq C\Big(\frac{u_i(\xi_i)^{-2/(Q-2)}}{r_i(\xi_i)}+u_i(\xi_i)^{-2/(Q-2)}\Big)(|\nabla_\H R_i (\xi_i)|+\bar{r}_i(\xi_i)|T R_i (\xi_i)|)\\
    &\leq C\Big(\frac{1}{M}+u_i(\xi_i)^{-2/(Q-2)}\Big)(|\nabla_\H R_i (\xi_i)|+\bar{r}_i(\xi_i)|T R_i (\xi_i)|).
\end{align*}
Substituting the above estimate into \eqref{3.6 3}, we obtain
\begin{align*}
 &\,|\nabla_\H R_i(\xi_i)|+\bar r_i(\xi_i)|TR_i(\xi_i)|\\
 \le&\, C u_i(\xi_i)^{-2}+C\tau_i
   +C(M^{-1}+u_i(\xi_i)^{-2/(Q-2)})
     (|\nabla_\H R_i(\xi_i)|+\bar r_i(\xi_i)|TR_i(\xi_i)|).
\end{align*}
Fixing $M$ large enough and then taking $i$ sufficiently large, we have
\[
    |\nabla_\H R_i (\xi_i)|+\bar{r}_i(\xi_i)|T R_i (\xi_i)|\leq C u_i(\xi_i)^{-2} +C\tau_i.
\]
Since $r_i(\xi_i)> Mu_i(\xi_i)^{-2/(Q-2)}$, for large $i$, we have $\bar{r}_i(\xi_i)\geq u_i(\xi_i)^{-2/(Q-2)}$. Hence
\[
    |\nabla_\H R_i (\xi_i)|+u_i(\xi_i)^{-2/(Q-2)}|T R_i (\xi_i)|\leq |\nabla_\H R_i (\xi_i)|+\bar{r}_i(\xi_i)|T R_i (\xi_i)|\leq C u_i(\xi_i)^{-2} +C\tau_i.
\]
The above estimate, \eqref{3.6 5} and Lemma \ref{lem:4.12} immediately imply Lemma \ref{lem:4.13}.
\end{proof}

\begin{lem}\label{lem:4.14}
Under the assumptions of Lemma \ref{lem:4.12}, we have
	\[
	\tau_i \leq C u_i(\xi_i)^{-2}+CL(\beta, i) u_i(\xi_i)^{- 2\beta/(Q-2)},
	\]
where the  constant has the same dependencies as in Lemma~\ref{lem:4.12}.
\end{lem}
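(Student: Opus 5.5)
The estimate follows by substituting Lemma~\ref{lem:4.13} into Lemma~\ref{lem:4.12}. Write $m_i=u_i(\xi_i)$. Lemma~\ref{lem:4.12} gives
\[
\tau_i\le Cm_i^{-2}+C|\nabla_\H R_i(\xi_i)|\,m_i^{-2/(Q-2)}+C|TR_i(\xi_i)|\,m_i^{-4/(Q-2)}+CL(\beta,i)m_i^{-2\beta/(Q-2)}.
\]
The two middle terms can be grouped as
\[
m_i^{-2/(Q-2)}\bigl(|\nabla_\H R_i(\xi_i)|+m_i^{-2/(Q-2)}|TR_i(\xi_i)|\bigr).
\]
The bracket is exactly the quantity controlled by Lemma~\ref{lem:4.13}.

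Inserting Lemma~\ref{lem:4.13}, the grouped term is at most
\[
Cm_i^{-2/(Q-2)}\bigl(m_i^{-2}+L(\beta,i)m_i^{-2(\beta-1)/(Q-2)}\bigr)=Cm_i^{-2-2/(Q-2)}+CL(\beta,i)m_i^{-2\beta/(Q-2)}.
\]
The first piece is dominated by $Cm_i^{-2}$, since $m_i\to\infty$. The second piece has exactly the exponent $-2\beta/(Q-2)$ already appearing in Lemma~\ref{lem:4.12}. Adding everything up gives the claimed bound, and the constants keep the dependencies listed in Lemma~\ref{lem:4.12}: neither substitution introduces dependence on $i$ or on $L(\beta,i)$.

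The one point to check is circularity. In the $\beta>2$ part, the proof of Lemma~\ref{lem:4.13} first obtains the bound with an additive $C\tau_i$ and only removes it by appealing to Lemma~\ref{lem:4.12}. So I must confirm that the final statement of Lemma~\ref{lem:4.13} is free of $\tau_i$. As stated, it is, so the substitution is legitimate; if it were not, a term like $C\tau_i m_i^{-2/(Q-2)}$ could still be absorbed into the left side for large $i$. The case $\beta=2$ is covered by the same computation, since both lemmas include it with the same exponents. I expect no genuine obstacle: the only real work is bookkeeping the exponents of $m_i$.
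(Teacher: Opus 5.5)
Your proposal is correct and coincides with the paper's proof, which is exactly the substitution of the bound from Lemma~\ref{lem:4.13} for $|\nabla_\H R_i(\xi_i)|+u_i(\xi_i)^{-2/(Q-2)}|TR_i(\xi_i)|$ into Lemma~\ref{lem:4.12}. Your exponent bookkeeping $-\frac{2}{Q-2}-\frac{2(\beta-1)}{Q-2}=-\frac{2\beta}{Q-2}$ is right, and so is your check that Lemma~\ref{lem:4.13} is stated free of $\tau_i$, so no circularity arises.
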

\begin{proof}
	Substituting the gradient estimate of Lemma~\ref{lem:4.13} into Lemma~\ref{lem:4.12} gives the bound for $\tau_i$.
\end{proof}
\begin{cor}\label{cor:4.15}
 In addition to the assumptions of Lemma \ref{lem:4.12}, we further assume that one of the following two conditions holds:
 \begin{itemize}
 	\item[(i)]$\beta=Q-2$ and $L(\beta, i)=o(1)$,
  	\item[(ii)]	$\beta>Q-2$ and $L(\beta, i)=O(1)$.
 \end{itemize}
Then for any $\si\in (0,1)$ we have
\begin{equation}\label{eq:normalized-pohozaev-vanishing}
\lim _{i \to\infty} u_i(\xi_i)^2 \int_{B_\si(\xi_i)}\Xi_i(R_i H_i^{\tau_i}) u_i^{p_i+1}=0 ,
\end{equation}
where $\Xi_i$ is defined in \eqref{Xi}.
\end{cor}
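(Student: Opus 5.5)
We split $\Xi_i(R_iH_i^{\tau_i}) = H_i^{\tau_i}\Xi_i(R_i) + \tau_i R_i H_i^{\tau_i-1}\Xi_i(H_i)$ and bound the two resulting integrals separately, after multiplying by $u_i(\xi_i)^2$. Throughout we write $m_i=u_i(\xi_i)$. We use the pointwise bound $|\Xi_i f(\xi)|\le C\,d(\xi,\xi_i)|\nabla_\H f(\xi)|+C\,d(\xi,\xi_i)^2|Tf(\xi)|$, which was used in \eqref{I}. The $C^{1,1}$ bound \eqref{4.27} on $H_i$ and Lemma~\ref{lem:4.11} with $s=1$ bound the $H_i$-term by $C\tau_i m_i^{-2/(Q-2)}$. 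The $R_i$-term is exactly the quantity $C(I_1+I_2)$ from the proof of Lemma~\ref{lem:4.12}, with $B_a$ replaced by $B_\si$. The restriction to $B_a$ there was only for convenience: on the part of $B_\si(\xi_i)$ at distance at least $a$ from $\xi_i$, \eqref{eq:sharp-isolated-bound} gives $u_i^{p_i+1}=O(m_i^{-Q^*+o(1)})$, and $Q^*>2$. Note also that for $\beta=2$ the increment estimate \eqref{eq:lf-two-increment} replaces \eqref{star beta new}, but case (i) forces $\beta=Q-2\ge2$, and $\beta=2$ only when $n=1$, where the same estimates apply.

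By \eqref{I1 new} and \eqref{I2 new},
\[
m_i^2\Big|\int_{B_\si(\xi_i)}\Xi_i(R_iH_i^{\tau_i})u_i^{p_i+1}\Big|
\le o(1)+C m_i^{2-\frac{2}{Q-2}}|\nabla_\H R_i(\xi_i)|+Cm_i^{2-\frac{4}{Q-2}}|TR_i(\xi_i)|+CL_i m_i^{2-\frac{2\beta}{Q-2}}+C\tau_i m_i^{2-\frac{2}{Q-2}}.
\]
We then substitute Lemma~\ref{lem:4.13}, which bounds both $|\nabla_\H R_i(\xi_i)|$ and $m_i^{-2/(Q-2)}|TR_i(\xi_i)|$ by $Cm_i^{-2}+CL_im_i^{-2(\beta-1)/(Q-2)}$. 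The gradient and $T$ terms then contribute at most
\[
Cm_i^{-2/(Q-2)}+CL_i m_i^{2-\frac{2\beta}{Q-2}}.
\]
We substitute Lemma~\ref{lem:4.14} for $\tau_i$ in the same way; it gives the same bound up to the extra small factor $m_i^{-2/(Q-2)}$.

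Collecting terms, the normalized integral is bounded by
\[
o(1)+CL_i\,m_i^{-\frac{2(\beta-(Q-2))}{Q-2}}=o(1)+CL_i\,\ell_i^{\beta-(Q-2)},\qquad \ell_i=m_i^{-2/(Q-2)},
\]
which matches the remark in the introduction. In case (ii) the exponent $\beta-(Q-2)$ is positive and $L_i$ is bounded, so the last term tends to $0$. In case (i) the exponent is $0$ and $L_i=o(1)$, so it also tends to $0$. This proves \eqref{eq:normalized-pohozaev-vanishing}.

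The main obstacle is checking that the constants in \eqref{I1 new}, \eqref{I2 new} and Lemmas~\ref{lem:4.13}--\ref{lem:4.14} are independent of $L_i$, so that only the explicit factor $L_i$ appears. Those lemmas state this, so the corollary reduces to the bookkeeping above.
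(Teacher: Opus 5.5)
Your proposal is correct and follows the paper's own proof. You split off the $\tau_i\,\Xi_i(H_i)$ term, bound the $R_i$-term through \eqref{I1 new}--\eqref{I2 new}, and substitute Lemmas~\ref{lem:4.13} and~\ref{lem:4.14} to reach $o(1)+CL_i\ell_i^{\beta-(Q-2)}$. Your two extra checks fill in details the paper leaves implicit: the annulus $a\le d(\xi,\xi_i)<\si$ handled via \eqref{eq:sharp-isolated-bound}, and the case $\beta=Q-2=2$, where \eqref{eq:lf-two-increment} replaces \eqref{star beta new}.
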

\begin{proof}
A direct calculation shows that
\begin{align*}
&\,\Big|\int_{B_\si (\xi_i)}\Xi_i(R_i H_i^{\tau_i}) u_i^{p_i+1}\Big| \\
 \leq&\,\Big|\int_{B_\si (\xi_i)}\Xi_i(R_i) H_i^{\tau_i} u_i^{p_i+1}\Big|+\tau_i\Big|\int_{B_\si (\xi_i)}\Xi_i(H_i) H_i^{\tau_i-1} R_i u_i^{p_i+1}\Big| \\
 \leq&\, C|\nabla_{\H} R_i(\xi_i)| \int_{B_\si (\xi_i)} d(\xi,\xi_i)  u_i^{p_i+1} +C \int_{B_\si (\xi_i)} d(\xi,\xi_i)|\nabla_{\H} R_i(\xi)-\nabla_{\H} R_i(\xi_i)| u_i^{p_i+1}\\
 &\,+C\int_{B_\si (\xi_i)}d(\xi,\xi_i)^2|TR_i(\xi)|  u_i^{p_i+1}+C\tau_i \int_{B_\si (\xi_i)} d(\xi,\xi_i)  u_i^{p_i+1}.
\end{align*}
Combining Proposition~\ref{prop:4.9}, Lemma~\ref{lem:4.11},
\eqref{I1 new}, \eqref{I2 new}, and Lemmas~\ref{lem:4.13}
and~\ref{lem:4.14} yields \eqref{eq:normalized-pohozaev-vanishing}.	
\end{proof}

Under the critical weighted flatness condition, every isolated blow-up point is simple.
\begin{prop}\label{prop:4.16}
 Suppose that $\{R_i\}$ satisfies \eqref{4.1} with constants $A_1,A_2$ and condition $(*)'_{Q-2}$ with $0<L_i\le L$ and a structural constant $C_*$ independent of $i$. Assume that $\{H_i\}$ satisfies \eqref{4.27} with constants $A_4,A_5$. Let $u_i \in  C^2(B_2)$ be a solution of \eqref{4.26}. If $\xi_i \to0$ is an isolated blow-up  point of $\{u_i\}$ with \eqref{4.5} for some positive constant $A_3$, then $\xi_i \to0$ is an isolated simple blow-up  point.
\end{prop}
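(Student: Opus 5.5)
We argue by contradiction, following Li's scheme \cite[Proposition~4.1]{Li1995} and Schoen--Zhang \cite{SZ1996}. Suppose $\xi_i\to0$ is isolated but not simple. By Proposition~\ref{prop:4.4} and Remark~\ref{rem1}, for large $i$ the function $f_{u_i,\theta}$ has exactly one critical point in $(0,s_i)$ for every $\theta$, and $s_i=S_im_i^{-(p_i-1)/2}\to0$. Failure of simplicity therefore gives radii $\mu_i\to0$, $\mu_i\ge s_i$, and directions $\theta_i$ such that $\mu_i$ is the second critical point of $f_{u_i,\theta_i}$ (the smallest one beyond the bubble scale). We rescale at this scale:
\[
\widetilde u_i(\xi)=\mu_i^{2/(p_i-1)}u_i(\xi_i\circ\delta_{\mu_i}\xi),\qquad |\xi|<\bar r/\mu_i .
\]
This function solves \eqref{4.26} with $\widetilde R_i(\xi)=R_i(\xi_i\circ\delta_{\mu_i}\xi)$ and $\widetilde H_i(\xi)=H_i(\xi_i\circ\delta_{\mu_i}\xi)$. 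It still satisfies the isolated bound \eqref{4.5} with the same $A_3$ on balls of radius $\bar r/\mu_i\to\infty$. Moreover, $0$ is an isolated \emph{simple} blow-up point of $\{\widetilde u_i\}$ with radius $\rho=1$, because $f_{\widetilde u_i,\theta}$ has exactly one critical point in $(0,1)$ and $f_{\widetilde u_i,\theta_i}'(1)=0$. The rescaled coefficients remain bounded in $\mathcal A_\al$ by Lemma~\ref{lem:intrinsic-coefficient-calculus}. By Lemma~\ref{lem:wf-scaling}, they satisfy $(*)'_{Q-2}$ with constant $\widetilde L_i=\mu_i^{Q-2}L_i\to0$ and the same $C_*$.

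Next we pass to the limit. Since $0$ is isolated simple for $\widetilde u_i$, Proposition~\ref{prop:4.9} applies on each fixed ball, with constants uniform in $i$ because the data are uniform. The isolated bound with $\bar r/\mu_i\to\infty$ gives, via the Harnack inequality of Lemma~\ref{lem:4.3}, local control away from the origin on all of $\Hn\setminus\{0\}$. Hence, after a subsequence,
\[
\widetilde u_i(0)\widetilde u_i\to h(\xi)=a|\xi|^{2-Q}+b(\xi)\quad\text{in }C^2_{loc}(\Hn\setminus\{0\}),
\]
with $a>0$ and $b$ a nonnegative $\Delta_\H$-harmonic function on $\Hn$. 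By the Liouville theorem, $b\equiv b_0\ge0$ is constant. The condition $f'_{\widetilde u_i,\theta_i}(1)=0$ passes to the limit as $\frac{d}{dr}\bigl(r^{(Q-2)/2}h(\delta_r\theta_0)\bigr)|_{r=1}=0$ for a limit direction $\theta_0$. This gives $a=b_0$, so $b_0>0$.

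Finally we reach a contradiction through the Pohozaev identity. Apply Lemma~\ref{prop:4.7} to $\widetilde u_i$ on $B_\si(0)$ for fixed small $\si$, and multiply by $\widetilde u_i(0)^2$. The boundary term converges to $\int_{\pa B_\si}B(\si,\xi,h,\nabla h)$, and by Lemma~\ref{lem:positivepoho}(ii) this is negative for $\si$ small since $b_0>0$. On the other side, the boundary term involving $R|u|^{p+1}\Xi\cdot\nu$ is $O(\widetilde u_i(0)^{2-p_i-1})\to0$. The term carrying the factor $\frac{Q}{p_i+1}-\frac{Q-2}{2}$ is nonnegative, since it has the sign of $\tau_i\ge0$. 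The derivative term $\widetilde u_i(0)^2\int_{B_\si}\Xi(\widetilde R_i\widetilde H_i^{\tau_i})\widetilde u_i^{p_i+1}\to0$ by Corollary~\ref{cor:4.15}(i), which applies because $\beta=Q-2$ and $\widetilde L_i\to0$. Thus the limit of the left-hand side is $\ge0$ while the limit of the right-hand side is $<0$, a contradiction.

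The main obstacle is justifying that the whole chain Lemma~\ref{lem:4.6} through Corollary~\ref{cor:4.15} holds for the rescaled sequence with constants independent of $i$. In particular, the flatness estimates use the auxiliary functions $\widetilde r_i=\mu_i^{-1}r_i\circ\Psi_i$, which may be unbounded. I would check that Lemmas~\ref{lem:4.12}--\ref{lem:4.14} used only \eqref{eq:wf-first}--\eqref{eq:wf-higher} and Lemma~\ref{lem:wf-increments}, whose constants depend only on $n,\beta,C_*$ and not on any bound for $r$. I would also check that the fixed radii $a,\si$ there fit inside the region where the rescaled flatness holds, which they do since $\Psi_i^{-1}$ of the fixed domain exhausts $\Hn$.
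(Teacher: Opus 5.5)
Your proposal is correct and follows essentially the same route as the paper: you rescale at the second critical radius $\mu_i$, derive the limit $a|\xi|^{2-Q}+b_0$ with $b_0=a>0$ from the critical-point condition at $r=1$, and contradict Lemma~\ref{lem:positivepoho}(ii) through the Pohozaev identity, killing the derivative term by Corollary~\ref{cor:4.15}(i) with $\widetilde L_i=\mu_i^{Q-2}L_i\to0$. One small imprecision: a priori the regular part $b$ is only bounded below (from $a|\xi|^{2-Q}+b\ge0$), which is all the Liouville step needs, and nonnegativity of the resulting constant then follows by letting $|\xi|\to\infty$.
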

\begin{proof}
 By Remark \ref{rem1}, $f_{u_i,\theta}$ has precisely one critical point in the interval $0<r<s_i$, where $s_i=S_i u_i(\xi_i)^{-(p_i-1) / 2}$ as before. Suppose $\xi_i \to0$ is not an isolated simple blow-up  point and let $\mu_i$ be the second critical point of $f_{u_i,\theta}$ defined as follows:
 $$\mu_i:=\min\Big\{r>s_i: \frac{\mathrm{d}}{\mathrm{d}r}f_{u_i,\theta}(r)=0  \text{ for some } \theta\in \pa B_1\Big\}.$$
  Then we see that
	\[
	\mu_i \geq s_i \quad \text{ and }\lim_{i\to \infty} \mu_i=  0.
	\]
		Without loss of generality, we assume that $\xi_i=0$. Set
	\[
	\phi_i(\xi)=\mu_i^{2 /(p_i-1)} u_i(\delta_{\mu_i} \xi), \quad \xi \in B_{2/\mu_i}.
\]
    Clearly, $\phi_i$ satisfies
	\begin{gather}
	 -\Delta_{\H} \phi_i(\xi)=\frac{n^2}{4}\widetilde{R}_i(\xi) \widetilde{H}_i^{\tau_i}(\xi) \phi_i^{p_i}(\xi),\label{3.2 in Li} \\
	 |\xi|^{2 /(p_i-1)} \phi_i(\xi) \leq A_3, \quad|\xi|<1 / \mu_i, \notag\\
	 \lim _{i \to\infty} \phi_i(0)=\infty, \notag
	\end{gather}
	where $\widetilde{R}_i(\xi)=R_i(\delta_{\mu_i} \xi)$ and $\widetilde{H}_i(\xi)=H_i(\delta_{\mu_i} \xi)$.
    Furthermore, note that for any $\theta \in \pa  B_1$,
\[
f_{\phi_i, \theta}(s)=s^{2 /(p_i-1)} \mu_i^{2 /(p_i-1)} u_i( \delta_{s \mu_i} \theta) ,
\]
	which shows that 0 is an isolated simple blow-up  point of $\phi_i$.


     On each fixed annulus $\{\var<|\xi|<R\}$, by Lemma \ref{lem:4.3}, Propositions \ref{prop:4.5} and \ref{prop:4.9} we have
$$
0<c(\var, R) \leq\phi_i(0)\phi_i(\xi) \leq C(\var, R).
$$
Since $\phi_i(0) \to  \infty$, the equation for $\phi_i(0)\phi_i(\xi)$,
$$
-\Delta_\H \phi_i(0)\phi_i(\xi)=\frac{n^2}{4} \widetilde{R}_i \widetilde{H}_i^{\tau_i}\phi_i(0) \phi_i^{p_i}(\xi),
$$
has a potential tending to zero locally uniformly away from the origin. Local sub-elliptic estimates and a diagonal subsequence therefore yield $\phi_i(0)\phi_i(\xi) \to  G>0$ in $C_{\text {loc }}^2(\Hn  \backslash\{0\})$, where $-\Delta_\H G=0$. By the B\^ocher theorem \cite{BL2007}, we have
$$
G=a_0|\xi|^{2-Q}+h, \quad a_0>0,
$$
with $h$ harmonic on $\Hn $. Since $G \geq 0, h \geq-a_0|\xi|^{2-Q}$, $h$ is bounded below on $\Hn $. The Harnack inequality in Lemma~\ref{lem:4.3}, whose constant depends only on $n$ when $a=0$, applied on expanding balls to $h-\inf_{\Hn}h+\var$ and then with $\var\to0$, implies that $h$ is constant. Thus $h=b$ for some constant $b\geq0$. By direct computations, we have
\[
0=\frac{\d}{\d s}[s^{(Q-2) / 2}(a_0 s^{2-Q}+b)]\Big|_{s=1}=\frac{Q-2}{2}(b-a_0),
\]
namely,
\be\label{ha}
    b=a_0>0.
\ee



	Applying the Pohozaev identity in Lemma \ref{prop:4.7} to \eqref{3.2 in Li} and using Proposition \ref{prop:4.9}, we obtain, for sufficiently small positive $\si $,
    \begin{align*}
        &\,\int_{\pa B_\si }B(\si ,\xi,\phi_i,\nabla \phi_i)\\
         \geq &\,\frac{n^2}{4}\frac{1}{p_i +1}\int_{B_\si } \Xi (\widetilde{R}_i\widetilde{H}_i^{\tau_i}) \phi_i^{p_i+1}-\frac{n^2}{4}\frac{1}{p_i +1}\int_{\pa  B_\si } \widetilde{R}_i\widetilde{H}_i^{\tau_i} \phi_i^{p_i+1} \Xi  \cdot \nu\\
       \geq &\, \frac{n^2}{4}\frac{1}{p_i +1}\int_{B_\si } \Xi (\widetilde{R}_i\widetilde{H}_i^{\tau_i}) \phi_i^{p_i+1}-O(\phi_i(0)^{-p_i-1}).
    \end{align*}
Multiplying the above by $\phi_i(0)^2$ and sending $i$ to $\infty$, we have
\begin{align}
    \int_{\pa B_\si }B(\si ,\xi,G,\nabla G)=&\,\lim_{i\to \infty}\phi_i(0)^2\int_{\pa B_\si }B(\si ,\xi,\phi_i,\nabla \phi_i)\notag\\
    \geq &\,\lim_{i\to \infty}\phi_i(0)^2 \frac{n^2}{4}\frac{1}{p_i +1}\int_{B_\si } \Xi (\widetilde{R}_i\widetilde{H}_i^{\tau_i}) \phi_i^{p_i+1}=0.\label{3.42 in L}
\end{align}
The last equality in \eqref{3.42 in L} follows from \eqref{eq:normalized-pohozaev-vanishing}, since Lemma~\ref{lem:wf-scaling} implies that $\widetilde{R}_i$ satisfies $(*)_{Q-2}'$ with the same structural constant and with
$$
\widetilde{L}_i=\mu_i^{Q-2} L_i \to  0 .
$$

On the other hand, we use \eqref{ha} and apply Lemma \ref{lem:positivepoho} to obtain that
\[
    \int_{\pa B_\si }B(\si ,\xi,G,\nabla G)<0
\]
for $\si >0$ sufficiently small, which contradicts \eqref{3.42 in L}. Proposition \ref{prop:4.16} is thus established.
\end{proof}

\begin{prop}\label{prop:4.17}
	 Suppose the assumptions of Proposition \ref{prop:4.16} hold except for condition $(*)_{Q-2}'$ for $R_i$. Then $|\nabla_{\H} R_i(\xi_i)| +|TR_i(\xi_i)|\to0$ as $i \to\infty$.
	\end{prop}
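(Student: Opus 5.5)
The plan is to argue by contradiction. First I reduce to the situation where the blow-up is isolated simple. Then I derive the vanishing of the derivatives from the translation-type identities used in the proof of Lemma~\ref{lem:4.13}.

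\textbf{Setup.} Suppose that, along a subsequence,
\[
|\nabla_\H R_i(\xi_i)|+|TR_i(\xi_i)|\ge c_0>0 .
\]
By \eqref{4.1}, the fields $ZR_i$ are uniformly Lipschitz for $d$, since $Z_aZ_bR_i$ is bounded, and $TR_i$ is uniformly bounded in $\Gamma_\al$. Arzel\`a--Ascoli therefore gives $R_i\to R_\infty$ with $ZR_i\to ZR_\infty$ and $TR_i\to TR_\infty$ locally uniformly. In particular
\[
|\nabla_\H R_\infty(0)|+|TR_\infty(0)|\ge c_0 .
\]
Hence, on a fixed small ball $B_{r_0}$ and for large $i$, the quantity $|\nabla_\H R_i|+|TR_i|$ stays bounded below. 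I would use this to verify the ingredients of a condition $(*)'_\beta$ for some $\beta>Q-2$, with the constant auxiliary function $r_i\equiv r_1$ for a fixed $r_1\in(0,1]$ and a bounded constant $L_i\equiv L$.
\begin{itemize}
\item The lower bound in \eqref{eq:wf-first} holds by the nondegeneracy above.
\item The upper bounds hold by \eqref{4.1}.
\item The bound $|\nabla_\H^2R_i|+|TR_i|\le C_*Lr_1^{\beta-2}$ follows from the $\mathcal A_\al$ bound.
\end{itemize}

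\textbf{Main obstacle.} The only ingredient not available is the mixed-derivative bound $|\nabla_\H(TR_i)|\le C_*Lr_i^{\beta-3}$, because $Z_a(TR_i)$ is not controlled by $\mathcal A_\al$. I would inspect where it is used: only in the $T$-multiplier estimate \eqref{3.6 2} of Lemma~\ref{lem:4.13}, through the increment of $TR_i$ in Lemma~\ref{lem:wf-increments}. With $r_i$ constant, that increment can be replaced by the H\"older increment $|TR_i(\xi)-TR_i(\xi_i)|\le A_2d(\xi,\xi_i)^\al$. Lemma~\ref{lem:4.11} then bounds the corresponding integral by $Cu_i(\xi_i)^{-2\al/(Q-2)}=o(1)$. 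I expect this bookkeeping, namely checking that Lemmas~\ref{lem:4.12}--\ref{lem:4.14} and Corollary~\ref{cor:4.15}(ii) survive this substitution, to be the main point requiring care.

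\textbf{Step 1: the blow-up is isolated simple.} With the modified flatness in hand, I rerun the proof of Proposition~\ref{prop:4.16}.
\begin{itemize}
\item If the blow-up were not simple, I would rescale at the second critical radius $\mu_i\to0$.
\item The normalized limit would be $G=a_0|\xi|^{2-Q}+a_0$.
\item The rescaled curvature has $\widetilde L_i=\mu_i^\beta L\to0$ by Lemma~\ref{lem:wf-scaling}, so Corollary~\ref{cor:4.15} makes the normalized $\Xi$-term vanish.
\item The Pohozaev limit \eqref{3.42 in L} then contradicts Lemma~\ref{lem:positivepoho}(ii).
\end{itemize}
Hence $\xi_i\to0$ is an isolated simple blow-up point.

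\textbf{Step 2: vanishing of the derivatives.} Once simplicity holds, all estimates of Section~\ref{sec:3} for simple points are available.
\begin{itemize}
\item \emph{Horizontal part.} The $(*)'_2$ condition holds trivially with $L_i=A_2$, by \eqref{4.1}. The $\beta=2$ case of Lemma~\ref{lem:4.13} gives $|\nabla_\H R_i(\xi_i)|\le Cu_i(\xi_i)^{-2}+Cu_i(\xi_i)^{-2/(Q-2)}\to0$.
\item \emph{Reeb part.} I multiply \eqref{4.26} by $(Tu_i)\eta_i$ and integrate by parts, using $[T,\Delta_\H]=0$. The boundary terms on the annulus are $O(u_i(\xi_i)^{-2})$ by Proposition~\ref{prop:4.9} and the estimates \eqref{4.30 in Com}. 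The $H_i^{\tau_i}$ contribution is $O(\tau_i)=o(1)$ by Lemma~\ref{lem:4.8}. Replacing $TR_i(\xi)$ by $TR_i(\xi_i)$ costs $C\int d(\xi,\xi_i)^\al u_i^{p_i+1}=o(1)$, by Lemma~\ref{lem:4.11}. Since $\int u_i^{p_i+1}\eta_i H_i^{\tau_i}\ge c>0$ by Proposition~\ref{prop:4.4}, this yields $|TR_i(\xi_i)|=o(1)$.
\end{itemize}
Together these contradict $|\nabla_\H R_i(\xi_i)|+|TR_i(\xi_i)|\ge c_0$ and prove the proposition.
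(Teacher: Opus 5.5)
Your outline follows the paper's proof in structure. The isolated simple case is handled by the $\overline Z$- and $T$-multiplier identities with Lemma~\ref{lem:4.11}; your Step~2 is the paper's Case~1, and using the $\beta=2$ form of Lemma~\ref{lem:4.13} for the horizontal part is legitimate. The non-simple case is handled by rescaling at the second critical radius $\mu_i$, obtaining $G=a_0|\xi|^{2-Q}+a_0$, and contradicting Lemma~\ref{lem:positivepoho}(ii) once the normalized $\Xi$-term vanishes.

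The difference is in how that vanishing is obtained. The paper does it directly. Nondegeneracy gives $|\widetilde\nabla\widetilde R_i(0)|\ge c\mu_i$ (resp.\ $|T(\widetilde R_i\widetilde H_i^{\tau_i})(0)|\ge c\mu_i^2$). The translation identities for $\phi_i$ bound this by $C\phi_i(0)^{-2}+C\tau_i$ plus increment errors from Lemma~\ref{lem:intrinsic-coefficient-calculus}. Hence $\mu_i\le C\phi_i(0)^{-2}$ (resp.\ $\mu_i^2\le C\phi_i(0)^{-2}$), and the $\Xi$-term is $o(\phi_i(0)^{-2})$. You instead encode nondegeneracy as $(*)'_\beta$ with $\beta\in(Q-2,Q)$ and a constant auxiliary function, rescale it by Lemma~\ref{lem:wf-scaling} ($\widetilde L_i=\mu_i^\beta L$, $\widetilde r_i=r_1/\mu_i$), and invoke Corollary~\ref{cor:4.15}. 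You correctly identify \eqref{3.6 2} as the only place the missing mixed-derivative bound enters.

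The patch you propose for \eqref{3.6 2} does not close as written, and this deferred step is where the content of the argument sits. In Step~1 the machinery of Lemma~\ref{lem:4.13} runs on $\widetilde R_i=R_i\circ\delta_{\mu_i}$. Its final absorption compares the error terms with the lower bound $|\nabla_\H\widetilde R_i(0)|+\bar r_i|T\widetilde R_i(0)|\ge C^{-1}\widetilde L_i\widetilde r_i^{\beta-1}/(1+\widetilde r_i)\ge c\mu_i^2$.

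An increment bound with the unscaled constant $A_2$ gives an error $CA_2\phi_i(0)^{-2\al/(Q-2)}$, which is your ``$o(1)$''. This cannot be absorbed against $c\mu_i^2$. When only the Reeb derivative is nondegenerate, it yields merely $\mu_i^2\lesssim\phi_i(0)^{-2\al/(Q-2)}$. The available bound $C\mu_i^2\phi_i(0)^{2-4/(Q-2)}$ for the Reeb part of the normalized $\Xi$-term then does not tend to zero when $n\ge2$. In Step~2, where nothing is rescaled and the lower bound is $c>0$, your $o(1)$ is indeed enough.

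The fix is to use the scaled increment from Lemma~\ref{lem:intrinsic-coefficient-calculus}, $|T\widetilde R_i(\xi)-T\widetilde R_i(0)|\le C\mu_i^{2+\al}|\xi|^\al$. This gives the error $C\mu_i^{2+\al}\phi_i(0)^{-2\al/(Q-2)}=o(\mu_i^2)$, which is absorbed. The modified Lemma~\ref{lem:4.13} then yields $|\nabla_\H\widetilde R_i(0)|+\bar r_i|T\widetilde R_i(0)|\le C\phi_i(0)^{-2}+C\tau_i$, and Lemmas~\ref{lem:4.12}, \ref{lem:4.14} and Corollary~\ref{cor:4.15}(ii) go through.

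With this correction your route is valid. At its core it reproduces the paper's bound $\mu_i^2\lesssim\phi_i(0)^{-2}$ inside the absorption step. It gains a unified view, treating nondegenerate points as weighted-flat with constant radius as in Proposition~\ref{prop:wf-globalization}. The cost is re-auditing Lemmas~\ref{lem:4.12}--\ref{lem:4.14} without the mixed hypothesis, whereas the paper's direct estimate is shorter and self-contained.
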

\begin{proof}
We first prove that $|\nabla_{\H} R_i(\xi_i)|\to 0$ as $i \to\infty$.

Suppose that
\be\label{4.37}
|\nabla_{\H} R_i(\xi_i)| \to d>0 .
\ee
Constants in this contradiction argument may also depend on $d$.
Without loss of generality, we assume $\xi_i=0$. There are two cases.

\textbf{Case 1}. 0 is an isolated simple blow-up  point. 

In this case, we argue as in the proof of Lemma \ref{lem:4.13} and obtain
\[
\Big|\int_{B_1} \widetilde{\nabla} R_i H_i^{\tau_i} u_i^{p_i+1}\Big| \leq C u_i(0)^{-2}+C \tau_i .
\]
It follows from Lemmas~\ref{lem:intrinsic-coefficient-calculus},  \ref{lem:4.8},  and Proposition \ref{prop:4.4} that
\[
|\nabla_{\H} R_i(0)|=\frac{1}{2}|\widetilde{\nabla}R_i(0)| \leq C \int_{B_1}|\widetilde{\nabla} R_i(\xi)-\widetilde{\nabla} R_i(0)| H_i^{\tau_i} u_i^{p_i+1}+o(1)=o(1) .
\]

\textbf{Case 2}. 0 is not an isolated simple blow-up  point. 

In this case we argue as in the proof of Proposition \ref{prop:4.16}. The only difference is that we cannot derive \eqref{3.42 in L} from Corollary \ref{cor:4.15}, since condition $(*)_{Q-2}'$ for $R_i$ is not assumed. Instead, we will use \eqref{4.37} to show \eqref{3.42 in L}.

Let $\mu_i$, $\phi_i$, $\widetilde{R}_i$ and $\widetilde{H}_i$ be as in the proof of Proposition \ref{prop:4.16}. The computation at the end of the proof of Proposition \ref{prop:4.16} gives
\[
        \int_{\pa B_\si }B(\si ,\xi,\phi_i,\nabla \phi_i)
         \geq \frac{n^2}{4}\frac{1}{p_i +1}\int_{B_\si } \Xi (\widetilde{R}_i\widetilde{H}_i^{\tau_i}) \phi_i^{p_i+1}-O(\phi_i(0)^{-p_i-1}).
\]

Now we estimate the integral term $\int_{B_\si } \Xi (\widetilde{R}_i\widetilde{H}_i^{\tau_i}) \phi_i^{p_i+1}$. Arguing as in the proof of Lemma \ref{lem:4.8}, we obtain
\begin{align*}
\tau_i \leq&\, C \phi_i(0)^{-2}+C \int_{B_\si }|\xi||\nabla_{\H} \widetilde{R}_i(\xi)| \widetilde{H}_i^{\tau_i} \phi_i^{p_i+1}+C \int_{B_\si }|\xi|^2|T \widetilde{R}_i(\xi)| \widetilde{H}_i^{\tau_i} \phi_i^{p_i+1}\\
\leq &\,C \phi_i(0)^{-2}+C \mu_i \phi_i(0)^{-2/(Q-2)} .
\end{align*}
Arguing as in the proof of Lemma \ref{lem:4.13}, we obtain
$$
\Big|\int_{B_\si } \widetilde{\nabla} \widetilde{R}_i \widetilde{H}_i^{\tau_i} \phi_i^{p_i+1}\Big| \leq C \phi_i(0)^{-2}+C \tau_i .
$$
It follows from Proposition \ref{prop:4.4} that
\begin{align*}
|\widetilde{\nabla} \widetilde{R}_i(0)|  \leq &\, C \int_{B_\si }|\widetilde{\nabla} \widetilde{R}_i(\xi)-\widetilde{\nabla} \widetilde{R}_i(0)| \phi_i^{p_i+1}+C \phi_i(0)^{-2}+C \tau_i \\
 \leq &\,  C \mu_i \phi_i(0)^{-2/(Q-2)}+C \phi_i(0)^{-2}+C \tau_i .
\end{align*}
Since $|\widetilde{\nabla} \widetilde{R}_i(0)|=2 \mu_i|\nabla_{\H} R_i(0)| \geq(d / 2) \mu_i$, we have
\[
\mu_i \leq C \phi_i(0)^{-2}+C \tau_i .
\]
It follows that $\tau_i \leq C \phi_i(0)^{-2}$ and $\mu_i \leq C \phi_i(0)^{-2}$. Therefore,
\[
\Big|\int_{B_\si } \Xi (\widetilde{R}_i\widetilde{H}_i^{\tau_i}) \phi_i^{p_i+1}\Big| \leq C \phi_i(0)^{-2-2/(Q-2)},
\]
and \eqref{3.42 in L} follows immediately.

We now prove $|TR_i(\xi_i)|\to0$. Suppose instead that $|TR_i(\xi_i)|\to d>0$, and translate $\xi_i$ to $0$.
If $0$ is an isolated simple blow-up point, the proof is similar to that in  \textbf{Case 1} with $T$ in place of the horizontal fields. 
If 0 is not an isolated simple blow-up point, we still need to show \eqref{3.42 in L}. Arguing as in Lemma \ref{lem:4.13}, we obtain
\[
\Big|\int_{B_\si }
D(\widetilde R_i\widetilde H_i^{\tau_i})
\phi_i^{p_i+1}\Big|
\le C\phi_i(0)^{-2},
\quad
D\in\{\overline{X}_j,\overline{Y}_j, T:1\le j\le n\}.
\]
Since $\tau_i\to0$ and $|TR_i(0)|\to d>0$, we have
\[
|T(\widetilde R_i\widetilde H_i^{\tau_i})(0)|
\ge c\mu_i^2.
\]
Moreover, Lemma~\ref{lem:intrinsic-coefficient-calculus}, applied to $R_iH_i^{\tau_i}$, gives
\begin{align*}
|T(\widetilde R_i\widetilde H_i^{\tau_i})(\xi)
-T(\widetilde R_i\widetilde H_i^{\tau_i})(0)|
&\le C\mu_i^{2+\al }|\xi|^\al ,\\
|\widetilde\nabla(\widetilde R_i\widetilde H_i^{\tau_i})(\xi)
-\widetilde\nabla(\widetilde R_i\widetilde H_i^{\tau_i})(0)|
&\le C\mu_i^2|\xi|.
\end{align*}
It follows from Lemma \ref{lem:4.11} and the positive lower bound for $\int_{B_\si }\phi_i^{p_i+1}$ that
\[
\mu_i^2
\le C\phi_i(0)^{-2}
+C\mu_i^{2+\al }\phi_i(0)^{-2\al /(Q-2)},
\]
namely, $\mu_i^2\le C\phi_i(0)^{-2}$.
Similarly, we have
\[
|\widetilde\nabla
(\widetilde R_i\widetilde H_i^{\tau_i})(0)|
\le C\phi_i(0)^{-2}
+C\mu_i^2\phi_i(0)^{-2/(Q-2)}
\le C\phi_i(0)^{-2}.
\]
Using Lemma~\ref{lem:intrinsic-coefficient-calculus}, we obtain
\begin{align*}
\Big|\int_{B_\si }
\Xi(\widetilde R_i\widetilde H_i^{\tau_i})
\phi_i^{p_i+1}\Big|
&\le
C|\widetilde\nabla
(\widetilde R_i\widetilde H_i^{\tau_i})(0)|
\phi_i(0)^{-2/(Q-2)}
+C\mu_i^2\phi_i(0)^{-4/(Q-2)}\\
&=o(\phi_i(0)^{-2}).
\end{align*}
Thus \eqref{3.42 in L} follows, yielding the same contradiction as before.
Hence $|TR_i(\xi_i)|\to0$.
	\end{proof}

\section{A priori estimates}\label{sec:4}
Fix $\al \in(0,1)$ and constants $A_1,A_2>0$. As in Section~\ref{sec:3}, $A_1$ will denote the two-sided bound for $R$ and $A_2$ its intrinsic norm bound, now on the sphere. Consider the subcritical equations
\be\label{maineq4}
-\Delta_{\theta_0}v +\frac{n^2}{4} v=\frac{n^2}{4} R  v^{p},\quad v>0 \quad \text { on }\, \S^{2n+1},
\ee
where  $p=\frac{Q+2}{Q-2}-\tau$ for $0\leq \tau<\frac{2}{Q-2}$ and $R$ satisfies
\[
A_1^{-1} \leq R \leq A_1 \quad \text { on }\, \S^{2n+1} ,
\]
and
\be\label{5.3}
\|R\|_{\mathcal A_\al (\S^{2n+1})} \leq A_2 .
\ee

Throughout this section we use the contact distance \eqref{eq:contact-distance} and the normalized charts \eqref{eq:normalized-cayley} with flatness understood as in Definition~\ref{def:weighted-flatness}.

\begin{prop}\label{prop:5.1}
 Let $v \in C^2(\S^{2n+1})$ be a positive solution of \eqref{maineq4}. For any $\var\in (0,1)$ and $S>1$, there exist large positive constants $C_1, C_2$ depending on $n,\al,A_1,A_2,\var$ and $S$ such that, if
\[
\max _{\S^{2n+1}} v \geq C_1,
\]
then $0\leq \tau<\var$, and there exists a finite set $\wp(v) \subset \S^{2n+1}$ such that:
\begin{itemize}
	\item[(i)] If $P \in \wp(v)$, then it is a local maximum point of $v$ and in the Cayley coordinate system $\xi=(x,y,t)$ with $P$ as the north pole,
	\[
	\|v^{-1}(P) v(F_P(\delta_{v^{-(p-1) / 2}(P)} \xi))-((1+k_P^2|z|^2)^2+k_P^4t^2)^{(2-Q)/ 4}\|_{C^2(B_{2 S})} \leq \var,
	\]
	where $k_P=\sqrt{R(P)} / 2$.
	\item[(ii)] If $P_1, P_2$ are distinct points of $\wp(v)$, then
\[
	\B(P_1,S v(P_1)^{-(p-1) /2}) \cap \B(P_2,S v(P_2)^{-(p-1) /2})=\emptyset .
\]
	\item[(iii)] $v(P) \leq C_2\{d_{\theta_0}(P, \wp(v))\}^{-2 /(p-1)}$ for all $P \in \S^{2n+1}$.
\end{itemize}
\end{prop}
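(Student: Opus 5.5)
This is the CR analogue of Schoen--Zhang / Li \cite[Lemma~3.1]{Li1995}. The plan has three parts: show $\tau$ is small via a blow-up/Liouville argument, pick peaks one at a time by an iterative selection that stops after finitely many steps, and get (iii) by contradiction and rescaling. Throughout, I work in the centered Cayley charts $F_P$ of \eqref{eq:normalized-cayley}. By \eqref{1.7}, the function $u=H_P^{(Q-2)/2}\cdot(v\circ F_P)$ solves, on a fixed chart ball, an equation of the form \eqref{4.26}. The coefficient is $R\circ F_P$ times a power of the conformal factor, and that factor is a positive smooth function $H$ with $H(0)=1$ raised to a power comparable to $\tau$. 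By Lemma~\ref{lem:intrinsic-coefficient-calculus} these coefficients are uniformly bounded in $\mathcal A_\al$, uniformly over $P$ by compactness of the unitary group.

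\emph{Smallness of $\tau$.} Suppose $\max v_k\to\infty$ while $\tau_k\ge\var$. Take the maximum point $P_k$ and rescale as in the proof of Proposition~\ref{prop:4.4}. The rescaled functions $\phi_k$ satisfy $\phi_k\le1=\phi_k(0)$, and the rescaled domains exhaust $\Hn$ because the blow-up scale tends to zero. Lemma~\ref{lem:intrinsic-regularity} and the Harnack inequality of Lemma~\ref{lem:4.3} give $C^2_{loc}$ convergence to a bounded positive entire solution of $-\Delta_\H\phi=\frac{n^2}4R_0\phi^{p}$ with $p\le\frac{Q+2}{Q-2}-\var$. This contradicts the subcritical Liouville theorem on $\Hn$ for bounded positive solutions. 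If that theorem is not available in the needed generality, I would argue by contradiction via a Kelvin-type transform. In the critical limit, the classification results \cite{CLMR2023,LLM2026,L2026} instead give $\phi=\Lam(\delta_{k_0}\cdot)$. By the same compactness argument, for $\max v\ge C_1$ the rescaling at any point $P$ satisfying $v(P)\ge C\max_{\B(P,r)}\dots$ (the selection condition below) is $\var$-close in $C^2(B_{2S})$ to the bubble $\Lam(\delta_{k_P}\cdot)$, with $k_P=\sqrt{R(P)}/2$.

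\emph{Iterative selection.} Let $P_1$ be a global maximum point of $v$; by the previous step it satisfies (i). Inductively, suppose $P_1,\dots,P_k$ have been chosen with (i) and (ii). Consider
\[
g_k(P)=d_{\theta_0}(P,\{P_1,\dots,P_k\})^{2/(p-1)}v(P).
\]
If $g_k\le C_2$ everywhere, stop and set $\wp(v)=\{P_1,\dots,P_k\}$; property (iii) then holds. Otherwise, let $P_{k+1}$ be a maximum point of $g_k$. Since $g_k(P_{k+1})>C_2$ is large, $v$ is large at $P_{k+1}$ relative to the distance scale. Maximality of $g_k$ gives a bound $v\le 2^{2/(p-1)}v(P_{k+1})$ on a ball $\B(P_{k+1},\tfrac12 d_k)$, where $d_k=d_{\theta_0}(P_{k+1},\{P_1,\dots,P_k\})$ and $d_kv(P_{k+1})^{(p-1)/2}\ge C_2^{(p-1)/2}\to\infty$. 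Rescaling at $P_{k+1}$ by $v(P_{k+1})^{-(p-1)/2}$ gives locally bounded solutions on domains that exhaust $\Hn$. The limit is again a bounded entire solution, hence (for $\tau<\var$ small, after a contradiction argument letting $C_2\to\infty$) close to a bubble whose maximum is near $0$.

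The delicate point is making $P_{k+1}$ an actual local maximum of $v$ with profile centered exactly at the maximum. The $C^2$ closeness to $\Lam(\delta_{k_0}\cdot)$, which has a nondegenerate maximum at $0$, yields a unique local maximum $\widetilde P_{k+1}$ within distance $o(1)v(P_{k+1})^{-(p-1)/2}$. I replace $P_{k+1}$ by $\widetilde P_{k+1}$ and renormalize, absorbing the change into $\var$; this gives (i). Property (ii) follows from $d_k\gg S\,v(P_{k+1})^{-(p-1)/2}$ together with the comparison $v(P_j)\gtrsim v(P_{k+1})$ near $P_j$ for previously selected points. The balls $\B(P_j,Sv(P_j)^{-(p-1)/2})$ are disjoint and each carries energy $\int v^{p+1}\ge c(n,A_1)>0$ from the bubble profile.

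\emph{Termination, and the main obstacle.} Termination needs a bound on the total energy, which is not assumed. I would follow Li's argument: each bubble region carries energy at least $c>0$, but the number of selection steps is controlled differently. The domain of $P$ shrinks while $v(P_{k})^{(p-1)/2}\,d(\cdot)\ge C_2^{(p-1)/2}$ forces the disjoint balls to have radii bounded below in terms of $v$, and $v$ is bounded by $\max v$. Hence at most $C(\max v)^{Q(p-1)/2}$ balls fit in the compact sphere, so the process is finite, although $|\wp(v)|$ need not be uniformly bounded, which the statement does not require. I expect the main obstacle to be uniformity of the limiting argument in the iterative step. Specifically, the constants $C_1,C_2$ must be independent of $k$ and of the chart, and the limit of the rescaled solutions at $P_{k+1}$ must be bounded even though $P_{k+1}$ is not a global maximum. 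Both are handled by the maximality of $g_k$ and the compactness of the chart family.
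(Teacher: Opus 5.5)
Your proposal is correct and follows the route the paper invokes by reference (Li, Proposition~4.1; Schoen--Zhang, Lemma~3.1). The subcritical Liouville theorem \cite[Theorem~1.1]{MO2023} forces $\tau\to0$, and the classification of positive entire solutions gives the bubble profile. The selection via maxima of $d_{\theta_0}(\cdot,\{P_1,\dots,P_k\})^{2/(p-1)}v$, terminated by counting the disjoint balls $\mathcal{B}(P_j,Sv(P_j)^{-(p-1)/2})$, then yields (i)--(iii).

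Two small fixes are needed. First, the factor $2^{2/(p-1)}$ alone does not center the limiting bubble at $0$. Use the sharper consequence of maximality: $v\le(1+o(1))\,v(P_{k+1})$ on $\mathcal{B}(P_{k+1},M v(P_{k+1})^{-(p-1)/2})$ for each fixed $M$, which holds because $d_{\theta_0}$ satisfies the triangle inequality and $d_k v(P_{k+1})^{(p-1)/2}\to\infty$. Alternatively, recenter at the nearby local maximum, which lies at rescaled distance $O(1)$. Second, drop the Kelvin-transform fallback: no such argument is known to give the subcritical Liouville theorem on $\mathbb{H}^n$, and that theorem is exactly what \cite{MO2023} provides.
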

\begin{proof}
Rescaling at a maximum point and applying the subcritical Liouville theorem \cite[Theorem~1.1]{MO2023} yields $\tau\to  0$ as $\max v\to  \infty$.
Using the classification results cited in the proof of Proposition~\ref{prop:4.4}, with the same distinction between $n=1$ and $n\ge2$, the proof of Proposition~\ref{prop:5.1} is similar to those of \cite[Proposition 4.1]{Li1995} and \cite[Lemma 3.1]{SZ1996}, and is omitted here. We refer to \cite{Li1995,SZ1996} for details.
\end{proof}

\begin{prop}\label{prop:5.2}
Assume the hypotheses of Proposition \ref{prop:5.1} hold. Suppose that there exists some constant $d>0$ such that $R$ satisfies $(*)_{Q-2}'$ for some $L>0$ and $C_*>0$ in $\Om_d=\{P \in \S^{2n+1}:|\nabla_{\theta_0} R(P)|<d\}$. Then, for $\var\in(0,1)$ and $S>1$, let $C_1$ and $\wp(v)$ be chosen as in Proposition~\ref{prop:5.1}. For any solution $v$ of \eqref{maineq4} with $\max _{\S^{2n+1}}v>C_1$, we have
\[
d_{\theta_0}(P_1, P_2) \geq \delta^*>0 \quad \text { for any }\, P_1, P_2 \in \wp(v),\, P_1 \neq P_2,
\]
where $\delta^*$ depends only on $n,\al,\var,S,C_*,A_1,A_2,L,d$.
\end{prop}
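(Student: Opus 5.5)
We argue by contradiction, following \cite[Proposition~4.2]{Li1995} and \cite[Lemma~3.2]{SZ1996}. Suppose there are solutions $v_i$ of \eqref{maineq4} (with $R=R_i$ satisfying the stated hypotheses uniformly, $p=p_i$, $\max v_i>C_1$) and distinct points $P_{1,i},P_{2,i}\in\wp(v_i)$ with $\si_i:=d_{\theta_0}(P_{1,i},P_{2,i})\to0$. Among all pairs in $\wp(v_i)$ we choose one realizing the minimal distance. By Proposition~\ref{prop:5.1}(ii), $\si_i\ge S\max\{v_i(P_{1,i})^{-(p_i-1)/2},v_i(P_{2,i})^{-(p_i-1)/2}\}$, and Proposition~\ref{prop:5.1}(iii) gives $v_i(P_{1,i})\to\infty$ (both peaks blow up, since otherwise the bubble scale could not be $\le\si_i/S\to0$). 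We pass to the Cayley chart $F_{P_{1,i}}$, which turns \eqref{maineq4} into an equation of the form \eqref{4.26} with $H_i$ the conformal factor (uniformly $C^{1,1}$ and bounded above and below on fixed balls), and write $u_i$ for the transformed solution. We then rescale by the minimal distance:
\[
w_i(\xi)=\si_i^{2/(p_i-1)}u_i(\delta_{\si_i}\xi).
\]
This satisfies $-\Delta_\H w_i=\frac{n^2}{4}\widetilde R_i\widetilde H_i^{\tau_i}w_i^{p_i}$ with $\widetilde R_i=R_i\circ\delta_{\si_i}$ on balls of radius $\sim\si_i^{-1}$. By Lemma~\ref{lem:wf-scaling}, $\widetilde R_i$ satisfies $(*)'_{Q-2}$ wherever the original point lies in $\Om_d$, with $\widetilde L_i=\si_i^{Q-2}L\to0$. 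Points outside $\Om_d$ are handled by Proposition~\ref{prop:4.17}: a blow-up point must have $|\nabla_\H R_i|\to0$, so for large $i$ both peaks lie in $\Om_d$, uniformly.

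In the rescaled picture, $0$ and a point $\bar\xi_i$ with $|\bar\xi_i|\to$ const $=1$ (up to chart comparability) are both in the rescaled peak set. All other rescaled peaks are at distance $\gtrsim1$ from each other, and Proposition~\ref{prop:5.1}(iii) gives $w_i(\xi)\le C\,\mathrm{dist}(\xi,\text{peaks})^{-2/(p_i-1)}$. Hence $0$ and $\bar\xi:=\lim\bar\xi_i$ are isolated blow-up points of $\{w_i\}$ with uniform $A_3$ and radius $\bar r=1/2$. Since $\widetilde L_i\to0$, Proposition~\ref{prop:4.16} applies and shows that both are isolated simple blow-up points. Proposition~\ref{prop:4.9} then gives $w_i\le Cw_i(0)^{-1}|\xi|^{2-Q}$ near $0$ and the analogue near $\bar\xi$; the Harnack inequality of Lemma~\ref{lem:4.3} compares $w_i(0)$ and $w_i(\bar\xi_i)$ up to bounded factors. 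Thus $w_i(0)w_i\to G$ in $C^2_{loc}$ away from the limiting peak set, where $-\Delta_\H G=0$, $G\ge a|\xi|^{2-Q}+b\,|\bar\xi^{-1}\circ\xi|^{2-Q}$ with $a,b>0$, and, by the B\^ocher theorem \cite{BL2007}, $G=a|\xi|^{2-Q}+h$ near $0$ with $h$ harmonic and $h(0)>0$ because of the second pole. We need care if other peaks accumulate at infinity or on bounded sets, but every such peak contributes a nonnegative harmonic term, so $h(0)\ge b|\bar\xi|^{2-Q}>0$ regardless.

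We conclude with the Pohozaev identity of Lemma~\ref{prop:4.7} on $B_\si$ centered at $0$, applied to $w_i$, multiplied by $w_i(0)^2$, and sent to the limit $i\to\infty$, exactly as in \eqref{3.42 in L}. The $\tau_i$-term is nonnegative, and the boundary $R$-term is $O(w_i(0)^{-p_i-1})$ by Proposition~\ref{prop:4.9}. The interior $\Xi(\widetilde R_i\widetilde H_i^{\tau_i})$ term vanishes after normalization by Corollary~\ref{cor:4.15}(i), since $\widetilde L_i\to0$. This yields $\int_{\pa B_\si}B(\si,\xi,G,\nabla G)\ge0$, while Lemma~\ref{lem:positivepoho}(ii) with $a_0=h(0)>0$ gives a strictly negative limit as $\si\to0$, which is a contradiction. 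The main obstacle is the uniformity bookkeeping. The flatness must transfer from $\Om_d$ on the sphere to the rescaled charts with uniform $C_*$, which is handled via the chart conventions and Lemma~\ref{lem:wf-scaling}. We must also verify isolation at $\bar\xi$, which requires choosing the pair with minimal distance so that no third peak lies within distance $\ll\si_i$. Finally, the dependence of $\delta^*$ on only the listed constants follows because every step used only those bounds.
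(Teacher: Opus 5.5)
Your skeleton is the paper's: minimal pair, rescaling by the collision distance $\si_i$ (the paper's $\vartheta_i$), two-pole limit $G$, normalized Pohozaev identity, then Lemma~\ref{lem:positivepoho}. However, the way you dispose of the non-flat region does not work.

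You claim that Proposition~\ref{prop:4.17} forces $|\nabla_\H R_i|\to0$ at the colliding peaks, so both lie in $\Om_d$. That proposition assumes $\xi_i\to0$ is an isolated blow-up point with a \emph{fixed} isolation radius as in \eqref{4.5}. Here Proposition~\ref{prop:5.1}(iii) only gives \eqref{4.5} around $P_{1i}$ on balls of radius $\sim\si_i\to0$. On a fixed ball it genuinely fails, since $d(P_{2i},P_{1i})^{2/(p_i-1)}u_i(P_{2i})$ is comparable to $w_i(\bar\xi_i)$, which diverges (see below). Applying the proposition to $w_i$ instead only yields $\si_i|\nabla_\H R_i(P_{1i})|\to0$, which says nothing.

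So when $|\nabla_{\theta_0}R_i(P_{1i})|\ge d/2$ you have no $(*)'_{Q-2}$ near the peaks. Then neither Proposition~\ref{prop:4.16} (simplicity of $0$ for $w_i$) nor Corollary~\ref{cor:4.15} (vanishing of $w_i(0)^2\int_{B_\si}\Xi(\widetilde R_i\widetilde H^{\tau_i})w_i^{p_i+1}$) is available. The paper treats this as a separate Case~2, where the \emph{lower} gradient bound replaces flatness:
\begin{itemize}
\item Rescale $w_i$ again at the second critical radius $\mu_i$. The translation identity from the proof of Lemma~\ref{lem:4.13} gives $|\widetilde\nabla\hat R_i(0)|\le C\phi_i(0)^{-2}+C\tau_i+C\si_i\mu_i\phi_i(0)^{-2/(Q-2)}$.
\item Meanwhile $|\widetilde\nabla\hat R_i(0)|\ge(d/4)\si_i\mu_i$. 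Together with the Lemma~\ref{lem:4.8}-type bound for $\tau_i$, this forces $\si_i\mu_i,\tau_i\le C\phi_i(0)^{-2}$. The normalized Pohozaev term then vanishes, which proves simplicity.
\item The same mechanism at the level of $w_i$ gives $\si_i,\tau_i\le Cw_i(0)^{-2}$. This is what yields the final equality in \eqref{4.11 in L}.
\end{itemize}
That $P_{1i}$ eventually lies in $\Om_d$ is a consequence of the whole argument, not an input to it.

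There is also a missing step before any result of Section~\ref{sec:3} can be applied. You assert that $0$ and $\bar\xi$ are isolated blow-up points of $w_i$. Definition~\ref{def:isolated-blowup} requires $w_i(0)\to\infty$ and $w_i(\bar\xi_i)\to\infty$, but Proposition~\ref{prop:5.1}(ii) only gives a positive lower bound for these values. The paper proves divergence by contradiction:
\begin{itemize}
\item If $w_i(0)$ stays bounded, then $w_i$ is locally bounded. Otherwise a blow-up point elsewhere would give $w_i(0)\le Cw_i(\zeta_i)^{-1}\to0$ by Proposition~\ref{prop:4.9} and the Harnack estimate.
\item Hence $w_i\to w$ in $C^2_{loc}(\Hn)$. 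The limit $w$ is a positive entire solution whose horizontal and $T$-derivatives vanish at both $0$ and $\bar P$, with $|\bar P|=1$.
\item The classification of \emph{all} positive entire solutions makes $w$ a single Jerison--Lee bubble, which has only one critical point. This classification needs no energy bound, and none is available at this stage.
\end{itemize}
Without this step, the normalization $w_i(0)w_i\to G$ and the multiplication by $w_i(0)^2$ in the Pohozaev step are not justified.
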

\begin{proof}
 Suppose the contrary: there exist sequences $\{p_i\}$ and $\{R_i\}$ satisfying the above assumptions, and a sequence of corresponding solutions $\{v_i\}$ such that
\be\label{5.5}
	\lim_{i \to  \infty}d_{\theta_0}(P_{1 i}, P_{2 i})=0, \quad P_{1 i}, P_{2 i} \in \wp(v_i), \quad d_{\theta_0}(P_{1 i}, P_{2 i})=\min _{ \substack{P_1, P_{2} \in \wp(v_i)\\P_1 \neq P_2}}d_{\theta_0}(P_{ 1}, P_{ 2}) .
\ee 
		Since $\B(P_{1i},S v_i(P_{1i})^{-(p_i-1) /2})$ and $\B(P_{2i},S v_i(P_{2i})^{-(p_i-1) /2})$ are disjoint, we infer from  \eqref{5.5} that $v_i(P_{1 i}) \to \infty$ and $v_i(P_{2 i}) \to \infty$. Proposition \ref{prop:5.1} also gives $\tau_i \to 0$.
	
	Use the normalized Cayley chart $F=F_{P_{1i}}$ defined in \eqref{eq:normalized-cayley}, and write $R_i(\xi)$ for $R_i(F(\xi))$. Expressions involving $\nabla_{\theta_0}R_i$ still refer to the prescribed curvature $R_i$ on the sphere. Then equation \eqref{maineq4} is transformed into
\[
	-\Delta_{\H} u_i(\xi)=\frac{n^2}{4} R_i(\xi) H^{\tau_i}(\xi) u_i(\xi)^{p_i}, \quad \xi \in \Hn ,
\]
	where
\be\label{5.8}
	u_i(\xi)=(2|J_F|)^{(Q-2)/(2Q)} v_i(F(\xi)), \quad H(\xi)=(2|J_F|)^{(Q-2)/(2Q)}.
\ee
Here $J_F$ denotes the surface Jacobian of the current chart $F=F_{P_{1i}}$, so $H=H_{P_{1i}}$. On each fixed coordinate ball, these factors satisfy \eqref{4.27} with constants independent of $i$. 
	 Denote the Cayley coordinates of $P_{2 i}$ again by $P_{2 i}$, and set $\vartheta_i=|P_{2 i}| \to  0$. For simplicity, we assume $P_{2 i}$ is a local maximum point of $u_i$: we can always reselect a sequence of points as in the proof of Proposition \ref{prop:5.1} to substitute for $P_{2 i}$.
	
	By Proposition \ref{prop:5.1}(ii), there exists some constant $C$ such that
\be\label{5.9}
	\vartheta_i>\frac{S}{C} \max \{ u_i(0)^{-(p_i-1) / 2}, u_i(P_{2 i})^{-(p_i-1) / 2}\} .
\ee
		Set
	\[
	w_i(\xi)=\vartheta_i^{2 /(p_i-1)} u_i(\delta_{\vartheta_i} \xi) \quad \text { in }\, \Hn  .
	\]
		It is easy to see that $w_i$, which is positive in $\Hn $, satisfies
\be\label{5.10}
	-\Delta_{\H} w_i(\xi)=\frac{n^2}{4}\widetilde{R}_i(\xi) \widetilde{H}^{\tau_i}(\xi) w_i(\xi)^{p_i} \quad \text { in }\, \Hn 
\ee
	and
	\[
	w_i \in C^2(\Hn ), \quad \liminf _{|\xi| \to  \infty} w_i(\xi)<\infty,
	\]
	where $\widetilde{R}_i(\xi)=R_i(\delta_{\vartheta_i} \xi)$ and $\widetilde{H}(\xi)=H(\delta_{\vartheta_i} \xi)$.

	By Proposition \ref{prop:5.1}, $u_i$ satisfies
\begin{align*}
    	u_i(\xi) \leq A_3|\xi|^{-2 /(p_i-1)}  &\quad \text { for all }\,|\xi| \leq \vartheta_i / 2, \\
	u_i(\xi) \leq A_3d(\xi,P_{2 i})^{-2 /(p_i-1)}  &\quad \text { for all }\,d(\xi,P_{2 i}) \leq \vartheta_i / 2.
\end{align*}
Here $A_3$ is obtained from the constant $C_2$ in Proposition~\ref{prop:5.1}(iii) and the fixed chart bounds. Thus it depends only on $n,\al,A_1,A_2,\var,S$; we use the same bound for the rescaled functions below.
	In view of \eqref{5.9}, we have
\[
 w_i(0)\ge c>0,\quad
 w_i(\delta_{\vartheta_i^{-1}}P_{2i})\ge c>0.
\]
Passing to a subsequence, assume $\delta_{\vartheta_i^{-1}}P_{2i}\to\bar P$ with $|\bar P|=1$. We claim that both $w_i(0)$ and $w_i(\delta_{\vartheta_i^{-1}}P_{2i})$ tend to infinity.

Suppose, to the contrary, that $w_i(0)\le C$ along a subsequence. By Proposition \ref{prop:5.1}, we know that $\sup_{B_r}w_i\le C$ for some fixed $r>0$. If $\{w_i\}$ is not locally bounded in $\Hn $, the minimality in \eqref{5.5} and Proposition \ref{prop:5.1} yield an isolated blow-up sequence $\zeta_i\to\zeta$, with $\zeta_i$ reselected as local maxima after the conformal change. Cases~1 and~2 below, applied at $\zeta_i$, show that $\zeta$ is an isolated simple blow-up point. Using Proposition \ref{prop:4.9} and the Harnack estimate \eqref{eq:annular-harnack}, we have
\[
 0<c\le w_i(0)\le Cw_i(\zeta_i)^{-1}\to 0,
\]
a contradiction. Hence $\{w_i\}$ is locally bounded. 
After passing to a further subsequence, we may assume that $R_i(0)\to R_*>0$. Since $\widetilde R_i\widetilde H^{\tau_i}\to R_*$ locally uniformly, the local Harnack inequality in Lemma~\ref{lem:4.3} and the regularity estimate \eqref{sch new} give
 \begin{gather*}
 w_i\to  w>0
 \quad\text{ in }\, C^2_{loc}(\Hn ),\\
 -\Delta_\H w=\frac{n^2}{4}R_*w^{(Q+2)/(Q-2)},\quad R_*>0,\\
 \nabla_\H w(0)=\nabla_\H w(\bar P)=0,\quad
 Tw(0)=Tw(\bar P)=0.
 \end{gather*}
The classification used in Proposition \ref{prop:4.4} implies that $w$ is a single bubble, which has only one full critical point, contradicting
$|\bar P|=1$. The same argument at
$\delta_{\vartheta_i^{-1}}P_{2i}$ proves
\[
 w_i(0)\to \infty,\quad
 w_i(\delta_{\vartheta_i^{-1}}P_{2i})\to \infty.
\]
Together with the preceding pointwise estimates, we obtain that $0$ and $\bar P$ are isolated blow-up points of $w_i$.

\textbf{Case 1}. $|\nabla_{\theta_0} R_i(P_{1i})| \leq d / 2$. 

In this case, $\{0\}$ is an isolated simple blow-up  point of $w_i$ by Lemma \ref{lem:wf-scaling} and Proposition \ref{prop:4.16}. 

\textbf{Case 2}. $|\nabla_{\theta_0} R_i(P_{1i})| \geq d / 2$.

In this case, we first prove that $\{0\}$ is an isolated simple blow-up  point of $w_i$.
By Remark \ref{rem1}, $f_{w_i,\theta}$ has precisely one critical point in the interval $0<r<s_i$, where $s_i=S_i w_i(0)^{-(p_i-1) / 2}$. Suppose $0$ is not an isolated simple blow-up  point of $w_i$ and let $\mu_i$ be the second critical point of $f_{w_i,\theta}$ defined as follows:
 $$\mu_i:=\min\Big\{r>s_i: \frac{\mathrm{d}}{\mathrm{d}r}f_{w_i,\theta}(r)=0  \text{ for some } \theta\in \pa B_1\Big\}.$$
  Then we see that
	\[
	\mu_i \geq s_i \quad \text{ and }\lim_{i\to \infty} \mu_i=  0.
	\]
		 Set
	\[
	\phi_i(\xi)=\mu_i^{2 /(p_i-1)} w_i(\delta_{\mu_i} \xi), \quad \xi \in B_{2/\mu_i}.
\]
    Clearly, $\phi_i$ satisfies
	\begin{gather}
	 -\Delta_{\H} \phi_i(\xi)=\frac{n^2}{4}\hat{R}_i(\xi) \hat{H}^{\tau_i}(\xi) \phi_i^{p_i}(\xi),\label{3.2new in Li} \\
	 |\xi|^{2 /(p_i-1)} \phi_i(\xi) \leq A_3, \quad|\xi|<1 / (2\mu_i), \notag\\
	 \lim _{i \to\infty} \phi_i(0)=\infty, \notag
	\end{gather}
	where $\hat{R}_i(\xi)=\widetilde{R}_i(\delta_{\mu_i}\xi)=R_i(\delta_{\vartheta_i\mu_i} \xi)$ and $\hat{H}(\xi)=\widetilde{H}(\delta_{\mu_i}\xi)=H(\delta_{\vartheta_i\mu_i} \xi)$.
    Furthermore, note that for any $\theta \in \pa  B_1$,
\[
f_{\phi_i, \theta}(s)=s^{2 /(p_i-1)} \mu_i^{2 /(p_i-1)} w_i( \delta_{s \mu_i} \theta) ,
\]
	which shows that 0 is an isolated simple blow-up  point of $\phi_i$.

     On each fixed annulus $\{\var<|\xi|<R\}$, by Lemma \ref{lem:4.3}, Propositions \ref{prop:4.5} and \ref{prop:4.9}, we have
$$
0<c(\var, R) \leq\phi_i(0)\phi_i(\xi) \leq C(\var, R).
$$
Since $\phi_i(0) \to  \infty$, the equation for $\phi_i(0)\phi_i(\xi)$,
$$
-\Delta_\H \phi_i(0)\phi_i(\xi)=\frac{n^2}{4} \hat{R}_i \hat{H}^{\tau_i}\phi_i(0) \phi_i^{p_i}(\xi),
$$
has a potential tending to zero locally uniformly away from the origin. Local sub-elliptic estimates and a diagonal subsequence therefore yield $\phi_i(0)\phi_i(\xi) \to  G>0$ in $C_{loc}^2(\Hn  \backslash\{0\})$, where $-\Delta_\H G=0$. By the B\^ocher theorem \cite{BL2007}, we have
$$
G=a_0|\xi|^{2-Q}+h, \quad a_0>0,
$$
with $h$ harmonic on $\Hn $. Since $G \geq 0, h \geq-a_0|\xi|^{2-Q}$, $h$ is bounded below on $\Hn $. The Harnack inequality in Lemma~\ref{lem:4.3}, whose constant depends only on $n$ when $a=0$, applied on expanding balls to $h-\inf_{\Hn}h+\var$ and then with $\var\to0$, implies that $h$ is constant. Thus $h=b$ for some constant $b\geq0$. By direct computations, we have
$$
0=\frac{\d}{\d s}[s^{(Q-2) / 2}(a_0 s^{2-Q}+b)]\Big|_{s=1}=\frac{Q-2}{2}(b-a_0),
$$
namely,
\be\label{ha new}
    b=a_0>0.
\ee

	Applying the Pohozaev identity in Lemma \ref{prop:4.7} to \eqref{3.2new in Li} and using Proposition \ref{prop:4.9}, we obtain, for sufficiently small positive $\si $,
    \begin{align*}
        &\,\int_{\pa B_\si }B(\si ,\xi,\phi_i,\nabla \phi_i)\\
         \geq &\,\frac{n^2}{4}\frac{1}{p_i +1}\int_{B_\si } \Xi (\hat{R}_i\hat{H}^{\tau_i}) \phi_i^{p_i+1}-\frac{n^2}{4}\frac{1}{p_i +1}\int_{\pa  B_\si } \hat{R}_i\hat{H}^{\tau_i} \phi_i^{p_i+1} \Xi  \cdot \nu\\
       \geq &\, \frac{n^2}{4}\frac{1}{p_i +1}\int_{B_\si } \Xi (\hat{R}_i\hat{H}^{\tau_i}) \phi_i^{p_i+1}-O(\phi_i(0)^{-p_i-1}).
    \end{align*}

Now we estimate the integral term $\int_{B_\si } \Xi (\hat{R}_i\hat{H}^{\tau_i}) \phi_i^{p_i+1}$. Arguing as in the proof of Lemma \ref{lem:4.8}, we obtain
\begin{align*}
\tau_i \leq&\, C \phi_i(0)^{-2}+C \int_{B_\si }|\xi||\nabla_{\H} \hat{R}_i(\xi)| \hat{H}^{\tau_i} \phi_i^{p_i+1}+C \int_{B_\si }|\xi|^2|T \hat{R}_i(\xi)| \hat{H}^{\tau_i} \phi_i^{p_i+1}\\
\leq &\,C \phi_i(0)^{-2}+C \vartheta_i\mu_i \phi_i(0)^{-2/(Q-2)} .
\end{align*}
Arguing as in the proof of Lemma \ref{lem:4.13}, we obtain
$$
\Big|\int_{B_\si } \widetilde{\nabla} \hat{R}_i \hat{H}^{\tau_i} \phi_i^{p_i+1}\Big| \leq C \phi_i(0)^{-2}+C \tau_i .
$$
It follows from Proposition \ref{prop:4.4}, Lemmas \ref{lem:intrinsic-coefficient-calculus} and \ref{lem:4.11} that
\begin{align*}
|\widetilde{\nabla} \hat{R}_i(0)|  \leq &\, C \int_{B_\si }|\widetilde{\nabla} \hat{R}_i(\xi)-\widetilde{\nabla} \hat{R}_i(0)| \phi_i^{p_i+1}+C \phi_i(0)^{-2}+C \tau_i \\
 \leq &\,  C \vartheta_i\mu_i \phi_i(0)^{-2/(Q-2)}+C \phi_i(0)^{-2}+C \tau_i .
\end{align*}
Since $|\widetilde{\nabla} \hat{R}_i(0)|=2 \vartheta_i\mu_i|\nabla_{\H} R_i(0)| \geq(d / 4) \vartheta_i\mu_i$, we have
\[
\vartheta_i\mu_i \leq C \phi_i(0)^{-2}+C \tau_i .
\]
It follows that $\tau_i \leq C \phi_i(0)^{-2}$ and $\vartheta_i\mu_i \leq C \phi_i(0)^{-2}$. Therefore,
\[
\Big|\int_{B_\si } \Xi (\hat{R}_i\hat{H}^{\tau_i}) \phi_i^{p_i+1}\Big| \leq C \phi_i(0)^{-2-2/(Q-2)},
\]
which contradicts \eqref{ha new} and Lemma \ref{lem:positivepoho}. This proves that $\{0\}$ is an isolated simple blow-up  point of $w_i$.
Similarly, $\{\bar{P}\}$ is also an isolated simple blow-up  point of $w_i$. 

By Proposition \ref{prop:4.9},
\[
w_i(0) w_i(\xi) \leq C_{\var} \quad \text { for all }\, \var \leq|\xi| \leq 1 / 2,
\]
where $C_{\var}$ is independent of $i$. By Proposition \ref{prop:5.1}, the Harnack estimate \eqref{eq:annular-harnack}, the interior estimates \cite[Theorem~6.1]{F1975} and the choice of $P_{1 i}, P_{2 i}$, there exists an at most countable set $\wp \subset \Hn $ such that
\[
\inf \{d(\xi,\widetilde{\xi}): \xi,\widetilde{\xi} \in \wp, \xi \neq \widetilde{\xi}\} \geq 1,
\]
and
\begin{align*}
\lim _{i \to  \infty} w_i(0) w_i(\xi)=G(\xi) &\quad  \text { in }\, C_{loc}^2(\Hn \backslash \wp), \\
G(\xi)>0 &\quad  \text { for }\, \xi \in \Hn \backslash \wp.
\end{align*}

Let $\wp_1 \subset \wp$ contain those points near which $G$ is singular. Lemma \ref{lem:4.3} and Propositions \ref{prop:4.5} and \ref{prop:4.9} imply that $0, \bar{P} \in \wp_1$. Since $p_i>1$, it follows from \eqref{5.10} that
\[
-\Delta_{\H} G=0 \quad\text{ in }\,  \Hn  \backslash \wp_1 .
\]
By Proposition \ref{prop:4.9} and the maximum principle, there exist positive constants $N_1, N_2$ and some nonnegative function $h$ satisfying
\[
-\Delta_{\H} h=0\quad\text{ in }\, \Hn  \backslash (\wp_1 \backslash\{0, \bar{P}\}),
\]
such that
\be\label{G}
G(\xi)=N_1|\xi|^{2-Q}+N_2|\bar{P}^{-1}\circ \xi|^{2-Q}+h(\xi), \quad \xi \in \Hn \backslash \wp_1 .
\ee

To verify that $h\ge0$, apply the minimum principle in a large ball $B_R$, with small balls around the remaining
poles removed. The positive singular terms give $h\ge0$ on sufficiently small inner boundaries.
On $\pa  B_R$, the positivity of $G$ gives
\[
h\ge -N_1|\xi|^{2-Q} -N_2|\bar P^{-1}\circ\xi|^{2-Q} \ge -CR^{2-Q}.
\]
Shrinking the inner boundaries and then letting $R\to\infty$, we obtain $h\ge0$. Consequently, the regular part of $G$ at the origin has value
\[
N_2|\bar P|^{2-Q}+h(0)>0.
\]

For $\si \in (0,1)$, we apply Lemma \ref{prop:4.7} to \eqref{5.10} and obtain
\begin{align}
    \int_{\pa B_\si }B(\si ,\xi,G,\nabla G)=&\,\lim_{i\to \infty}w_i(0)^2\int_{\pa B_\si }B(\si ,\xi,w_i,\nabla w_i)\notag\\
    \geq&\, \lim_{i\to \infty}w_i(0)^2 \frac{n^2}{4}\frac{1}{p_i +1}\int_{B_\si } \Xi (\widetilde{R}_i\widetilde{H}^{\tau_i}) w_i^{p_i+1}=0.\label{4.11 in L}
\end{align}
If $|\nabla_{\theta_0} R_i(P_{1i})| \leq d / 2$, the last equality in \eqref{4.11 in L} follows from \eqref{eq:normalized-pohozaev-vanishing} applied to \eqref{5.10} and the fact  that condition $(*)'_{Q-2}$ with $L=o(1)$ holds for $\widetilde{R}_i$. If $|\nabla_{\theta_0} R_i(P_{1i})| \geq d / 2$, we can apply the argument in the proof of Proposition \ref{prop:4.17} to conclude that $\vartheta_i, \tau_i\leq Cw_i(0)^{-2}$.

Using \eqref{G} and applying Lemma \ref{lem:positivepoho}, we obtain
\[
    \int_{\pa B_\si }B(\si ,\xi,G,\nabla G)<0,
\]
for $\si >0$ sufficiently small, which contradicts \eqref{4.11 in L}. Proposition \ref{prop:5.2} is established.		
\end{proof}

Now we consider
\be\label{maineq5}
-\Delta_{\theta_0}v_i +\frac{n^2}{4} v_i=\frac{n^2}{4} R_i  v_i^{p_i},\quad v_i>0 \quad \text { on }\, \S^{2n+1},
\ee
where $p_i=\frac{Q+2}{Q-2}-\tau_i$, $0\leq \tau_i<2/(Q-2)$ and $\tau_i \to 0$ as $i\to \infty$.
\begin{thm}\label{thm:5.3}
 Suppose every $R_i$ satisfies the assumptions on $R$ in Proposition~\ref{prop:5.2}, with the same constants $A_1,A_2,C_*,L,d$ for all $i$. Let $v_i$ be solutions of \eqref{maineq5}. Then
\[
	\|v_i\|_{S_1^2(\S^{2n+1})} \leq C,
	\]
	where $C>0$ depends only on $n,\al,C_*,A_1,A_2,L,d$. Furthermore, after passing to a subsequence, either $\{v_i\}$ stays bounded in $L^{\infty}(\S^{2n+1})$, or $\{v_i\}$ has only isolated simple blow-up  points and the distance between any two such points is bounded below by some positive constant depending only on $n,\al,C_*,A_1,A_2,L,d$.
\end{thm}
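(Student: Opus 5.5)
The plan follows the standard Schoen--Zhang/Li argument. Fix $\var\in(0,1)$ small and $S>1$ large. If $\max v_i\le C_1$ along a subsequence, the energy bound is immediate: $v_i$ is uniformly bounded, so $\int v_i L_{\theta_0}v_i=\frac{n^2}{4}\int R_iv_i^{p_i+1}\le C$. So assume $\max v_i>C_1$ and let $\wp(v_i)$ be the finite set of Proposition~\ref{prop:5.1}.

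First, by Proposition~\ref{prop:5.2}, distinct points of $\wp(v_i)$ are $\delta^*$-separated, with $\delta^*$ independent of $i$. Since $\S^{2n+1}$ is compact, the cardinality of $\wp(v_i)$ is then bounded by some $N=N(n,\delta^*)$. After passing to a subsequence, $\wp(v_i)=\{P_{1i},\dots,P_{ki}\}$ with $k\le N$ fixed and $P_{ji}\to \bar P_j$ distinct.

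Second, I will check that each $\bar P_j$ with $v_i(P_{ji})\to\infty$ is an isolated blow-up point. In the Cayley chart $F_{P_{ji}}$, Proposition~\ref{prop:5.1}(iii) combined with the separation gives
\[
u_i(\xi)\le C d(\xi,0)^{-2/(p_i-1)}\quad\text{on } B_{\delta^*/2},
\]
with the conformal factor $H$ satisfying \eqref{4.27}. Simplicity is obtained by the two cases in the proof of Proposition~\ref{prop:5.2}:
\begin{itemize}
\item if $|\nabla_{\theta_0}R_i(P_{ji})|\le d/2$, the point lies in $\Om_d$ and Proposition~\ref{prop:4.16} applies;
\item otherwise, the Case 2 argument there (cf.\ Proposition~\ref{prop:4.17}) gives a contradiction if the point is not simple.
\end{itemize}
Points of $\wp(v_i)$ along which $v_i$ stays bounded do not affect the estimates below.

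Third, I will bound the energy, splitting $\int R_iv_i^{p_i+1}$ into the balls $\B(P_{ji},\delta^*/4)$ and their complement. On each ball, Lemma~\ref{lem:4.11} with $s=0$ gives $\int u_i^{p_i+1}=O(1)$; the core contributes $O(1)$ and the annulus $o(1)$, using Lemma~\ref{lem:4.8} for $m_i^{\tau_i}\to1$. On the complement, Proposition~\ref{prop:5.1}(iii) gives $v_i\le C_2(\delta^*/4)^{-2/(p_i-1)}$, which is bounded. Summing over the at most $N$ points gives $\|v_i\|_{S_1^2}^2=\frac{n^2}{4}\int R_iv_i^{p_i+1}\le C$, after adjusting for the factor $2^{2n+1}n!$ between $\d V_{\theta_0}$ and $\d\si$. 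The dichotomy then follows: either the maxima stay bounded, or the blow-up points are exactly those $\bar P_j$ with $v_i(P_{ji})\to\infty$, all isolated simple and $\delta^*$-separated. Any other blow-up point would be a limit of local maxima, which Proposition~\ref{prop:5.1}(iii) forces to be near $\wp(v_i)$.

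The main obstacle is the uniformity of constants. Lemma~\ref{lem:4.11} and Proposition~\ref{prop:4.9} are proved by contradiction along sequences, so their constants are tied to $A_3$ and $\rho$, and $\rho$ is not a priori uniform over all possible sequences. I will handle this by arguing by contradiction on the energy bound itself: suppose $\|v_i\|\to\infty$ along some sequence, extract the subsequence above, and derive a bounded energy for it. This avoids needing explicit dependence on $\rho$, so the final constant depends only on $n,\al,C_*,A_1,A_2,L,d$. A further point to verify is that the reselection of local maxima in the chart, given the factor $H$ with $\tau_i>0$, keeps the hypotheses of Section~\ref{sec:3} intact.
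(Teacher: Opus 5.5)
Your proposal is correct and follows essentially the paper's argument. The paper deduces the theorem directly from Propositions~\ref{prop:5.2}, \ref{prop:4.17}, \ref{prop:4.16}, \ref{prop:4.4} and Lemma~\ref{lem:4.11}: uniform separation bounds the number of peaks, and every blow-up point is isolated simple. Proposition~\ref{prop:4.17} places each such point in $\Om_d$, which your two-case split achieves equivalently, and then Proposition~\ref{prop:4.16} applies. Lemma~\ref{lem:4.11} together with Proposition~\ref{prop:5.1}(iii) bounds the energy near and away from the peaks. Your contradiction argument, used to avoid dependence on the simplicity radius $\rho$, makes explicit the uniformity that the paper's one-line proof leaves implicit.
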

\begin{proof}
The theorem follows immediately from Propositions \ref{prop:5.2}, \ref{prop:4.17}, \ref{prop:4.16}, \ref{prop:4.4} and Lemma \ref{lem:4.11}.
\end{proof}


\begin{thm}\label{thm:5.4}
	 Suppose  $\{R_i\}$ satisfies the assumptions on $R$ in Proposition~\ref{prop:5.2}, with the same constants $A_1,A_2,C_*,L,d$ for all $i$. Let $v_i$ be solutions of \eqref{maineq5}. Suppose further that either $\{R_i\}$ satisfies condition $(*)_{Q-2}'$ for a sequence $L(Q-2, i)=o(1)$ in $\Om_{d, i}=\{q \in \S^{2n+1}:|\nabla_{\theta_0} R_i|<d\}$, or $\{R_i\}$ satisfies condition $(*)_\beta'$ with $\beta\in (Q-2,Q)$ and $L(\beta,i)=O(1)$ in $\Om_{d, i}$. Then, after passing to a subsequence, either $\{v_i\}$ stays bounded in $L^{\infty}(\S^{2n+1})$, or $\{v_i\}$ has precisely one isolated simple blow-up  point.
\end{thm}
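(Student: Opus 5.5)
We start from Theorem~\ref{thm:5.3}. After passing to a subsequence, either $\{v_i\}$ is bounded in $L^\infty(\S^{2n+1})$, in which case there is nothing to prove, or every blow-up point is isolated simple. The blow-up points are then at mutual contact distance at least $\delta^*>0$ by Proposition~\ref{prop:5.2}, so there are finitely many of them, and there is at least one, since $\max v_i\to\infty$.

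Suppose, for contradiction, that there are two distinct blow-up points $\bar P_1\neq\bar P_2$ with local maxima $P_{1i}\to\bar P_1$, $P_{2i}\to\bar P_2$. We work in the normalized Cayley chart $F_{P_{1i}}$ and write $u_i$, $H$ as in \eqref{5.8}. Then $0$ is an isolated simple blow-up point of $u_i$, and Proposition~\ref{prop:4.9} together with the Harnack estimate \eqref{eq:annular-harnack} gives $u_i(0)u_i\to G$ in $C^2_{loc}$ away from the finitely many blow-up points (all in a fixed compact region of the chart, after choosing the chart so that no other blow-up point sits at the omitted point). Note that $u_i(0)u_i$ stays bounded near each other blow-up point only after normalization by $u_i(P_{ki})$, so we split into two cases.

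\emph{Case A:} $u_i(0)u_i(P_{2i})\to\infty$ comparably, i.e.\ $u_i(0)/u_i(P_{2i})$ bounded above and below. Then $G$ has a pole at the image of $\bar P_2$ with positive coefficient.

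\emph{Case B:} the ratios degenerate. We then renormalize at the point of smaller height, so that the nonnegative limit around it contains the other pole.

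In either case, maximum principle arguments as in the derivation of \eqref{G} give
\[
G=N_1|\xi|^{2-Q}+h,\qquad h\ge0 \text{ harmonic near } 0,\qquad h(0)>0.
\]
The positivity comes from the second pole when it appears in $G$. When it does not appear, it comes from the constant term: the global equation on the sphere, i.e.\ Green's representation \eqref{Green} of $v_i$ normalized by $v_i(P_{1i})$, contributes the positive mass $\int R_iv_i^{p_i}$ concentrated at $\bar P_2$, which is bounded below by Proposition~\ref{prop:4.5} times $v_i(P_{2i})^{-1}$.

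Since $\{R_i\}$ satisfies $(*)'_\beta$ with $\beta>Q-2$ and $L(\beta,i)=O(1)$, or $\beta=Q-2$ with $L=o(1)$, Corollary~\ref{cor:4.15} applies at $0$, with $H_i=H$ satisfying \eqref{4.27}, whenever $|\nabla_{\theta_0}R_i(P_{1i})|<d$. This gives \eqref{eq:normalized-pohozaev-vanishing}, so that
\[
\lim_i u_i(0)^2\int_{B_\si}\Xi_i(R_iH^{\tau_i})u_i^{p_i+1}=0 .
\]
If instead $|\nabla_{\theta_0}R_i(P_{1i})|\ge d$, Proposition~\ref{prop:4.17} gives $|\nabla_\H R_i(P_{1i})|\to0$, which is a contradiction for large $i$. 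So the condition always holds eventually.

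We now apply the Pohozaev identity of Lemma~\ref{prop:4.7} on $B_\si$ around $0$, multiply by $u_i(0)^2$, and pass to the limit. The boundary term $\int R u^{p+1}\Xi\cdot\nu$ is $O(u_i(0)^{-p_i-1})$ by \eqref{eq:sharp-isolated-bound}. The $\tau_i$ term has a favorable sign because $\frac{Q}{p_i+1}-\frac{Q-2}{2}\ge0$. Hence, as in \eqref{4.11 in L},
\[
\int_{\pa B_\si}B(\si,\xi,G,\nabla G)\ge0 \quad\text{for all small } \si .
\]
Lemma~\ref{lem:positivepoho}(ii), applied with $a_0=h(0)/N_1>0$, gives a strictly negative limit as $\si\to0$, which is a contradiction. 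Hence there is exactly one blow-up point.

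The main obstacle is justifying $h(0)>0$ when the second peak has much smaller height than the first, so that its pole disappears from the limit normalized by $u_i(0)$. The plan is to choose the labeling so that we normalize at the peak $P_{ki}$ with the smallest value of $v_i(P_{ki})$. Then $v_i(P_{ki})v_i\to G$ retains poles of positive coefficient $\lim v_i(P_{ki})/v_i(P_{ji})\cdot a_j$ at the others, or at least a positive constant from the conformal term $\frac{n^2}{4}v$ through the Green function. Either source yields $h(0)>0$ in the Cayley chart.
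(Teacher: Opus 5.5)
Your Case A is the paper's argument, the same ending as in Proposition~\ref{prop:5.2}. The limit $G$ has two poles with positive coefficients, so its regular part at $0$ is positive, and Corollary~\ref{cor:4.15} together with Lemma~\ref{lem:positivepoho} then contradicts the Pohozaev identity. The gap is in how you handle degenerate heights, which you yourself call the main obstacle. The mechanism you propose there does not work.

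\textbf{Why your Case B remedy fails.}
First, the direction is reversed. If $v_i(P_{2i})\ll v_i(P_{1i})$, Proposition~\ref{prop:4.5} shows that $u_i(0)u_i$ is unbounded near $\bar P_2$; the pole would disappear only if the second peak were much \emph{higher}. Normalizing at the smallest peak then gives pole coefficients $\lim v_i(P_{ki})/v_i(P_{ji})\in[0,1]$, and these may vanish.

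Second, there is no ``positive constant from the conformal term'' to fall back on.
\begin{itemize}
\item The mass at $\bar P_2$, normalized by $v_i(P_{1i})$, is of order $v_i(P_{1i})/v_i(P_{2i})$. That is just the pole coefficient again, so the argument is circular.
\item The term $\frac{n^2}{4}v$ is absorbed by the conformal change. Using \eqref{Green} and \eqref{Cayley}, the $L_{\theta_0}$-Green function with pole $P$ is exactly a multiple of $|\xi|^{2-Q}$ in the chart centered at $P$, with no constant term.
\item If a single pole did produce a positive constant, Lemma~\ref{lem:positivepoho} would also exclude one blow-up point. That is false, for example when $R_i\equiv1$.
\end{itemize}

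\textbf{The missing idea: blow-up heights are comparable, so Case B never occurs.}
By Theorem~\ref{thm:5.3} and Proposition~\ref{prop:5.2}, there are finitely many blow-up points, pairwise $\delta^*$-separated. Remove small fixed balls around them to get a connected compact set $K$.
\begin{itemize}
\item $v_i$ is uniformly bounded on $K$, since otherwise there would be a blow-up point in $K$.
\item So the potential is bounded on $K$. The Harnack inequality of Lemma~\ref{lem:4.3}, applied in charts on a finite chain of balls, gives $\sup_K v_i\le C\inf_K v_i$.
\item On a fixed sphere around each $P_{ki}$, Propositions~\ref{prop:4.5} and~\ref{prop:4.9} together with Lemma~\ref{lem:4.8} give $v_i\asymp v_i(P_{ki})^{-1}$.
\end{itemize}
Hence $v_i(P_{1i})\asymp v_i(P_{2i})$. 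Only your Case A occurs, and
\[
G=N_1|\xi|^{2-Q}+N_2\,d(\xi,\hat P_2)^{2-Q}+h,
\]
where $\hat P_2$ is the chart coordinate of $\bar P_2$, $N_1,N_2>0$, and $h\ge0$. The rest of your argument then goes through. This is the same Harnack comparability the paper uses implicitly when it asserts $0,\bar P\in\wp_1$ in Proposition~\ref{prop:5.2}.

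\textbf{A smaller point on the chart.}
In the chart $F_{P_{1i}}$ the omitted point is always $-P_{1i}$, so you cannot ``choose'' it. Either use a fixed chart as the paper does, or note that a pole at the omitted point shows up as a positive constant in that chart.
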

\begin{proof}
	The strategy is the same as in the proof of Proposition \ref{prop:5.2}. We assume there are two isolated blow-up  points. Choose a fixed Cayley chart whose omitted point is not a blow-up point. Under the assumption on $\{R_i\}$, Corollary \ref{cor:4.15} applies to $u_i$ as defined in \eqref{5.8}. Moreover, since the distance between these blow-up  points is uniformly bounded below due to Proposition \ref{prop:5.2}, we can apply Lemma \ref{lem:positivepoho} to contradict \eqref{Pohozaev} and \eqref{eq:normalized-pohozaev-vanishing}.
\end{proof}

\begin{thm}\label{thm:5.5}
Suppose that $\{R_i\}\subset\mathcal A_\al (\S^{2n+1})$ satisfies \eqref{5.3} with a constant $A_2$ independent of $i$. Assume $R_i(P_0)\ge b_0>0$ for some $P_0\in\S^{2n+1}$ and a constant $b_0$ independent of $i$. For some $\var_0>0$ independent of $i$ and $\beta\in(2,Q)$, suppose that
\[
R_i(\xi)=R_i(0)+Q_i^{(\beta)}(\xi)+W_i(\xi), \quad|\xi| \leq \var_0,
\]
where $\xi=(x,y,t)$ denotes the normalized Cayley coordinates given by $F_{P_0}$ and $R_i(\xi)$ abbreviates $R_i(F_{P_0}(\xi))$. The function $Q_i^{(\beta)}(\xi)$ satisfies $Q_i^{(\beta)}(\delta_{\lam}\xi)=\lam^\beta Q_i^{(\beta)}(\xi)$ for all $\lam>0$ and $\xi \in \Hn $, and $W_i(\xi)$ satisfies
\[
|W_i(\xi)||\xi|^{-\beta}+|\widetilde{\nabla}W_i(\xi)||\xi|^{1-\beta}+|TW_i(\xi)||\xi|^{2-\beta} \to  0
\]
uniformly in $i$ as $\xi \to  0$. Suppose also that $Q_i^{(\beta)} \to  Q^{(\beta)}$ in $C^1_{loc}(\H^{n}\backslash \{0\})$ and for some positive constant $c_{\mathrm{nd}}$,
\[
c_{\mathrm{nd}}|\xi|^{\beta-1} \leq|\widetilde{\nabla} Q^{(\beta)}(\xi)|+|\xi||TQ^{(\beta)}(\xi)|, \quad|\xi| \leq \var_0,
\]
and
\[
\left(
\begin{array}{ll}
\displaystyle\int_{\Hn } \overline{\nabla} Q^{(\beta)}(\widetilde{\xi}\circ\xi)U_{0,1}^{Q^* } \, \d  \xi\\[1.75ex]
\displaystyle\int_{\Hn } Q^{(\beta)}(\widetilde{\xi}\circ\xi)U_{0,1}^{Q^* } \, \d  \xi
\end{array}
\right)
\neq 0, \quad \forall\, \widetilde{\xi} \in \Hn  .
\]
Let $v_i$ be positive solutions of \eqref{maineq} with $R=R_i$. If $P_0$ is an isolated simple blow-up  point of $v_i$, then $v_i$ must have at least one other blow-up  point.
\end{thm}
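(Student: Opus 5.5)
We argue by contradiction. Assume that $P_0$ is the only blow-up point of $\{v_i\}$. Since $P_0$ is isolated simple, Proposition~\ref{prop:4.9}, together with the Harnack estimate \eqref{eq:annular-harnack} and a covering of $\S^{2n+1}\setminus\B(P_0,\var_0/2)$, should give the global bound $v_i\le Cm_i^{-1}$ away from $P_0$. Here $m_i=\max v_i$.

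Work in the chart $F_{P_0}$ and set $u_i=H_{P_0}(v_i\circ F_{P_0})$, so that $u_i$ solves \eqref{maineq1} on all of $\Hn$ with $R=R_i$ and $\tau_i=0$. The south-pole point is not a blow-up point, so $u_i(\xi)\le Cm_i^{-1}|\xi|^{2-Q}$ for $|\xi|\ge1$. The same decay holds for $\nabla_\H u_i$ and $Tu_i$ with one and two extra powers of $|\xi|^{-1}$, by interior estimates on dyadic annuli after scaling.

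With this decay, the first step is to derive the global Kazdan--Warner type identities on $\Hn$. Multiply the equation by $\overline X_ju_i$, $\overline Y_ju_i$, $Tu_i$ and by $\Xi u_i+\frac{Q-2}{2}u_i$, and integrate over $B_\rho$. The boundary terms are $O(m_i^{-2}\rho^{2-Q})$ and vanish as $\rho\to\infty$. The right-invariant fields commute with $\Delta_\H$, as in Lemma~\ref{lem:4.13}. We obtain
\[
\int_{\Hn}\overline\nabla R_i\,u_i^{Q^*}\,\d\xi=0,\quad \int_{\Hn}\Xi(R_i)\,u_i^{Q^*}\,\d\xi=0,
\]
the second being Lemma~\ref{prop:4.7} on $B_\rho$ with $\rho\to\infty$. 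We then split each integral into $\{|\xi|<\var_0\}$ and its complement. On the complement, $|\overline\nabla R_i|+|\Xi R_i|\le C(1+|\xi|^2)$ in the chart, since $R_i$ is bounded in $\mathcal A_\al$ on the sphere and the Cayley factors are explicit. Combined with $u_i^{Q^*}\le Cm_i^{-Q^*}|\xi|^{-2Q}$, the contribution there is $O(m_i^{-Q^*})$.

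The second step is the rescaling. Let $\xi_i\to0$ be the local maxima, $\lam_i=m_i^{-2/(Q-2)}$, $\eta_i=\delta_{\lam_i^{-1}}\xi_i$, and $\widetilde u_i(\xi)=\lam_i^{(Q-2)/2}u_i(\xi_i\circ\delta_{\lam_i}\xi)$. By Proposition~\ref{prop:4.4}, $\widetilde u_i\to\Lam(\delta_{k}\cdot)$ in $C^2_{loc}$, and by \eqref{eq:sharp-isolated-bound}, $\widetilde u_i\le C(1+|\xi|)^{2-Q}$ on the rescaled ball of radius $\var_0/\lam_i$. On this ball we use the expansion
\[
R_i(\xi_i\circ\delta_{\lam_i}\xi)-R_i(0)=\lam_i^{\beta}Q_i^{(\beta)}(\eta_i\circ\xi)+W_i(\xi_i\circ\delta_{\lam_i}\xi).
\]
The derivatives scale as $\overline Z\mapsto\lam_i^{\beta-1}$ and $T\mapsto\lam_i^{\beta-2}$, and $\Xi$ picks up $\lam_i^\beta$. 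The $W_i$ terms are $o(\lam_i^{\beta-1}|\eta_i\circ\xi|^{\beta-1})$, and similarly for $T$ and $\Xi$.

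The key point, and the step I expect to be the main obstacle, is showing that $\{\eta_i\}$ is bounded; this is the displacement estimate. Suppose $|\eta_i|\to\infty$. Divide the translation identities by $\lam_i^{\beta-1}|\eta_i|^{\beta-1}$, respectively $\lam_i^{\beta-2}|\eta_i|^{\beta-2}$, and use homogeneity, $Q^{(\beta)}(\eta_i\circ\xi)=|\eta_i|^\beta Q^{(\beta)}(\hat\eta_i\circ\delta_{|\eta_i|^{-1}}\xi)$ with $\hat\eta_i=\delta_{|\eta_i|^{-1}}\eta_i$. The mass of $\widetilde u_i^{Q^*}$ concentrates, relative to the scale $|\eta_i|$, near $\xi=0$. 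Since $\beta<Q$, the weights $|\xi|^{\beta-1}(1+|\xi|)^{-2Q}$ are integrable, so dominated convergence applies. In the limit this yields $\overline\nabla Q^{(\beta)}(\hat\eta)=0$ with $|\hat\eta|=1$, after combining the horizontal and $T$ components with weights $1$ and $|\hat\eta|$. This contradicts \eqref{0.6}, using $Q_i^{(\beta)}\to Q^{(\beta)}$ in $C^1_{loc}(\Hn\setminus\{0\})$. Care is needed near the singular set $\xi\approx\eta_i^{-1}$, where $Q^{(\beta)}$ is only $C^1$ away from $0$. There the integrand is bounded by $C\lam_i^{\beta-1}|\eta_i\circ\xi|^{\beta-1}$, which is locally integrable, and the measure of that region is negligible after normalization. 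We must also check that the outer error satisfies $O(m_i^{-Q^*})\lam_i^{-(\beta-1)}|\eta_i|^{1-\beta}\to0$. This holds because $m_i^{-Q^*}=\lam_i^{Q}$ and $\beta<Q$.

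Once $\eta_i\to\eta$, the same limit with normalization $\lam_i^{\beta-1}$ and $\lam_i^{\beta-2}$ gives
\[
\int_{\Hn}\overline\nabla Q^{(\beta)}(\eta\circ\xi)\,\Lam(\delta_k\xi)^{Q^*}\,\d\xi=0 .
\]
For the dilation identity, note that $\Xi$ is centered at $0$ in the original coordinates and $\Xi(R_i(0))=0$. Homogeneity gives $\Xi Q^{(\beta)}=\beta Q^{(\beta)}$, so dividing by $\lam_i^\beta$ yields $\beta\int Q^{(\beta)}(\eta\circ\xi)\Lam(\delta_k\xi)^{Q^*}\,\d\xi=0$. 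The change of variables $\xi\mapsto\delta_{k^{-1}}\xi$ together with homogeneity converts these identities into the vanishing of both components in \eqref{0.8in jde} at the point $\widetilde\xi=\delta_k\eta$. This contradicts the hypothesis and shows that $v_i$ has a second blow-up point.
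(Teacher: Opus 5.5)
Your proposal is correct and follows the paper's proof. The shared structure is the decay $u_i\le Cm_i^{-1}H$ away from $P_0$, the translation identities $\int\overline\nabla R_i\,u_i^{Q^*}=0$ and the Pohozaev dilation identity, the displacement estimate $|\xi_i|=O(\lambda_i)$ from the nondegeneracy (your rescaling by $|\eta_i|$ is the same argument as the paper's $\mathcal D_i=(\widetilde\nabla,r_iT)$ estimate), and the limit against $\Lambda(\delta_k\cdot)^{Q^*}$. The only difference is that you center the Pohozaev identity at the chart origin rather than at $\xi_i$. Euler's relation $\Xi Q^{(\beta)}=\beta Q^{(\beta)}$ then gives $\int Q^{(\beta)}(\eta\circ\xi)\Lambda(\delta_k\xi)^{Q^*}\,\d\xi=0$ directly, whereas the paper obtains the $\Xi_\zeta$-integral and must combine its decomposition identity with the vanishing translation integrals.
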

\begin{proof}
Suppose the contrary: $P_0$ is the only blow-up point. In the coordinates centered at $P_0$, let $u_i$ and $H$ be as in \eqref{5.8}, with $F=F_{P_0}$, and let $\xi_i\to0$ be local maximum points of $u_i$. Write
\[
m_i=u_i(\xi_i),\quad \ell_i=m_i^{-2/(Q-2)},\quad  \eta_i=\delta_{\ell_i^{-1}}\xi_i,\quad
\varphi_i(\zeta)=m_i^{-1}
u_i(\xi_i\circ\delta_{\ell_i}\zeta).
\]
First we establish that
\be\label{eq:center-shift}
|\xi_i|=O(\ell_i).
\ee

Since $v_i$ has no other blow-up point, \eqref{eq:sharp-isolated-bound} and the local Harnack inequality in Lemma~\ref{lem:4.3}, applied in fixed charts away from $P_0$, give
\be\label{ex est}
u_i(\xi)\le C(\var)m_i^{-1}H(\xi),
\quad |\xi|\ge\var>0.
\ee
Multiplying \eqref{maineq1}, with $R=R_i$, by $\overline X_ju_i$, $\overline Y_ju_i$ and $Tu_i$, respectively, and integrating by parts over $\Hn $, we obtain
\begin{equation}\label{eq:global-translation-identities}
\int_{\Hn }\widetilde\nabla R_i\,u_i^{Q^*}=0,
\quad \text{ and }\quad 
\int_{\Hn }TR_i\,u_i^{Q^*}=0.
\end{equation}
Combining \eqref{eq:global-translation-identities} with \eqref{ex est}, we obtain, for fixed small $\var>0$,
\[
\Big|\int_{B_\var}
(DR_i)(\xi_i\circ\xi)
u_i(\xi_i\circ\xi)^{Q^*}\,\d\xi\Big|
\le C(\var)m_i^{-Q^*}
=C(\var)\ell_i^Q,
\]
where $D\in\{\overline X_j,\overline Y_j,T:1\leq j\leq n\}$.

By Propositions~\ref{prop:4.4} and~\ref{prop:4.9}, together with
\eqref{ex est}, after passing to a subsequence we obtain
\begin{align}
\varphi_i &\to \Lam\circ\delta_k
\quad \text{ in } C^2_{loc}(\Hn), \notag\\
\varphi_i(\zeta) &\le C(1+|\zeta|)^{2-Q}
\quad \text{ for all } \,\zeta\in\Hn, \label{bound var}
\end{align}
where $k=\lim_{i\to\infty}\sqrt{R_i(\xi_i)}/2>0$
and $C$ is independent of $i$.

Suppose that \eqref{eq:center-shift} is false, namely, $r_i:=|\eta_i|\to\infty$ along a subsequence. Write $\mathcal D_i=(\widetilde\nabla,r_iT)$. Using the expansion of $R_i$ and changing variables $\xi=\delta_{\ell_i}\zeta$, we have
\begin{align}
&\,\Big|\int_{B_{\var/\ell_i}}
(\mathcal D_iQ_i^{(\beta)})(\eta_i\circ\zeta)\varphi_i^{Q^*}\,\d \zeta\Big|\notag\\
\le &\, C(\var)\ell_i^{Q-\beta+1}+\om (\var)
\int_{B_{\var/\ell_i}}( |\eta_i\circ\zeta|^{\beta-1}+r_i|\eta_i\circ\zeta|^{\beta-2})\varphi_i^{Q^*}\,\d \zeta,\label{D new 1}
\end{align}
where $\om (\var)\to0$ as $\var\to0$, uniformly in $i$. Here we also used $\ell_ir_i=|\xi_i|\to0$.

Choose $S_i\to\infty$ sufficiently slowly such that $S_i/r_i\to0$.
By homogeneity and the $C^1$ convergence of $Q_i^{(\beta)}$ away from the origin, we have
\be\label{D new 2}
\int_{B_{S_i}} (\mathcal D_iQ_i^{(\beta)})(\eta_i\circ\zeta)\varphi_i^{Q^*}\,\d \zeta = c_0\mathcal D_iQ_i^{(\beta)}(\eta_i) +o(r_i^{\beta-1}),
\ee
where
\[
c_0=\int_{\Hn }\Lam (\delta_k\zeta)^{Q^*}\,\d \zeta>0.
\]
Moreover,
\be\label{D new 3}
|\mathcal D_iQ_i^{(\beta)}(\eta_i)| \ge c r_i^{\beta-1}.
\ee

On the other hand, by the pointwise bound \eqref{bound var}, we have
\[
\int_{|\zeta|>S_i}
(
|\eta_i\circ\zeta|^{\beta-1} +r_i|\eta_i\circ\zeta|^{\beta-2} )\varphi_i^{Q^*}\,\d \zeta
\le
Cr_i^{\beta-1}(S_i^{-Q}+r_i^{-Q})
=o(r_i^{\beta-1}).
\]
By the same estimates, we have
$$
\int_{\Hn }(|\eta_i \circ \zeta|^{\beta-1}+r_i|\eta_i \circ \zeta|^{\beta-2}) \varphi_i^{Q^*} \, \d \zeta \leq C r_i^{\beta-1}.
$$
It follows from \eqref{D new 1}, \eqref{D new 2} and \eqref{D new 3} that
$$
c r_i^{\beta-1} \leq C(\var) \ell_i^{Q-\beta+1}+C \om (\var) r_i^{\beta-1}+o(r_i^{\beta-1}) .
$$
Thus, choosing $\var$ small and then $i$ large, we obtain
\[
r_i^{\beta-1}
\le C(\var)\ell_i^{Q-\beta+1}\to 0,
\]
a contradiction. This proves \eqref{eq:center-shift}.

Applying \eqref{Pohozaev} on $B_\rho(\xi_i)$ and letting $\rho\to\infty$, we obtain
\begin{equation}\label{eq:global-dilation-identity}
\int_{\Hn }
\Xi_\xi\big[R_i(\xi_i\circ\xi)\big] u_i(\xi_i\circ\xi)^{Q^*}\,\d \xi=0,
\end{equation}
where $\Xi_\xi=\sum_{j=1}^n(x_j\pa _{x_j}+y_j\pa _{y_j})+2t\pa _t$ acts on the variable $\xi=(x,y,t)$.
It follows from \eqref{ex est} that
\[
\Big|\int_{B_\var} \Xi_\xi[R_i(\xi_i\circ\xi)] u_i(\xi_i\circ\xi)^{Q^*}\,\d \xi\Big| \le C(\var)\ell_i^Q.
\]
Using the remainder assumptions, we have
\[
|\Xi_\xi[W_i(\xi_i\circ\xi)]| \le C\om (\var)( |\xi|\,|\xi_i\circ\xi|^{\beta-1}
+(|\xi|^2+|\xi_i||\xi|) |\xi_i\circ\xi|^{\beta-2}).
\]
By \eqref{eq:center-shift} and \eqref{bound var}, we have
\[
\Big|\int_{B_\var}
\Xi_\xi\big[Q_i^{(\beta)}(\xi_i\circ\xi)\big] u_i(\xi_i\circ\xi)^{Q^*}\,\d \xi\Big|
\le C(\var)\ell_i^Q +C\om (\var)\ell_i^\beta.
\]

Passing to a subsequence, let $\eta_i\to\eta$. Multiplying the last inequality by $\ell_i^{-\beta}$, changing variables $\xi=\delta_{\ell_i}\zeta$, and letting $i\to\infty$, we obtain
\[
\Big|\int_{\Hn }
\Xi_\zeta[Q^{(\beta)}(\eta\circ\zeta)] \Lam (\delta_k\zeta)^{Q^*}\,\d \zeta\Big| \le C\om (\var).
\]
Sending $\var\to0$ gives
\[
\int_{\Hn } \Xi_\zeta\big[Q^{(\beta)}(\eta\circ\zeta)\big] \Lam (\delta_k\zeta)^{Q^*}\,\d \zeta=0.
\]
Similarly, we also obtain
\[
\int_{\Hn }
\overline\nabla Q^{(\beta)}(\eta\circ\zeta) \Lam (\delta_k\zeta)^{Q^*}\,\d \zeta=0.
\]

Write $\eta=(a,b,c)$. By direct computations, we have
\[
\beta Q^{(\beta)}(\eta\circ\zeta)
=\Xi_\zeta\big[Q^{(\beta)}(\eta\circ\zeta)\big]+\sum_{j=1}^n (a_j\overline X_j+b_j\overline Y_j)
Q^{(\beta)}(\eta\circ\zeta) +2c\,TQ^{(\beta)}(\eta\circ\zeta).
\]
Multiplying the preceding identity by $\Lam(\delta_k\zeta)^{Q^*}$ and integrating, we obtain
\[
\int_{\Hn } Q^{(\beta)}(\eta\circ\zeta) \Lam (\delta_k\zeta)^{Q^*}\,\d \zeta=0.
\]
Using the change of variables $\xi=\delta_k\zeta$, we have
\[
\int_{\Hn } \overline\nabla Q^{(\beta)} ((\delta_k\eta)\circ\xi)U_{0,1}^{Q^*}\,\d \xi=0,
\quad \text{ and }\quad 
\int_{\Hn } Q^{(\beta)} ((\delta_k\eta)\circ\xi)U_{0,1}^{Q^*}\,\d \xi=0.
\]
The two vanishing integrals contradict the nonvanishing assumption of Theorem~\ref{thm:5.5}.
\end{proof}

\section{Proofs of the main theorems}\label{sec:5}
In this section, we prove the main theorems.
We begin with a compactness result for a fixed prescribed
curvature function. Theorem \ref{thm:homogeneous-compactness} further provides uniform a priori bounds along the homotopy $R_\mu=1+\mu(R-1)$, $\mu\in[\var,1]$,
which are needed for the degree arguments in our subsequent paper \cite{QTWPartII}.
\begin{cor}\label{thm:5.6}
Let $\al \in (0,1)$ and $R \in \mathcal{A}_\al (\S^{2 n+1})$ be positive. In every fixed contact chart assume that the weak derivatives $Z_a(T R)$ are locally essentially bounded away from the full critical set $\{\nabla_{\theta_0} R=T_{\theta_0} R=0\}$.  For each critical point $\zeta_0$ of $R$, let $F_{\zeta_0}$ be the centered Cayley chart in \eqref{eq:normalized-cayley}, with $F_{\zeta_0}(0)=\zeta_0$, and set $R_{\zeta_0}(\xi):=R(F_{\zeta_0}(\xi))$. Assume that there exist a small neighborhood $\mathscr{O}_{\zeta_0}$ of $0$ in $\H^n$,  constants $L>0$, and $\beta=\beta(\zeta_0)\in(Q-2,Q)$ such that  $R_{\zeta_0}$ satisfies condition $(*)'_\beta$ in $\mathscr{O}_{\zeta_0}$ for $L$, and
\[
    R_{\zeta_0}(\xi)=R(\zeta_0)+Q_{\zeta_0}^{(\beta)}(\xi)+W_{\zeta_0}(\xi)\quad \text{ in }\,\mathscr{O}_{\zeta_0},
\]
    where $Q_{\zeta_0}^{(\beta)}\in C^1_{loc }(\H^{n}\backslash \{0\})$ satisfies
    $Q_{\zeta_0}^{(\beta)}(\delta_{\lam}\xi)=\lam ^{\beta}Q_{\zeta_0}^{(\beta)}(\xi)$, $\forall\, \lam  >0$, $\xi\in \Hn $,
  \[
	c_{\mathrm{nd}}|\xi|^{\beta-1}\leq |\widetilde{\nabla} Q_{\zeta_0}^{(\beta)}(\xi)|+|\xi||TQ_{\zeta_0}^{(\beta)}(\xi)|
\]
for some constant $c_{\mathrm{nd}}>0$, which may depend on $\zeta_0$,
    and
\[
	\left( 
	\begin{array}{l}
	\displaystyle\int_{\Hn } \overline{\nabla} Q_{\zeta_0}^{(\beta)}(\eta\circ\xi)U_{0,1}^{Q^* } \, \d  \xi\\[2.5mm]
    \displaystyle\int_{\Hn } Q_{\zeta_0}^{(\beta)}(\eta\circ\xi)U_{0,1}^{Q^* } \, \d  \xi 
	\end{array}  \right) \neq 0, \quad \forall\,  \eta\in \Hn ,
	\] 
    and $W_{\zeta_0}(\xi)\in C^{1}(\mathscr{O}_{\zeta_0})$ satisfies $\lim _{\xi \to  0}\big(|W_{\zeta_0}(\xi)||\xi|^{-\beta}+|\widetilde{\nabla}W_{\zeta_0}(\xi)||\xi|^{1-\beta}+|TW_{\zeta_0}(\xi)||\xi|^{2-\beta}\big)=0$. Then there exists a positive constant $C \geq 1$ depending on $n,\al$ and $R$ such that for any solution $v$ of \eqref{maineq},
\[
1 / C \leq v \leq C \quad \text { on }\, \S^{2n+1} .
\]
Moreover, the set of positive solutions of \eqref{maineq} is compact in $C^2(\mathbb S^{2n+1})$.
\end{cor}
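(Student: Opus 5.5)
The plan is a contradiction argument along a sequence, combining the machinery of Sections~\ref{sec:3}--\ref{sec:4} with the globalization in Proposition~\ref{prop:wf-globalization}.

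\textbf{Flatness and energy bound.} First I apply Proposition~\ref{prop:wf-globalization} to $R$. Its hypotheses hold: the homogeneous expansion and nondegeneracy \eqref{0.6} are assumed at each critical point, with $\beta(\zeta_0)\in(Q-2,Q)$, and off the critical set the mixed derivatives $Z_a(TR)$ are locally bounded. The proposition gives three things:
\begin{itemize}
\item the critical set $\mathcal C=\{P_1,\ldots,P_N\}$ is finite;
\item for $\gamma:=\min_j\beta(P_j)>Q-2$, $R$ satisfies $(*)'_\gamma$ on the whole sphere with $L=1$ and constants uniform over the centered charts;
\item by Lemma~\ref{lem:wf-lowering}, $R$ also satisfies $(*)'_{Q-2}$ there.
\end{itemize}
Taking $R_i\equiv R$ and $\tau_i=0$, Theorem~\ref{thm:5.3} gives $\|v\|_{S_1^2}\le C$ for every positive solution $v$ of \eqref{maineq}.

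\textbf{Upper bound by contradiction.} Suppose $v_i$ are solutions with $\max v_i\to\infty$. Since $\gamma\in(Q-2,Q)$ and $L=O(1)$, Theorem~\ref{thm:5.4} applies: after a subsequence, $\{v_i\}$ has precisely one isolated simple blow-up point $P_0$. By Proposition~\ref{prop:4.17}, applied in the chart $F_{P_0}$ (the isolated blow-up hypothesis holds by Theorem~\ref{thm:5.3}), both $|\nabla_\H R|$ and $|TR|$ vanish at $P_0$. So $P_0\in\mathcal C$ is some $\zeta_0$.

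Now I apply Theorem~\ref{thm:5.5} with $R_i\equiv R$, $P_0=\zeta_0$, $Q_i^{(\beta)}=Q^{(\beta)}_{\zeta_0}$ and $W_i=W_{\zeta_0}$. The uniformity requirements are trivially satisfied because the sequence is constant, and the nondegeneracy and nonvanishing conditions are exactly the hypotheses of the corollary. Theorem~\ref{thm:5.5} then forces a second blow-up point, contradicting uniqueness. Hence $\max v\le C$.

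\textbf{Lower bound and compactness.} Given $v\le C$, the potential $\frac{n^2}{4}Rv^{4/(Q-2)}$ is bounded. The local Harnack inequality \eqref{eq:local-harnack}, applied in finitely many charts covering the sphere, gives $\max v\le C\min v$. A positive lower bound on $\max v$ comes from testing \eqref{maineq} with $v$ and using the Sobolev inequality: $\|v\|_{S_1^2}^2\le C\int v^{Q^*}\le C\|v\|_{S_1^2}^{Q^*}$, so $\|v\|_{S_1^2}\ge c$ and hence $\max v\ge c$. Together these give $v\ge 1/C$.

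Compactness in $C^2$ then follows from Lemma~\ref{lem:intrinsic-regularity}, which gives uniform $C^{2,\al'/2}$ bounds in the Cayley charts, together with Arzel\`a--Ascoli. Limits are positive solutions by the two-sided bounds.

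\textbf{Main obstacle.} The delicate step is the passage from the single blow-up point to the setting of Theorem~\ref{thm:5.5}. One must check two things:
\begin{itemize}
\item the blow-up point, found on the sphere, is exactly a critical point of $R$, so that the expansion hypothesis is available there; this is what Proposition~\ref{prop:4.17} provides;
\item the normalized chart used in Theorem~\ref{thm:5.5} agrees with $F_{\zeta_0}$. The choice of $U_{\zeta_0}$ affects the expansion only by a unitary rotation, which preserves homogeneity and the integral conditions.
\end{itemize}
If the argument in Theorem~\ref{thm:5.5} needs the pointwise exterior bound \eqref{ex est}, I would derive it from Proposition~\ref{prop:4.9} plus Harnack away from $P_0$, as that proof indicates.
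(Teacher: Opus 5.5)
Your proposal is correct and follows the paper's proof. The upper bound comes from Proposition~\ref{prop:wf-globalization} together with Theorems~\ref{thm:5.4} and~\ref{thm:5.5}, and you usefully make explicit, via Proposition~\ref{prop:4.17}, that the single blow-up point lies in the critical set, so the expansion is available there. The lower bound and $C^2$ compactness come from the Harnack inequality and \eqref{sch new} with Arzel\`a--Ascoli. The only minor difference is that the paper bounds $\max v$ from below by evaluating \eqref{maineq} at a maximum point, which gives $\max v\ge(\sup R)^{-n/2}$ directly, while you route it through the Sobolev inequality, which works equally well.
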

\begin{proof}
The upper bound follows from Proposition \ref{prop:wf-globalization}, Theorems \ref{thm:5.4} and \ref{thm:5.5}.
At a maximum point, equation \eqref{maineq} gives $\max_{\S^{2n+1}} v\ge(\sup R)^{-n/2}$. Since $R$ and $v$ are uniformly bounded, the local Harnack inequality in Lemma~\ref{lem:4.3}, applied on a fixed finite covering by Cayley charts, gives
\begin{equation}\label{eq:uniform-positive-lower-bound}
\min_{\S^{2n+1}} v\ge C^{-1}\max_{\S^{2n+1}} v \ge C^{-1}\Big(\sup_{\S^{2n+1}} R\Big)^{-n/2}>0.
\end{equation}
Compactness follows from \eqref{sch new} and the Arzel\`a--Ascoli theorem. By the uniform positive
lower bound, every $C^2$ limit remains positive, and the equation passes to the limit.
\end{proof}

\begin{proof}[Proof of Theorem~\ref{thm:homogeneous-compactness}]
By Proposition \ref{prop:wf-globalization}, the family $R_\mu=1+\mu(R-1), \mu \in[\var, 1]$, satisfies the hypotheses of Theorem \ref{thm:5.4} uniformly. Theorems~\ref{thm:5.4} and~\ref{thm:5.5} give the uniform upper bound in \eqref{0.9}; the argument yielding \eqref{eq:uniform-positive-lower-bound} gives the lower bound uniformly in $\mu$.
\end{proof}

\begin{proof}[Proof of Theorem~\ref{thm:intro-energy}]
Estimate \eqref{eq:intro-energy} is the critical case of Theorem~\ref{thm:5.3}.
\end{proof}

\begin{proof}[Proof of Theorem~\ref{thm:intro-onebubble}]
By Lemma \ref{lem:wf-radius}, the auxiliary functions $r_i$ are uniformly bounded in $\Om _{d,i}$. Choose 
$\gamma\in (Q-2,\min\{\beta,Q\})$. Hence Lemma \ref{lem:wf-lowering} implies that $\{R_i\}$ satisfies $(*)_{\gamma}'$ in $\Om _{d, i}$, with constant $L$ and a structural constant $\widetilde C_*$ independent of $i$. Since $Q-2<\gamma<Q$, Theorem \ref{thm:5.4} applies, so it remains to prove Theorem~\ref{thm:intro-onebubble} in the unbounded case.

    After passing to a subsequence, $\zeta_i \to  \bar{\zeta}$ is the only isolated simple blow-up  point of $v_i$.
    Let $\kappa_i:=R_i(\zeta_i)$, $\widehat{R}_i:=\kappa_i^{-1}R_i$ and $\widehat{v}_i:=\kappa_i^{n/2}v_i$. By direct computations, we have
    $$
    L_{\theta_0}\widehat{v}_i=\frac{n^2}{4}\widehat{R}_i\widehat{v}_i^\frac{Q+2}{Q-2}.
    $$
     For simplicity, assume that $\zeta_i$ is the north pole and $\widehat{R}_i(\zeta_i)=1$. Let $F: \Hn  \to  \S^{2n+1}\backslash\{\mathcal{S}\}$ be the inverse of the Cayley transform defined in \eqref{Cayley} with $\mathcal{S}$ being the south pole of $\S^{2n+1}$. As previously defined,  for any $\lam>0$, let $
		\delta_\lam(z,t)=(\lam z,\lam^2 t)$, $\forall\,(z,t)\in\Hn$.
	Set $\varphi_i=F \circ \delta_{\lam_i} \circ F^{-1}$ with $\lam_i=\widehat{v}_i(\zeta_i)^{-2/(Q-2)}$. Then $T_{\varphi_i} \widehat{v}_i=(\widehat{v}_i\circ\varphi_i)(|\operatorname{det}\d \varphi_i|)^{(Q-2)/(2Q)}$ satisfies
	\begin{equation}\label{eq:normalized-sphere-equation}
	-\Delta_{\theta_0}(T_{\varphi_i} \widehat{v}_i)+\frac{n^2}{4}(T_{\varphi_i} \widehat{v}_i)=\frac{n^2}{4} \widehat{R}_i \circ \varphi_i (T_{\varphi_i}\widehat{v}_i)^{\frac{Q+2}{Q-2}} \quad \text { on }\, \S^{2n+1} .
	\end{equation}
	For $\xi \in \Hn $, let
	\[
	u_i(\xi)=\frac{2^{n}}{((1+|z|^2)^2+t^2)^{n/2}} \widehat{v}_i \circ F(\xi)
	\]
	and
	\[
	\widetilde{u}_i(\xi)=\frac{2^{n}}{((1+|z|^2)^2+t^2)^{n/2}} T_{\varphi_i} \widehat{v}_i \circ F(\xi) .
	\]
    Note that
	\[
	|\operatorname{det} \d \varphi_i(F(\xi))|^{(Q-2)/(2Q)}=\lam_i^n \Big(\frac{(1+| z|^2)^2+t^2}{(1+|\lam_i z|^2)^2+\lam_i^4t^2} \Big)^{(Q-2)/4} .
	\]
	Hence $\widetilde{u}_i(\xi)=\lam_i^n u_i(\delta_{\lam_i}\xi)$ for any $\xi \in \Hn $ and $0<\widetilde{u}_i \leq 2^n$. Using the classification argument in the proof of Proposition \ref{prop:4.4}, we obtain
\[
	\widetilde{u}_i(\xi) \to \frac{2^{n}}{((1+|z|^2)^2+t^2)^{n/2}} \quad \text { in }\, C_{loc}^2(\Hn ) .
\]
	Therefore, $T_{\varphi_i}\widehat{v}_i \to  1$ in $C_{loc}^2(\S^{2n+1} \backslash\{\mathcal{S}\})$.

    By \eqref{eq:sharp-isolated-bound} and the local Harnack inequality in Lemma~\ref{lem:4.3} away from the unique blow-up point, the rescaled functions satisfy $\widetilde{u}_i(\xi) \leq C \min \{1,|\xi|^{-2 n}\} \leq C H_F(\xi)$. Hence $T_{\varphi_i} \widehat{v}_i$ is uniformly bounded on the whole sphere. Equation \eqref{eq:normalized-sphere-equation}, the interior estimates, and the Sobolev embedding \cite[Theorems~6.1 and~5.15]{F1975} yield uniform H\"older bounds, which, together with the convergence away from the south pole, imply $T_{\varphi_i} \widehat{v}_i \to  1$ uniformly on $\S^{2 n+1}$.
     This completes the proof.
\end{proof}

\medskip

\noindent\textbf{Data availability statement.}~	 Data sharing is not applicable to this article as no datasets were generated or analysed during the current study.

\medskip

\noindent\textbf{Declarations.}~ \textbf{Conflict of interest.} ~ On behalf of all authors, the corresponding author states that there is no conflict of interest.

\medskip


\end{document}